\documentclass[a4paper,12pt]{amsart}
\usepackage{amssymb,amsmath,amsfonts,amsthm}
\usepackage[margin=1in]{geometry}

\newcommand{\ds}{\displaystyle}

\newcommand{\NN}{\mathbb N}

\newcommand{\CC}{\mathbb C}
\newcommand{\RR}{\mathbb R}
\newcommand{\ZZ}{\mathbb Z}
\newcommand{\EE}{\mathcal E}
\newcommand{\DD}{\mathcal D}
\newcommand{\SSS}{\mathcal S}

\theoremstyle{plain}
\newtheorem{theorem}{Theorem}[section]
\newtheorem{proposition}[theorem]{Proposition}
\newtheorem{lemma}[theorem]{Lemma}
\newtheorem{corollary}[theorem]{Corollary}

\theoremstyle{remark}
\newtheorem{remark}[theorem]{Remark}

\theoremstyle{definition}
\newtheorem{definition}[theorem]{Definition}
\newtheorem{example}[theorem]{Example}
\allowdisplaybreaks

\numberwithin{equation}{section}

\parindent 0.7cm

\newcommand{\beq}{\begin{eqnarray}}
\newcommand{\eeq}{\end{eqnarray}}

\newcommand{\beqs}{\begin{eqnarray*}}
\newcommand{\eeqs}{\end{eqnarray*}}
%\setcounter{section}{-1}

%**************************************

\begin{document}

\title[Quasianalytic classes -- Parametrix and convolution]{On quasianalytic classes of Gelfand-Shilov type. Parametrix and convolution}

\author[S. Pilipovi\'{c}]{Stevan Pilipovi\'{c}}
\address{S. Pilipovi\'{c}, Department of Mathematics and Informatics, University of Novi Sad, Trg Dositeja Obradovi\'ca 4, 21000 Novi Sad, Serbia}
\email{stevan.pilipovic@dmi.uns.ac.rs}
\thanks{S. Pilipovi\'{c} is supported by the Serbian Ministry of Education, Science and Technological Development, through the project 174024}

\author[B. Prangoski]{Bojan Prangoski}
\address{B. Prangoski, Faculty of Mechanical Engineering, University Ss. Cyril and Methodius, Karpos II bb, 1000 Skopje, Macedonia}
\email{bprangoski@yahoo.com}

\author[J. Vindas]{Jasson Vindas}
\address{J. Vindas, Department of Mathematics, Ghent University, Krijgslaan 281 Gebouw S22, 9000 Gent, Belgium}
\email{jasson.vindas@ugent.be}
\thanks{J. Vindas gratefully acknowledges support by Ghent University, through the BOF-grant 01N01014}

\subjclass[2010]{Primary 46E10, 46F05. Secondary 46E40, 46F10, 46F15, 44A35}
\keywords{Convolution; parametrix method; quasianalytic classes; ultradifferentiable functions; ultradistributions; Gelfand-Shilov spaces}

\begin{abstract}
We develop a convolution theory for quasianalytic ultradistributions of Gelfand-Shilov type. We also construct a special class of ultrapolynomials, and use it as a base for the parametrix method in the study of new topological and structural properties of several quasianalytic spaces of functions and ultradistributions. In particular, our results apply to Fourier hyperfunctions and Fourier ultra-hyperfunctions.
\end{abstract}

\maketitle

\section*{Introduction}

Convolution is among the most important operations in mathematical analysis. In the case of  distributions, this is a classical and much studied topic within  Schwartz' theory \cite{hor,SchwartzS,SchwartzV,SchwartzT,Shiraishi} (see also the relevant references
\cite{DD,dirvo,K, SI}). Although many central problems concerning the convolution of Schwartz distributions were solved long time ago, one may still find many interesting results in the recent literature \cite{BO,DPV,ort,Wagner}. In the case of non-quasianalytic ultradistributions, the existence and characterizations of the convolution of  Beurling ultradistributions was established  in \cite{PK,PilipovicC}. The long-standing problem of extending such theory to Roumieu ultradistributions, or even to classical Gelfand-Shilov spaces \cite{NR}, of non-quasianalytic type was solved only until recently in our paper \cite{PB} and substantially improved in \cite{DPPV}. A key ingredient for the improvements in the latter paper is the use of the so-called parametrix method, based there on parametrices for ultradifferential operators constructed by Komatsu in \cite{Komatsu4}.

In this article we develop a convolution theory for quasianalytic ultradistributions. The problems that we consider in this paper significantly differ from the non-quasianalytic case and require the development of new methods for their analysis. In fact, we shall establish here a number of new topological and structural properties for spaces of quasianalytic functions and ultradistributions that, to the best of our knowledge, have been lacking in the literature for quite long time. An important technical tool in this work is the construction of suitable ultrapolynomials with respect to a class of quasianalytic sequences and their use as a base for the parametrix method.

 Our interest lies in the spaces $\SSS^{(M_p)}_{(A_p)}(\RR^d)$ (Beurling case) and $\SSS^{\{M_p\}}_{\{A_p\}}(\RR^d)$ (Roumieu case) of  Gelfand-Shilov mixed type and their duals, where the weight sequences satisfy suitable conditions (see Section \ref{preli}) but may be quasianalytic, namely, they do not necessarily satisfy the so-called $(M.3')$ condition \cite{Komatsu1}. We employ the common notation $\mathcal{S}^{\ast}_{\dagger}(\mathbb{R}^{d})$ to treat the Beurling and Roumieu cases simultaneously. We remark that all of our results cover the important cases of Fourier hyperfunctions and ultra-hyperfunctions as particular instances; in fact, they correspond to the symmetric choice $A_p=M_p=p!$ (see \cite{morimoto69,morimoto,zharinov} for hyperfunctions and ultrahyperfunctions; see \cite{dm-1,dm-2} for ultradistributions on homogeneous spaces and compact manifolds). However, it should also be emphasized that we go beyond (ultra)-hyperfunctions, as the weight sequence $M_{p}$ may satisfy $M_p\subset p!$.

The paper is organized as follows. In Section \ref{preli} we fix the assumptions on the weight sequences and discuss some preliminary notions. We set the ground for the parametrix method in Section \ref{structure and topology}, where we also deduce various topological and structural properties of $\mathcal{S}^{\ast}_{\dagger}(\mathbb{R}^{d})$  and their duals $\mathcal{S}'^{\ast}_{\dagger}(\mathbb{R}^{d})$. Indeed, we provide a structural characterization for $\mathcal{S}'^{\ast}_{\dagger}(\mathbb{R}^{d})$ and establish the nuclearity of these spaces. As a preparation for our study of the general convolution of quasianalytic ultradistributions, we collect and explain in Section \ref{TIBU} a number of results concerning the class of ultradistribution spaces associated to translation-invariant Banach spaces recently introduced and studied by us and Dimovski in \cite{DPV,DPPV,dpv-2}. The main concern in the rest of the paper is the convolution. Naturally, a good understanding of the topological properties of the spaces $\dot{\mathcal{B}}^{\ast}_{\eta}$, $\mathcal{D}^{\ast}_{L^{\infty}_{\eta}}$, and $\mathcal{D}'^{\ast}_{L^{1}_{\eta}}$ (ultradistribution weighted extensions of the corresponding Schwartz spaces \cite{SchwartzT,SchwartzS}) should play an essential role in such study. In Section \ref{tensor} we study the $\varepsilon$ tensor product of $\dot{\mathcal{B}}^{\ast}_{\eta}$ with a sequentially complete, quasi-complete, or complete l.c.s., while in Section \ref{section convolution} we show that $(\DD^*_{L^{\infty}_{\eta},c})'_b$ and $\DD'^*_{L^1_{\eta}}$ are isomorphic as l.c.s.; such investigations are new and of independent interest. All these topological results are then used for the analysis and characterization of the convolution in Theorem \ref{convolution}.

\section{Preliminaries}\label{preli}

Let $\{M_p\}_{p\in\mathbb{N}}$ and $\{A_p\}_{p\in\mathbb{N}}$ be  sequences of positive numbers, $M_0=M_1=A_{0}=A_{1}=1.$ Throughout the article, we impose the following assumptions over these weight sequences. The sequence $M_p$ satisfies:

\indent $(M.1)$ $M_{p}^{2} \leq M_{p-1} M_{p+1}, \; \; p \in\ZZ_+$;\\
\indent $(M.2)$ $\ds M_{p} \leq c_0H^{p} \min_{0\leq q\leq p} \{M_{p-q} M_{q}\}$, $p,q\in \NN$, for some $c_0,H\geq1$;\\
\indent $(M.5)$ there exists $q>0$ such that $M_p^q$ is strongly
non-quasianalytic, i.e., there exists $c_0\geq 1$ such that $
\sum_{j=p+1}^{\infty}{M_{j-1}^q}/{M_j^q}\leq c_0
p{M_p^q}/{M_{p+1}^q},\,\, \forall p\in\ZZ_+. $

We assume that $A_{p}$ also satisfies $(M.1)$ and $(M.2)$. Without any loss of generality, we can assume that the constants $c_0$ and $H$ from  $(M.2)$ are the same for $M_p$ and $A_p$. Moreover, we  assume that $A_p$ satisfies:\\
\indent $(M.6)$ $p!\subset A_p$; i.e., there exist $c_0,L_0\geq1$
such that $p!\leq c_0 L_0^p A_p$, $p\in\NN$.\\
Note that $(M.5)$
implies that there exists $\kappa>0$ such that $p!^{\kappa}\subset
M_p$, i.e., there exist $c_0,L_0>0$ such that $p!^{\kappa}\leq c_0
L_0^p M_p$, $p\in\NN$ (cf. \cite[Lemma 4.1]{Komatsu1}). Naturally,
by enlarging it if necessary, we may assume that $q\in\ZZ_+, q\geq
2,$ in $(M.5)$. Following \cite{Komatsu1}, for $p\in\ZZ_+$, we
denote $m_p=M_p/M_{p-1}$ and for $\rho\geq 0$ let $m(\rho)$ be the
number of indices $p$ with $m_p\leq \rho$. As a consequence of
\cite[Proposition 4.4]{Komatsu1},  a change of variables shows that
$(M.5)$ holds if and only if
\beqs
\int_{\rho}^{\infty}\frac{m(\lambda)}{\lambda^{q+1}}d\lambda \leq
c \frac{m(\rho)}{\rho^q},\,\, \forall \rho\geq m_1.
\eeqs
A sufficient condition for $M_p$ to satisfy $(M.5)$ is obtained if
the sequence $m_p/p^{\lambda}$ is eventually quasi-increasing for
some $\lambda>0$, namely,  there is $c$ such that $m_{p+1}/m_p\geq
c (1+1/p)^{\lambda}$ for $p\geq 1$. Although it is not part of our
assumptions, we are primary interested in the quasianalytic case,
i.e., $\sum_{p=1}^{\infty}{M_{p-1}}/{M_p}=\infty$.

We denote by $M(\cdot)$ and $A(\cdot)$ the associated functions \cite{Komatsu1} of $M_p$ and $A_p$, respectively, and by $\tilde{M}(\cdot)$ the associated function of $\tilde{M}_p:=M_p^{q}$. Clearly, $\tilde{M}(\rho^q)=qM(\rho)$, $\rho>0$. Denote by $\mathfrak{R}$ the set of all positive sequences which increase to $\infty.$ For $(l_p)\in\mathfrak{R}$, denote as $N_{l_p}(\cdot)$ and $B_{l_p}(\cdot)$ the associated functions for the sequences $M_p\prod_{j=1}^pl_j$ and $A_p\prod_{j=1}^pl_j$, respectively.  In the sequel we will often use the following technical result.

\begin{lemma}[\cite{BojanL}]\label{nwseq}
Let $(k_p)\in\mathfrak{R}$. There exists $(k'_p)\in\mathfrak{R}$ such that $k'_p\leq k_p$ and
\beqs
\prod_{j=1}^{p+q}k'_j\leq 2^{p+q}\prod_{j=1}^{p}k'_j\cdot\prod_{j=1}^{q}k'_j,\,\, \mbox{for all}\,\, p,q\in\ZZ_+.
\eeqs
\end{lemma}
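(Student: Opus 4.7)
The plan is to construct $(k'_p)$ explicitly as a dyadic step-function version of $(k_p)$, then verify the submultiplicativity-type inequality by taking logarithms and carrying out a short case analysis.

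First I would inductively select thresholds $1 = N_0 < N_1 < N_2 < \cdots$ satisfying (i) $k_p \geq 2^m$ for all $p \geq N_m$, and (ii) $N_{m+1} \geq 2 N_m$; both are simultaneously achievable because $k_p \to \infty$, e.g.\ by taking $N_m := \max\bigl(2 N_{m-1},\ \min\{p : k_p \geq 2^m\}\bigr)$. I would then define $k'_p := 2^{m-1}$ for $p \in [N_{m-1}, N_m)$. This sequence is non-decreasing, tends to $\infty$, and satisfies $k'_p \leq k_p$ by the choice of $N_{m-1}$; if $\mathfrak{R}$ is read as demanding strict monotonicity, one adds a negligible strictly increasing perturbation whose effect is absorbed into the factor $2^{p+q}$.

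Next I would take $\log_2$ of the target inequality to rewrite it as
\[
\sum_{j=1}^{p+q} M(j) - \sum_{j=1}^{p} M(j) - \sum_{j=1}^{q} M(j) \leq p + q,
\]
where $M(j) := \log_2 k'_j$ is an integer-valued step function jumping by $1$ at each $N_m$. Abel summation yields the clean identity $\sum_{j=1}^{n} M(j) = M(n)(n+1) - \sum_{r=1}^{M(n)} N_r$. Writing $B := M(p)$, $C := M(q)$, $A := M(p+q)$ (WLOG $B \leq C$), the doubling property (ii) forces $A \in \{C, C+1\}$, and the inequality reduces in each case to an algebraic bound on the partial sums $\sum_{r=1}^{m} N_r$.

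The main obstacle will be the subcase $A = C + 1$ with $C - B$ large, in which a term of order $(C-B)\cdot p$ appears on the left and must be absorbed by $N_{C+1}$ on the right. The two estimates that close the argument are the geometric-series bound $\sum_{r=1}^{m} N_r < N_{m+1}$ (immediate from the doubling) and the iterated-doubling inequality $N_{C+1} \geq 2^{C-B} N_{B+1}$; combined with the elementary fact $2^k - 1 \geq k$ for $k \geq 1$, these provide enough slack to dominate $(C - B)\,p \leq (C-B)\,N_{B+1}$ by $N_{C+1} - \sum_{r\leq B} N_r$. The remaining subcases ($A = C$ with either $C = B$ or $C > B$, and $A = C + 1$ with small $C - B$) are handled by the same two ingredients via direct computation.
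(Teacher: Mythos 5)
The paper does not actually prove this lemma --- it is quoted from \cite{BojanL} --- so there is no in-paper argument to compare against; judged on its own, your dyadic construction is a correct and self-contained proof. I checked the key points: the Abel-summation identity $\sum_{j\leq n}M(j)=M(n)(n+1)-\sum_{r\leq M(n)}N_r$ is right (since $M(j)=\#\{r\geq 1: N_r\leq j\}$); the doubling $N_{m+1}\geq 2N_m$ does force $A\in\{C,C+1\}$ when $p\leq q$; and in every subcase the inequality closes using exactly the three ingredients you name ($\sum_{r\leq m}N_r<N_{m+1}$, $N_{C+1}\geq 2^{C-B}N_{B+1}$, and $2^k\geq k+1$), e.g.\ in the case $A=C$, $C=B$ one needs the extra observation $\sum_{r\leq B}N_r<2N_B\leq 2p\leq p+q$, which also follows from the geometric bound together with $p\geq N_B$. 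The one genuine (though easily repaired) gap is at the bottom of the scale: your condition (i) for $m=0$ reads $k_p\geq 1$ for all $p\geq N_0=1$, which is not guaranteed for an arbitrary positive increasing sequence ($k_1$ may be smaller than $1$), so $k'_p=1$ on $[1,N_1)$ need not satisfy $k'_p\leq k_p$. This is fixed by replacing $k'_p$ with $ck'_p$ for $c=\min(1,k_1)$: the target inequality is invariant under this rescaling because both sides pick up the same factor $c^{p+q}$, and $ck'_p\leq k_p$ then holds for all $p$. With that adjustment (and the strict-monotonicity perturbation you already mention), the proof is complete.
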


\indent We denote by $\SSS^{M_p,h}_{A_p,h}, h>0,$ the $(B)$-space (Banach space) of all $\varphi\in C^{\infty}(\RR^d)$ for which the norm
\beqs
\sigma_h(\varphi)=\sup_{\alpha} \frac{h^{|\alpha|}\left\|e^{A(h|\cdot|)}D^{\alpha}\varphi\right\|_{L^{\infty}(\RR^d)}}{M_{\alpha}}
\eeqs
is finite. One easily verifies that for $h_1H<h_2$ the canonical inclusion $\SSS^{M_p,h_2}_{A_p,h_2}\rightarrow\SSS^{M_p,h_1}_{A_p,h_1}$ is compact. As l.c.s. we define $\ds\SSS^{(M_p)}_{(A_p)}(\RR^d)=\lim_{\substack{\longleftarrow\\ h\rightarrow \infty}} \SSS^{M_p,h}_{A_p,h}$ and $\ds\SSS^{\{M_p\}}_{\{A_p\}}(\RR^d)=\lim_{\substack{\longrightarrow\\ h\rightarrow 0}} \SSS^{M_p,h}_{A_p,h}$. Thus $\SSS^{(M_p)}_{(A_p)}(\RR^d)$ is an $(FS)$-space and $\SSS^{\{M_p\}}_{\{A_p\}}(\RR^d)$ is a $(DFS)$-space. In particular they are both Montel spaces. In the sequel we shall employ $\SSS^*_{\dagger}(\RR^d)$ as a common notation for $\SSS^{(M_p)}_{(A_p)}(\RR^d)$ (Beurling case) and $\SSS^{\{M_p\}}_{\{A_p\}}(\RR^d)$ (Roumieu case).\\
\indent For each $(r_p)\in\mathfrak{R}$, by $\SSS^{M_p,(r_p)}_{A_p,(r_p)}$ we denote the space of all $\varphi\in C^{\infty}(\RR^d)$ such that
\beqs
\sigma_{(r_p)}(\varphi)=\sup_{\alpha} \frac{\left\|e^{B_{r_p}(|\cdot|)}D^{\alpha}\varphi\right\|_{L^{\infty}(\RR^d)}}{M_{\alpha}\prod_{j=1}^{|\alpha|}r_j}<\infty.
\eeqs
Provided with the norm $\sigma_{(r_p)}$, the space $\SSS^{M_p,(r_p)}_{A_p,(r_p)}$ becomes a $(B)$-space. As in \cite{PilipovicK}, one can prove that $\SSS^{\{M_p\}}_{\{A_p\}}(\RR^d)$ is topologically isomorphic to $\ds \lim_{\substack{\longleftarrow \\ (r_p)\in\mathfrak{R}}}\SSS^{M_p,(r_p)}_{A_p,(r_p)}$.

We will often make use of weighted $L^p$ spaces. We fix the notation. Let $\eta:\mathbb{R}^d\rightarrow (0,\infty)$ be a measurable function.
For $1\leq p<\infty$ we denote as $L^p_{\eta}$ the spaces of measurable functions $g$ such that $\|g\|_{p,\eta}:=\|\eta g\|_{p}<\infty$. On the other hand, we make an exception and define $L^\infty_{\eta}$ via the norm $\|g\|_{\infty,\eta}:=\| g/\eta\|_{\infty}$. As usual, if $\eta=1$, we drop it from the notation.

Following Komatsu \cite{Komatsu3} we will use the ensuing
terminology. For a subset $A$ of a locally convex space $F$ (from
now on always abbreviated as l.c.s.), we say that $f\in F$ is a
sequential limit point (resp. a bounding limit point) of $A$ if
there exists a sequence in $A$ (resp. a bounded net in $A$) which
converges to $f$. We say that $A$ is sequentially closed (resp.
boundedly closed) if it coincides with the set of all sequential
limit points (resp. the set of all bounding limit points) of $A$.
Since the intersection of sequentially closed sets (resp.
boundedly closed sets) is sequentially closed (resp. boundedly
closed) there exists smallest sequentially closed set (resp.
smallest boundedly closed set) which contains $A$. We will call
this set the
sequential closure (resp. the bounding closure) of $A$ in $F$.

As usual, a subset $A$ of $F$ will be called sequentially complete
if every Cauchy sequence in $A$ is convergent in $A$. Moreover,
$A$ is said to be quasi-complete (Komatsu \cite{Komatsu3} uses the
term boundedly complete) if every bounded Cauchy net in $A$ is
convergent in $A$. The sequential completion of $A\subseteq F$
(resp. quasi-completion) is the sequential closure (resp. the
bounding closure) of $A$ in the completion of $F$.

For $X$ and $Y$ two l.c.s., we denote by
$\mathcal{L}(X,Y)$ the space of all continuous linear mappings
from $X$ to $Y$. When we want to indicate a specific topology on
this space we will use the following indices: $b$ for the strong
topology, $\sigma$ for the weak topology, $p$ for the topology of
precompact convergence, $c$ for the topology of compact convex
circled convergence. If $X$ is quasi-complete then clearly
$\mathcal{L}_p(X,Y)$ is isomorphic to $\mathcal{L}_c(X,Y)$. We use
the same indices as above for denoting the corresponding
topologies on the dual $X'$ of the l.c.s. $X$. If $t$ is any of
these indices we denote by $\mathcal{L}_{\epsilon}(X'_t,Y)$ the
space $\mathcal{L}(X'_t,Y)$ equipped with the topology of
uniform convergence on all equicontinuous subsets of $X'$.\\
\indent Following Schwartz \cite{SchwartzV} and Komatsu
\cite{Komatsu3} for two l.c.s. $X$ and $Y$ we denote by
$X\varepsilon Y$ (the $\varepsilon$ product of $X$ and $Y$) the space
of all bilinear functionals on $X'_c\times Y'_c$ which are
hypocontinuous with the respect to the equicontinuous subsets of
$X'$ and $Y'$ and equipped with the topology of uniform
convergence on the products of equicontinuous subsets of $X'$ and
$Y'$. The tensor product $X\otimes Y$ is canonically embedded into
$X\varepsilon Y$ via $(x\otimes y)(x',y')=\langle x',x\rangle
\langle y',y\rangle$. Clearly, the topology induced by
$X\varepsilon Y$ on $X\otimes Y$ is the $\epsilon$ topology. As
pointed in \cite[p. 657]{Komatsu3}, we have the following canonical
isomorphisms of l.c.s.
\beqs
X\varepsilon Y\cong\mathcal{L}_{\epsilon}(X'_c,Y)\cong\mathcal{L}_{\epsilon}(Y'_c,X).
\eeqs
Grothendieck \cite{Grothendieck} first
introduced the approximation property for a l.c.s. $X$:\\
\indent  $X$ is said to have the
approximation property if the identity mapping
$\mathrm{Id}\in\mathcal{L}(X,X)$ is in the closure of $X'\otimes
X$ in $\mathcal{L}_p(X,X)$.\\
Later Schwartz \cite{SchwartzV} introduced the following weaker
version:\\
\indent $X$ is said to have the weak approximation property if
$\mathrm{Id}$ is in the closure of $X'\otimes X$ in
$\mathcal{L}_c(X,X)$.\\
If $X$ is quasi-complete the above two definitions are
equivalent. We also need the following notion \cite{Komatsu3,SchwartzV}:\\
\indent We say that $X$ satisfies the weak sequential
approximation property (resp. the weak bounded approximation
property) if $\mathrm{Id}$ is in the sequential closure (resp. the
bounding closure) of $X'\otimes X$ in $\mathcal{L}_c(X,X)$. Every nuclear space has the weak approximation
property.\\
\indent Lastly, we mention that there is a similar concept to this: the so called metric approximation property (see \cite[Section 43.8]{kothe2} for the definition) that was recently investigated in the setting of mixed-norm $L^p$ spaces and modulation spaces; see \cite{dmw-1,dmw-2,delm}.

\section{Structural and topological properties of $\SSS'^*_{\dagger}(\RR^d)$}
\label{structure and topology}

\subsection{Parametrix} The construction of ultradifferential operators for the parametrix method is a basic result for accomplishing the main results of the paper.
\begin{lemma}\label{ultpoly}
Let $r'\geq 1$ and $k>0$ (resp. $(k_p)\in\mathfrak{R}$). There exists an ultrapolynomial $P(z)$ of class $(M_p)$ (resp. of class $\{M_p\}$) such that $P$ does not vanish on $\RR^{d}$ and satisfies the following estimate:\\
\indent There exists $C>0$ such that for all $x\in\RR^d$ and $\alpha\in\NN^d$
\beq
\left|D^{\alpha}\left({1}/{P(x)}\right)\right|&\leq& C\frac{\alpha!}{r'^{|\alpha|}} e^{-M(k|x|)},\,\, \mbox{resp.}\label{bgrowth}\\
\left|D^{\alpha}\left({1}/{P(x)}\right)\right|&\leq& C\frac{\alpha!}{r'^{|\alpha|}} e^{-N_{k_p}(|x|)}.\label{rgrowth}
\eeq
\end{lemma}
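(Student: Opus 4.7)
The plan is to realize $P$ as an infinite product with purely imaginary zeros. In one complex variable I set
\[
P_{0}(w)=\prod_{p=1}^{\infty}\left(1+\frac{w^{2}}{b_{p}^{2}}\right),
\]
where the positive numbers $b_p$ are chosen in terms of $m_p=M_p/M_{p-1}$; in the Beurling case $b_p$ is a multiple of $m_p$ depending on $r'$ and $k$, while in the Roumieu case the $b_p$'s are built from $(k_p)$, which I first replace by a refined sequence $(k'_p)$ via Lemma \ref{nwseq} so that the partial products $\prod_{j=1}^{p}k'_{j}$ enjoy the submultiplicative property provided there. Condition $(M.5)$ enters through the summability $\sum 1/b_{p}^{2}<\infty$ (applied to $\tilde{M}_{p}=M_{p}^{q}$), which ensures convergence of the product to an entire function. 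In $d$ variables I take $P(z)=P_{0}(z_{1})\cdots P_{0}(z_{d})$; since every one-dimensional factor is strictly positive on $\RR$, $P$ has no real zero.

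Two estimates on $P_{0}$ are needed: an upper bound of the required ultrapolynomial class, and a matching lower bound on a horizontal strip $|\mathrm{Im}\,w|\leq r'$. The upper bound comes from the classical inequality
\[
\log|P_{0}(w)|\leq \int_{0}^{\infty}\log\!\left(1+\frac{|w|^{2}}{t^{2}}\right) dn(t),
\]
with $n(t)=\#\{p:b_{p}\leq t\}$, combined with the identity $\tilde{M}(\rho^{q})=qM(\rho)$ and $(M.2)$ to put the estimate in the correct class. The lower bound rests on the factor-wise inequality $|1+(\xi+i\eta)^{2}/b_{p}^{2}|\geq c\,(1+\xi^{2}/b_{p}^{2})$, valid for $|\eta|\leq b_{p}/2$ with a universal $c>0$: since $b_{p}\to\infty$ and $|\eta|\leq r'$ is fixed, only finitely many factors are perturbed by the imaginary part, and they contribute only a bounded constant, so the real-line lower bound extends to the whole strip.

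The derivative estimate is then immediate from the Cauchy integral formula on the distinguished polydisc $|\zeta_{j}-x_{j}|=r'$, $j=1,\ldots,d$:
\[
|D^{\alpha}(1/P)(x)|\leq \frac{\alpha!}{r'^{|\alpha|}}\max_{|\zeta-x|_{\infty}=r'}\frac{1}{|P(\zeta)|}.
\]
On that polydisc each $|\mathrm{Im}\,\zeta_{j}|\leq r'$ and $|\mathrm{Re}\,\zeta_{j}|\geq |x_{j}|-r'$, so the strip lower bound yields the targeted $e^{-M(k|x|)}$ (resp.\ $e^{-N_{k_{p}}(|x|)}$) after the coordinate-wise shift of $r'$ is absorbed using $(M.2)$ in the Beurling case, or the refinement $(k_{p})\mapsto(k'_{p})$ together with its submultiplicativity in the Roumieu case.

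The main obstacle I anticipate is the Roumieu case, where a single entire $P$ must handle the whole sequence $(k_{p})$. Since $k'_{p}\leq k_{p}$ forces $N_{k_{p}}\leq N_{k'_{p}}$, it suffices to establish the bound with $N_{k'_{p}}$ in place of $N_{k_{p}}$; the submultiplicativity $\prod_{j=1}^{p+q}k'_{j}\leq 2^{p+q}\prod_{j=1}^{p}k'_{j}\prod_{j=1}^{q}k'_{j}$ from Lemma \ref{nwseq} is exactly what is needed to translate the Cauchy-contour shift $|\zeta|\leq |x|+r'\sqrt{d}$ into a clean $e^{-N_{k'_{p}}(|x|)}$ bound. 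In the Beurling case the analogous shift is absorbed using only $(M.2)$, which is essentially routine by comparison.
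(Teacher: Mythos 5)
Your construction breaks down at the very first step, and the failure is exactly where the quasianalytic hypothesis bites. You set $P_{0}(w)=\prod_{p}(1+w^{2}/b_{p}^{2})$ with $b_{p}$ a constant multiple of $m_{p}$ and invoke $(M.5)$ ``applied to $\tilde{M}_{p}=M_{p}^{q}$'' to get $\sum_{p} 1/b_{p}^{2}<\infty$. But $(M.5)$ only gives $\sum_{p} m_{p}^{-q}<\infty$ for some $q$ which in the interesting cases must be taken large (the paper normalizes $q\geq 2$, $q\in\ZZ_{+}$); it does not give $\sum_{p} m_{p}^{-2}<\infty$. For instance $M_{p}=p!^{1/3}$ satisfies all the standing assumptions (cf.\ the Example after Proposition \ref{parametrix}), yet $m_{p}\sim c\,p^{1/3}$ and $\sum_{p} m_{p}^{-2}=\infty$, so your infinite product diverges. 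If instead you place the zeros at multiples of $\tilde{m}_{p}=m_{p}^{q}$ — which does make the product converge — the resulting entire function is an ultrapolynomial of class $(\tilde{M}_{p})=(M_{p}^{q})$, not of class $(M_{p})$, and its reciprocal decays on the real axis only like $e^{-\tilde{M}(c|\xi|)}=e^{-qM(c|\xi|^{1/q})}$, far too slowly for the required bound $e^{-M(k|x|)}$. This is precisely the obstruction the paper's proof is built to overcome: it first forms $\tilde{P}_{l}(w)=\prod_{j\geq j_{0}}\bigl(1+l^{2q}w^{2}/\tilde{m}_{j}^{2}\bigr)$, a convergent ultrapolynomial of class $(\tilde{M}_{p})$ in an auxiliary variable, and then substitutes $P_{l}(z)=\tilde{P}_{l}(z_{1}^{q},\dots,z_{d}^{q})$; the identity $\tilde{M}(\rho^{q})=qM(\rho)$ converts class $(\tilde{M}_{p})$ into class $(M_{p})$ and turns the lower bound into $e^{qM(2k|z|)}\geq e^{M(2k|z|)}$. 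Your proposal has no analogue of this substitution, so the gap is not a technicality but the central missing idea of the lemma.

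A secondary consequence: once the $q$-th powers are in place, your factor-wise strip estimate $\left|1+(\xi+i\eta)^{2}/b_{p}^{2}\right|\geq c\,(1+\xi^{2}/b_{p}^{2})$ no longer applies, since the factors involve $z_{j}^{2q}$ rather than $z_{j}^{2}$, and $\mathrm{Re}\,(\xi+i\eta)^{2q}$ is not obviously positive on a strip. The paper devotes a binomial-expansion argument to showing $\mathrm{Re}(z_{1}^{2q}+\cdots+z_{d}^{2q})\geq (2d)^{-q}|z|^{2q}$ on $W=\RR^{d}+i[-2r',2r']^{d}$ when $|x|\geq 2^{q+4}d^{q}r'$, and checks separately that no zeros occur on $W$ for smaller $|x|$; some such argument is unavoidable. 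The Cauchy-formula endgame of your proposal coincides with the paper's and is fine once correct upper and lower bounds for $P$ on the strip are actually in hand.
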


\begin{proof} Let $r'\geq 1$ and $k>0$ (resp. $(k_p)\in\mathfrak{R}$). In the $(M_p)$ case set $l=2Hk\sqrt{2d}$. In the $\{M_p\}$ case define $l'_p=k_p/(4H\sqrt{2d})$, $p\in\ZZ_+$. By Lemma \ref{nwseq}, we can select $(l_p)\in\mathfrak{R}$ such that $l_p\leq l'_p$, $p\in\ZZ_+$ and $\prod_{j=1}^{p+q}l_j\leq 2^{p+q}\prod_{j=1}^{p}l_j\cdot\prod_{j=1}^{q}l_j$, for all $p,q\in\ZZ_+$. Let $j_0\in\ZZ_+$, $j_0\geq 2$, be such that $r'l/m_j\leq 2^{-q-5}d^{-q}$ (resp. $r'/(l_jm_j)\leq 2^{-q-5}d^{-q}$), for all $j\geq j_0$, $j\in\ZZ_+$; such $j_0$ exists since $m_j\rightarrow\infty$ and $(l_p)\in\mathfrak{R}$. Consider the function
\beqs
\tilde{P}_l(z)=\prod_{j=j_0}^{\infty}\left(1+\frac{l^{2q}z^2}{\tilde{m}_j^2}\right)\,\, \left(\mbox{resp.}\,\, \tilde{P}_{l_p}(z)=\prod_{j=j_0}^{\infty}\left(1+\frac{z^2}{l_j^{2q}\tilde{m}_j^2}\right)\right), z\in \CC^d,
\eeqs
where $z^2=z_1^2+...+z^2_d$. Clearly $\tilde{P}_l(z)$ (resp. $\tilde{P}_{l_p}(z)$), is an entire function. One easily verifies that the function of one complex variable $\zeta\mapsto \tilde{P}_l(\zeta,0,...,0)$ (resp. $\zeta\mapsto \tilde{P}_{l_p}(\zeta,0,...,0)$) satisfies the condition of \cite[Proposition 4.6]{Komatsu1}. Denote briefly this function by $\tilde{\tilde{P}}_l$ (resp. by $\tilde{\tilde{P}}_{l_p}$). Hence $\tilde{\tilde{P}}_l$ (resp. $\tilde{\tilde{P}}_{l_p}$), satisfies the equivalent conditions of \cite[Proposition 4.5]{Komatsu1}, i.e., there exist $h,C>0$ (resp. for every $h>0$ there exists $C>0$), such that $|\tilde{\tilde{P}}_l(\zeta)|\leq C e^{\tilde{M}(h|\zeta|)}$ for all $\zeta\in\CC$ (resp. $|\tilde{\tilde{P}}_{l_p}(\zeta)|\leq C e^{\tilde{M}(h|\zeta|)}$ for all $\zeta\in\CC$). In the Beurling case, observe that
\beqs
|\tilde{P}_l(z)|\leq \prod_{j=j_0}^{\infty}\left(1+\frac{l^{2q}|z|^2}{\tilde{m}_j^2}\right)=\tilde{\tilde{P}}_l(|z|)\leq C e^{\tilde{M}(h|z|)},\, z\in\CC^d,
\eeqs
for some $h,C>0$. Similarly, in the Roumieu case, one obtains the same inequality with $\tilde{P}_{l_p}$ for each $h$ and the corresponding $C=C(h)$. Thus $\tilde{P}_l$ (resp. $\tilde{P}_{l_p}$) is an ultrapolynomial of $(\tilde{M}_p)$ class (resp. of $\{\tilde{M}_p\}$ class). Define $P_l(z)=\tilde{P}_l(z^q_1,...,z^q_d)$ (resp. $P_{l_p}(z)=\tilde{P}_{l_p}(z^q_1,...,z^q_d)$). Then, if we use that $\tilde{M}(\rho^q)=qM(\rho)$ and \cite[Proposition 3.6]{Komatsu1} for $M_p$, we have that there are $h,C>0$ (resp. for every $h>0$ there exists $C>0$), such that $|P_l(z)|\leq Ce^{M(h|z|)}$ (resp. $|P_{l_p}(z)|\leq Ce^{M(h|z|)}$), for all $z\in\CC^d$. Denote $W=\RR^d_x+i\{y\in\RR^d|\, |y_j|\leq 2r',j=1,...,d\}$. For $z=x+iy\in W$, with $|x|\leq 2^{q+4}d^q r'$ and $j\geq j_0$ we have
\beqs
\frac{l^{2q}\left|z_1^{2q}+...+z_d^{2q}\right|}{\tilde{m}_j^2}\leq \left(\frac{l^{2}(|x|^2+|y|^2)}{m_j^{2}}\right)^q\leq \frac{1}{2^q},
\eeqs
i.e., $P_l$ does not have zeros in $W$ when $|x|\leq 2^{q+4}d^q r'$. Similarly, $P_{l_p}$ doesn't have zeroes when $z\in W$ and $|x|\leq 2^{q+4}d^q r'$. When $z=x+iy\in W$ and $|x|\geq 2^{q+4}d^q r'$, we have $|y|\leq 2r'\sqrt{d}\leq 2^{-q-3} d^{-q+1}|x|$. Thus
\beqs
\mathrm{Re}(z^{2q}_1+...+z^{2q}_d)&=&\mathrm{Re}\sum_{j=1}^d \left(|z_j|^2+2ix_jy_j-2y_j^2\right)^q\\
&=&\sum_{j=1}^d|z_j|^{2q}+\mathrm{Re}\sum_{j=1}^d \sum_{t=1}^q{q\choose t} |z_j|^{2(q-t)}\left(2ix_jy_j-2y_j^2\right)^t.
\eeqs
Now, observe that\\
\\
$\ds \left|\mathrm{Re}\sum_{j=1}^d \sum_{t=1}^q{q\choose t} |z_j|^{2(q-t)}\left(2ix_jy_j-2y_j^2\right)^t\right|$
\beqs
&\leq&\sum_{j=1}^d \sum_{t=1}^q{q\choose t} |z_j|^{2(q-t)}\left(2|x_j||y_j|+2y_j^2\right)^t\leq \sum_{j=1}^d \sum_{t=1}^q{q\choose t} |z_j|^{2(q-t)}\cdot\frac{|x|^{2t}}{2^{t(q+1)}d^{t(q-1)}}\\
&\leq& \frac{|z|^{2q}}{2^{q+1}d^{q-1}} \sum_{t=1}^q{q\choose t}\leq \frac{|z|^{2q}}{2d^{q-1}}.
\eeqs
On the other hand, we have
$
\sum_{j=1}^d|z_j|^{2q}\geq \frac{|z|^{2q}}{d^{q-1}}.
$
Hence, for $z\in W$ and $|x|\geq 2^{q+4}d^q r'$, we obtain
\beqs
\mathrm{Re}(z^{2q}_1+...+z^{2q}_d)\geq |z|^{2q}/(2d^{q-1})\geq (2d)^{-q}|z|^{2q}>0.
\eeqs
Thus $P_l$, resp. $P_{l_p}$, does vanish on $W$. Moreover, by the above estimates for $z=x+iy\in W$ and $|x|\geq 2^{q+4}d^q r'$ in the Roumieu case, we have
\beqs
|P_{l_p}(z)|&\geq& \sup_{\substack{p\in\ZZ_+\\ p\geq j_0}}\prod_{j=j_0}^p\left|1+\frac{z^{2q}_1+...+z^{2q}_d}{l_j^{2q} m_j^{2q}}\right| \geq \sup_{\substack{p\in\ZZ_+\\ p\geq j_0}}\prod_{j=j_0}^p\frac{|z|^{2q}}{(2dl_j^2 m_j^2)^q}\\
&=& \left(\sup_{p\in\NN} \frac{|z|^{p+1} M_{j_0-1}\prod_{j=1}^{j_0-1} l_j}{(2d)^{(p+1)/2}M_{p+j_0}\prod_{j=1}^{p+j_0} l_j}\right)^{2q}\\
&\geq& c_0^{-2q}(2H)^{-2q(j_0-1)}\left(\sup_{p\in\NN} \frac{|z|^{p+1}}{(2H\sqrt{2d})^{p+1} M_{p+1}\prod_{j=1}^{p+1} l_j}\right)^{2q}\\
&\geq& c_0^{-2q}(2H)^{-2q(j_0-1)}\left(\sup_{p\in\NN} \frac{|z|^{p+1}}{M_{p+1}\prod_{j=1}^{p+1} (k_j/2)}\right)^{2q}\\
&=& C'e^{2qN_{k_p/2}(|z|)}\geq  C'e^{N_{k_p/2}(|z|)},
\eeqs
and similarly $|P_l(x)|\geq C'e^{M(2k|z|)}$ in the Beurling case. As $P_l(z)$ and $P_{l_p}(z)$ don't have zeroes on $W$, the same inequalities hold on $W$, possibly with another $C'$. Hence, for $x\in\RR^d$, by Cauchy integral formula applied on the distinguished boundary of the polydisc $T(x)=\{w\in\CC^d|\, |w_j-x_j|\leq r',\, j=1,...,d\}\subseteq \mathrm{int}\,W$, we have
\beqs
\left|D^{\alpha}\left(\frac{1}{P_{l_p}(x)}\right)\right|\leq \frac{\alpha!}{r'^{|\alpha|}} \sup_{w\in T(x)} \frac{1}{|P_{l_p}(w)|}.
\eeqs
If $|x|\geq 2r'\sqrt{d}$, then for $w\in T(x)$, one easily verifies that $|w|\geq |x|/2$, hence
\beq\label{stripb}
|P_{l_p}(w)|\geq C'e^{N_{k_p/2}(|w|)}\geq C'e^{N_{k_p}(|x|)}.
\eeq
This yields (\ref{rgrowth}) for $|x|\geq 2r'\sqrt{d}$ and by similar calculations also (\ref{bgrowth}) for $|x|\geq 2r'\sqrt{d}$. If $|x|\leq 2r'\sqrt{d}$, one also has (\ref{stripb}) with possibly another $C'$ (observe that $1\leq e^{M(k|x|)}\leq C''$ and $1\leq e^{N_{k_p}(|x|)}\leq C''$ when $|x|\leq 2r'\sqrt{d}$). Hence (\ref{bgrowth}) and (\ref{rgrowth}) are valid for all $x\in\RR^d$.
\end{proof}

\begin{proposition}\label{parametrix}
For every $t>0$ (resp. $(t_p)\in\mathfrak{R}$) there exist $G\in\SSS^{M_p,t}_{A_p,t}$ (resp. $G\in\SSS^{M_p,(t_p)}_{A_p,(t_p)}$) and an ultradifferential operator $P(D)$ of class $(M_p)$ (resp. $\{M_p\}$), such that $P(D)G=\delta$.
\end{proposition}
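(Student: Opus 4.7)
The plan is to build $G$ by passing through the Fourier transform, using Lemma \ref{ultpoly} to supply the denominator. Concretely, apply Lemma \ref{ultpoly} to obtain an ultrapolynomial $P$ of class $(M_p)$ (resp.\ $\{M_p\}$) with no zeros on $\RR^d$ and with derivative bounds
\beqs
|D^\alpha(1/P(\xi))|\leq C\,\alpha!\,r'^{-|\alpha|}e^{-M(k|\xi|)}\quad (\text{resp. }e^{-N_{k_p}(|\xi|)}),
\eeqs
where $r'$ and $k$ (resp.\ $(k_p)$) are large parameters to be fixed at the end. Define $G$ by $\hat G(\xi)=1/P(\xi)$. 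With the convention that the symbol of $P(D)$ is $P(\xi)$, one has $\widehat{P(D)G}(\xi)=P(\xi)\hat G(\xi)=1=\hat\delta$, so $P(D)G=\delta$.

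It remains to verify that $G\in\SSS^{M_p,t}_{A_p,t}$ (resp.\ $G\in\SSS^{M_p,(t_p)}_{A_p,(t_p)}$). I would express
\beqs
x^\beta D^\alpha G(x)=(-1)^{|\beta|}(2\pi)^{-d}\int e^{ix\xi}\,D^\beta_\xi\bigl(\xi^\alpha/P(\xi)\bigr)\,d\xi,
\eeqs
expand via Leibniz, and substitute the estimate from Lemma \ref{ultpoly}. The resulting integrand is dominated by a finite sum of terms
\beqs
\binom{\beta}{\gamma}\frac{\alpha!}{(\alpha-\gamma)!}\,\frac{(\beta-\gamma)!}{r'^{|\beta-\gamma|}}\,|\xi|^{|\alpha|-|\gamma|}\,e^{-M(k|\xi|)}
\eeqs
(and likewise in the Roumieu case). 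The spatial integral $\int|\xi|^n e^{-M(k|\xi|)}d\xi$ is then bounded, using $\rho^p\leq M_p e^{M(\rho)}$, by $C_d M_n /k_1^n$ for a suitable $k_1$ comparable to $k$; the loss of $d+1$ indices is absorbed by $(M.2)$ for $M_p$. Combining these ingredients with $(M.6)$ to convert $\beta!$ into $A_\beta$ (paying a factor $L_0^{|\beta|}$) and with $(M.2)$ for $A_p$ to split $A_\beta$ with respect to $\gamma,\beta-\gamma$, I obtain an estimate of the form
\beqs
|x^\beta D^\alpha G(x)|\leq C\,\tilde h^{|\alpha|+|\beta|}\,\frac{A_\beta M_\alpha}{r'^{|\beta|}k_1^{|\alpha|}},
\eeqs
with $\tilde h$ depending only on $d,H,c_0,L_0$. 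Taking the supremum over $\beta$ and invoking the definition of the associated function $A(\cdot)$ transforms the polynomial factor $|x|^{|\beta|}$ into the exponential weight $e^{A(\cdot)}$, giving $\|e^{A(t|\cdot|)}D^\alpha G\|_\infty\leq C'M_\alpha/t^{|\alpha|}$ provided $r'/(\tilde h L_0)\geq t$ and $k_1/\tilde h\geq t$. Thus the whole scheme works once $k$ and $r'$ are chosen sufficiently large at the outset.

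In the Roumieu case the same calculation goes through with $e^{-N_{k_p}(|\xi|)}$ in place of $e^{-M(k|\xi|)}$ and the target weight $e^{B_{t_p}(|\cdot|)}$ in place of $e^{A(t|\cdot|)}$; however, one has to convert products $\prod k_j$ into products $\prod t_j$ and vice versa. This is precisely where Lemma \ref{nwseq} is used: before choosing $(k_p)$ one replaces $(t_p)$ by an equivalent sequence enjoying the multiplicative control $\prod_{j=1}^{p+q}\leq 2^{p+q}\prod_{j=1}^{p}\prod_{j=1}^{q}$, which lets one split $\prod_{j=1}^{|\alpha|+|\beta|}$ correctly.

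\textbf{Main obstacle.} The qualitative shape of the proof is clear and the analytic inputs (Fourier inversion, integration by parts, associated function estimates) are standard; the real difficulty is the careful tracking of the many constants. In particular the Roumieu bookkeeping, where one must pre-process $(t_p)$ via Lemma \ref{nwseq} so that the passage from the derivative bound on $1/P$ to the target norm $\sigma_{(t_p)}$ preserves the multiplicative structure of the products $\prod_{j=1}^{|\alpha|}t_j$ and $\prod_{j=1}^{|\beta|}t_j$ that live inside $N_{t_p}$ and $B_{t_p}$, is the subtle point. Absorbing the $d+1$ index-loss incurred when integrating $|\xi|^n e^{-M(k|\xi|)}$ via $(M.2)$ is routine but must be done explicitly.
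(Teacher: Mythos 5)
Your proposal follows essentially the same route as the paper: the same construction $G=(2\pi)^{-d}\int e^{ix\xi}/P(\xi)\,d\xi$ with $P$ supplied by Lemma \ref{ultpoly}, the same integration-by-parts/Leibniz estimate of $x^\beta D^\alpha G$, the same use of $(M.2)$, $(M.6)$ and the associated function to produce the weight $e^{A(t|\cdot|)}$ (resp. $e^{B_{t_p}(|\cdot|)}$), the same pre-processing of $(t_p)$ via Lemma \ref{nwseq} in the Roumieu case, and the same Fourier-inversion verification of $P(D)G=\delta$. The only detail you gloss over is the passage from bounds on $x^\beta D^\alpha G$ to bounds on $|x|^{|\beta|}|D^\alpha G|$ (handled in the paper by a multinomial identity), which is routine.
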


\begin{proof} Without loss of generality, we may assume $t\geq 1$ in the Beurling case. Let $k=(2L_0dt)^2$ and $r'=2L_0dt$ in the Beurling case. In the Roumieu case, let $r'=2$ and take $(k_p)\in\mathfrak{R}$ such that $k_p\leq t_p/d$, $p\in\ZZ_+$, and $\prod_{j=1}^{p+q} k_j\leq 2^{p+q} \prod_{j=1}^p k_j \cdot \prod_{j=1}^q k_j$ for all $p,q\in\ZZ_+$ (such a sequence exists by Lemma \ref{nwseq}). In view of Lemma \ref{ultpoly}, we can find an ultrapolynomial $P(z)$ of class $(M_p)$, resp. $\{M_p\}$, such that
\beqs
\left|D^{\alpha}\left({1}/{P(x)}\right)\right|&\leq& C\frac{\alpha!}{r'^{|\alpha|}} e^{-M(2Hk|x|)},\quad x\in\RR^d,\, \alpha\in\NN^d\\
\left|D^{\alpha}\left({1}/{P(x)}\right)\right|&\leq& C\frac{\alpha!}{r'^{|\alpha|}} e^{-N_{k_p/(8H)}(|x|)},\quad x\in\RR^d,\, \alpha\in\NN^d.
\eeqs
Define
\beqs
G(x)=\frac{1}{(2\pi)^d}\int_{\RR^d}\frac{e^{i x\xi}}{P(\xi)}d\xi,\quad x\in \RR^d.
\eeqs
Obviously $G\in C^{\infty}(\RR^d)$. We have
\beqs
\left|x^{\beta}D^{\alpha}G(x)\right|=\frac{1}{(2\pi)^d}\left|\int_{\RR^d}e^{i x\xi} (-D_{\xi})^{\beta}\left(\frac{\xi^{\alpha}}{P(\xi)}\right)d\xi\right|\leq \frac{1}{(2\pi)^d}\int_{\RR^d} \left|D^{\beta}_{\xi}\left(\frac{\xi^{\alpha}}{P(\xi)}\right)\right|d\xi.
\eeqs
In the Roumieu case we have
\beqs
\left|D^{\beta}_{\xi}\left(\frac{\xi^{\alpha}}{P(\xi)}\right)\right|&\leq& C\sum_{\substack{\gamma\leq\beta\\ \gamma\leq\alpha}} {\beta\choose\gamma}\frac{\alpha!(\beta-\gamma)!}{(\alpha-\gamma)! r'^{|\beta|-|\gamma|}}\cdot|\xi|^{|\alpha|-|\gamma|} e^{-N_{k_p/(8H)}(|\xi|)}\\
&\leq& C\frac{2^{|\alpha|}\beta!}{r'^{|\beta|}}\sum_{\substack{\gamma\leq\beta\\ \gamma\leq\alpha}} {\beta\choose\gamma}r'^{|\gamma|}|\xi|^{|\alpha|-|\gamma|}e^{-N_{k_p/(8H)}(|\xi|)}
\eeqs
and similarly, in the Beurling case, we obtain
\beqs
\left|D^{\beta}_{\xi}\left(\frac{\xi^{\alpha}}{P(\xi)}\right)\right|\leq C\frac{2^{|\alpha|}\beta!}{r'^{|\beta|}}\sum_{\substack{\gamma\leq\beta\\ \gamma\leq\alpha}} {\beta\choose\gamma}r'^{|\gamma|}|\xi|^{|\alpha|-|\gamma|}e^{-M(2Hk|\xi|)}.
\eeqs
Observe that, for $|\xi|\geq 1$, in the Roumieu case we have (recall $r'=2$)
$$
r'^{|\gamma|}|\xi|^{|\alpha|-|\gamma|}e^{-N_{k_p/(8H)}(|\xi|)}\leq 2^{|\alpha|}|\xi|^{-d-1}|\xi|^{|\alpha|+d+1}e^{-N_{k_p/(8H)}(|\xi|)}
$$
$$\leq  2^{|\alpha|}|\xi|^{-d-1}M_{|\alpha|+d+1}\prod_{j=1}^{|\alpha|+d+1}(k_j/(8H))
\leq c_0M_{d+1}\prod_{j=1}^{d+1}(k_j/2)|\xi|^{-d-1}M_{\alpha}\prod_{j=1}^{|\alpha|}(k_j/2)
$$$$= C'|\xi|^{-d-1}M_{\alpha}\prod_{j=1}^{|\alpha|}(k_j/2)
$$
and for $|\xi|\leq 1$, we obtain
\beqs
r'^{|\gamma|}|\xi|^{|\alpha|-|\gamma|}e^{-N_{k_p/(8H)}(|\xi|)}\leq 2^{|\alpha|}\leq c'M_{\alpha}\prod_{j=1}^{|\alpha|}(k_j/2).
\eeqs
Thus, by using $\beta!\leq c_1A_{\beta} K_{\beta}$, where $K_{\beta}=\prod_{j=1}^{|\beta|}k_j$ for $\beta\in\NN^d$, we obtain
\beqs
\int_{\RR^d}\left|D^{\beta}_{\xi}\left(\frac{\xi^{\alpha}}{P(\xi)}\right)\right|dx\leq C'_1M_{\alpha}A_{\beta}K_{\alpha}K_{\beta},
\eeqs
i.e., $\left|x^{\beta}D^{\alpha}G(x)\right|\leq C''M_{\alpha}A_{\beta}K_{\alpha}K_{\beta}$, for all $x\in\RR^d$, $\alpha,\beta\in\NN^d$. In the Beurling case, for $|\xi|\geq 1$, (recall $k=(2L_0dt)^2$ and $r'=2L_0dt$)
$$
r'^{|\gamma|}|\xi|^{|\alpha|-|\gamma|}e^{-M(2Hk|\xi|)}\leq r'^{|\alpha|}|\xi|^{-d-1}|\xi|^{|\alpha|+d+1}e^{-M(2Hk|\xi|)}
$$$$\leq r'^{|\alpha|}|\xi|^{-d-1} (2Hk)^{-|\alpha|-d-1}M_{|\alpha|+d+1}
\leq c_0r'^{|\alpha|}(2k)^{-|\alpha|-d-1}M_{d+1}|\xi|^{-d-1}M_{\alpha}
$$$$\leq C'(2L_0dt)^{-|\alpha|}|\xi|^{-d-1}M_{\alpha}.
$$
For $|\xi|\leq 1$ we have
\beqs
r'^{|\gamma|}|\xi|^{|\alpha|-|\gamma|}e^{-M(2Hk|\xi|)}\leq r'^{|\alpha|}\leq c'(2L_0dt)^{-|\alpha|}M_{\alpha}.
\eeqs
Now, by using $\beta!\leq c_0L_0^{|\beta|}A_{\beta}$ we obtain $\left|x^{\beta}D^{\alpha}G(x)\right|\leq C'''(dt)^{-|\alpha|-|\beta|}M_{\alpha}A_{\beta}$, for all $x\in\RR^d$, $\alpha,\beta\in\NN^d$ (recall $r'=2L_0dt$). Observe
\beqs
|x|^{2|\beta|}\left|D^{\alpha}G(x)\right|^2=\sum_{|\gamma|=|\beta|} \frac{|\beta|!}{\gamma!}x^{2\gamma}\left|D^{\alpha}G(x)\right|^2.
\eeqs
Thus $|x|^{|\beta|}\left|D^{\alpha}G(x)\right|\leq C_2 t^{-|\alpha|-|\beta|}M_{\alpha}A_{\beta}$ in the Beurling case and $|x|^{|\beta|}\left|D^{\alpha}G(x)\right|\leq C_2M_{\alpha}A_{\beta}T_{\alpha}T_{\beta}$ in the Roumieu case (we set $T_{\gamma}=\prod_{j=1}^{|\gamma|}t_j$ for $\gamma\in\NN^d$), for all $x\in\RR^d$, $\alpha,\beta\in\NN^d$ (recall $k_p\leq t_p/d$, $p\in\ZZ_+$). Hence
\beqs
\frac{t^{|\alpha|}\left|D^{\alpha}G(x)\right|e^{A(t|x|)}}{M_{\alpha}}\leq C_2\,\, \left(\mbox{resp.}\,\, \frac{\left|D^{\alpha}G(x)\right|e^{B_{t_p}(|x|)}}{M_{\alpha}T_{\alpha}}\leq C_2\right),\,\, \forall x\in\RR^d,\, \forall\alpha\in\NN^d.
\eeqs
It remains to prove that $P(D)G=\delta$. For $\varphi\in\SSS^*_{\dagger}(\RR^d)$ we have
\beqs
\langle P(D)G, \varphi\rangle&=&\int_{\RR^d}G(x)P(-D)\varphi(x)dx=\frac{1}{(2\pi)^d}\int_{\RR^{2d}}\frac{e^{i x\xi}}{P(\xi)}\cdot P(-D)\varphi(x)dxd\xi\\
&=&\int_{\RR^d}\frac{\mathcal{F}^{-1}(P(-D)\varphi)(\xi)}{P(\xi)}d\xi= \int_{\RR^d}\mathcal{F}^{-1}\varphi(\xi)d\xi=\varphi(0),
\eeqs
i.e., $P(D)G=\delta$, which completes the proof.
\end{proof}

\begin{example} Using Proposition \ref{parametrix},  one can construct an entire function $f$ of exponential decay such that $P(D)f$ is continuous but nowhere differentiable. More precisely, let $M_p\subset p!$ (e.g., $M_p=p!^{\sigma}$ with $0<\sigma<1$) and $A_p=p!$.  By considering the Weierstrass continuous nowhere differentiable function on $\RR$, Proposition \ref{parametrix} implies that, for each fixed $\tau,r>0$, one can find an entire function $f$ satisfying $|f^{(k)}(x)|\leq C M_k e^{-\tau|x|}/r^k$, $\forall x\in\RR$, $\forall k\in\NN$ and an ultradifferential operator $P(D)$ of class $(M_p)$ such that $P(D)f=$Weierstrass function.
\end{example}

\begin{lemma}\label{appincl}
Let $r>0$ (resp. $(r_p)\in\mathfrak{R}$).
\begin{itemize}
\item[$i)$] For each $\chi,\varphi\in \mathcal{S}^*_{\dagger}(\mathbb{R}^d)$ and $\psi\in\SSS^{M_p,r}_{A_p,r}$ (resp. $\psi\in\SSS^{M_p,(r_p)}_{A_p,(r_p)}$), $\chi*(\varphi\psi)\in \SSS^*_{\dagger}(\RR^d)$.
\item[$ii)$] Let $\varphi,\chi\in \mathcal{S}^*_{\dagger}(\mathbb{R}^d)$ with $\varphi(0)=1$ and $\int_{\RR^d}\chi(x)dx=1$. For each $n\in\ZZ_+$ define $\chi_n(x)=n^d\chi(nx)$ and $\varphi_n(x)=\varphi(x/n)$. Then there exists $k\geq 2r$ (resp. $(k_p)\in\mathfrak{R}$ with $(k_p)\leq (r_p/2)$), such that the operators $\tilde{Q}_n:\psi\mapsto\chi_n*(\varphi_n\psi)$, are continuous as mappings from $\SSS^{M_p,k}_{A_p,k}$ to $\SSS^{M_p,r}_{A_p,r}$ (resp. from $\SSS^{M_p,(k_p)}_{A_p,(k_p)}$ to $\SSS^{M_p,(r_p)}_{A_p,(r_p)}$), for all $n\in\ZZ_+$. Moreover $\tilde{Q}_n\rightarrow \mathrm{Id}$ in $\mathcal{L}_b\left(\SSS^{M_p,k}_{A_p,k},\SSS^{M_p,r}_{A_p,r}\right)$ (resp. in $\mathcal{L}_b\left(\SSS^{M_p,(k_p)}_{A_p,(k_p)},\SSS^{M_p,(r_p)}_{A_p,(r_p)}\right)$).
\end{itemize}
\end{lemma}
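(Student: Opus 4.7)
Part (i) is straightforward bookkeeping. First, using Leibniz together with $(M.2)$ for $M_p$ and $A_p$ (which supply the submultiplicative estimates $M_\alpha\ge c_0^{-1}H^{-|\alpha|}M_\beta M_{\alpha-\beta}$ and similarly for $A_p$) and the fact that $\varphi$ has faster exponential decay than any rate of $\psi$, one sees that $\varphi\psi\in\SSS^{M_p,h}_{A_p,h}$ at an appropriate level $h$ (in the Roumieu case, at a level indexed by a sequence produced via Lemma \ref{nwseq}). To then prove $\chi*(\varphi\psi)\in\mathcal{S}^*_\dagger$, push the derivatives onto $\chi$ and estimate $|D^\alpha\chi(x-y)|\le Ch^{-|\alpha|}M_\alpha e^{-A(h|x-y|)}$, invoking the standard subadditivity of the associated function (a consequence of $(M.2)$ for $A_p$: there exist $L,C$ with $A(t+s)\le A(Lt)+A(Ls)+C$) to split $e^{A(h'|x|)}$ into a product of exponentials in $|x-y|$ and $|y|$. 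The remaining integral against the decay of $g=\varphi\psi$ converges, yielding $\sigma_{h'}(\chi*g)<\infty$ for every $h'$ (Beurling) or some $h'$ (Roumieu).

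For part (ii) the plan has three steps: equicontinuity of $\{\tilde Q_n\}$, pointwise convergence, and an Arzel\`a--Ascoli upgrade to operator-norm convergence. For equicontinuity, a careful tracking of constants in the argument of (i) suffices; the essential observation is that $D^\beta\varphi_n(x)=n^{-|\beta|}(D^\beta\varphi)(x/n)$ carries a harmless factor $n^{-|\beta|}\le 1$ and a translated decay $e^{-A(h|x|/n)}\le 1$, so multiplication by $\varphi_n$ has norm independent of $n$ between appropriate spaces; meanwhile
\[
\int|\chi_n(y)|e^{A(Lr|y|)}\,dy=\int|\chi(u)|e^{A(Lr|u|/n)}\,du\le\int|\chi(u)|e^{A(Lr|u|)}\,du
\]
is uniform in $n\ge 1$, so convolution by $\chi_n$ is likewise uniformly bounded. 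Composing these gives the uniform bound $\tilde Q_n:\SSS^{M_p,k}_{A_p,k}\to\SSS^{M_p,r}_{A_p,r}$ for suitable $k\ge 2r$ (respectively $(k_p)\le(r_p/2)$ constructed with Lemma \ref{nwseq}).

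For pointwise convergence, decompose $\tilde Q_n\psi-\psi=\chi_n*((\varphi_n-1)\psi)+(\chi_n*\psi-\psi)$. The second summand is controlled by a weighted mollifier estimate (pushing all derivatives onto $\psi$ and using the subadditivity of $A$). The first summand is handled by splitting the $x$-integration into $|x|\le R$, where $D^\alpha(\varphi_n-1)(x)\to 0$ uniformly (because $x/n\to 0$ and $\varphi(0)=1$), and $|x|>R$, where the excess decay of $\psi$ relative to the target norm (we have $k>r$) produces a tail controllable uniformly in $n$ and arbitrarily small for $R$ large. Together these give $\sigma_r(\tilde Q_n\psi-\psi)\to 0$ for each fixed $\psi$.

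Finally, to upgrade to convergence in $\mathcal{L}_b$ --- which for Banach spaces is operator-norm convergence --- enlarge $k$ so that an intermediate level $k'$ exists with $k>k'H$ and the equicontinuity argument still works from level $k'$. Then the unit ball of $\SSS^{M_p,k}_{A_p,k}$ is relatively compact in $\SSS^{M_p,k'}_{A_p,k'}$ (by the compact inclusion recalled in Section \ref{preli}), and the equicontinuous sequence $\{\tilde Q_n-\mathrm{Id}\}$, converging pointwise to $0$ on that ball, converges uniformly there by a standard Arzel\`a--Ascoli argument. The main obstacle is the parameter bookkeeping: each passage between levels consumes constants from $(M.2)$ and from the subadditivity of $A$, and in the Roumieu case also requires a fresh application of Lemma \ref{nwseq} to preserve the sub-multiplicative property of the weight sequences; the success of the proof hinges on aligning these parameters consistently across all the estimates.
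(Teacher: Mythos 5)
Your proposal is correct in substance, and the core estimates coincide with the paper's: part $i)$ is the same computation (push derivatives onto $\chi$, split the weight via the near-subadditivity of the associated function, absorb the rapid decay of $\varphi$), and in part $ii)$ your two ingredients --- a $1/n$ mollifier estimate obtained by Taylor expansion and the uniform smallness of $(\varphi_n-1)$ against the surplus decay coming from $k>r$ --- are exactly the quantities the paper calls $S'_{n,\alpha}$ and $a_n$. Where you genuinely diverge is the passage to $\mathcal{L}_b$-convergence. The paper makes the two estimates \emph{uniform over the unit ball} of the domain space, obtaining directly $\sigma_{(r_p)}(\tilde Q_n\psi-\psi)\leq \sigma_{(k_p)}(\psi)\bigl(c'a_n+C/n\bigr)$, i.e.\ operator-norm convergence with an explicit rate and no appeal to compactness. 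You instead prove equicontinuity plus pointwise convergence and upgrade via the compactness of an intermediate inclusion and an Arzel\`a--Ascoli argument. That route is sound in principle, but note two costs: (a) in the Roumieu case it requires the compactness of inclusions $\SSS^{M_p,(k_p)}_{A_p,(k_p)}\rightarrow\SSS^{M_p,(k'_p)}_{A_p,(k'_p)}$ between the sequence-indexed Banach spaces, which the paper records only for the $h$-indexed scale; this can be proved (choose $(k'_p)$ with $k'_p/k_p\to\infty$ via Lemma \ref{nwseq}), but it is an extra lemma you would have to supply; (b) you lose the quantitative rate. Since your pointwise-convergence estimates are already effectively uniform in $\psi$ over a bounded set (both $a_n$ and the $1/n$ factor are independent of $\psi$ once the relevant norm of $\psi$ is factored out), the compactness detour is avoidable, and the paper's direct argument is the shorter path.
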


\begin{proof} We give the proof for the Roumieu case. The Beurling case is similar. To see that $\chi*(\varphi\psi)\in \SSS^{\{M_p\}}_{\{A_p\}}(\RR^d)$ let $(l_p)\in\mathfrak{R}$ be arbitrary but fixed. Set $k_p=l_p/2$, $p\in\ZZ_+$. Clearly $(k_p)\in\mathfrak{R}$. Observe that
\beqs
e^{B_{l_p}(|x|)}\leq 2e^{B_{l_p}(2|x-y|)}e^{B_{l_p}(2|y|)}= 2e^{B_{k_p}(|x-y|)}e^{B_{k_p}(|y|)},
\eeqs
so
$$
\left|D^{\alpha}\left(\chi*(\varphi\psi)(x)\right)\right|\leq \int_{\RR^d} \left|D^{\alpha}\chi(y)\right||\varphi(x-y)||\psi(x-y)|dy\\
$$
$$\leq C_1M_{\alpha} \prod_{j=1}^{|\alpha|}k_j\int_{\RR^d} e^{-B_{k_p}(|y|)} e^{-B_{k_p}(|x-y|)}e^{-B_{r_p}(|x-y|)}dy\\
\leq C_2 M_{\alpha} \prod_{j=1}^{|\alpha|}l_j\cdot e^{-B_{l_p}(|x|)};
$$
which completes the proof of $i)$.

We will prove $ii)$ in the Roumieu case. The Beurling case is similar. Let $(r_p)\in\mathfrak{R}$ be fixed. By Lemma \ref{nwseq} one can find $(k'_p)\in\mathfrak{R}$ such that $k'_p\leq r_p/(4H)$, $\forall p\in\ZZ_+$ and $\prod_{j=1}^{p+q} k'_j\leq 2^{p+q} \prod_{j=1}^p k'_j\cdot \prod_{j=1}^q k'_j$ for all $p,q\in\ZZ_+$. Let $k_p=k'_p/(2H)$, $p\in\ZZ_+$. Obviously, $(k_p)\in\mathfrak{R}$. We will prove that $(k_p)$ satisfies the required conditions. For $\alpha\in\NN^d$ set $R_{\alpha}=\prod_{j=1}^{|\alpha|}r_j$, $K'_{\alpha}=\prod_{j=1}^{|\alpha|}k'_j$ and $K_{\alpha}=\prod_{j=1}^{|\alpha|}k_j$. For $\psi\in\SSS^{M_p,(k_p)}_{A_p,(k_p)}$ denote $\psi_n=\varphi_n\psi$ and proceed as follows\\
\\
$\ds \frac{\left|D^{\alpha}\left(\chi_n*(\varphi_n\psi)\right)(x)-D^{\alpha}\psi(x)\right|e^{B_{r_p}(|x|)}}{M_{\alpha}R_{\alpha}}$
\beqs
&\leq& 2\int_{\RR^d}|\chi_n(y)|e^{B_{r_p}(2|y|)}\cdot \frac{\left|D^{\alpha}\psi_n(x-y)-D^{\alpha}\psi_n(x)\right|e^{B_{r_p}(2|x-y|)}}{M_{\alpha}R_{\alpha}}dy\\
&{}&+ \frac{\left|D^{\alpha}\psi_n(x)-D^{\alpha}\psi(x)\right|e^{B_{r_p}(|x|)}}{M_{\alpha}R_{\alpha}}\\
&\leq& 2\int_{\RR^d}|\chi(y)|e^{B_{r_p}(2|y|)}\cdot \frac{\left|D^{\alpha}\psi_n(x-y/n)-D^{\alpha}\psi_n(x)\right|e^{B_{r_p}(2|x-y/n|)}}{M_{\alpha}R_{\alpha}}dy\\
&{}&+ \frac{\left|D^{\alpha}\psi_n(x)-D^{\alpha}\psi(x)\right|e^{B_{k'_p}(|x|)}}{M_{\alpha}K'_{\alpha}}.
\eeqs
Denote the first term as $S'_{n,\alpha}(x)$ and the second one as $S''_{n,\alpha}(x)$. To estimate $S''_{n,\alpha}(x)$, observe that, by construction, the sequences $M_p\prod_{j=1}^p k'_j$ and $A_p\prod_{j=1}^p k'_j$ satisfy $(M.2)$ with constant $2H$ instead of $H$. Thus \cite[Proosition 3.6]{Komatsu1} implies
\beqs
e^{2B_{k'_p}(|x|)}\leq c'e^{B_{k'_p}(2H|x|)}=c'e^{B_{k_p}(|x|)}.
\eeqs
Hence,\\
\\
$S''_{n,\alpha}(x)$
\beqs
&\leq&\frac{\left|1-\varphi(x/n)\right|\left|D^{\alpha}\psi(x)\right|e^{B_{k'_p}(|x|)}}{M_{\alpha} K'_{\alpha}} +\frac{1}{n}\sum_{\substack{\beta\leq \alpha\\ \beta\neq 0}} {\alpha\choose\beta} \frac{\left|D^{\beta}\varphi(x/n)\right|\left|D^{\alpha-\beta}\psi(x)\right|e^{B_{k'_p}(|x|)}}{M_{\alpha} K'_{\alpha}}\\
&\leq& c'\sigma_{(k_p)}(\psi)\left|1-\varphi(x/n)\right|e^{-B_{k'_p}(|x|)}+ \frac{\sigma_{(k_p)}(\varphi)\sigma_{(k_p)}(\psi)}{n(2H)^{|\alpha|}} \sum_{\substack{\beta\leq \alpha\\ \beta\neq 0}} {\alpha\choose\beta}\\
&\leq&
c'\sigma_{(k_p)}(\psi)\left|1-\varphi(x/n)\right|e^{-B_{k'_p}(|x|)}+
\frac{\sigma_{(k_p)}(\varphi)\sigma_{(k_p)}(\psi)}{n}
\eeqs
Set
$a_n=\left\|\left(1-\varphi(\cdot/n)\right)e^{-B_{k'_p}(|\cdot|)}\right\|_{L^{\infty}(\RR^d)}$.
We prove that $a_n\rightarrow 0$. Let $\varepsilon>0$. Pick $C>0$
such that $e^{-B_{k'_p}(|x|)}\leq
\varepsilon/(1+\|\varphi\|_{L^{\infty}(\RR^d)})$ when $|x|\geq C$.
Since $\varphi(0)=1$ and $\varphi$ is continuous there exists
$n_0\in\ZZ_+$ such that for all $n\geq n_0$, $|1-\varphi(x/n)|\leq
\varepsilon$ for all $|x|\leq C$. Thus
$\left|1-\varphi(x/n)\right|e^{-B_{k'_p}(|x|)}\leq \varepsilon$
for all $x\in\RR^d$ and $n\geq n_0$, which proves $a_n\rightarrow
0$. We obtain
\beq\label{est15} S''_{n,\alpha}(x)\leq
\sigma_{(k_p)}(\psi)\left(c'a_n+\sigma_{(k_p)}(\varphi)/n\right),
 \eeq
for all, $x\in\RR^d$, $\alpha\in\NN^d$,
$\psi\in\SSS^{M_p,(k_p)}_{A_p,(k_p)}$. To estimate
$S'_{n,\alpha}$, Taylor expand $D^{\alpha}\psi_n$ at $x$ to obtain
\beqs
\left|D^{\alpha}\psi_n(x-y/n)-D^{\alpha}\psi_n(x)\right|&\leq& \frac{|y|}{n}\sum_{|\beta|=1}\int_0^1\left|D^{\alpha+\beta}\psi_n(x-ty/n)\right|dt\\
&\leq& \frac{d\sigma_{(k'_p)}(\psi_n)M_{|\alpha|+1}K'_{|\alpha|+1}|y|}{n}\int_0^1 e^{-B_{k'_p}(|x-ty/n|)}dt.
\eeqs
Observe that
\beqs
e^{B_{r_p}(2|x-y/n|)}&\leq& 2e^{B_{r_p}(4|x-ty/n|)}e^{B_{r_p}(4|(1-t)y/n|)}\leq 2e^{B_{k'_p}(|x-ty/n|)}e^{B_{k'_p}(|y|)}\,\,\,\, \mbox{and}\\
M_{|\alpha|+1}K'_{|\alpha|+1}&\leq& c_0(2H)^{|\alpha|+1}M_{\alpha}k'_1K'_{\alpha}\leq c_0r_1M_{\alpha}R_{\alpha}.
\eeqs
Moreover by definition $S''_{n,\alpha}(x)$, $\ds\sup_{\alpha} \|S''_{n,\alpha}\|_{L^{\infty}(\RR^d)}=\sigma_{(k'_p)}(\psi_n-\psi)$. Hence, the above estimate for $S''_{n,\alpha}(x)$ implies
\beqs
\sigma_{(k'_p)}(\psi_n)\leq \sigma_{(k_p)}(\psi)\left(c'a_n+ \frac{\sigma_{(k_p)}(\varphi)}{n}+1\right)\leq c''\sigma_{(k_p)}(\psi).
\eeqs
Now, for $S'_{n,\alpha}(x)$ we have $S'_{n,\alpha}(x)\leq C'\sigma_{(k_p)}(\psi)/n$. This estimate together with (\ref{est15}) proves that $\tilde{Q}_n\in\mathcal{L}\left(\SSS^{M_p,(k_p)}_{A_p,(k_p)},\SSS^{M_p,(r_p)}_{A_p,(r_p)}\right)$, for all $n\in\ZZ_+$ and $\tilde{Q}_n\rightarrow \mathrm{Id}$ in $\mathcal{L}_b\left(\SSS^{M_p,(k_p)}_{A_p,(k_p)},\SSS^{M_p,(r_p)}_{A_p,(r_p)}\right)$, which completes the proof.
\end{proof}

\subsection{Structure theorems for $\SSS'^*_{\dagger}(\RR^d)$} \label{structure}
 In the next proposition for $t>0$ (resp. $(t_p)\in\mathfrak{R}$), we denote by $\overline{\SSS}^{M_p,t}_{A_p,t}$ (resp. by $\overline{\SSS}^{M_p,(t_p)}_{A_p,(t_p)}$) the closure of $\SSS^{(M_p)}_{(A_p)}(\RR^d)$ in $\SSS^{M_p,t}_{A_p,t}$ (resp. the closure of $\SSS^{\{M_p\}}_{\{A_p\}}(\RR^d)$ in $\SSS^{M_p,(t_p)}_{A_p,(t_p)}$).

\begin{proposition}\label{parametrix1}
Let $B$ be a bounded subset of $\SSS'^*_{\dagger}(\RR^d)$. There exists $k>0$ (resp. $(k_p)\in\mathfrak{R}$) such that each $f\in B$ can be extended to a continuous functional $\tilde{f}$ on $\overline{\SSS}^{M_p,k}_{A_p,k}$ (resp. on $\overline{\SSS}^{M_p,(k_p)}_{A_p,(k_p)}$). Moreover, there exists $l\geq k$ (resp. $(l_p)\in\mathfrak{R}$ with $(l_p)\leq (k_p)$) such that $\SSS^{M_p,l}_{A_p,l}\subseteq\overline{\SSS}^{M_p,k}_{A_p,k}$ (resp. $\SSS^{M_p,(l_p)}_{A_p,(l_p)}\subseteq \overline{\SSS}^{M_p,(k_p)}_{A_p,(k_p)}$) and $*:\SSS^{M_p,l}_{A_p,l}\times\SSS^{M_p,l}_{A_p,l}\rightarrow \overline{\SSS}^{M_p,k}_{A_p,k}$ (resp. $*:\SSS^{M_p,(l_p)}_{A_p,(l_p)}\times\SSS^{M_p,(l_p)}_{A_p,(l_p)}\rightarrow \overline{\SSS}^{M_p,(k_p)}_{A_p,(k_p)}$) is a continuous bilinear mapping. Furthermore, there exist an ultradifferential operator $P(D)$ of class $*$ and $u\in \overline{\SSS}^{M_p,l}_{A_p,l}$ (resp. $u\in\overline{\SSS}^{M_p,(l_p)}_{A_p,(l_p)}$) such that $P(D)u=\delta$ and $f=(P(D)u)*f=P(D)(u*\tilde{f})$ for each $f\in B$, where $u*\tilde{f}$ is the image of $\tilde{f}$ under the transpose of the continuous mapping $\varphi\mapsto\check{u}*\varphi$, $\SSS^{(M_p)}_{(A_p)}(\RR^d)\rightarrow \overline{\SSS}^{M_p,k}_{A_p,k}$ (resp. $\SSS^{\{M_p\}}_{\{A_p\}}(\RR^d)\rightarrow \overline{\SSS}^{M_p,(k_p)}_{A_p,(k_p)}$). For $f\in B$, $u*\tilde{f}\in L^{\infty}_{e^{A(l|\cdot|)}}\cap C(\RR^d)$ (resp. $u*\tilde{f}\in L^{\infty}_{e^{B_{l_p}(|\cdot|)}}\cap C(\RR^d)$) and in fact $u*\tilde{f}(x)=\langle \tilde{f}, u(x-\cdot)\rangle$. The set $\{u*\tilde{f}|\, f\in B\}$ is bounded in $L^{\infty}_{e^{A(l|\cdot|)}}$ (resp. in $L^{\infty}_{e^{B_{l_p}(|\cdot|)}}$).
\end{proposition}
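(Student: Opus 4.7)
The plan is to combine the equicontinuity of $B\subset\SSS'^*_\dagger(\RR^d)$ with the approximation and parametrix tools of the preceding subsection, and then realize $u*\tilde f$ as a genuine function via duality. Since $\SSS^*_\dagger(\RR^d)$ is barrelled (being $(FS)$ in the Beurling case and $(DFS)$ in the Roumieu case), $B$ is equicontinuous. Using the projective description of $\SSS^{\{M_p\}}_{\{A_p\}}(\RR^d)$ as $\lim_{\substack{\longleftarrow\\ (r_p)\in\mathfrak{R}}}\SSS^{M_p,(r_p)}_{A_p,(r_p)}$ in the Roumieu case, equicontinuity yields a parameter $k>0$ (resp.\ $(k_p)\in\mathfrak{R}$) such that $|\langle f,\varphi\rangle|\leq C\sigma_k(\varphi)$ (resp.\ $|\langle f,\varphi\rangle|\leq C\sigma_{(k_p)}(\varphi)$) for every $f\in B$ and $\varphi\in\SSS^*_\dagger(\RR^d)$, with $C$ independent of $f$. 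By density of $\SSS^*_\dagger(\RR^d)$ in $\overline{\SSS}^{M_p,k}_{A_p,k}$ (resp.\ $\overline{\SSS}^{M_p,(k_p)}_{A_p,(k_p)}$), each $f$ extends uniquely to a continuous functional $\tilde f$ on this closure with the same bound.

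Next, apply Lemma \ref{appincl}$(ii)$ with $r=k$ (resp.\ $(r_p)=(k_p)$) to obtain $l\geq 2k$ (resp.\ $(l_p)\leq (k_p/2)$) and operators $\tilde Q_n\colon\SSS^{M_p,l}_{A_p,l}\to\SSS^{M_p,k}_{A_p,k}$ converging to the identity in $\mathcal{L}_b$. Part $(i)$ of the same lemma yields $\tilde Q_n\psi\in\SSS^*_\dagger(\RR^d)$, so every $\psi\in\SSS^{M_p,l}_{A_p,l}$ is approximated in $\SSS^{M_p,k}_{A_p,k}$ by elements of $\SSS^*_\dagger(\RR^d)$, giving the inclusion $\SSS^{M_p,l}_{A_p,l}\subseteq\overline{\SSS}^{M_p,k}_{A_p,k}$. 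Continuity of $*\colon\SSS^{M_p,l}_{A_p,l}\times\SSS^{M_p,l}_{A_p,l}\to\overline{\SSS}^{M_p,k}_{A_p,k}$ follows from a weight computation in the spirit of the proof of Lemma \ref{appincl}$(i)$, using the subadditivity-type estimate $e^{A(k|x|)}\leq 2\,e^{A(2k|x-y|)}\,e^{A(2k|y|)}$ (a consequence of $(M.2)$ for $A_p$; enlarge $l$ if needed). A further application of Lemma \ref{appincl}$(ii)$ with $r=l$ produces a parameter $t$ (resp.\ $(t_p)$) with $\SSS^{M_p,t}_{A_p,t}\subseteq\overline{\SSS}^{M_p,l}_{A_p,l}$, and Proposition \ref{parametrix} applied to this $t$ supplies $u=G\in\SSS^{M_p,t}_{A_p,t}\subseteq\overline{\SSS}^{M_p,l}_{A_p,l}$ together with an ultradifferential operator $P(D)$ of class $*$ satisfying $P(D)u=\delta$.

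The convolution continuity just proved shows that $\varphi\mapsto\check u*\varphi$ is continuous from $\SSS^*_\dagger(\RR^d)$ into $\overline{\SSS}^{M_p,k}_{A_p,k}$, so $u*\tilde f$ is defined as its transpose image. To extract a pointwise formula I estimate $\sigma_k(u(x-\cdot))$: from $|D^\alpha u(z)|\leq C l^{-|\alpha|}M_\alpha e^{-A(l|z|)}$, splitting the supremum in $y$ into $|y|\leq 2|x|$ (where monotonicity gives $e^{A(k|y|)}\leq e^{A(2k|x|)}$) and $|y|\geq 2|x|$ (where $|x-y|\geq|y|/2$ together with $l\geq 2k$ gives $e^{A(k|y|)-A(l|x-y|)}\leq 1$), one gets $\sigma_k(u(x-\cdot))\leq C'e^{A(2k|x|)}\leq C'e^{A(l|x|)}$; in the Roumieu case the analogous estimate uses $(l_p)\leq(k_p/2)$, which forces $B_{l_p}(\rho/2)\geq B_{k_p}(\rho)$. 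Combined with $|\tilde f(\psi)|\leq C\sigma_k(\psi)$ uniformly for $f\in B$, this shows $\{u*\tilde f\}_{f\in B}$ is bounded in $L^\infty_{e^{A(l|\cdot|)}}$. A Fubini-style pairing $\langle u*\tilde f,\varphi\rangle=\langle\tilde f,\check u*\varphi\rangle=\int\langle\tilde f,u(x-\cdot)\rangle\varphi(x)\,dx$ for $\varphi\in\SSS^*_\dagger$ identifies $u*\tilde f(x)=\langle\tilde f,u(x-\cdot)\rangle$; the same translation estimate applied to differences $u(x'-\cdot)-u(x-\cdot)$ provides continuity of $x\mapsto u(x-\cdot)$ into $\overline{\SSS}^{M_p,k}_{A_p,k}$ and hence continuity of $u*\tilde f$. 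Finally, $P(D)(u*\tilde f)=f$ follows by testing against $\varphi\in\SSS^*_\dagger$ and using $\check u*P(-D)\varphi=P(-D)(\check u*\varphi)$, after which the left-hand side becomes the pairing of $(P(D)u)*\tilde f=\delta*\tilde f=f$ with $\varphi$.

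The main obstacle is the weight-function arithmetic for the translated kernel $u(x-\cdot)$, which must simultaneously lie in $\overline{\SSS}^{M_p,k}_{A_p,k}$ and carry a norm controlled by $e^{A(l|x|)}$ uniformly in $f\in B$; in the Roumieu setting this additionally requires coordinating the auxiliary sequences $(k_p),(l_p),(t_p)$ so that the product and monotonicity properties supplied by Lemma \ref{nwseq} guarantee clean composition of the $B_{(\cdot)}$-exponentials at every step.
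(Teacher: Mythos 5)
Your proposal is correct and follows essentially the same route as the paper: equicontinuity of $B$ yields the extension to $\overline{\SSS}^{M_p,k}_{A_p,k}$, Lemma \ref{appincl} supplies both the inclusion $\SSS^{M_p,l}_{A_p,l}\subseteq\overline{\SSS}^{M_p,k}_{A_p,k}$ and the continuity of the convolution, and Proposition \ref{parametrix} provides $u$ and $P(D)$. The only local deviation is that you identify $u*\tilde{f}(x)=\langle \tilde{f},u(x-\cdot)\rangle$ via a vector-valued (Fubini-type) integral, whereas the paper passes to the limit along the regularizations $g_{f,n}=\varphi_n(\check{\chi}_n*f)$; both justifications are sound given the translation estimates you establish.
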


\begin{proof} We give the proof in the Roumieu case. The Beurling case is analogous. For brevity in notation for $(t_p)\in\mathfrak{R}$, set $X_{(t_p)}=\overline{\SSS}^{M_p,(t_p)}_{A_p,(t_p)}$. Since $B$ is bounded in $\SSS'^{\{M_p\}}_{\{A_p\}}(\RR^d)$, it is equicontinuous. Hence, there exist $(r_p)\in\mathfrak{R}$ and $C>0$ such that $|\langle f,\varphi\rangle|\leq C \sigma_{(r_p)}(\varphi)$ for all $f\in B$, i.e., each $f$ can be extended to a continuous linear functional on $X_{(r_p)}$ and if we denote this extension of $f\in B$ by $\tilde{f}$, then the set $\tilde{B}=\{\tilde{f}\in X'_{(r_p)}|\, f\in B\}$ is bounded in $X'_{(r_p)}$. For $n\in\ZZ_+$, let $\chi_n$ and $\varphi_n$ be as in $ii)$ of Lemma \ref{appincl}. One easily verifies that the mapping $Q_n:\psi\mapsto \chi_n*(\varphi_n\psi)$ is continuous from $\SSS^{\{M_p\}}_{\{A_p\}}(\RR^d)$ to $\SSS^{\{M_p\}}_{\{A_p\}}(\RR^d)$ for all $n\in\ZZ_+$ and hence also $Q_n\in\mathcal{L}\left(\SSS^{\{M_p\}}_{\{A_p\}}(\RR^d),\SSS^{M_p,(r_p)}_{A_p,(r_p)}\right)$ for all $n\in\ZZ_+$. For this $(r_p)$, Lemma \ref{appincl} implies that there exists $(k_p)\in\mathfrak{R}$ with $(k_p)\leq (r_p/(2H))$ for which the operators $\tilde{Q}_n:\psi\mapsto\chi_n*(\varphi_n\psi)$, are continuous as mappings from $\SSS^{M_p,(k_p)}_{A_p,(k_p)}$ to $\SSS^{M_p,(r_p)}_{A_p,(r_p)}$, for all $n\in\ZZ_+$ and $\tilde{Q}_n\rightarrow \mathrm{Id}$ in $\mathcal{L}_b\left(\SSS^{M_p,(k_p)}_{A_p,(k_p)},\SSS^{M_p,(r_p)}_{A_p,(r_p)}\right)$. By $i)$ of Lemma \ref{appincl}, $\tilde{Q}_n(X_{k_p})\subseteq \SSS^{\{M_p\}}_{\{A_p\}}(\RR^d)\subseteq X_{(r_p)}$ for all $n\in\ZZ_+$, hence $\tilde{Q}_n\in\mathcal{L}\left(X_{(k_p)},X_{(r_p)}\right)$ for all $n\in\ZZ_+$ and $\tilde{Q}_n\rightarrow \mathrm{Id}$ in $\mathcal{L}_b\left(X_{(k_p)},X_{(r_p)}\right)$. Obviously the restriction of $\tilde{Q}_n$ to $\SSS^{\{M_p\}}_{\{A_p\}}(\RR^d)$ is $Q_n$. Denoting by ${}^t\tilde{Q}_n$ the transpose of $\tilde{Q}_n$ we have ${}^t\tilde{Q}_n\rightarrow \mathrm{Id}$ in $\mathcal{L}_b\left(X'_{(r_p)},X'_{(k_p)}\right)$. Also for $\tilde{f}\in \tilde{B}$, the restriction of ${}^t\tilde{Q}_n\tilde{f}$ to $\SSS^{\{M_p\}}_{\{A_p\}}(\RR^d)$ is exactly the ultradistribution ${}^tQ_nf=\varphi_n(\check{\chi}_n*f)$. But we have that $g_{f,n}=\varphi_n(\check{\chi}_n*f)\in \SSS^{\{M_p\}}_{\{A_p\}}(\RR^d)$. Again, Lemma \ref{appincl} implies that there exists $(l'_p)\in\mathfrak{R}$ with $(l'_p)\leq (k_p/(2H))$ such that for each $v\in \SSS^{M_p,(l'_p)}_{A_p,(l'_p)}$, $\chi_n*(\varphi_nv)\rightarrow v$ in $\SSS^{M_p,(k_p)}_{A_p,(k_p)}$ and $\chi_n*(\varphi_nv)\in \SSS^{\{M_p\}}_{\{A_p\}}(\RR^d)$, i.e., $\SSS^{M_p,(l'_p)}_{A_p,(l'_p)}\subseteq X_{(k_p)}$. By Lemma \ref{nwseq} we can assume that $\prod_{j=1}^{p+q}l'_j\leq 2^{p+q} \prod_{j=1}^p l'_j\cdot \prod_{j=1}^q l'_j$ for all $p,q\in\ZZ_+$. Set $l_p=l'_p/(4H)$, $p\in\ZZ_+$. One easily verifies that for each $h\in \RR^d$, $T_h\in\mathcal{L}_b\left(\SSS^{M_p,(l_p)}_{A_p,(l_p)},\SSS^{M_p,(l'_p)}_{A_p,(l'_p)}\right)=Y_{(l_p),(l'_p)}$ and there exists $c>0$ such that $\|T_h\|_{Y_{(l_p),(l'_p)}}\leq ce^{B_{l_p}(|h|)}$. Moreover, there exists $c'>0$ such that for $h,h_0\in\RR^d$ and $v\in \SSS^{M_p,(l_p)}_{A_p,(l_p)}$,
\beq\label{115rl}
\sigma_{(l'_p)}(T_hv-T_{h_0}v)\leq c' |h-h_0|\sigma_{(l_p)}(v)e^{B_{l_p}(|h_0|+|h|)}.
\eeq
To prove this one estimates $D^{\alpha}v(x+h)-D^{\alpha}v(x+h_0)$ by Taylor expanding $D^{\alpha}v$ at $x+h_0$ similarly as in the proof of $ii)$ of Lemma \ref{appincl}. Also one readily verifies that for this $(l_p)$ and $v,v'\in\SSS^{M_p,(l_p)}_{A_p,(l_p)}$, $\sigma_{(l'_p)}(v*v')\leq c''\sigma_{(l_p)}(v)\sigma_{(l_p)}(v')$, i.e., $*:\SSS^{M_p,(l_p)}_{A_p,(l_p)}\times \SSS^{M_p,(l_p)}_{A_p,(l_p)}\rightarrow \SSS^{M_p,(l'_p)}_{A_p,(l'_p)}$ is continuous bilinear mapping. Similarly as above, take $(l''_p)\in\mathfrak{R}$ with $(l''_p)\leq (l_p)$ such that $\SSS^{M_p,(l''_p)}_{A_p,(l''_p)}\subseteq X_{(l_p)}$. By Proposition \ref{parametrix} there exist $u\in\SSS^{M_p,(l''_p)}_{A_p,(l''_p)}\subseteq X_{(l_p)}\subseteq X_{(k_p)}$ and an ultradifferential operator $P(D)$ of class $\{M_p\}$ such that $P(D)u=\delta$. Hence $(P(D)u)*f=f$. Moreover, $(P(D)u)*g_{f,n}=g_{f,n}$. Since $g_{f,n}\rightarrow f$ in $\SSS'^{\{M_p\}}_{\{A_p\}}(\RR^d)$ we obtain $(P(D)u)*g_{f,n}\rightarrow f=(P(D)u)*f$ in $\SSS'^{\{M_p\}}_{\{A_p\}}(\RR^d)$. On the other hand, since $g_{f,n}\in\SSS^{\{M_p\}}_{\{A_p\}}(\RR^d)$, $(P(D)u)*g_{f,n}=P(D)(u*g_{f,n})$. Let $\varphi\in\SSS^{\{M_p\}}_{\{A_p\}}(\RR^d)$. Then $\check{u}*\varphi\in \SSS^{M_p,(l'_p)}_{A_p,(l'_p)}\subseteq X_{(k_p)}$ and thus
\beqs
{}_{\SSS'^{\{M_p\}}_{\{A_p\}}}\langle u*g_{f,n},\varphi\rangle_{\SSS^{\{M_p\}}_{\{A_p\}}}&=&\int_{\RR^d}u*g_{f,n}(x)\varphi(x)dx =\int_{\RR^d}g_{f,n}(x)\check{u}*\varphi(x)dx\\
&=&{}_{X'_{(k_p)}}\langle g_{f,n}, \check{u}*\varphi\rangle_{X_{(k_p)}}.
\eeqs
The right hand side tends to ${}_{X'_{(k_p)}}\langle \tilde{f}, \check{u}*\varphi\rangle_{X_{(k_p)}}$ where we used the same notation for $\tilde{f}\in X'_{(r_p)}$ and its restriction to $X_{(k_p)}$. Now, observe that
\beqs
x\mapsto {}_{X'_{(k_p)}}\langle \tilde{f}, u(x-\cdot)\rangle_{X_{(k_p)}}={}_{X'_{(k_p)}}\langle \tilde{f}, T_x\check{u}\rangle_{X_{(k_p)}},\,\, \RR^d\rightarrow \CC,
\eeqs
is well defined function which we denote by $F_f$. By the estimates for $\|T_h\|_{Y_{(l_p),(l'_p)}}$ it follows that there exists $C'$ such that $|F_f(x)|\leq C' \|\tilde{f}\|_{X'_{(k_p)}}e^{B_{l_p}(|x|)}$. Moreover, by (\ref{115rl}) it follows that $F_f$ is continuous. Hence $\{F_f|\,f\in B\}\subseteq L^{\infty}_{e^{B_{l_p}(|x|)}}\cap C(\RR^d)$ and $\{F_f|\,f\in B\}$ is bounded in $L^{\infty}_{e^{B_{l_p}(|x|)}}$. Moreover, for each $f\in B$, $u*g_{f,n}(x)={}_{X'_{(k_p)}}\langle g_{f,n},T_x\check{u}\rangle_{X_{(k_p)}}$ and thus $u*g_{f,n}\rightarrow F_f$ in $L^{\infty}_{e^{B_{l_p}(|x|)}}$ hence also in $\SSS'^{\{M_p\}}_{\{A_p\}}(\RR^d)$. Hence $\langle \tilde{f},u(x-\cdot)\rangle$ is exactly the image of $\tilde{f}$ under the transpose of the mapping $\varphi\mapsto\check{u}*\varphi$, $\SSS^{\{M_p\}}_{\{A_p\}}(\RR^d)\rightarrow X_{(k_p)}$. Now, clearly $P(D)(u*g_{f,n})(x)\rightarrow P(D)(\langle \tilde{f},u(x-\cdot)\rangle)$ in $\SSS'^{\{M_p\}}_{\{A_p\}}(\RR^d)$, which completes the proof of the proposition.
\end{proof}

\begin{remark} From the proof of this proposition it is clear that the numbers $k$ and $l$ in the Beurling case can be chosen arbitrary large (resp. the sequence $(k_p),(l_p)\in\mathfrak{R}$ in the Roumieu case can be chosen arbitrary small) in the following sense:\\
\indent Given $t>0$ (resp. $(t_p)\in\mathfrak{R})$ one can choose the numbers $k$ and $l$ (resp. the sequences $(k_p),(l_p)\in\mathfrak{R}$) such that $t\leq k\leq l$ (resp. $(l_p)\leq (k_p)\leq (t_p)$).
\end{remark}

\begin{lemma}\label{boundedsetS}
Let $B\subseteq\SSS'^*_{\dagger}(\RR^d)$. The following are equivalent:
\begin{itemize}
\item[$i)$] $B$ is bounded in $\SSS'^*_{\dagger}(\RR^d)$;
\item[$ii)$] for each $\varphi\in\SSS^*_{\dagger}(\RR^d)$, $\{f*\varphi|\, f\in B\}$ is bounded in $\SSS'^*_{\dagger}(\RR^d)$;
\item[$iii)$] for each $\varphi\in\SSS^*_{\dagger}(\RR^d)$ there exist $t,C>0$ (resp. $(t_p)\in\mathfrak{R}$ and $C>0$) such that $|(f*\varphi)(x)|\leq C e^{A(t|x|)}$ (resp. $|(f*\varphi)(x)|\leq C e^{B_{t_p}(|x|)}$), for all $x\in \RR^d$, $f\in B$;
\item[$iv)$] there exist $C,t>0$ (resp. there exist $(t_p)\in\mathfrak{R}$ and $C>0$) such that
    \beqs
    \left|(f*\varphi)(x)\right|\leq Ce^{A(t|x|)}\sigma_t(\varphi)\,\, \left(\mbox{resp.}\,\, \left|(f*\varphi)(x)\right|\leq Ce^{B_{t_p}(|x|)}\sigma_{(t_p)}(\varphi)\right).
    \eeqs
    for all $\varphi\in\SSS^*_{\dagger}(\RR^d)$, $x\in\RR^d$, $f\in B$.
\end{itemize}
\end{lemma}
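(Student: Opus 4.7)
The plan is to prove the cycle $(iv)\Rightarrow(iii)\Rightarrow(ii)\Rightarrow(i)\Rightarrow(iv)$. The implication $(iv)\Rightarrow(iii)$ is immediate: for a fixed $\varphi\in \SSS^*_\dagger(\RR^d)$, the quantity $\sigma_t(\varphi)$ (resp.\ $\sigma_{(t_p)}(\varphi)$) is a finite constant that can be absorbed into $C$.

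For $(iii)\Rightarrow(ii)$, I use the fact that $f*\varphi$ is a well-defined smooth function whenever $\varphi\in\SSS^*_\dagger(\RR^d)$ and $f\in\SSS'^*_\dagger(\RR^d)$. Given the pointwise bound of (iii), for any $\psi\in \SSS^*_\dagger(\RR^d)$ the integral $\int e^{A(t|x|)}|\psi(x)|\,dx$ (resp.\ $\int e^{B_{t_p}(|x|)}|\psi(x)|\,dx$) is finite, since test functions of class $*$ decay faster than any such weight. Hence $|\langle f*\varphi,\psi\rangle|$ is bounded uniformly in $f\in B$ for each fixed $\psi$, i.e.\ $\{f*\varphi:f\in B\}$ is weakly bounded in $\SSS'^*_\dagger(\RR^d)$. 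Since $\SSS^*_\dagger(\RR^d)$ is Montel (hence barrelled), weak and strong boundedness coincide in the dual, so (ii) follows.

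For $(ii)\Rightarrow(i)$, I apply the parametrix from Proposition \ref{parametrix}: fix $G\in \SSS^*_\dagger(\RR^d)$ and an ultradifferential operator $P(D)$ of class $*$ with $P(D)G=\delta$. Then each $f\in B$ satisfies $f=\delta*f=(P(D)G)*f=P(D)(G*f)$. Hypothesis (ii) applied to $\varphi=G$ shows $\{G*f:f\in B\}$ is bounded in $\SSS'^*_\dagger(\RR^d)$; since $P(D)$ acts continuously on $\SSS'^*_\dagger(\RR^d)$ (as the transpose of the continuous operator $P(-D)$ on $\SSS^*_\dagger(\RR^d)$), it maps bounded sets to bounded sets and thus $B=P(D)\{G*f:f\in B\}$ is bounded.

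The key step is $(i)\Rightarrow(iv)$, which I base on Proposition \ref{parametrix1}. Its conclusion furnishes $k>0$ (resp.\ $(k_p)\in\mathfrak{R}$) and $C_0>0$ such that every $f\in B$ extends to a functional $\tilde f$ with $|\langle\tilde f,\psi\rangle|\leq C_0\sigma_k(\psi)$ (resp.\ $\leq C_0\sigma_{(k_p)}(\psi)$) uniformly in $f\in B$. Writing $(f*\varphi)(x)=\langle\tilde f,\check\varphi(\cdot-x)\rangle$, the task reduces to estimating the norm of the translate $\check\varphi(\cdot-x)$. Choosing $t\leq k/(2H)$ in the Beurling case (resp.\ $(t_p)\leq(k_p/(2H))$ in the Roumieu case, arranged via Lemma \ref{nwseq}) and using the subadditivity of the associated functions that follows from $(M.2)$ via \cite[Proposition 3.6]{Komatsu1}, one obtains bounds of the form $\sigma_k(\check\varphi(\cdot-x))\leq c\,e^{A(t|x|)}\sigma_t(\varphi)$ (resp.\ $\sigma_{(k_p)}(\check\varphi(\cdot-x))\leq c\,e^{B_{t_p}(|x|)}\sigma_{(t_p)}(\varphi)$). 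This is essentially the translation estimate already used for $\|T_h\|_{Y_{(l_p),(l'_p)}}$ inside the proof of Proposition \ref{parametrix1}, so the main obstacle is the careful bookkeeping of the $H$-dilations and the manipulation of auxiliary sequences from Lemma \ref{nwseq}; once the translation bound is in place, combining it with the uniform bound on $\tilde f$ yields (iv) at once.
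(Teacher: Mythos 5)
Your steps $(iv)\Rightarrow(iii)$, $(iii)\Rightarrow(ii)$ and $(i)\Rightarrow(iv)$ are essentially sound (for the last one, note that in the Beurling case the parameter must be chosen \emph{larger}, e.g.\ $t=2k$, since $\sigma_t$ is a stronger norm for larger $t$; your ``$t\leq k/(2H)$'' goes the wrong way, though the Roumieu direction is correct). The genuine gap is in $(ii)\Rightarrow(i)$. You ``fix $G\in\SSS^*_{\dagger}(\RR^d)$ and an ultradifferential operator $P(D)$ of class $*$ with $P(D)G=\delta$'' --- no such $G$ exists in $\SSS^*_{\dagger}(\RR^d)$. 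Indeed, $P(D)$ maps $\SSS^*_{\dagger}(\RR^d)$ continuously into itself, so $P(D)G$ would be a smooth rapidly decreasing function and could never equal $\delta$. Proposition \ref{parametrix} only produces $G$ in a \emph{single} Banach space $\SSS^{M_p,t}_{A_p,t}$ (resp.\ $\SSS^{M_p,(t_p)}_{A_p,(t_p)}$) of the defining spectrum, with $P(D)$ depending on $t$; the parametrix is never a test function. Consequently you cannot apply hypothesis $(ii)$ ``to $\varphi=G$'': $(ii)$ only controls $f*\varphi$ for $\varphi\in\SSS^*_{\dagger}(\RR^d)$, and making sense of $G*f$ and bounding $\{G*f:f\in B\}$ uniformly is exactly the hard point. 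This is not a cosmetic issue: $(ii)\Rightarrow(i)$ (equivalently $(ii)\Rightarrow(iii)$) is the substantive implication of the lemma, and without it your cycle collapses to $(i)\Leftrightarrow(iii)\Leftrightarrow(iv)\Rightarrow(ii)$ (after adding the easy $(iii)\Rightarrow(i)$ by evaluating $f*\varphi$ at $x=0$).

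For comparison, the paper's route for this implication is: form the bilinear forms $G_f(\varphi,\psi)=\langle f,\varphi*\psi\rangle$, show from $(ii)$ that $\{G_f: f\in B\}$ is separately equicontinuous, upgrade to joint equicontinuity using K\"othe's theorems for $(F)$- and barrelled $(DF)$-spaces, extend the resulting trilinear estimates to the Banach spaces $X_{(r'_p)}$ and $L^{\infty}_{e^{B_{r'_p}}}$, and only then bring in the parametrix of Proposition \ref{parametrix1} --- applied to the bounded sets $B_\varphi=\{f*\varphi\}$ in $\SSS'^*_{\dagger}(\RR^d)$, with the convolution $u*F_{f,\varphi}$ defined through the extensions $\tilde f$ and a transposed mapping, precisely because $u$ is not a test function. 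Some version of this extra uniformity is unavoidable; your proof needs to be repaired along these lines.
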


\begin{proof} The implication $i)\Rightarrow ii)$ is trivial. To prove $ii)\Rightarrow iii)$ suppose that $ii)$ holds. For $f\in B$ denote by $G_f$ the bilinear mapping defined by $(\varphi,\psi)\mapsto\langle f,\varphi*\psi\rangle$, $\SSS^*_{\dagger}(\RR^d)\times \SSS^*_{\dagger}(\RR^d)\rightarrow \CC$. Clearly $G_f$ is continuous. Moreover, for fixed $\varphi\in\SSS^*_{\dagger}(\RR^d)$, by assumption, the set $\{f*\check{\varphi}|\, f\in B\}$ is bounded in $\SSS'^*_{\dagger}(\RR^d)$, hence equicontinuous ($\SSS^*_{\dagger}(\RR^d)$ is barrelled), so the mappings $\psi\mapsto G_f(\varphi,\psi)=\langle f*\check{\varphi},\psi\rangle$, $f\in B$, are equicontinuous subset of $\mathcal{L}\left(\SSS^*_{\dagger}(\RR^d),\CC\right)$. An analogous conclusion holds for the set of mappings $\varphi\mapsto G_f(\varphi,\psi)=\langle f*\check{\psi},\varphi\rangle$, $f\in B$, when $\psi\in\SSS^*_{\dagger}(\RR^d)$ is fixed. Thus $\{G_f|\, f\in B\}$ is a separately equicontinuous set. As $\SSS^{(M_p)}_{(A_p)}(\RR^d)$ is an $(F)$-space (resp. $\SSS^{\{M_p\}}_{\{A_p\}}(\RR^d)$ is a barrelled $(DF)$-space), \cite[Theorem 2, p. 158]{kothe2} (resp. \cite[Theorem 11, p. 161]{kothe2}) implies that $\{G_f|\, f\in B\}$ is equicontinuous. We will continue the proof in the Roumieu case. The Beurling case is similar. For $(t_p)\in\mathfrak{R}$, denote by $X_{(t_p)}$ the closure of $\SSS^{\{M_p\}}_{\{A_p\}}(\RR^d)$ in $\SSS^{M_p, (t_p)}_{A_p,(t_p)}$. The equicontinuity of $\{G_f|\, f\in B\}$ implies that there exist $(r_p)\in\mathfrak{R}$ and $C>0$, such that $|G_f(\varphi,\psi)|\leq C \sigma_{(r_p)}(\varphi)\sigma_{(r_p)}(\psi)$ for all $f\in B$. Now, for $\chi\in \SSS^{\{M_p\}}_{\{A_p\}}(\RR^d)$, we have
\beq
|\langle f*\varphi*\psi,\chi\rangle|&=&|G_f(\check{\varphi},\chi*\check{\psi})|\leq C \sigma_{(r_p)}(\check{\varphi})\sigma_{(r_p)}(\chi*\check{\psi})\\
&\leq& C_1 \sigma_{(r_p)}(\varphi)\sigma_{(r_p/2)}(\psi)\|\chi\|_{L^1_{e^{B_{r_p/2}(|\cdot|)}}}\label{estft}
\eeq
Since $\SSS^{\{M_p\}}_{\{A_p\}}(\RR^d)$ is dense in $L^1_{e^{B_{r_p/2}(|\cdot|)}}$ we obtain that the bilinear mappings $T_f:(\varphi,\psi)\rightarrow f*\varphi*\psi$, $\SSS^{\{M_p\}}_{\{A_p\}}\times \SSS^{\{M_p\}}_{\{A_p\}}(\RR^d)\rightarrow L^{\infty}_{e^{B_{r_p/2}(|\cdot|)}}$, are well defined and continuous and the set $\{T_f|\, f\in B\}$ is equicontinuous. Set $r'_p=r_p/2$, $p\in \ZZ_+$ and denote by $Y_{(r'_p)}$ the space $L^{\infty}_{e^{B_{r'_p}(|\cdot|)}}$. The estimate (\ref{estft}) implies that $T_f$ can be extended by continuity to $X_{(r'_p)}\times X_{(r'_p)}$ and if we denote this extensions by $\tilde{T}_f$ we have
\beq\label{inqbs}
\|\tilde{T}_f(\tilde{\varphi},\tilde{\psi})\|_{Y_{(r'_p)}}\leq C_1\sigma_{(r'_p)}(\tilde{\varphi})\sigma_{(r'_p)}(\tilde{\psi}),\,\, \forall \tilde{\varphi},\tilde{\psi}\in X_{(r'_p)},\, \forall f\in B.
\eeq
For $\varphi\in\SSS^{\{M_p\}}_{\{A_p\}}(\RR^d)$ denote by $F_{f,\varphi}$ the $C^{\infty}$ function $f*\varphi$. We prove that for each $\varphi\in\SSS^{\{M_p\}}_{\{A_p\}}(\RR^d)$, $B_{\varphi}=\{f*\varphi|\, f\in B\}$ is a bounded subset of $Y_{(r'_p)}$. By assumption, $B_{\varphi}$ is bounded in $\SSS'^{\{M_p\}}_{\{A_p\}}(\RR^d)$. Hence Proposition \ref{parametrix1} implies that there exist $(l_p),(k_p)\in\mathfrak{R}$ with $(l_p)\leq (k_p)$ such that $F_{f,\varphi}$ can be extended to $X_{(k_p)}$, $\SSS^{M_p,(l_p)}_{A_p,(l_p)}\subseteq X_{(k_p)}$, the convolution is a continuous bilinear mapping from $\SSS^{M_p,(l_p)}_{A_p,(l_p)}\times\SSS^{M_p,(l_p)}_{A_p,(l_p)}$ to $X_{(k_p)}$ and there exists $u\in X_{(l_p)}$ and $P(D)$ of class $\{M_p\}$ such that $P(D)u=\delta$ and $F_{f,\varphi}=P(D)(u*F_{f,\varphi})$, where $u*F_{f,\varphi}$ is the transpose of the continuous mapping $\psi\mapsto \check{u}*\psi$, $\SSS^{\{M_p\}}_{\{A_p\}}(\RR^d)\rightarrow X_{(k_p)}$. Of course, we can assume that $(k_p)\leq (r'_p)$. Since $\SSS^{\{M_p\}}_{\{A_p\}}(\RR^d)$ is dense in $X_{(l_p)}$, there exist $u_n\in \SSS^{\{M_p\}}_{\{A_p\}}(\RR^d)$, $n\in\ZZ_+$, such that $u_n\rightarrow u$ in $X_{(l_p)}$. Since $*:\SSS^{M_p,(l_p)}_{A_p,(l_p)}\times \SSS^{M_p,(l_p)}_{A_p,(l_p)}\rightarrow X_{(k_p)}$ is continuous,
\beqs
\langle T_f(u_n,\varphi),\psi\rangle={}_{X'_{(k_p)}}\langle F_{f,\varphi},\check{u}_n*\psi\rangle_{X_{(k_p)}}\rightarrow {}_{X'_{(k_p)}} \langle F_{f,\varphi},\check{u}*\psi\rangle_{X_{(k_p)}}=\langle u*F_{f,\varphi},\psi\rangle
\eeqs
for all $\psi\in\SSS^{\{M_p\}}_{\{A_p\}}(\RR^d)$. On the other hand, $T_f(u_n,\varphi)\rightarrow \tilde{T}_f(u,\varphi)$ in $Y_{(r'_p)}$ hence also in $\SSS'^{\{M_p\}}_{\{A_p\}}(\RR^d)$. Thus $\tilde{T}_f(u,\varphi)=u*F_{f,\varphi}$ which in turn implies $F_{f,\varphi}=P(D)(\tilde{T}_f(u,\varphi))$. For $\{u_n\}_{n\in\ZZ_+}$ as before, observe that $P(D)(T_f(u_n,\varphi))=T_f(u_n,P(D)\varphi)$. The right hand side tends to $\tilde{T}_f(u,P(D)\varphi)$ and consequently $F_{f,\varphi}=\tilde{T}_f(u,P(D)\varphi)$. Now (\ref{inqbs}) implies $\|F_{f,\varphi}\|_{Y_{(r'_p)}}\leq C_1\sigma_{(r'_p)}(u)\sigma_{(r'_p)}(P(D)\varphi)$, i.e., $B_{\varphi}$ is bounded in $Y_{(r'_p)}$ for each $\varphi\in\SSS^{\{M_p\}}_{\{A_p\}}(\RR^d)$. But the elements of $B_{\varphi}$ are continuous functions hence $iii)$ holds. Next we prove $iii)\Rightarrow i)$. Fix $\varphi\in\SSS^*_{\dagger}(\RR^d)$. Since $f*\varphi$ is continuous, $|\langle f,\check{\varphi}\rangle|=|f*\varphi(0)|\leq C$ for all $f\in B$, i.e., $B$ is weakly bounded in $\SSS'^*_{\dagger}(\RR^d)$ and thus also strongly bounded.\\
\indent $iv)\Rightarrow iii)$ is trivial. To prove $i)\Rightarrow iv)$, observe that since $B$ is bounded in $\SSS'^*_{\dagger}(\RR^d)$ it is equicontinuous ($\SSS^*_{\dagger}(\RR^d)$ is barrelled). We give the proof in the Roumieu case, the Beurling case is similar. There exist $(r_p)\in\mathfrak{R}$ and $C'>0$ such that $|\langle f,\varphi\rangle|\leq C'\sigma_{(r_p)}(\varphi)$ for all $\varphi\in\SSS^{\{M_p\}}_{\{A_p\}}(\RR^d)$, $f\in B$. Let $(t_p)=(r_p/2)$. Then
\beqs
e^{B_{r_p}(|y|)}\leq 2e^{B_{r_p}(2|x-y|)} e^{B_{r_p}(2|x|)}=2e^{B_{t_p}(|x-y|)} e^{B_{t_p}(|x|)}.
\eeqs
We have
\beqs
|f*\varphi(x)|=|\langle f,\varphi(x-\cdot)\rangle|\leq C'\sup_{\alpha}\sup_{y}\frac{\left|e^{B_{r_p}(|y|)}D^{\alpha}\varphi(x-y)\right|}{M_{\alpha}\prod_{j=1}^{|\alpha|}r_j}\leq C e^{B_{t_p}(|x|)}\sigma_{(t_p)}(\varphi)
\eeqs
for all $x\in\RR^d$, $\varphi\in\SSS^{\{M_p\}}_{\{A_p\}}(\RR^d)$, $f\in B$.
\end{proof}

It is interesting to note that the property $iv)$ turns out to characterize the space $\SSS'^*_{\dagger}(\RR^d)$.

\begin{corollary}\label{cha_s}
Let $f\in \SSS'^{(M_p)}_{(p!)}(\RR^d)$ (resp. $f\in\SSS'^{\{M_p\}}_{\{p!\}}(\RR^d)$). Then $f\in\SSS'^*_{\dagger}(\RR^d)$ if and only if there exists $t>0$ (resp. there exists $(t_p)\in\mathfrak{R}$) such that for every $\varphi\in \SSS^{(M_p)}_{(p!)}(\RR^d)$ (resp. $\varphi\in\SSS^{\{M_p\}}_{\{p!\}}(\RR^d)$)
\beqs
\sup_{x\in\RR^d}e^{-A(t|x|)}|(f*\varphi)(x)|<\infty\,\, \left(\mbox{resp.} \sup_{x\in\RR^d}e^{-B_{t_p}(|x|)}|(f*\varphi)(x)|<\infty\right).
\eeqs
\end{corollary}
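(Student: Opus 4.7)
\emph{Plan.} The forward implication is essentially an application of Lemma \ref{boundedsetS}. The singleton $\{f\}$ being a bounded subset of $\SSS'^*_\dagger(\RR^d)$, part $(iv)$ of that lemma provides $t>0$, $C>0$ (resp.\ $(t_p)\in\mathfrak{R}$, $C>0$) such that $|(f*\varphi)(x)|\leq Ce^{A(t|x|)}\sigma_t(\varphi)$ (resp.\ $\leq Ce^{B_{t_p}(|x|)}\sigma_{(t_p)}(\varphi)$) for every $\varphi\in\SSS^*_\dagger(\RR^d)$. Assumption $(M.6)$ yields a continuous inclusion $\SSS^{(M_p)}_{(p!)}(\RR^d)\subseteq\SSS^{(M_p)}_{(A_p)}(\RR^d)$ (resp.\ $\SSS^{\{M_p\}}_{\{p!\}}(\RR^d)\subseteq\SSS^{\{M_p\}}_{\{A_p\}}(\RR^d)$), so the requisite seminorm of $\varphi$ is finite and the stated estimate follows.

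For the converse, I will construct a continuous extension $\tilde f\in\SSS'^*_\dagger(\RR^d)$ of $f$ via the parametrix method with $p!$ playing the role of $A_p$. Since $p!$ satisfies $(M.1)$, $(M.2)$, and trivially $(M.6)$, the proofs of Lemma \ref{ultpoly} and Proposition \ref{parametrix} apply verbatim with $A_p$ replaced by $p!$, producing for any $s>0$ (resp.\ $(s_p)\in\mathfrak{R}$) an ultradifferential operator $P(D)$ of class $(M_p)$ (resp.\ $\{M_p\}$) and a function $G\in\SSS^{M_p,s}_{p!,s}$ (resp.\ $G\in\SSS^{M_p,(s_p)}_{p!,(s_p)}$) with $P(D)G=\delta$. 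Applying the closed graph theorem to the linear map $\varphi\mapsto f*\varphi$ from the $(F)$-space $\SSS^{(M_p)}_{(p!)}(\RR^d)$ into the Banach space $L^\infty_{e^{A(t|\cdot|)}}$ — well defined by hypothesis, and closed since convolution is continuous in the weak-$*$ topology of ultradistributions — produces $h>0$ and $C_1$ with
\[
\bigl\|f*\varphi\bigr\|_{L^\infty_{e^{A(t|\cdot|)}}}\leq C_1\sigma_h(\varphi),\qquad \varphi\in\SSS^{(M_p)}_{(p!)}(\RR^d),
\]
and the Roumieu analogue (with $(r_p)\in\mathfrak{R}$) is obtained similarly via the $(DFS)$ version of the closed graph theorem. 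By continuity this estimate extends to the closure of $\SSS^{(M_p)}_{(p!)}$ inside $\SSS^{M_p,h}_{p!,h}$; invoking Lemma \ref{appincl}(ii) with $A_p=p!$, that closure contains $\SSS^{M_p,s}_{p!,s}$ provided $s\geq 2Hh$ (resp.\ $(s_p)\leq (r_p)/(2H)$). Fixing such $s$ in the parametrix step, $F:=f*G$ is a continuous function satisfying $|F(x)|\leq C_2 e^{A(t|x|)}$ (resp.\ $\leq C_2 e^{B_{t_p}(|x|)}$).

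Finally I define $\tilde f:\SSS^*_\dagger(\RR^d)\to\CC$ by
\[
\langle\tilde f,\varphi\rangle:=\int_{\RR^d}F(x)\,P(-D)\varphi(x)\,dx.
\]
The standard estimate on ultradifferential operators of class $(M_p)$ gives $|P(-D)\varphi(x)|\leq C_3\sigma_{h'}(\varphi)\,e^{-A(h'|x|)}$ for $\varphi\in\SSS^{M_p,h'}_{A_p,h'}$, and the iterated subadditivity coming from $(M.2)$ for $A_p$ allows one to choose $h'\geq 2Ht$ so that $\int e^{A(t|x|)-A(h'|x|)}dx<\infty$; hence $|\langle\tilde f,\varphi\rangle|\leq C_4\sigma_{h'}(\varphi)$, proving $\tilde f\in\SSS'^*_\dagger(\RR^d)$ (the Roumieu case is analogous with sequences). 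To see that $\tilde f$ extends $f$: for $\varphi\in\SSS^{(M_p)}_{(p!)}(\RR^d)$, Fubini's theorem together with the identity $P(-D)\check G=(P(D)G)^{\vee}=\delta$ (valid since $D^\alpha\check G=(-1)^{|\alpha|}(D^\alpha G)^{\vee}$) yields
\[
\langle\tilde f,\varphi\rangle=\langle f,\check G*P(-D)\varphi\rangle=\langle f,(P(-D)\check G)*\varphi\rangle=\langle f,\delta*\varphi\rangle=\langle f,\varphi\rangle.
\]
The delicate point — and the main obstacle — is coordinating the various weights (resp.\ sequences in $\mathfrak{R}$) so that the parametrix $G$ simultaneously lies in the closure where the closed-graph estimate is valid and produces an $F$ whose growth is controlled enough for the defining integral of $\tilde f$ to converge; this balancing is accomplished by repeated use of Lemmas \ref{nwseq} and \ref{appincl} together with the flexibility in choosing $s$ (resp.\ $(s_p)$) afforded by Proposition \ref{parametrix}.
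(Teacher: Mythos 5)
Your proposal is correct and follows essentially the same route as the paper: the forward implication via Lemma \ref{boundedsetS}, and the converse by combining a closed graph theorem (to get the quantitative bound $\|f*\varphi\|_{L^{\infty}_{e^{A(t|\cdot|)}}}\leq C\tilde{\sigma}_h(\varphi)$ and the extension of $\varphi\mapsto f*\varphi$ to the closure of $\SSS^{(M_p)}_{(p!)}$ in $\SSS^{M_p,h}_{p!,h}$) with the parametrix method relative to the pair $(M_p,p!)$, yielding $f=P(D)(G*f)$ with $G*f$ of the prescribed growth. The only cosmetic difference is that you unpack the parametrix bookkeeping directly from Proposition \ref{parametrix} and Lemma \ref{appincl}, whereas the paper delegates it to Proposition \ref{parametrix1} applied with $p!$ in place of $A_p$.
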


\begin{proof} The direct implication follows by Lemma \ref{boundedsetS}. We prove the converse implication only in the Roumieu case, as the Beurling case is similar. For $(r_p)\in\mathfrak{R}$, denote by $\tilde{\sigma}_{(r_p)}$ the norm on $\SSS^{M_p,(r_p)}_{p!,(r_p)}$ to distinguish it from the norm $\sigma_{(r_p)}$ on $\SSS^{M_p,(r_p)}_{A_p,(r_p)}$. Also, for brevity in notation, we denote by $\tilde{X}_{(r_p)}$ the closure of $\SSS^{\{M_p\}}_{\{p!\}}(\RR^d)$ in $\SSS^{M_p,(r_p)}_{p!,(r_p)}$. Let $G_f$ be the continuous mapping $\varphi\mapsto f*\varphi$, $\SSS^{\{M_p\}}_{\{p!\}}(\RR^d)\rightarrow \SSS'^{\{M_p\}}_{\{p!\}}(\RR^d)$. By assumption, the image of $G_f$ is contained in $L^{\infty}_{e^{B_{t_p}(|\cdot|)}}(\RR^d)$. Since the latter space is continuously injected into $\SSS'^{\{M_p\}}_{\{p!\}}(\RR^d)$, the graph of $G_f$ considered as a mapping from $\SSS^{\{M_p\}}_{\{p!\}}(\RR^d)$ to $L^{\infty}_{e^{B_{t_p}(|\cdot|)}}(\RR^d)$ is closed. As $\SSS^{\{M_p\}}_{\{p!\}}(\RR^d)$ is barreled and $L^{\infty}_{e^{B_{t_p}(|\cdot|)}}(\RR^d)$ is a $(B)$-space hence a Pt\'{a}k space (cf. \cite[Sect. IV. 8, p. 162]{Sch}) the Pt\'{a}k closed graph theorem (cf. \cite[Thm. 8.5, p. 166]{Sch}) implies that $G_f:\SSS^{\{M_p\}}_{\{p!\}}(\RR^d)\rightarrow L^{\infty}_{e^{B_{t_p}(|\cdot|)}}(\RR^d)$ is continuous. Thus, there exist $(r_p)\in\mathfrak{R}$ and $C>0$ such that $\left\|f*\varphi\right\|_{L^{\infty}_{e^{B_{t_p}(|\cdot|)}}}\leq C\tilde{\sigma}_{(r_p)}(\varphi)$, i.e., $G_f$ can be extended to a continuous mapping $\tilde{G}_f$ from $\tilde{X}_{(r_p)}$ to $L^{\infty}_{e^{B_{t_p}(|\cdot|)}}$. For $f\in\SSS'^{\{M_p\}}_{\{p!\}}(\RR^d)$ find $(k_p),(l_p)\in\mathfrak{R}$ with $(l_p)\leq (k_p)$, $u\in \tilde{X}_{(l_p)}$ and $P(D)$ of class $\{M_p\}$ as in Proposition \ref{parametrix1}. Then $P(D)(u*\tilde{f})=f$ in $\SSS'^{\{M_p\}}_{\{p!\}}(\RR^d)$, where $\tilde{f}$ is the extension of $f$ to $\tilde{X}_{(k_p)}$. Of course, we can take $(k_p)\leq (r_p)$. Then $u*\tilde{f}=\tilde{G}_f(u)\in L^{\infty}_{e^{B_{t_p}(|\cdot|)}}\subseteq \SSS'^{\{M_p\}}_{\{A_p\}}(\RR^d)$. As $P(D)$ is of class $\{M_p\}$, $f=P(D)(u*\tilde{f})\in\SSS'^{\{M_p\}}_{\{A_p\}}(\RR^d)$.
\end{proof}

\subsection{Nuclearity}\label{nuclearity} We shall now establish the nuclearity of $\SSS^*_{\dagger}(\RR^d)$. We first need to introduce some notation. Given a positive function $\Psi\in C(\RR^{d})$ and  $k\in\NN$ we denote as $C^k_{\Psi}(\RR^d)$ the $(B)$-space of all $k$-times continuously differentiable functions for which the norm $\ds\|\varphi\|_{\Psi,k}=\sup_{|\alpha|\leq k}\left\|\Psi D^{\alpha}\varphi\right\|_{L^{\infty}(\RR^d)}$ is finite.  Let $\mathbf{I}=(-1,1)^d$ and denote by $C_{\mathbf{I}}^k(\RR^d)$ the space of all $k$-times continuously differentiable periodic functions with period $\mathbf{I}$. Equipped with the norm $\sup_{|\alpha|\leq k}\|D^{\alpha}\varphi\|_{L^{\infty}(\mathbf{I})}$ it becomes a $(B)$-space. We denote by $C_{L^{\infty}}(\RR^d)$ the $(B)$-space of all bounded continuous functions on $\RR^d$ with the supremum norm. Finally, we also employ the notation $\langle x \rangle=(1+|x|^{2})^{1/2}$.

\begin{lemma}\label{nuccp} Suppose that $\lim_{|x|\to\infty} \langle x \rangle^{j} /\Psi(x)=0$, for all $j\in\mathbb{Z}_{+}$. Then, the inclusion mapping $C^{d+1}_{\Psi}(\RR^d)\rightarrow C_{L^{\infty}}(\RR^d)$ is nuclear.
\end{lemma}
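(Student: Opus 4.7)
The plan is to exhibit the inclusion $\iota : C^{d+1}_{\Psi}(\RR^d)\to C_{L^{\infty}}(\RR^d)$ as an absolutely convergent sum of rank one operators by combining a local periodization with a Fourier series expansion. The $C^{d+1}$ regularity will produce Fourier coefficients of size $(1+|n|)^{-(d+1)}$, which is summable on $\ZZ^d$ because $d+1>d$, whereas the hypothesis on $\Psi$ will provide summability over the translates.

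Fix a partition of unity $\{\theta(\cdot-k)\}_{k\in\ZZ^d}$ with $\theta\in C^{d+1}_{c}(\RR^d)$, $\supp\theta\subseteq(-1,1)^d$, and $\sum_{k}\theta(x-k)\equiv 1$. For $\varphi\in C^{d+1}_{\Psi}(\RR^d)$ and $k\in\ZZ^d$ put $F_k(y):=\theta(y)\varphi(y+k)$, extended from $[-1,1]^d$ to $\RR^d$ by $2\ZZ^d$-periodicity; since $\supp\theta\subset(-1,1)^d$, the extension is of class $C^{d+1}$. Its Fourier series $F_k(y)=\sum_{n\in\ZZ^d}\hat{F}_k(n)e^{i\pi n\cdot y}$ converges absolutely and uniformly. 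Because the support of $\theta$ prevents the periodic copies from interfering, $\theta(x-k)\varphi(x)=\theta(x-k)F_k(x-k)$ on all of $\RR^d$, so, setting $e_{k,n}(x):=\theta(x-k)e^{i\pi n\cdot(x-k)}$,
\[
\iota(\varphi)(x)=\sum_{k\in\ZZ^d}\theta(x-k)\varphi(x)=\sum_{(k,n)\in\ZZ^{2d}}\hat{F}_k(n)\,e_{k,n}(x),\qquad \|e_{k,n}\|_{L^{\infty}}\le\|\theta\|_{L^{\infty}}.
\]

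To check that this is a nuclear decomposition, integrate by parts $d+1$ times in one coordinate of the defining integral for $\hat F_k(n)$ and apply Leibniz' rule: for all $n\neq 0$,
\[
|\hat F_k(n)|\le C_1(1+|n|)^{-(d+1)}\sup_{|\alpha|\le d+1}\sup_{y\in k+[-1,1]^d}|D^{\alpha}\varphi(y)|,
\]
with $C_1$ depending only on $d$ and $\theta$. Set $\Psi_k^{*}:=\inf_{y\in k+[-1,1]^d}\Psi(y)$, which is strictly positive by continuity and positivity of $\Psi$. Then
\[
|\hat F_k(n)|\le C_2(1+|n|)^{-(d+1)}(\Psi_k^{*})^{-1}\|\varphi\|_{\Psi,d+1},
\]
so each linear functional $\varphi\mapsto \hat F_k(n)$ on $C^{d+1}_{\Psi}(\RR^d)$ has norm at most $\lambda_{k,n}:=C_2(1+|n|)^{-(d+1)}(\Psi_k^{*})^{-1}$. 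Nuclearity thus reduces to $\sum_{k,n}\lambda_{k,n}<\infty$.

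This final sum factors as a product. The inner piece $\sum_{n\in\ZZ^d}(1+|n|)^{-(d+1)}$ converges since the exponent exceeds $d$. For the outer piece, invoke the hypothesis $\langle x\rangle^{j}/\Psi(x)\to 0$ with $j=d+1$: one gets $\Psi(x)\ge c\langle x\rangle^{d+1}$ for large $|x|$, hence $\Psi_k^{*}\ge c'\langle k\rangle^{d+1}$ for large $|k|$, and $\sum_{k}(\Psi_k^{*})^{-1}<\infty$. The delicate point, though elementary, is the simultaneous matching at the borderline exponent $d+1$: the regularity $C^{d+1}$ is exactly enough to render the Fourier tail summable, and the faster than polynomial growth of $\Psi$ is exactly what makes the spatial sum over translates converge; weakening either assumption at this threshold would destroy nuclearity.
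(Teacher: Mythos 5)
Your argument is correct in substance but follows a genuinely different route from the paper. The paper compactifies $\RR^d$ by a single global diffeomorphism $g:(-1,1)^d\to\RR^d$ (built from $t\mapsto(1-t)^{-1}-(1+t)^{-1}$), shows that $\varphi\mapsto\varphi\circ g$ maps $C^{d+1}_{\Psi}(\RR^d)$ continuously into the periodic space $C^{d+1}_{\mathbf{I}}(\RR^d)$ --- this is where the hypothesis on $\Psi$ is consumed, to absorb the polynomial blow-up of the derivatives of $g$ near the boundary of the cube --- and then factors the inclusion through the nuclear embedding $C^{d+1}_{\mathbf{I}}(\RR^d)\to C^{0}_{\mathbf{I}}(\RR^d)$ (a Komatsu-type Fourier-series argument), followed by the pullback to $C_{L^{\infty}}(\RR^d)$. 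You instead localize by a $\ZZ^d$-partition of unity, periodize each piece, and expand in Fourier series directly, obtaining an explicit nuclear representation $\sum_{k,n}\hat F_k(n)\,e_{k,n}$ in which the $C^{d+1}$ regularity pays for summability in $n$ and the growth of $\Psi$ pays for summability in $k$. Your version is self-contained (it does not outsource the torus step) and makes visible that only $\langle x\rangle^{d+1}=o(\Psi(x))$ --- indeed only $\sum_k 1/\Psi^*_k<\infty$ --- is needed, which is weaker than the stated hypothesis. One slip should be repaired: since $F_k(x-k)=\theta(x-k)\varphi(x)$ wherever $\theta(x-k)\neq0$, your identity actually gives $\theta(x-k)F_k(x-k)=\theta(x-k)^2\varphi(x)$, so $\sum_{k,n}\hat F_k(n)e_{k,n}=\bigl(\sum_k\theta(\cdot-k)^2\bigr)\varphi\neq\varphi$ in general. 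Fix this either by choosing the partition so that $\sum_k\theta(x-k)^2\equiv1$, or by setting $e_{k,n}(x)=\tilde\theta(x-k)e^{i\pi n\cdot(x-k)}$ with $\tilde\theta\in C^{d+1}_c((-1,1)^d)$ equal to $1$ on $\supp\theta$; either repair leaves all the estimates, and hence the nuclearity, intact.
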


\begin{proof} Consider the function $f:(-1,1)\rightarrow \RR$, defined by $f(t)=(1-t)^{-1}-(1+t)^{-1}$. It is a diffeomorphism between $(-1,1)$ and $\RR$ with inverse $f^{-1}(t)=(\langle t\rangle-1)/t$ (notice that $f^{-1}$ is analytic on a strip along the real axis since the singularity at $0$ is removable). Denote by $g:\mathbf{I}\rightarrow\RR^d$ the mapping $g(x)=(f(x_1),...,f(x_d))$. It is a diffeomorphism between $\mathbf{I}$ and $\RR^d$ with inverse $g^{-1}(y)=\left((\langle y_1\rangle-1)/y_1,...,(\langle y_d\rangle-1)/y_d\right)$. For $\varphi\in C^{d+1}_{\Psi}(\RR^d_y)$ consider the function $\varphi\circ g$. Clearly it is an element of $C^{d+1}(\mathbf{I})$. For $\kappa\in\NN^d$, we denote $\partial^{\kappa}_xg(x)=(\partial^{\kappa}_xf(x_1),...,\partial^{\kappa}_xf(x_d))$. Using the Fa\'a di Bruno formula (see \cite[Theorem 2.1]{faadib}) one can see that for $1\leq |\alpha|\leq d+1$, $\partial^{\alpha}_x(\varphi\circ g)(x)$ is a finite sum of terms of the form $const.\cdot(\partial^{\beta}\varphi)\circ g(x)\prod_{j=1}^{|\alpha|} \left(\partial^{\gamma^{(j)}}_xg(x)\right)^{\nu^{(j)}}$, where $1\leq |\beta|\leq |\alpha|$, $\gamma^{(j)}=\nu^{(j)}=0$ for $1\leq j\leq |\alpha|-s$ and $\gamma^{(j)}\neq0\neq\nu^{(j)}$ for $|\alpha|-s+1\leq j\leq |\alpha|$ and $\sum_{j=1}^{|\alpha|}\nu^{(j)}=\beta$ and $\sum_{j=1}^{|\alpha|}\left|\nu^{(j)}\right|\gamma^{(j)}=\alpha$ (as in \cite[Theorem 2.1]{faadib} we set $0^0=1$). Now, observe that $f^{(k)}(t)=k!\left((1-t)^{-k-1}+(-1)^{k+1}(1+t)^{-k-1}\right)$. For $x\in\mathbf{I}$, let $y_j=f(x_j)$, $j=1,...,d$. Since
\beqs
\lim_{y_j\rightarrow 0}\left(1-\frac{\langle y_j\rangle-1}{y_j}\right)^{-1}=1\,\, \mbox{and}\,\, \lim_{y_j\rightarrow 0} \left(1+\frac{\langle y_j\rangle-1}{y_j}\right)^{-1}=1
\eeqs
there exists $0<\varepsilon<1$ such that for $|y_j|\leq \varepsilon$, $j=1,...,d$,
\beqs
\frac{|y_j|}{\left|y_j-\langle y_j\rangle+1\right|}\leq 2\,\, \mbox{and}\,\, \frac{|y_j|}{\left|y_j+\langle y_j\rangle-1\right|}\leq 2.
\eeqs
For $y_j\geq \varepsilon$ we have $\left|y_j-\langle y_j\rangle+1\right|=y_j+1-\langle y_j\rangle\geq \varepsilon/(2\langle y_j\rangle)$ and $\left|y_j+\langle y_j\rangle-1\right|\geq \varepsilon$. Similarly, for $y_j\leq -\varepsilon$, we have $\left|y_j-\langle y_j\rangle+1\right|\geq \varepsilon$ and $\left|y_j+\langle y_j\rangle-1\right|\geq \varepsilon/(2\langle y_j\rangle)$. Thus, for $k\in\ZZ_+$,
\beqs
\frac{|y_j|^k}{\left|y_j-\langle y_j\rangle+1\right|^k}\leq c^k\langle y_j\rangle^{2k}\,\, \mbox{and}\,\, \frac{|y_j|^k}{\left|y_j+\langle y_j\rangle-1\right|^{k}}\leq c^k\langle y_j\rangle^{2k}
\eeqs
for all $y_j\in\RR$, where $c=2(1+\varepsilon^{-1})$. We obtain
\beqs
\left|f^{(k)}(x_j)\right|\leq \frac{k!|y_j|^{k+1}}{\left|y_j-\langle y_j\rangle+1\right|^{k+1}}+\frac{k!|y_j|^{k+1}}{\left|y_j+\langle y_j\rangle-1\right|^{k+1}}\leq 2c^{k+1}k!\langle y_j\rangle^{2(k+1)}.
\eeqs
Now, we estimate as follows
\beqs
\prod_{j=1}^{|\alpha|} \left|\left(\partial^{\gamma^{(j)}}_xg(x)\right)^{\nu^{(j)}}\right|&\leq& \prod_{j=1}^{|\alpha|}\prod_{k=1}^d\left|\partial^{\gamma^{(j)}_k}_{x_k}f(x_k)\right|^{\nu^{(j)}_k}\leq C_1\prod_{j=1}^{|\alpha|}\prod_{k=1}^d \langle y_k\rangle^{2\gamma^{(j)}_k \nu^{(j)}_k+2\nu^{(j)}_k}\\
&\leq& C_1\prod_{j=1}^{|\alpha|} \langle y\rangle^{2\left|\gamma^{(j)}\right| \left|\nu^{(j)}\right|+2\left|\nu^{(j)}\right|}=C_1\langle y\rangle^{2|\alpha|+2|\beta|}\\
&\leq& C_1\langle y\rangle^{4(d+1)}.
\eeqs
Hence\\
\\
$\ds \sup_{|\alpha|\leq d+1}\|D^{\alpha}(\varphi\circ
g)\|_{L^{\infty}(\mathbf{I})}$
\beq\label{1155779}
\leq C_2
\sup_{|\alpha|\leq d+1} \sup_{y\in\RR^d}\langle
y\rangle^{4(d+1)}\left|D^{\alpha}\varphi(y)\right|\leq C_3
\|\varphi\|_{\Psi,d+1}.
\eeq
Let $x\in \mathbf{I}$ and set
$y=g(x)$. If $x_j\geq 0$ then \beqs |1-x_j|^{-1}\leq
|f(x_j)|+|1+x_j|^{-1}\leq |y_j|+1\leq 2\langle y_j\rangle. \eeqs
If $x_j\leq 0$, clearly $|1-x_j|^{-1}\leq 1\leq 2\langle y_j\rangle$.
Similarly, $|1+x_j|^{-1}\leq 2\langle y_j\rangle$. Thus, denoting
$\mathbf{1}=(1,...,1)\in\RR^d$, for $\beta,\gamma\in\NN^d$ fixed,
by the above estimates we have \beqs
\sup_{|\alpha|\leq d+1} \sup_{x\in\mathbf{I}}\left|\frac{D^{\alpha}(\varphi\circ g) (x)}{(\mathbf{1}-x)^{\beta}(\mathbf{1}+x)^{\gamma}}\right|&\leq& C_4\sup_{|\alpha|\leq d+1}\sup_{y\in\RR^d}\left|\langle y\rangle^{4(d+1)+|\beta|+|\gamma|}D^{\alpha}\varphi(y)\right|\\
&\leq& C_5\|\varphi\|_{\Psi,d+1}.
\eeqs
For $0<l< 1$ denote $\mathbf{I}_l=(-l,l)^d$. The above estimate implies that for each $\beta,\gamma\in \NN^d$ and $\varepsilon>0$ there exists $0<l<1$ such that
\beqs
\sup_{|\alpha|\leq d+1} \sup_{x\in\mathbf{I}\backslash \mathbf{I}_l}\left|(\mathbf{1}-x)^{-\beta}(\mathbf{1}+x)^{-\gamma}D^{\alpha}(\varphi\circ g) (x)\right|\leq \varepsilon.
\eeqs
Hence $\varphi\circ g$ can be extended to a periodic function on $\RR^d$ with period $\mathbf{I}$ and this extension is an element of $C^{d+1}_{\mathbf{I}}(\RR^d_x)$. Thus, the mapping $T$ which maps $\varphi\in C^{d+1}_{\Psi}(\RR^d_y)$ to the periodic extension of $\varphi\circ g$ with period $\mathbf{I}$ is well defined as a mapping from $C^{d+1}_{\Psi}(\RR^d_y)$ into $C_{\mathbf{I}}^{d+1}(\RR^d_x)$. Moreover, by (\ref{1155779}), it is continuous.\\
\indent Next, by using a similar technique as in the proof of \cite[Lemma 2.3]{Komatsu1}, one shows that the inclusion mapping $C_{\mathbf{I}}^{d+1}(\RR^d)\rightarrow C_{\mathbf{I}}^0(\RR^d)$ is nuclear. Denote by $\tilde{T}$ the mapping $\varphi\mapsto \varphi\circ g^{-1}$, $C_{\mathbf{I}}^0(\RR^d)\rightarrow C_{L^{\infty}}(\RR^d)$. To prove that it is well defined and continuous, observe that $\varphi\circ g^{-1}$ is continuous and
\beqs
|\varphi\circ g^{-1}(y)|=\left|\varphi\left((\langle y_1\rangle-1)/y_1,...,(\langle y_d\rangle -1)/y_d\right)\right|\leq \|\varphi\|_{L^{\infty}(\mathbf{I})}
\eeqs
for all $y\in\RR^d$.\\
\indent Now, notice that the inclusion mapping $\iota:C^{d+1}_{\Psi}(\RR^d)\rightarrow C_{L^{\infty}}(\RR^d)$ can be decomposed as $\ds C^{d+1}_{\Psi}(\RR^d)\xrightarrow{T} C^{d+1}_{\mathbf{I}}(\RR^d)\xrightarrow{\iota} C^0_{\mathbf{I}}(\RR^d)\xrightarrow{\tilde{T}} C_{L^{\infty}}(\RR^d)$, which, by the above observation, proves its nuclearity.
\end{proof}

In the rest of this subsection, we are interested in the special case $\Psi(x)=e^{A(h|x|)}$, $h>0$. We recall  (\cite[Definition 3.2.3, p. 56]{pietsch}) that a continuous mapping
 $T:E\rightarrow F$,
$E$ and $F$ being normed spaces, is called quasi-nuclear if there exists a sequence of functionals $x'_j\in E'$, $j\in\ZZ_+$, such that $\sum_j \|x'_j\|_{E'}<\infty$ and $\|T(x)\|_F\leq \sum_j |\langle x'_j,x\rangle|$ for all $x\in E$, as well as
(\cite[Theorem 3.3.2, p. 62]{pietsch}) that the composition of two quasi-nuclear mappings is nuclear.

\begin{proposition}\label{nucin} We have:

\indent $i)$ For every $h>0$ there exists $h_1>h$ such that the inclusion mapping $\SSS^{M_p,h_1}_{A_p,h_1}\rightarrow\SSS^{M_p,h}_{A_p,h}$ is nuclear.\\
\indent $ii)$ For every $h>0$ there exists $h_1<h$ such that the inclusion mapping $\SSS^{M_p,h}_{A_p,h}\rightarrow\SSS^{M_p,h_1}_{A_p,h_1}$ is nuclear.
\end{proposition}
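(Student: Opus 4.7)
Parts $i)$ and $ii)$ amount to the same statement modulo relabelling of the parameters, so I only describe the argument for $i)$. The standard route to extract nuclearity of the canonical inclusions of Gelfand--Shilov type spaces from Lemma~\ref{nuccp} is to factor the inclusion as a composition of two quasi-nuclear maps between Banach spaces and invoke the Pietsch composition theorem \cite[Theorem~3.3.2, p.~62]{pietsch}: the composition of two quasi-nuclear maps is nuclear. Thus, given $h>0$, I would pick $h_1>h$ sufficiently large (the precise size dictated by the constant $H$ from $(M.2)$) and an intermediate $h<h'<h_1$, and decompose the inclusion as
\[
  \SSS^{M_p,h_1}_{A_p,h_1} \;\xrightarrow{\iota_1}\; \SSS^{M_p,h'}_{A_p,h'} \;\xrightarrow{\iota_2}\; \SSS^{M_p,h}_{A_p,h}.
\]
It then suffices to show that each $\iota_j$ is quasi-nuclear.

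The two factors are treated identically, so I explain only $\iota_2$. Starting from $\sup_\alpha\leq\sum_\alpha$,
\[
  \sigma_h(\varphi)\;\leq\;\sum_{\alpha\in\NN^d}\frac{h^{|\alpha|}}{M_\alpha}\,\bigl\|e^{A(h|\cdot|)}D^\alpha\varphi\bigr\|_{L^\infty(\RR^d)},
\]
so it is enough to represent, for every $\alpha$, the evaluation map $T_\alpha\colon\SSS^{M_p,h'}_{A_p,h'}\to C_{L^\infty}(\RR^d)$, $T_\alpha\varphi=e^{A(h|\cdot|)}D^\alpha\varphi$, in quasi-nuclear form with total norm summable against the weights $h^{|\alpha|}/M_\alpha$. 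I would factor each $T_\alpha$ as
\[
  \SSS^{M_p,h'}_{A_p,h'} \;\xrightarrow{R_\alpha}\; C^{d+1}_{\Psi}(\RR^d) \;\xrightarrow{J}\; C_{L^\infty}(\RR^d),
\]
where $\Psi(x)=\langle x\rangle^{d+2}\, e^{A(\tilde h|x|)}$ for some $h<\tilde h<h'$ and $R_\alpha$ sends $\varphi$ to a smooth majorant of $e^{A(h|\cdot|)}D^\alpha\varphi$. Condition $(M.6)$ forces $A(\rho)/\log\rho\to\infty$, so $\Psi$ grows faster than every polynomial; hence Lemma~\ref{nuccp} applies and $J$ admits a nuclear representation $J=\sum_j\lambda_j\,\psi'_j\otimes f_j$ with $\sum_j|\lambda_j|<\infty$. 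A Leibniz expansion combined with $(M.2)$ for both $M_p$ and $A_p$ yields an operator-norm bound of the form $\|R_\alpha\|\leq C\,(H/h')^{|\alpha|}M_\alpha$, so that $T_\alpha$ is quasi-nuclear with
\[
  \sum_\alpha\frac{h^{|\alpha|}}{M_\alpha}\|T_\alpha\|_{\mathrm{QN}}
  \;\leq\; C\Bigl(\sum_j|\lambda_j|\Bigr)\sum_\alpha\bigl(hH/h'\bigr)^{|\alpha|}<\infty
\]
provided $h'$ (and hence $h_1$) is taken so that $hH/h'$ is strictly less than one. Treating $\iota_1$ symmetrically and applying Pietsch's theorem then proves $i)$; part $ii)$ follows from exactly the same recipe after interchanging the roles of source and target.

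The main technical obstacle is that $e^{A(h|\cdot|)}$ is not $C^{d+1}$ at the origin, so $R_\alpha$ cannot simply be multiplication by $e^{A(h|\cdot|)}$ composed with $D^\alpha$ if we want to land in $C^{d+1}_\Psi$. This is resolved by replacing $e^{A(h|\cdot|)}$ by a smooth majorant $\mathcal{E}_h(x)$, for instance obtained from an entire series of the form $\sum_{p\geq 0}(h^2|x|^2)^{p}/\tilde A_p$ built from the sequence $A_p$ (if necessary combined with a cut-off near $0$), which is two-sidedly comparable to $e^{A(h|\cdot|)}$ and to $e^{A(\tilde h|\cdot|)}$ via the associated-function identities of \cite[\S3]{Komatsu1}, and whose derivatives satisfy Cauchy-type estimates expressible in terms of the quotients $A_p/A_{p-1}$. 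These are exactly the ingredients needed for the Leibniz computation that underlies the operator-norm bound on $R_\alpha$; once this smoothing step is in place, the remainder is a routine geometric summation in $\alpha$.
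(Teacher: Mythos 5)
Your proposal is correct and follows essentially the same route as the paper: reduce nuclearity to quasi-nuclearity of a single step and compose via Pietsch's theorem, feed a smooth, two-sidedly comparable majorant of $e^{A(h|\cdot|)}$ times $D^{\alpha}\varphi$ into the quasi-nuclear representation supplied by Lemma~\ref{nuccp}, and sum over $\alpha$ using the geometric gain $(hH/h')^{|\alpha|}$ coming from Leibniz and $(M.2)$ (the paper takes $h_1=4Hh$). The only difference is the smoothing device: where you build an entire majorant from a power series in $A_p$ with Cauchy estimates, the paper simply sets $F=\phi*e^{A(2|\cdot|)}$ for a bump function $\phi$, which yields the $C^{d+1}$ bounds (\ref{extraeq1})--(\ref{extraeq2}) immediately.
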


\begin{proof} For $h>0$ we denote by $X_h$ the space $\SSS^{M_p,h}_{A_p,h}$. Since the inclusion $X_{h_1}\rightarrow X_h$ (resp. $X_h\rightarrow X_{h_1}$), is the composition of two inclusions of the same type, it is enough to prove that it is quasi-nuclear. For ease of writing, set $Y_h=C^{d+1}_{e^{A(h|\cdot |)}}(\RR^d)$. We first need to construct $F\in C^{d+1}(\mathbb{R}^{d})$ such that
\begin{equation}\label{extraeq1}
c_1 e^{A(|x|)}\leq |F(x)|\leq C_1 e^{A(4|x|)},\,\,\, x\in\RR^{d},
\end{equation}
and
\begin{equation}\label{extraeq2}
|D^{\alpha}F(x)|\leq C_1 e^{A(4|x|)},\,\,\, x\in\RR^{d}, \: |\alpha|\leq d+1.
\end{equation}
Indeed, choose an even non-negative $\phi\in\mathcal{D}(\mathbb{R}^{d})$ and set $\displaystyle F(x)=(\phi* e^{A(2|\:\cdot\:|)})(x)$; then, from the elementary inequality $\displaystyle e^{A(\rho+\lambda)}\leq 2e^{A(2\rho)}e^{A(2\lambda)}$, we obtain (\ref{extraeq1}) and (\ref{extraeq2}) with
\beqs
c_1=\frac{1}{2}\int_{\mathbb{R}^{d}}\phi(y)e^{-A(2|y|)}dy\,\,\, \mbox{and}\,\,\, C_1=2 \max_{|\alpha|\leq d+1}\int_{\mathbb{R}^{d}}|D^{\alpha}\phi(y)|e^{A(4|y|)}dy.
\eeqs
Let $h>0$. For the proof of $i)$ define $F_h(x)=F(hx)$, resp. for the proof of $ii)$ define $F_h(x)=F(hx/(4H))$. Then $c_1 e^{A(h|x|)}\leq |F_h(x)|\leq C_1 e^{A(4h|x|)}$, resp. $c_1 e^{A(h|x|/(4H))}\leq |F_h(x)|\leq C_1 e^{A(h|x|/H)}$ and (\ref{extraeq2}) implies (for $|\alpha|\leq d+1$)
\begin{equation}\label{grw11}
 \left|D^{\alpha}F_h(x)\right|\leq C_1h^{|\alpha|} e^{A(4h|x|)},\, \mbox{resp.}\, \left|D^{\alpha}F_h(x)\right|\leq C_1\left(\frac{h}{4H}\right)^{|\alpha|}e^{A(h|x|/H)}.
\end{equation}
First we prove $i)$. Take $h_1=4Hh$. We prove that the inclusion $X_{h_1}\rightarrow X_h$ is quasi-nuclear. By Lemma \ref{nuccp}, there exists $S_j\in Y'_{2h}$, $j\in\ZZ_+$, such that $\sum_j \|S_j\|_{Y'_{2h}}<\infty$ and $\chi_j\in C_{L^{\infty}}(\RR^d)$, $j\in\ZZ_+$, with $\|\chi_j\|_{L^{\infty}(\RR^d)}\leq 1$ for all $j\in\ZZ_+$, such that for every $\psi\in Y_{2h}$, $\psi(x)=\sum_j S_j(\psi)\chi_{j}(x)$. Let $\varphi\in X_{h_1}$. For each $\alpha\in\NN^d$, $F_hD^{\alpha}\varphi\in Y_{2h}$. Indeed, for $|\beta|\leq d+1$, by (\ref{grw11}), we have\\
\\
$\left|e^{A(2h|x|)}D^{\beta}\left(F_h(x)D^{\alpha}\varphi(x)\right)\right|$
\beqs
&\leq& \sum_{\gamma\leq \beta} {\beta\choose\gamma} e^{A(2h|x|)}\left|D^{\beta-\gamma}F_h(x)\right| \left|D^{\alpha+\gamma}\varphi(x)\right|\\
&\leq& C_2\sum_{\gamma\leq \beta} {\beta\choose\gamma} e^{2A(4h|x|)} \left|D^{\alpha+\gamma}\varphi(x)\right|\leq C_3\|\varphi\|_{X_{h_1}}\sum_{\gamma\leq \beta} {\beta\choose\gamma} \frac{M_{\alpha+\gamma}}{h_1^{|\alpha|+|\gamma|}}\\
&\leq& C_4(1/(4h))^{|\alpha|}M_{\alpha}\|\varphi\|_{X_{h_1}},
\eeqs
where, in the third inequality, we have used $e^{2A(4h|x|)}\leq c_0 e^{A(4Hh|x|)}=c_0e^{A(h_1|x|)}$ (which follows from $(M.2)$ for $A_p$; see \cite[Proposition 3.6]{Komatsu1}). Observe that
\beqs
\left\|e^{A(h|\cdot|)}D^{\alpha}\varphi\right\|_{L^{\infty}(\RR^d)}\leq C_0\left\|F_hD^{\alpha}\varphi\right\|_{L^{\infty}(\RR^d)}\leq \sum_{j=1}^{\infty} C_0\left|S_j(F_hD^{\alpha}\varphi)\right|.
\eeqs
Denote by $T_{\alpha,j}$ the linear functional on $X_{h_1}$ defined by $T_{\alpha,j}(\varphi)=C_0h^{|\alpha|}S_j(F_hD^{\alpha}\varphi)/M_{\alpha}$. Observe that
\beqs
\left|T_{\alpha,j}(\varphi)\right|\leq C_0\frac{h^{|\alpha|}\|S_j\|_{Y'_{2h}}}{M_{\alpha}}\cdot \sup_{|\beta|\leq d+1} \left\|e^{A(2h|\cdot|)}D^{\beta}\left(F_hD^{\alpha}\varphi\right)\right\|_{L^{\infty}(\RR^d)}\leq C_6\frac{\|S_j\|_{Y'_{2h}}\|\varphi\|_{X_{h_1}}}{4^{|\alpha|}}.
\eeqs
Hence $\left\|T_{\alpha,j}\right\|_{X'_{h_1}}\leq C_6 4^{-|\alpha|} \|S_j\|_{Y'_{2h}}$. We conclude $\sum_{j,\alpha}\left\|T_{\alpha,j}\right\|_{X'_{h_1}}<\infty$ and together with the inequality
\beqs
\|\varphi\|_{X_h}&\leq& \sum_{\alpha}\frac{h^{|\alpha|}\left\|e^{A(h|\cdot|)}D^{\alpha}\varphi\right\|_{L^{\infty}(\RR^d)}}{M_{\alpha}}\leq \sum_{\alpha}\frac{C_0h^{|\alpha|}\left\|F_hD^{\alpha}\varphi\right\|_{L^{\infty}(\RR^d)}}{M_{\alpha}}\\
&\leq&\sum_{j,\alpha}\left|T_{\alpha,j}(\varphi)\right|,
\eeqs
proves the desired quasi-nuclearity.\\
\indent For $ii)$, we take $h_1=h/(4H)$ and show that the inclusion $X_h\rightarrow X_{h_1}$ is quasi-nuclear. By Lemma \ref{nuccp}, there exists $S_j\in Y'_{h/(2H)}$, $j\in\ZZ_+$, such that $\sum_j \|S_j\|_{Y'_{h/(2H)}}<\infty$ and $\chi_j\in C_{L^{\infty}}(\RR^d)$, $j\in\ZZ_+$, with $\|\chi_j\|_{L^{\infty}(\RR^d)}\leq 1$ for all $j\in\ZZ_+$, such that for every $\psi\in Y_{h/(2H)}$, $\psi(x)=\sum_j S_j(\psi)\chi(x)$. Let $\varphi\in X_h$. Similarly as above, one proves that for $\alpha,\beta\in\NN^d$, such that $|\beta|\leq d+1$,
\beqs
\left|e^{A(h|x|/(2H))}D^{\beta}\left(F_h(x)D^{\alpha}\varphi(x)\right)\right|\leq C'_3(H/h)^{|\alpha|}M_{\alpha}\|\varphi\|_{X_h}.
\eeqs
Thus, for each $\alpha\in\NN^d$, $F_hD^{\alpha}\varphi\in Y_{h/(2H)}$. Observe that
\beqs
\left\|e^{A(h_1|\cdot|)}D^{\alpha}\varphi\right\|_{L^{\infty}(\RR^d)}\leq C_0\left\|F_hD^{\alpha}\varphi\right\|_{L^{\infty}(\RR^d)}\leq \sum_{j=1}^{\infty} C_0\left|S_j(F_hD^{\alpha}\varphi)\right|.
\eeqs
Set $T_{\alpha,j}(\varphi)=C_0h_1^{|\alpha|}S_j(F_hD^{\alpha}\varphi)/M_{\alpha}$. Similarly as above one verifies that $\left\|T_{\alpha,j}\right\|_{X'_h}\leq C'_4 4^{-|\alpha|} \|S_j\|_{Y'_{h/(2H)}}$, i.e., $\sum_{j,\alpha}\left\|T_{\alpha,j}\right\|_{X'_h}<\infty$ and $\|\varphi\|_{X_{h_1}}\leq \sum_{j,\alpha}\left|T_{\alpha,j}(\varphi)\right|$ which proves the quasi nuclearity of the inclusion $X_h\rightarrow X_{h_1}$.
\end{proof}

In particular we obtain:

\begin{proposition}\label{propnuclear}
The space $\SSS^*_{\dagger}(\RR^d)$ is nuclear.
\end{proposition}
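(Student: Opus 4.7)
The proof will be a short deduction from Proposition \ref{nucin}, exploiting the characterization of nuclearity in terms of nuclear connecting maps between the steps of a projective (resp. inductive) limit of Banach spaces.

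In the Beurling case, recall that $\SSS^{(M_p)}_{(A_p)}(\RR^d)=\lim_{\substack{\longleftarrow\\ h\rightarrow \infty}} \SSS^{M_p,h}_{A_p,h}$ is realized as a countable projective limit of Banach spaces with connecting maps $\SSS^{M_p,h_1}_{A_p,h_1}\to \SSS^{M_p,h}_{A_p,h}$ for $h_1>h$. The plan is to apply Proposition \ref{nucin}$(i)$: for each $h$ there is $h_1>h$ for which this inclusion is nuclear. Since one can build a cofinal sequence $h_1<h_2<\ldots\to\infty$ with all consecutive inclusions $\SSS^{M_p,h_{n+1}}_{A_p,h_{n+1}}\to\SSS^{M_p,h_n}_{A_p,h_n}$ nuclear, the standard criterion (e.g.\ \cite[\S 21.2]{Sch} or \cite[Chap.\ 5]{pietsch}) shows that the projective limit is a nuclear space.

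In the Roumieu case, $\SSS^{\{M_p\}}_{\{A_p\}}(\RR^d)=\lim_{\substack{\longrightarrow\\ h\rightarrow 0}} \SSS^{M_p,h}_{A_p,h}$ is a countable inductive limit of Banach spaces with connecting maps $\SSS^{M_p,h}_{A_p,h}\to\SSS^{M_p,h_1}_{A_p,h_1}$ for $h_1<h$. Here I would invoke Proposition \ref{nucin}$(ii)$: for each $h$ there is $h_1<h$ for which the linking inclusion is nuclear. Picking a decreasing null sequence $h_1>h_2>\ldots\to 0$ such that every step $\SSS^{M_p,h_n}_{A_p,h_n}\to\SSS^{M_p,h_{n+1}}_{A_p,h_{n+1}}$ is nuclear, the same general principle implies that the (DFS) inductive limit is nuclear.

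There is no real obstacle beyond invoking this standard limit stability property; the substantive content has already been placed in Proposition \ref{nucin}, whose proof relied on the key analytic input of Lemma \ref{nuccp} (nuclearity of the inclusion $C^{d+1}_{\Psi}(\RR^d)\to C_{L^{\infty}}(\RR^d)$) together with the multiplier $F_h$ of controlled growth built from a suitable convolution of $e^{A(2|\cdot|)}$ with a test function.
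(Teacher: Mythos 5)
Your deduction is exactly what the paper intends: Proposition \ref{propnuclear} is stated there as an immediate consequence of Proposition \ref{nucin}, with the standard fact that a countable projective (resp. inductive) limit of Banach spaces with nuclear linking maps is nuclear left implicit. Your proposal simply makes that routine step explicit, so it is correct and follows the same route as the paper.
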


Since $\SSS^{(M_p)}_{(A_p)}(\RR^d)$ is an $(FN)$-space (Fr\'{e}chet nuclear) and $\SSS^{\{M_p\}}_{\{A_p\}}(\RR^d)$ is a $(DFN)$-space, $\SSS'^*_{\dagger}(\RR^d)$ is also nuclear. As a consequence of the nuclearity of $\SSS^*_{\dagger}(\RR^d)$ we have the following proposition. We will often tacitly apply it through the rest of the article.

\begin{proposition}\label{proptensorpi} The following canonical isomorphisms of l.c.s. hold:
\beqs
\SSS^*_{\dagger}(\RR^{d_1+d_2})\cong \SSS^*_{\dagger}(\RR^{d_1})\hat{\otimes}\SSS^*_{\dagger}(\RR^{d_2})\cong \mathcal{L}_b\left(\SSS'^*_{\dagger}(\RR^{d_1}),\SSS^*_{\dagger}(\RR^{d_2})\right),\\
\SSS'^*_{\dagger}(\RR^{d_1+d_2})\cong \SSS'^*_{\dagger}(\RR^{d_1})\hat{\otimes}\SSS'^*_{\dagger}(\RR^{d_2})\cong \mathcal{L}_b\left(\SSS^*_{\dagger}(\RR^{d_1}),\SSS'^*_{\dagger}(\RR^{d_2})\right).
\eeqs
\end{proposition}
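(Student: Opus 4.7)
The plan is to prove everything by first establishing the single key isomorphism
\[
\iota:\SSS^*_{\dagger}(\RR^{d_1})\,\hat{\otimes}\,\SSS^*_{\dagger}(\RR^{d_2})\xrightarrow{\;\sim\;}\SSS^*_{\dagger}(\RR^{d_1+d_2}),
\]
and then deducing the rest from general nuclear space theory. Recall that $\SSS^{(M_p)}_{(A_p)}$ is an $(FN)$-space and $\SSS^{\{M_p\}}_{\{A_p\}}$ is a $(DFN)$-space by Proposition \ref{propnuclear}, so in particular both are complete, reflexive, and Montel, and the $\pi$ and $\varepsilon$ tensor products agree.

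First I would observe that the algebraic map $\varphi\otimes\psi\mapsto\varphi(x)\psi(y)$ is a continuous bilinear map from $\SSS^*_{\dagger}(\RR^{d_1})\times \SSS^*_{\dagger}(\RR^{d_2})$ to $\SSS^*_{\dagger}(\RR^{d_1+d_2})$. This is routine using $(M.2)$ for $A_p$ (to split $e^{A(h(|x|+|y|))}$) together with the Leibniz rule and $(M.2)$ for $M_p$. By the universal property, it extends to a continuous linear map $\iota$ on the completed tensor product, which is clearly injective. The essential step is to prove that the image is dense in $\SSS^*_{\dagger}(\RR^{d_1+d_2})$. For this I would use part $ii)$ of Lemma \ref{appincl} applied in each variable separately: given $\Phi\in\SSS^*_{\dagger}(\RR^{d_1+d_2})$, the operators $\tilde{Q}_n^{(1)}\otimes\tilde{Q}_n^{(2)}$ (mollify-and-cutoff in each group of variables) converge to the identity on the Banach step $\SSS^{M_p,k}_{A_p,k}(\RR^{d_1+d_2})$ (resp.\ $\SSS^{M_p,(k_p)}_{A_p,(k_p)}(\RR^{d_1+d_2})$), so we may assume $\Phi$ is of the form $\chi_n^{(1)}\chi_n^{(2)}*((\varphi_n^{(1)}\otimes\varphi_n^{(2)})\Phi)$, i.e.\ essentially compactly concentrated and Gevrey-regular. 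Any such function can then be expanded as a Fourier series on a large cube, whose partial sums are finite sums of tensor products of one-variable trigonometric exponentials multiplied by cutoffs from $\SSS^*_{\dagger}$; standard estimates show the partial sums converge in the appropriate Banach step. This density, combined with continuity of $\iota$ and nuclearity, forces $\iota$ to be a topological isomorphism (since the projective topology on the tensor product is already induced by the seminorms of $\SSS^*_{\dagger}(\RR^{d_1+d_2})$ via $\iota$).

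Once this central identification is secured, the remaining isomorphisms are formal. By Grothendieck's theory for nuclear spaces,
\[
E\,\hat{\otimes}\,F\;\cong\;E\,\varepsilon\,F\;\cong\;\mathcal{L}_{\epsilon}(E'_c,F),
\]
and when $E$ is reflexive Montel the equicontinuous subsets of $E'$ coincide with the strongly bounded subsets, so the last space equals $\mathcal{L}_b(E'_b,F)$. Applying this with $E=\SSS^*_{\dagger}(\RR^{d_1})$ and $F=\SSS^*_{\dagger}(\RR^{d_2})$ gives the first displayed line. For the second line, take strong duals throughout: since $\SSS^*_{\dagger}(\RR^d)$ is nuclear Fréchet (Beurling) or nuclear $(DF)$ (Roumieu), we have $(E\,\hat{\otimes}\,F)'_b \cong E'_b\,\hat{\otimes}\,F'_b$, and by the same kernel-type identification applied to the nuclear dual pair $(E'_b,F'_b)$ (noting $E'_b$ is itself nuclear and is $(DFN)$ resp.\ $(FN)$), we obtain $E'_b\,\hat{\otimes}\,F'_b\cong\mathcal{L}_b(E,F'_b)$.

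The main obstacle is establishing density of the algebraic tensor product in $\SSS^*_{\dagger}(\RR^{d_1+d_2})$. In a non-quasianalytic context one would just multiply by a compactly supported cutoff and apply Fourier series; in the present quasianalytic setting, compactly supported testing functions are unavailable, and the approximation must be performed by the combined mollifier--cutoff procedure from Lemma \ref{appincl} followed by a careful tensor expansion. Once this density is in hand, everything else is an exercise in standard nuclear space theory.
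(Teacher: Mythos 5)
Your overall architecture is right: everything reduces to the density of $\SSS^*_{\dagger}(\RR^{d_1})\otimes\SSS^*_{\dagger}(\RR^{d_2})$ in $\SSS^*_{\dagger}(\RR^{d_1+d_2})$, after which nuclearity and the standard identifications $E\,\hat{\otimes}\,F\cong E\,\varepsilon\,F\cong\mathcal{L}_{\epsilon}(E'_c,F)\cong\mathcal{L}_b(E'_b,F)$ (for $E$ reflexive Montel) and dualization give both displayed lines; this is exactly how the paper proceeds, deferring the formal part to \cite[Proposition 2]{BojanP}. The first reduction, via $(\chi_n\otimes\tilde{\chi}_n)*\psi\to\psi$ using Lemma \ref{appincl}, also matches the paper.

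The gap is in your Fourier-series step. The function you propose to expand, $\chi_n^{(1)}\chi_n^{(2)}*((\varphi_n^{(1)}\otimes\varphi_n^{(2)})\Phi)$, is \emph{not} compactly supported in the quasianalytic case — the cutoffs $\varphi_n^{(j)}$ lie in $\SSS^*_{\dagger}$ and cannot vanish on any open set. Consequently its restriction to a large cube, extended periodically, fails to be smooth across the boundary of the cube, so the Fourier partial sums do not converge in the norms $\sigma_h$ (resp. $\sigma_{(r_p)}$), which control all derivatives weighted by $M_\alpha$; and even granting interior convergence, the error outside the cube cannot be absorbed: the available cutoffs decay like $e^{-A(h|x|)}$ (resp. $e^{-B_{r_p}(|x|)}$), which exactly cancels, rather than beats, the weight in the seminorm, while the trigonometric partial sums stay merely bounded. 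The paper sidesteps this entirely with a convolution trick you should adopt: write the target as $(\theta_1\otimes\theta_2)*\psi$ with $\theta_j\in\SSS^*_{\dagger}(\RR^{d_j})$, approximate $\psi$ by $g\in\DD(\RR^{d_1})\otimes\DD(\RR^{d_2})$ merely in a weighted $L^1$ norm (where compactly supported functions \emph{are} dense — no quasianalyticity obstruction), and observe that $(\theta_1\otimes\theta_2)*g$ automatically lies in $\SSS^*_{\dagger}(\RR^{d_1})\otimes\SSS^*_{\dagger}(\RR^{d_2})$ while the error $(\theta_1\otimes\theta_2)*(\psi-g)$ is bounded in every seminorm of $\SSS^*_{\dagger}(\RR^{d_1+d_2})$ by $\sigma_{(k_p)}(\theta)\,\|\psi-g\|_{L^1_{e^{B_{k_p}}}}$; here all smoothness and decay are carried by the fixed tensor factor $\theta_1\otimes\theta_2$, and only an $L^1$ approximation of $\psi$ is required.
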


\begin{proof} For brevity in the notation set $d=d_1+d_2$. If we prove the density of $\SSS^*_{\dagger}(\RR^{d_1})\otimes\SSS^*_{\dagger}(\RR^{d_2})$ in $\SSS^*_{\dagger}(\RR^d)$ the rest of the proof is analogous to the proof of \cite[Proposition 2]{BojanP}. Let $\psi\in\SSS^*_{\dagger}(\RR^d)$. For $\{\chi_n\}_{n\in\ZZ_+}\subseteq\SSS^*_{\dagger}(\RR^{d_1})$ and $\{\tilde{\chi}_n\}_{n\in\ZZ_+}\subseteq\SSS^*_{\dagger}(\RR^{d_2})$ as in $ii)$ of Lemma \ref{appincl}, with a similar argument as in the proof of Lemma \ref{appincl}, one readily verifies that $(\chi_n\otimes\tilde{\chi}_n)*\psi\rightarrow \psi$ in $\SSS^*_{\dagger}(\RR^d)$. Hence it is enough to prove that for arbitrary but fixed $\theta_j\in\SSS^*_{\dagger}(\RR^{d_j})$, $j=1,2$, $(\theta_1\otimes\theta_2)*\psi$ can be approximated by elements of $\SSS^*_{\dagger}(\RR^{d_1})\otimes\SSS^*_{\dagger}(\RR^{d_2})$. We give the proof for the Roumieu case, as the Beurling case is is similar. Set $\theta=\theta_1\otimes \theta_2$. Let $\varepsilon>0$ and $(r_p)\in\mathfrak{R}$. Denote $(k_p)=(r_p/2)$. Since $\DD(\RR^d)$ is dense in $L^1_{e^{B_{k_p}(|\cdot|)}}(\RR^d)$ and $\DD(\RR^{d_1})\otimes\DD(\RR^{d_2})$ is dense in $\DD(\RR^d)$ there exists $g\in\DD(\RR^{d_1})\otimes\DD(\RR^{d_2})$ such that $\|\psi-g\|_{L^1_{e^{B_{k_p}(|\cdot|)}(\RR^d)}}\leq \varepsilon/(2\sigma_{(k_p)}(\theta))$. One easily verifies that $\theta*g\in\SSS^{\{M_p\}}_{\{A_p\}}(\RR^{d_1})\otimes \SSS^{\{M_p\}}_{\{A_p\}}(\RR^{d_2})$. By using the inequality $e^{B_{r_p}(\rho+\lambda)}\leq 2e^{B_{r_p}(2\rho)}e^{B_{r_p}(2\lambda)}$, $\rho,\lambda>0$, we have $e^{B_{r_p}(|x|)}\leq 2e^{B_{k_p}(|x-y|)}e^{B_{k_p}(|y|)}$, $\forall x,y\in\RR^d$. Thus\\
\\
$\ds\frac{\left|D^{\alpha}\left(\theta*\psi-\theta*g\right)(x)\right|e^{B_{r_p}(|x|)}}{M_{\alpha}R_{\alpha}}$
\beqs
\leq 2\int_{\RR^d}\frac{\left|D^{\alpha}\theta(x-y)\right|e^{B_{k_p}(x-y)}}{M_{\alpha}R_{\alpha}}\cdot |\psi(y)-g(y)|e^{B_{k_p}(|y|)}dy\leq \varepsilon.
\eeqs
\end{proof}

\begin{remark} As inspection in its proof shows Proposition \ref{nucin} (and hence Propositions \ref{propnuclear} and \ref{proptensorpi}) remains valid if we merely assume that $A_p$ and $M_p$ satisfy $(M.1)$ and $(M.2)$.
\end{remark}

\section{Translation-invariant spaces of ultradistributions}\label{TIBU}
This section collects and explains various results about some classes of translation-invariant spaces of ultradistributions that we shall apply in the next two sections in our study of the convolution. See \cite{DPPV} for the non-quasianalytic case and  \cite{dpv-2} for the quasianalytic one (see also \cite{DPV} for the distribution case). We employ the notation $T_h$ for the translation operator $T_hg=g(\:\cdot\:+h), h\in\mathbb R^d$.

\subsection{Translation-invariant Banach spaces of ultradistribution}
We start by defining the following class of Banach spaces:

\begin{definition}\label{def E}
A $(B)$-space $E$ is said to be a translation-invariant $(B)$-space of ultradistributions of class $*-\dagger$ if it satisfies the following three axioms:
\begin{itemize}
    \item[(I)] $\mathcal{S}^*_{\dagger}(\mathbb{R}^d)\hookrightarrow E\hookrightarrow \mathcal{S}'^*_{\dagger}(\RR^d)$.
    \item[(II)] $T_{h}(E)\subseteq E$ for each $h\in\RR^d$ (i.e., $E$ is translation-invariant).
\item [(III)] There exist $\tau,C>0$ (for every $\tau>0$ there exists $C>0$), such that\footnote{Applying the closed graph theorem, the axioms (I) and (II) yield $T_h\in\mathcal{L}(E)$ for every $h\in\RR^d$, see \cite[Lemma 3.1]{dpv-2}}
    \begin{equation}\label{omega}
   \omega(h):= \|T_{-h}\|_{\mathcal{L}(E)}\leq C e^{A(\tau|h|)}.
   \end{equation}
\end{itemize}
The function $\omega:\mathbb{R}^{d}\to (0,\infty)$ is called the weight (or growth) function of $E$.
\end{definition}

These axioms imply \cite{DPPV,dpv-2} the following important property:  \\
\indent ($\widetilde{II}$) The mappings $h\mapsto T_h g$ are continuous for each $g\in E$,\\
i.e., the translation group of $E$ is a $C_0$-group; moreover, $E$ is separable.

In the rest of the section we assume that $E$ and $\omega$ are as
in Definition \ref{def E}. It should be noticed that the weight
function of $E$ is measurable, $\omega(0)=1$, and $\log \omega$ is
subadditive. We associate to $E$ the Beurling algebra
$L^{1}_{\omega}$, this convolution algebra is very important to
understand the properties of $E$. The next proposition collects
some useful results from \cite{dpv-2} concerning the natural
convolution structure on $E$.

\begin{proposition}[\cite{dpv-2}]\label{corBeurling Algebra} The $(B)$-space $E$ satisfies:

a) The convolution  mapping $ \mathcal{S}^*_{\dagger}(\mathbb{R}^d)\times \mathcal{S}^*_{\dagger}(\mathbb{R}^d)\rightarrow \mathcal{S}^*_{\dagger}(\mathbb{R}^d) $ extends as a continous bilinear mapping  $L^{1}_{\omega}\times E\rightarrow E$ so that $E$ becomes a Banach module over the Beurling
algebra $L^{1}_{\omega}$, i.e., $ \|u\ast g\|_{E}\leq \|u\|_{1,\omega}\|g\|_{E}$.

b) Let $g\in E$, $\chi\in \mathcal{S}^*_{\dagger}(\mathbb{R}^d)$ with $\displaystyle\int_{\mathbb{R}^{d}}\chi(x)dx$=1, and define $\chi_n(x)=n^d\chi(nx)$, $n\in\ZZ_+$ . Then, $ \displaystyle \lim_{n\to\infty} \|g-\chi_{n}*g\|_E=0$.

\end{proposition}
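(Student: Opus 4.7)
My plan for part (a) is to define the convolution $u\ast g$, for $u\in L^{1}_{\omega}$ and $g\in E$, via the Bochner integral
\begin{equation*}
u\ast g := \int_{\RR^{d}} u(y)\, T_{-y}g\, dy
\end{equation*}
taken in the Banach space $E$. I would first verify that this integral is well defined: by property $(\widetilde{II})$ the map $y\mapsto T_{-y}g$ is continuous from $\RR^{d}$ into $E$, hence $y\mapsto u(y)T_{-y}g$ is strongly measurable; axiom (III) gives $\|T_{-y}g\|_{E}\leq \omega(y)\|g\|_{E}$, so
\begin{equation*}
\int_{\RR^{d}} \|u(y) T_{-y}g\|_{E}\, dy \leq \|u\|_{1,\omega}\|g\|_{E},
\end{equation*}
which supplies both Bochner integrability and the continuity bound $\|u\ast g\|_{E}\leq \|u\|_{1,\omega}\|g\|_{E}$.

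Next I must check compatibility with the classical convolution on $\SSS^{\ast}_{\dagger}(\RR^{d})$. Since $E\hookrightarrow \SSS'^{\ast}_{\dagger}(\RR^{d})$ continuously, pairing the Bochner integral with $\varphi\in\SSS^{\ast}_{\dagger}(\RR^{d})$ yields $\langle u\ast g,\varphi\rangle=\int_{\RR^{d}} u(y)\langle T_{-y}g,\varphi\rangle\, dy$, which for $u,g\in\SSS^{\ast}_{\dagger}(\RR^{d})$ agrees with the usual convolution by a direct Fubini argument. The associativity $(u_{1}\ast u_{2})\ast g = u_{1}\ast (u_{2}\ast g)$ needed for the module structure is proved in the same spirit, using Fubini for Bochner integrals. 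Submultiplicativity of $\omega$ follows from $T_{-(x+y)}=T_{-x}T_{-y}$, giving $\omega(x+y)\leq \omega(x)\omega(y)$, so that $L^{1}_{\omega}$ is itself a Banach convolution algebra.

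For part (b), the plan is to write, using $\int\chi_{n}=1$ and the change of variables $z=ny$,
\begin{equation*}
\chi_{n}\ast g - g = \int_{\RR^{d}}\chi_{n}(y)\bigl(T_{-y}g-g\bigr)dy = \int_{\RR^{d}}\chi(z)\bigl(T_{-z/n}g-g\bigr)dz,
\end{equation*}
and then apply dominated convergence in $E$. Pointwise, $\|T_{-z/n}g-g\|_{E}\to 0$ by strong continuity $(\widetilde{II})$; for a uniform dominant, monotonicity of $A$ gives $\omega(z/n)\leq C e^{A(\tau|z|)}$ for all $n\geq 1$, hence
\begin{equation*}
\bigl\|\chi(z)(T_{-z/n}g-g)\bigr\|_{E}\leq |\chi(z)|\bigl(C e^{A(\tau|z|)}+1\bigr)\|g\|_{E},
\end{equation*}
whose right-hand side is integrable since $\chi\in\SSS^{\ast}_{\dagger}(\RR^{d})$ has rapid decay of the required type. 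The main obstacle will be the careful identification in part (a) between the Bochner-integral definition and the classical convolution on $\SSS^{\ast}_{\dagger}(\RR^{d})$; once that is settled, the Banach module property and the approximation statement in (b) become routine consequences of strong continuity and the growth bound on $\omega$.
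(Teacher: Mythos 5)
Your argument is correct, and it is the standard route: the paper itself states this proposition without proof (it is quoted from the reference \cite{dpv-2}), where it is established in essentially the way you propose, namely by defining $u\ast g$ as an $E$-valued Bochner integral $\int_{\RR^d}u(y)T_{-y}g\,dy$ (well defined thanks to separability of $E$, the $C_0$-group property $(\widetilde{II})$, and the bound $\|T_{-y}g\|_E\leq\omega(y)\|g\|_E$ from (III)), identifying it with the classical convolution through the embedding $E\hookrightarrow \SSS'^*_{\dagger}(\RR^d)$, and proving $b)$ by the approximate-identity computation with dominated convergence. The only detail worth spelling out fully is the integrability check that makes $\SSS^*_{\dagger}(\RR^d)\subset L^1_{\omega}$ and the dominant in $b)$ integrable; this uses $(M.2)$ and $(M.6)$ for $A_p$ together with the correct quantifier on $\tau$ in (III) in the Roumieu case, but it is routine.
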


The dual space $E'$ carries a convolution structure as well.
Indeed, we can associate the Beurling algebra
$L^{1}_{\check{\omega}}$ to $E'$ (here
$\check{\omega}(x)=\omega(-x)$) and the convolution of $f\in E'$
and $u\in L^{1}_{\check{\omega}}$ is  defined via transposition: $
\left\langle u\ast f,g \right\rangle:= \left\langle f,\check
{u}\ast g\right\rangle,$ $g\in E. $ The space $E'$ then becomes a
Banach module over $L^{1}_{\check{\omega}}$.

It is important to notice that, in general, $E'$ is not a
translation-invariant $(B)$-space of ultradistributions of class
$*-\dagger$. Indeed, the properties (I) and ($\widetilde{II}$) may
fail for $E'$ (e.g., take $E=L^{1}$).

We have introduced in \cite{DPV,DPPV, dpv-2} the space
$E'_{\ast}=L^{1}_{\check{\omega}}\ast E'$. Since the Beurling
algebra $L^{1}_{\check{\omega}}$ admits bounded approximation
unities, it follows from the celebrated Cohen-Hewitt factorization
theorem \cite{kisynski} that $E'_{\ast}$ is actually a closed
linear subspace of $E'$. Thus, $E'_{\ast}$ inheres the Banach
module structure over $L^{1}_{\check{\omega}}$.  The $(B)$-space
of ultradistributions $E'_{\ast}$ possesses richer properties than
$E'$; indeed, it satisfies ($\tilde{II}$) and $b)$ from
Proposition \ref{corBeurling Algebra}; moreover, we have the
explicit description \cite{DPPV,dpv-2} $E'_{\ast}=\{f\in E':
\lim_{h\to 0}\|T_{h}f-f\|_{E'}=0\}$. When $E$ is reflexive, we
have proved \cite{DPPV, dpv-2} that $E'$ is also a
translation-invariant Banach space of ultradistributions of class
$*-\dagger$
and in fact $E'=E'_{\ast}$. We now give some typical examples of
$E$.

\begin{example} \label{ex1}
Let $\eta$ be a \emph{ultrapolynomially bounded weight function of class $\dagger$}, that is, a continous function $\eta:\mathbb{R}^d\rightarrow (0,\infty)$ that fulfills the requirement $\eta(x+h)\leq C\eta(x)e^{A(\tau|h|)}$ for some $C,\tau>0$, resp. for every $\tau>0$ there exists $C>0$).
 One clearly has that $E=L^{p}_{\eta}$ are translation-invariant Banach spaces of tempered ultradistributions for $p\in[1,\infty)$. The case $p=\infty$ is an exception, because the properties (I) fails for $L^{\infty}_{\eta}$. In view of reflexivity, the space $E'_{\ast}$ corresponding to $E=L^{p}_{\eta^{-1}}$ is $E'_{\ast}=E'=L^{q}_{\eta}$ whenever $1<p<\infty$, where $q$ is the conjugate index to $p$. On the other hand, $E'_{\ast}=UC_{\eta}:= \left\{u\in L^{\infty}_{\eta}: \lim_{h\to0}||T_{h}u-u||_{\infty,\eta}=0 \right\}$ for $E=L^{1}_{\eta}$. The weight function of $L^{p}_{\eta}$ is $\omega(h)= \|T_{h}\eta\|_{\infty,\eta}$ for $p\in[1,\infty)$, while that for $L^{\infty}_{\eta}$ is $\check{\omega}$ (cf. \cite[Prop. 10]{DPV}). Another instance is $E=C_{\eta}$ where $C_{\eta}=\{\varphi\in C(\RR^d)|\, \lim_{|x|\rightarrow\infty} \varphi(x)/\eta(x)=0\}$ with norm $\|\cdot\|_{\infty,\eta}$; in this case, one readily verifies that $E'_{\ast}=L^{1}_{\eta}$.
\end{example}

\subsection{The spaces $\mathcal{D}^*_{E}$ and $\mathcal{D}'^*_{E'_{\ast}}$}\label{tspace}
Following \cite{DPPV,dpv-2}, we introduce the test function space $\mathcal{D}^*_{E}$ as follows.
Let $$\mathcal{D}^{M_p,m}_{E}=\left\{\varphi\in E\Big|\, D^{\alpha}\varphi\in E, \forall \alpha\in\NN^d, \|\varphi\|_{E,m}=\sup_{\alpha\in\NN^d}\frac{m^\alpha \|D^\alpha\varphi\|_E}{M_\alpha}<\infty\right\}.$$  We define the (Hausdorff) l.c.s.
\begin{equation} \label{aldef1}
\mathcal{D}_E^{(M_p)}=\lim_{\substack{\longleftarrow\\ m\rightarrow\infty}} \mathcal{D}_E^{M_p,m},\,\,\,\mathcal{D}_E^{\{M_p\}}=\lim_{\substack{\longrightarrow\\ m\rightarrow 0}} \mathcal{D}_E^{M_p,m}.
\end{equation}
An important result from  \cite{DPPV, dpv-2} is the alternative description of the Roumieu case via
the projective family of $(B)$-spaces
\begin{equation}
\label{aldef2}
\mathcal{D}_E^{\{M_p\},(r_p)}=\left\{\varphi \in E\Bigg|\, D^{\alpha}\varphi\in E, \forall\alpha\in\NN^d, \|\varphi\|_{E,(r_p)}=\sup_{\alpha}\frac{\|D^\alpha\varphi\|_E}{M_\alpha \prod_{j=1}^{|\alpha|}r_j }<\infty\right\},
\end{equation}
with $(r_p)\in\mathfrak{R}$; we have the (topological) equality $\ds\mathcal{D}_E^{\{M_p\}}=\lim_{\substack{\longleftarrow\\ (r_p)\in \mathfrak{R}}}\mathcal{D}_E^{\{M_p\},(r_p)}$.

We recall some properties of $\mathcal{D}^*_{E}$ that are shown in
\cite{dpv-2}. We have shown that the elements of
$\mathcal{D}^*_{E}$  are actually ultradifferentiable
functions, namely, $\mathcal{D}^*_{E}\subseteq
\mathcal{E}^*(\RR^{d})$ and the inclusion is continuous. Moreover, one has the dense embeddings
$\mathcal{S}^*_{\dagger}(\mathbb{R}^d)\hookrightarrow\mathcal{D}^{*}_E\hookrightarrow E\hookrightarrow\mathcal{S}'^*_{\dagger}(\mathbb{R}^d)$. Clearly, ultradifferential operators of class $*$ act continuously on $\mathcal{D}^*_{E}$.
Furthermore, $\mathcal{D}^*_{E}$ is a topological algebra on
$L^{1}_{\omega}$ under convolution. Obviously,
$\mathcal{D}_E^{(M_p)}$ is an $(F)$-space and
$\mathcal{D}_E^{\{M_p\}}$ is a barreled, bornological
$(DF)$-space. Moreover, $\DD^{\{M_p\}}_E$ is complete and regular,
i.e., every bounded set $B$ in $\mathcal{D}_E^{\{M_p\}}$ is
bounded in some $\mathcal{D}_E^{\{M_p\},m}$. The space
$\mathcal{D}^*_{E}$ is reflexive if $E$ is reflexive.

 The strong dual of $\mathcal{D}^*_{E}$ will be denoted as $\mathcal{D}'^*_{E'_{\ast}}$. Clearly, $\mathcal{D}'^{(M_p)}_{E'_{\ast}}$ is a complete $(DF)$-space and $\mathcal{D}'^{\{M_p\}}_{E'_{\ast}}$ is an $(F)$-space. When $E$ is reflexive, we write $\mathcal{D}'^*_{E'}=\mathcal{D}'^*_{E'_{\ast}}$ and $\SSS^*_{\dagger}(\mathbb{R}^d)$ is dense in it. The notation $\mathcal{D}'^*_{E'_{\ast}}=(\mathcal{D}^*_{E})'$ is justified by the next structural theorem from \cite{dpv-2}. The spaces $UC_{\omega}$ and $C_{\omega}$ are those closed subspaces of $L^{\infty}_{\omega}$ defined in Example \ref{ex1}.

\begin{theorem} [\cite{dpv-2}]\label{karak}
Let $B\subseteq\SSS'^*_{\dagger}(\mathbb{R}^d)$. The following statements are equivalent:
\begin{itemize}
\item [$(i)$] $B$ is a bounded subset of $\mathcal{D}'^*_{E'_{\ast}}$.
\item [$(ii)$] For each $\psi\in\SSS^*_{\dagger}(\RR^d)$, $\{f*\psi|\, f\in B\}$ is a bounded subset of $E'$.
\item [$(iii)$] For each $\psi\in\SSS^*_{\dagger}(\RR^d)$, $\{f*\psi|\, f\in B\}$ is a bounded subset of $E'_*$.
\item [$(iv)$] There exists a bounded subset $B_1$ of $E'$ and an ultradifferential operator $P(D)$ of class $*$ such that each $f\in B$ can be expressed as $f=P(D)g$ with $g\in B_1$.
\item [$(v)$] There exists $B_2\subseteq E'_*\cap  UC_{\omega}$ which is bounded in $E'_*$ and in $ UC_{\omega}$ and an ultradifferential operator $P(D)$ of class $*$ such that each $f\in B$ can be expressed as $f=P(D)g$ with $g\in B_2$. Moreover, if $E$ is reflexive, we may choose $B_2\subseteq E'_*\cap C_{\omega}$.
\end{itemize}
\end{theorem}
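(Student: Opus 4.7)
The plan is to close the chain of implications $(v)\Rightarrow(iv)\Rightarrow(iii)\Rightarrow(ii)\Rightarrow(i)\Rightarrow(v)$; the last implication is the main step and hinges on the parametrix construction of Proposition \ref{parametrix}. The first three implications are essentially formal: $(v)\Rightarrow(iv)$ and $(iii)\Rightarrow(ii)$ are immediate since $E'_{\ast}$ is a closed subspace of $E'$ with the induced topology; and $(iv)\Rightarrow(iii)$ follows from $f\ast\psi=P(D)g\ast\psi=g\ast P(D)\psi$ together with $P(D)\psi\in\SSS^*_{\dagger}(\RR^d)\subseteq L^1_{\check{\omega}}$, so that the Banach module bound $\|g\ast P(D)\psi\|_{E'}\leq\|P(D)\psi\|_{1,\check{\omega}}\|g\|_{E'}$ places $f\ast\psi$ uniformly in $E'_{\ast}=L^1_{\check{\omega}}\ast E'$.

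For $(ii)\Rightarrow(i)$, apply Proposition \ref{parametrix} to obtain $G\in\SSS^*_{\dagger}(\RR^d)$ and an ultradifferential operator $P(D)$ of class $*$ with $P(D)G=\delta$. Every $\varphi\in\mathcal{D}^*_{E}$ decomposes as $\varphi=G\ast P(D)\varphi$, and since $P(D)$ acts continuously on $\mathcal{D}^*_E$ one has $P(D)\varphi\in E$ and the identity $\langle f,\varphi\rangle=\langle f\ast\check{G},P(D)\varphi\rangle$. Combined with hypothesis $(ii)$ applied to $\psi=\check{G}$ this yields
\[
|\langle f,\varphi\rangle|\leq\|f\ast\check{G}\|_{E'}\,\|P(D)\varphi\|_E\leq C_{\varphi},\qquad f\in B.
\]
Hence $B$ is weakly bounded, and therefore strongly bounded by the barrelledness of $\mathcal{D}^*_E$.

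The main implication $(i)\Rightarrow(v)$ proceeds as follows. Barrelledness gives equicontinuity: there exist $m>0$, resp.\ $(r_p)\in\mathfrak{R}$, and $C>0$ with $|\langle f,\varphi\rangle|\leq C\|\varphi\|_{E,m}$, resp.\ $C\|\varphi\|_{E,(r_p)}$, for all $f\in B$ and $\varphi\in\SSS^*_{\dagger}(\RR^d)$, so each $f$ extends by density to a continuous functional $\tilde{f}$ on the closure $X$ of $\SSS^*_{\dagger}(\RR^d)$ in the corresponding $\mathcal{D}^{M_p,m}_E$, resp.\ $\mathcal{D}^{\{M_p\},(r_p)}_E$. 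Invoke Proposition \ref{parametrix} to pick $u\in\SSS^*_{\dagger}(\RR^d)$ and $P(D)$ of class $*$ with $P(D)u=\delta$, choosing the parameter $t$ (resp.\ $(t_p)$) large (resp.\ small) enough to ensure $u\in X$, $u\in L^1_{\check{\omega}}$, and that $u$ admits a factorization $u=u_1\ast u_2$ with $u_1,u_2\in X\cap L^1_{\check{\omega}}$. Define $g_f(x)=\langle\tilde{f},T_{-x}\check{u}\rangle$; the estimate $\|T_{-x}\check{u}\|_{E,m}\leq\omega(x)\|\check{u}\|_{E,m}$ yields $|g_f(x)|\leq C\omega(x)\|\check{u}\|_{E,m}$, uniformly in $f\in B$, placing $\{g_f\}$ in a bounded set of $L^\infty_\omega$, while the continuity of the translation group on $E$ gives $g_f\in C(\RR^d)$. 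Membership in $UC_\omega$ follows from $g_f(x+h)-g_f(x)=\langle\tilde{f},T_{-x}(T_{-h}\check{u}-\check{u})\rangle$ and the bound
\[
\frac{|g_f(x+h)-g_f(x)|}{\omega(x)}\leq C\|T_{-h}\check{u}-\check{u}\|_{E,m},
\]
which tends to $0$ as $h\to 0$ uniformly in $f\in B$ and $x$. Boundedness in $E'_{\ast}$ is extracted from $g_f=u_2\ast(u_1\ast f)$: the inner factor $u_1\ast f$ (defined pointwise by the same duality formula) lies in $E'$ with $\sup_{f\in B}\|u_1\ast f\|_{E'}<\infty$ via the seminorm control, and convolution with $u_2\in L^1_{\check{\omega}}$ lands in $E'_{\ast}$ with the Banach module estimate. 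Finally $P(D)g_f=P(D)(u\ast f)=(P(D)u)\ast f=f$ verifies $(v)$ with $B_2=\{g_f:f\in B\}$. For reflexive $E$ one has $E'=E'_{\ast}$, and the same $g_f$'s lie in $C_\omega$ because $\check{u}$ has super-polynomial decay in $E$-norm, forcing $g_f(x)/\omega(x)\to 0$.

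The main obstacle is the careful coordination of parameters inside the parametrix: one must choose the decay index of $u$ in Proposition \ref{parametrix} so that all of the properties $u\in X$, $u\in L^1_{\check{\omega}}$ with controlled $\mathcal{D}^*_{L^1_\omega}$-norm, and factorability into two pieces of the same quality, hold simultaneously. In addition, showing that the pointwise definition $u_1\ast f(x)=\langle\tilde{f},T_{-x}\check{u}_1\rangle$ represents an element of $E'$ requires identifying it as the transpose image of the continuous map $\varphi\mapsto\check{u}_1\ast\varphi$ from $E$ into the appropriate local space, a step analogous to (but more delicate than) the one carried out in the proof of Proposition \ref{parametrix1}.
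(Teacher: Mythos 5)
A preliminary remark: Theorem \ref{karak} is imported from \cite{dpv-2} and the present paper contains no proof of it, so your argument can only be measured against the techniques the paper does develop (Lemma \ref{appincl}, Proposition \ref{parametrix1}, Lemma \ref{boundedsetS}, Proposition \ref{corprecom}). Your architecture --- a cycle of implications whose two nontrivial steps $(ii)\Rightarrow(i)$ and $(i)\Rightarrow(v)$ rest on the parametrix of Proposition \ref{parametrix} --- is the expected one, and the formal implications $(v)\Rightarrow(iv)\Rightarrow(iii)\Rightarrow(ii)$ are correct as you give them.

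There is, however, a genuine gap that recurs in both hard steps: you assert that Proposition \ref{parametrix} produces $G\in\SSS^*_{\dagger}(\RR^d)$ (resp.\ $u\in\SSS^*_{\dagger}(\RR^d)$) with $P(D)G=\delta$. This is false and cannot be repaired by a better reading of that proposition: $P(D)$ maps $\SSS^*_{\dagger}(\RR^d)$ continuously into itself, so $G\in\SSS^*_{\dagger}(\RR^d)$ would force $\delta\in\SSS^*_{\dagger}(\RR^d)$. The proposition only places $G$ in a single Banach step $\SSS^{M_p,t}_{A_p,t}$ (resp.\ $\SSS^{M_p,(t_p)}_{A_p,(t_p)}$), not in the projective limit. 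In $(ii)\Rightarrow(i)$ this is fatal as written: hypothesis $(ii)$ is available only for $\psi\in\SSS^*_{\dagger}(\RR^d)$, so you may not apply it to $\psi=\check{G}$, and at that stage $f*\check{G}$ is not even defined. The repair is exactly the missing substance of the implication: first upgrade $(ii)$ to a uniform estimate $\|f*\psi\|_{E'}\leq C\sigma_t(\psi)$ over $f\in B$ (via separate equicontinuity of $(\psi,e)\mapsto\langle f*\psi,e\rangle$ and the K\"othe theorems, as in Lemma \ref{boundedsetS}), then extend $\psi\mapsto f*\psi$ to the closure of $\SSS^*_{\dagger}(\RR^d)$ in $\SSS^{M_p,t}_{A_p,t}$ and coordinate parameters so that $\check{G}$ lies in that closure (which itself requires the approximation argument of Lemma \ref{appincl}, as in Proposition \ref{parametrix1}). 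Two further weak points in $(i)\Rightarrow(v)$: the factorization $u=u_1*u_2$ with both factors enjoying all the listed properties is not furnished by Proposition \ref{parametrix} and must be constructed separately (e.g.\ taking $P=P_1P_2$ with each $P_i$ as in Lemma \ref{ultpoly}); and your justification of the reflexive refinement $g_f\in C_{\omega}$ does not work, since $\|T_{-x}\check{u}\|_{E}\leq\omega(x)\|\check{u}\|_{E}$ only yields $|g_f(x)|=O(\omega(x))$, not $o(\omega(x))$ --- for $E=L^2$ the translates of $\check{u}$ have constant norm, so no decay can come from this bound alone.
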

We shall also need the ensuing characterization of precompact subsets of $B\subseteq\mathcal{D}_{E'_{\ast}}'^*$.

\begin{proposition}\label{corprecom}
Let $B\subseteq\mathcal{D}_{E'_{\ast}}'^*$. The following statements are equivalent:
\begin{itemize}
\item [$(i)$] $B$ is precompact in $\mathcal{D}_{E'_{\ast}}'^*$.
%\item [$(ii)$] $\iota(B)$ is precompact in $\mathcal{S}'^*_{\dagger}(\mathbb{R}^{d},E')$ (or equivalently in $\mathcal{S}'^*_{\dagger}(\mathbb{R}^{d},E'_{\ast})$).
\item [$(ii)$] There exists a precompact set $B_1$ in $E'$ and an ultradifferential operator $P(D)$ of class $*$ such that $f=P(D) g$ for some $g\in B_1$.
\item [$(iii)$] There exists a set $B_2\subseteq E'_*\cap  UC_{\omega}$ which is precompact in $E'_*$ and in $ UC_{\omega}$ and an ultradifferential operator $P(D)$ of class $*$ such that each $f\in B$ is of the form $f=P(D) g$ for some $g\in B_2$; if $E$ is reflexive one may choose $B_2\subseteq E'_*\cap C_{\omega}$.
\end{itemize}
\end{proposition}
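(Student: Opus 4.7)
The plan is to mirror the strategy of Theorem \ref{karak} while tracking precompactness in place of boundedness throughout. The guiding principle is that continuous linear maps between l.c.s.\ send precompact sets to precompact sets, so each implication reduces to a continuity assertion, and the parametrix from Proposition \ref{parametrix} supplies a common structural representation that is functorial on precompact families.

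The implication $(iii)\Rightarrow(ii)$ is immediate from the continuous inclusion $E'_{\ast}\hookrightarrow E'$. For $(ii)\Rightarrow(i)$, observe that for any ultradifferential operator $P(D)$ of class $*$, the formal transpose $P(-D)$ acts continuously from $\mathcal{D}^{*}_{E}$ into $E$ directly from the definitions (\ref{aldef1})--(\ref{aldef2}); dualizing yields the continuous map $P(D)\colon E'\to \mathcal{D}'^{*}_{E'_{\ast}}$, so that $P(D)(B_1)\supseteq B$ is precompact in $\mathcal{D}'^{*}_{E'_{\ast}}$, and consequently so is $B$.

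The substantive implication is $(i)\Rightarrow(iii)$. By Proposition \ref{parametrix} choose $u\in\mathcal{S}^{*}_{\dagger}(\RR^d)$ and an ultradifferential operator $P(D)$ of class $*$ with $P(D)u=\delta$, so that the identity $f=P(D)(u*f)$ holds in $\mathcal{D}'^{*}_{E'_{\ast}}$ for every $f$ in this space. The key claim is that the convolution operator
\[
\Phi_{u}\colon\mathcal{D}'^{*}_{E'_{\ast}}\longrightarrow E'_{\ast}\cap UC_{\omega}, \qquad f\mapsto u*f,
\]
is a well-defined continuous linear map when the codomain carries the intersection topology. Granted this, $B_2:=\Phi_{u}(B)$ is the continuous image of a precompact set, hence precompact both in $E'_{\ast}$ and in $UC_{\omega}$, and every $f\in B$ admits the representation $f=P(D)g$ with $g=u*f\in B_2$. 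The refinement $B_2\subseteq E'_{\ast}\cap C_{\omega}$ in the reflexive case is obtained by noting that $u*g\in C_{\omega}$ whenever $u\in\mathcal{S}^{*}_{\dagger}(\RR^d)$ and $g\in E'=E'_{\ast}$, a consequence of the density of $\mathcal{S}^{*}_{\dagger}(\RR^d)$ in $E$.

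The main obstacle is the continuity of $\Phi_{u}$. Continuity into $E'_{\ast}$ follows from the Banach module structure of $\mathcal{D}'^{*}_{E'_{\ast}}$ over $L^{1}_{\check{\omega}}$, which contains $\mathcal{S}^{*}_{\dagger}(\RR^d)$ thanks to (\ref{omega}). Continuity into $UC_{\omega}$ is more delicate and rests on the pointwise description $(u*f)(x)=\langle f,T_{x}\check u\rangle$ (cf.\ the last paragraph of Proposition \ref{parametrix1}) together with a Lipschitz estimate on translates in $\mathcal{S}^{*}_{\dagger}(\RR^d)$ analogous to (\ref{115rl}); these simultaneously yield a pointwise bound $|(u*f)(x)|\leq C\omega(x)$ uniformly for $f$ in any equicontinuous family and the uniform continuity required for membership in $UC_{\omega}$.
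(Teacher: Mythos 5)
There is a genuine gap in your proof of $(i)\Rightarrow(iii)$, and it occurs at the very first step: you ``choose $u\in\mathcal{S}^{*}_{\dagger}(\RR^d)$ and an ultradifferential operator $P(D)$ of class $*$ with $P(D)u=\delta$.'' No such $u$ exists. An ultradifferential operator of class $*$ maps $\mathcal{S}^{*}_{\dagger}(\RR^d)$ continuously into itself, so $P(D)u$ would be a smooth rapidly decreasing function, whereas $\delta$ is not. Proposition \ref{parametrix} only provides $G\in\SSS^{M_p,t}_{A_p,t}$ (resp.\ $G\in\SSS^{M_p,(t_p)}_{A_p,(t_p)}$) for a \emph{single, fixed} $t$ (resp.\ $(t_p)$), i.e.\ an element of one Banach space of the defining spectrum but emphatically not of the projective limit $\mathcal{S}^{*}_{\dagger}(\RR^d)$; this is unavoidable in the quasianalytic setting, where there is no compactly supported cut-off to produce a parametrix inside the test space. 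Consequently your map $\Phi_{u}$ is not globally defined on $\mathcal{D}'^{*}_{E'_{\ast}}$ by the $L^{1}_{\check\omega}$-module structure, the identity $f=P(D)(u*f)$ does not ``hold for every $f$ in this space'' without first making sense of $u*f$, and the pointwise formula $(u*f)(x)=\langle f,T_x\check u\rangle$ requires $f$ to act on translates of $\check u$, which a general $f\in\mathcal{D}'^{*}_{E'_{\ast}}$ cannot do.

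This is precisely where the real work of the proof lies, and why $P(D)$ and the representing set in the conclusion must depend on $B$. The correct route (the one the paper takes) is: use the precompactness of $B$ to show that the family $\iota(B)=\{\varphi\mapsto f*\check\varphi:\,f\in B\}$ is equicontinuous in $\mathcal{L}\bigl(\mathcal{S}^{\{M_p\}}_{\{A_p\}}(\RR^d),E'_{\ast}\bigr)$, hence extends to a bounded subset of $\mathcal{L}_b\bigl(X_{(r_p)},E'_{\ast}\bigr)$ for some $(r_p)\in\mathfrak{R}$; upgrade, via the Banach--Steinhaus theorem and a compact inclusion $X_{(r'_p)}\rightarrow X_{(r_p)}$, to precompactness of the extended family in $\mathcal{L}_b\bigl(X_{(r'_p)},E'_{\ast}\bigr)$; and only then invoke Proposition \ref{parametrix} to place a parametrix $u$ inside $X_{(r'_p)}$, so that evaluation at $u$ produces the precompact set $B_2=\{u*\tilde f:\,f\in B\}$ and the representation $f=P(D)(u*\tilde f)$. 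Your implications $(iii)\Rightarrow(ii)$ and $(ii)\Rightarrow(i)$ are fine, but the heart of the proposition cannot be reduced to the continuity of a single convolution operator with a test-function kernel.
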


\begin{proof} We consider the Roumieu case as the Beurling case is similar. Let $(i)$ hold. For $f\in \DD'^{\{M_p\}}_{E'_*}$ we define the linear operator $\iota(f)(\varphi)=f*\check{\varphi}$, $\SSS^{\{M_p\}}_{\{A_p\}}(\RR^d)\rightarrow E'_*$ (it is well defined by Theorem \ref{karak}). One easily verifies that $\iota(f)$ is continuous and the mapping $f\mapsto \iota(f)$, $\DD'^{\{M_p\}}_{E'_*}\rightarrow \mathcal{L}_b(\SSS^{\{M_p\}}_{\{A_p\}}(\RR^d),E'_*)$ is continuous (see the discussion after \cite[Corollary 4.11]{dpv-2} for details). Since $B$ is precompact, so is $\iota(B)$. As $\SSS^{\{M_p\}}_{\{A_p\}}(\RR^d)$ is barreled, $\iota(B)$ is equicontinuous in $\mathcal{L}_b\left(\SSS^{\{M_p\}}_{\{A_p\}}(\RR^d), E'_*\right)$. Hence there exists $(r_p)\in\mathfrak{R}$ such that the elements of $\iota(B)$ can be extended to a bounded subset $\widetilde{\iota(B)}=\{\widetilde{\iota(f)}|\, f\in B\}$ of $\mathcal{L}_b\left(X_{(r_p)}, E'_*\right)$. Let $\mathfrak{G}$ be the family of all finite subsets of $\SSS^*_{\dagger}(\RR^d)$. Since $\SSS^*_{\dagger}(\RR^d)$ is dense in $X_{(r_p)}$, $\mathfrak{G}$ is total in $X_{(r_p)}$ and $\mathcal{L}_{\mathfrak{G}}\left(X_{(r_p)},E'_*\right)$ is a l.c.s. (the index $\mathfrak{G}$ stands for the topology of uniform convergence on all sets in $\mathfrak{G}$). The topology induced on $\widetilde{\iota(B)}$ by $\mathcal{L}_{\mathfrak{G}}\left(X_{(r_p)},E'_*\right)$ is the same as the topology induced on it by $\mathcal{L}_{\sigma}\left(\SSS^{\{M_p\}}_{\{A_p\}}(\RR^d), E'_*\right)$, hence $\widetilde{\iota(B)}$ is precompact in $\mathcal{L}_{\mathfrak{G}}\left(X_{(r_p)},E'_*\right)$ (the inclusion $\mathcal{L}_b\left(\SSS^{\{M_p\}}_{\{A_p\}}(\RR^d), E'_*\right)\rightarrow \mathcal{L}_{\sigma}\left(\SSS^{\{M_p\}}_{\{A_p\}}(\RR^d), E'_*\right)$ is continuous). Now, the Banach-Steinhaus theorem \cite[Theorem 4.5, p. 85]{Sch} implies that $\widetilde{\iota(B)}$ is precompact in $\mathcal{L}_p\left(X_{(r_p)}, E'_*\right)$. Pick now, $(r'_p)\in\mathfrak{R}$ with $(r'_p)\leq (r_p)$ such that the inclusion $X_{(r'_p)}\rightarrow X_{(r_p)}$ is compact. Then the inclusion $\mathcal{L}_p\left(X_{(r_p)}, E'_*\right)\rightarrow \mathcal{L}_b\left(X_{(r'_p)}, E'_*\right)$ is continuous. Thus $\widetilde{\iota(B)}$ is precompact in $\mathcal{L}_b\left(X_{(r'_p)}, E'_*\right)$. Now one can use exactly the same technique as in the proof of $(ii)\Rightarrow (iv)$ of Theorem \ref{karak} to conclude $(ii)$ and as in the proof of $(ii)\Rightarrow (v)$ of Theorem \ref{karak} to conclude $(iii)$  (see \cite{dpv-2} for details). The implications $(ii)\Rightarrow (i)$ and $(iii)\Rightarrow (i)$ are obvious.
\end{proof}

%For $F$ is a l.c.s., as in \cite{dpv-2} we have
%\beqs
%\mathcal{S}'^*_{\dagger}\left(\mathbb{R}^{d};F\right)= \SSS'^*_{\dagger}(\RR^d)\varepsilon F=\mathcal{L}_{\epsilon}\left(\left(\SSS'^*_{\dagger}(\RR^d)\right)'_c, F\right)=\mathcal{L}_b\left(\SSS^*_{\dagger}(\RR^d), F\right);
%\eeqs
%the last equality follows form the fact that $\SSS^*_{\dagger}(\RR^d)$ and $\SSS'^*_{\dagger}(\RR^d)$ are complete Montel spaces. Since $\SSS'^*_{\dagger}(\RR^d)$ is nuclear it satisfies the weak approximation property, hence if $F$ is complete $\SSS'^*_{\dagger}(\RR^d)\varepsilon F\cong \SSS'^*(\RR^d) \hat{\otimes} F$ (cf. \cite[Proposition 1.4]{Komatsu3}).

We now specialized our discussion to weighted $L^{p}$ spaces of ultradistributions. Let $\eta$ be a ultrapolynomially bounded weight function of class $\dagger$ (cf. Example \ref{ex1}). As usual, we write $q$ for the conjugate index of $p\in [1,\infty]$. For $1<q<\infty$, the choice $E=L^{q}_{\eta^{-1}}$ leads to the spaces $\mathcal{D}'^*_{L^{p}_{\eta}}$, $1<p<\infty$. The latter spaces are reflexive. When $q=1$, we make an exception in the notation and write (in analogy to Schwartz notation) $\mathcal{B}'^*_{\eta}:=\mathcal{D}'^*_{UC_{\eta}}=(\mathcal{D}^*_{L^{1}_{\eta}})'$, the space of $\eta$-bounded ultradistributions of class $\ast$. We set $\dot{\mathcal{B}}^*_{\eta}:=\mathcal{D}^*_{C_{\eta}}$ and $\mathcal{D}_{L_{\eta}^1}'^*:=(\mathcal{D}^*_{C_{\eta}})'=(\dot{\mathcal{B}}^*_\eta)'$, the space of weighted $\eta$-integrable ultradistributions of class $\ast$.

We can also introduce the space
$\mathcal{D}^*_{L^{\infty}_{\eta}}$ as in (\ref{aldef1}) with
$L^{\infty}_{\eta}$ instead of $E$, but the reader should keep in
mind that $E=L^{\infty}_{\eta}$ is not a translation-invariant
(B)-space of ultradistributions of class $\dagger$. Nevertheless,
one can still show that ultradifferential operators act
continuously on it, that the Roumieu case coincides (as set and as
l.c.s.) with the projective limit of the spaces (\ref{aldef2})
defined with $L^{\infty}_{\eta}$ instead of $E$, and  that
$\DD^{\{M_p\}}_{L^{\infty}_{\eta}}$ is regular and complete
\cite{dpv-2}.  We have shown in \cite{dpv-2} the following result:
$\dot{\mathcal{B}}^{*}_{\eta}$ is (as l.c.s.) the closure of
$\mathcal{S}^{*}_{\dagger}(\mathbb{R}^{d})$ in
$\DD^{*}_{L^{\infty}_{\eta}}$. (This assertion is non-trivial in the
Roumieu case). We will make use of the following result in the next section.

\begin{theorem}[\cite{dpv-2}]\label{theorBB}
The strong bidual of $\dot{\mathcal{B}}^*_{\eta}$ is isomorphic to $\DD^*_{L^{\infty}_{\eta}}$ as l.c.s.. Moreover $\dot{\mathcal{B}}^{(M_p)}_{\eta}$ is a distinguished $(F)$-space and consequently $\DD'^{(M_p)}_{L^1_{\eta}}$ is barreled and bornological.
\end{theorem}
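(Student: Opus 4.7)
The plan is to identify the strong bidual of $\dot{\mathcal{B}}^*_\eta = \mathcal{D}^*_{C_\eta}$ with $\mathcal{D}^*_{L^\infty_\eta}$ by lifting the Banach-space biduality $(C_\eta)'' = L^\infty_\eta$ to the ultradifferentiable setting, using Theorem \ref{karak} (the structural characterization of bounded subsets of $\mathcal{D}'^*_{L^1_\eta}$ as parametrix images $P(D)B_1$) as the main engine throughout. The pairing between $\mathcal{S}'^*_\dagger$ and $\mathcal{S}^*_\dagger$ already extends to a pairing of $\mathcal{D}^*_{L^\infty_\eta}$ with $\mathcal{D}'^*_{L^1_\eta}$, so the candidate map and its consistency are rather natural; the work lies in proving bijectivity and topological coincidence.

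First, I would define a canonical linear map $\mathcal{D}^*_{L^\infty_\eta} \to (\mathcal{D}'^*_{L^1_\eta})'_b$ sending $\psi$ to the functional $\hat\psi(P(D)g) := \langle g, P(-D)\psi\rangle$ for $g \in L^1_\eta$ and $P(D)$ of class $*$. Theorem \ref{karak} guarantees that every $f \in \mathcal{D}'^*_{L^1_\eta}$ admits such a representation, and $\hat\psi(f)$ is independent of the representation because the definition coincides with the ambient ultradistribution pairing $\langle f,\psi\rangle$. Continuity into the strong bidual uses that, by Theorem \ref{karak}, the polars of bounded subsets of $\mathcal{D}'^*_{L^1_\eta}$ are dominated by seminorms of the form $\psi\mapsto \|P(-D)\psi\|_{\infty,\eta}$, which generate precisely the topology of $\mathcal{D}^*_{L^\infty_\eta}$. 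Injectivity follows from density of $\mathcal{S}^*_\dagger(\mathbb{R}^d)$ in $\mathcal{D}'^*_{L^1_\eta}$.

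For surjectivity, given $T \in (\mathcal{D}'^*_{L^1_\eta})'_b$, I restrict $T$ to the continuously embedded subspace $L^1_\eta \subset \mathcal{D}'^*_{L^1_\eta}$ to obtain $\psi := T|_{L^1_\eta} \in (L^1_\eta)' = L^\infty_\eta$. The boundedness of $T$ on parametrix-image bounded sets $P(D)B$ then forces $\|P(-D)\psi\|_{\infty,\eta} < \infty$ for every ultradifferential operator $P(D)$ of class $*$ (in the Roumieu case, for every sufficiently slow $P$ indexed by some $(r_p) \in \mathfrak{R}$), which is the defining condition for $\psi \in \mathcal{D}^*_{L^\infty_\eta}$. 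Density of $\mathcal{S}^*_\dagger(\mathbb{R}^d)$ in $\mathcal{D}'^*_{L^1_\eta}$ gives $T = \hat\psi$. The topological isomorphism reduces to a cofinality statement: the polars of the bounded sets described by Theorem \ref{karak} form a cofinal basis of $0$-neighborhoods equivalent to the defining seminorms of $\mathcal{D}^*_{L^\infty_\eta}$.

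For the Beurling-case distinguishedness, the point is that Theorem \ref{karak} exhibits each bounded subset of $\mathcal{D}'^{(M_p)}_{L^1_\eta}$ in the uniform form $P(D)B_1$ with a single $P(D)$ and $B_1$ bounded in $L^1_\eta$; its polar in $\mathcal{D}^{(M_p)}_{L^\infty_\eta}$ is therefore a $0$-neighborhood in the natural (projective) topology, which is the standard criterion for a Fréchet space to be distinguished. The consequences that $\mathcal{D}'^{(M_p)}_{L^1_\eta}$ is barreled and bornological then follow from classical structure theory of strong duals of distinguished Fréchet spaces (Grothendieck). The step I expect to be the main obstacle is the topological matching in the surjectivity argument, especially in the Roumieu case, where the parametrix representation depends on the sequence $(r_p) \in \mathfrak{R}$ and one must arrange a diagonal cofinal selection so that the seminorms obtained from polars of $P(D)B_1$ genuinely generate the projective topology of $\mathcal{D}^{\{M_p\}}_{L^\infty_\eta}$ on its alternative description via $(r_p)$-indexed norms.
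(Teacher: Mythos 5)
First, a point of reference: the paper does not prove Theorem \ref{theorBB} at all --- it is quoted from \cite{dpv-2} --- so there is no internal proof to compare with, and your proposal must stand on its own. Its overall shape (identify the bidual through the pairing of $\DD^*_{L^{\infty}_{\eta}}$ with $\DD'^*_{L^1_{\eta}}$, using the parametrix representation from Theorem \ref{karak}) is reasonable, but the distinguishedness argument contains a genuine error. The property you invoke --- that the polar in $E''$ of every bounded subset of $E'_b$ is a $0$-neighbourhood for the natural topology of $E''$ --- holds for \emph{every} Fr\'echet space $E$: bounded subsets of $E'_b$ are $\sigma(E',E)$-bounded, hence equicontinuous because $E$ is barrelled, hence contained in some $U^{\circ}$, so their polars contain the bipolar $U^{\circ\circ}$, a basic neighbourhood of the natural topology. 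This is equivalent to quasi-barrelledness of $E$ and says nothing about distinguishedness. The correct criterion is dual to the one you use: $E=\dot{\mathcal{B}}^{(M_p)}_{\eta}$ is distinguished iff $E'_b$ is quasi-barrelled, i.e.\ iff every $\beta(E'',E')$-bounded subset $H$ of $E''\cong\DD^{(M_p)}_{L^{\infty}_{\eta}}$ is contained in the bipolar $B^{\circ\circ}$ of some bounded subset $B$ of $\dot{\mathcal{B}}^{(M_p)}_{\eta}$. Establishing this needs an argument you never give, e.g.\ $B=\{\varphi_n\psi:\ \psi\in H,\ n\in\ZZ_+\}$ with the cut-offs $\varphi_n$ of Lemma \ref{appincl}, together with a check that $B$ is bounded in $\dot{\mathcal{B}}^{(M_p)}_{\eta}$ and that $\varphi_n\psi\to\psi$ in $\sigma(E'',E')$.

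Two further steps in the biduality part are asserted rather than proved. In the surjectivity argument you treat ``$\|P(-D)\psi\|_{\infty,\eta}<\infty$ for every $P(D)$ of class $*$'' as the defining condition for $\psi\in\DD^*_{L^{\infty}_{\eta}}$; it is not --- the definition requires $\sup_{\alpha}m^{|\alpha|}\|D^{\alpha}\psi\|_{\infty,\eta}/M_{\alpha}<\infty$, and converting control of $P(-D)\psi$ into uniform control of all derivatives with the correct constants is precisely where Proposition \ref{parametrix} must be used (write $D^{\alpha}\psi=D^{\alpha}u*P(D)\psi$ with $P(D)u=\delta$, $u\in\SSS^{M_p,t}_{A_p,t}$, and estimate $\|D^{\alpha}u\|_{L^1_{\omega}}$). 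Moreover, in the Beurling case $E'_b=\DD'^{(M_p)}_{L^1_{\eta}}$ is a non-metrizable $(DF)$-space, so boundedness of a functional on bounded sets (which is all that your domination by the seminorms $\psi\mapsto\|P(-D)\psi\|_{\infty,\eta}$ delivers) does \emph{not} imply membership in $(E'_b)'$; that equivalence is bornologicity of $E'_b$, i.e.\ the very distinguishedness under discussion. To avoid the circularity one must work directly with $E''=\bigcup_B B^{\circ\circ}$ over bounded $B\subseteq E$, again via the cut-off approximation. Only the Roumieu half is immune to this, since there $\DD'^{\{M_p\}}_{L^1_{\eta}}$ is an $(F)$-space and hence bornological.
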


\section{$\varepsilon$ product of $\dot{\mathcal{B}}^*_{\eta}$ with a sequentially complete l.c.s.}
\label{tensor}
We are now interested in the $\varepsilon$ product of $\dot{\mathcal{B}}^*_{\eta}$ with a sequentially complete l.c.s.. Here $\eta$ always stands for an ultrapolynomially bounded weight function of class $\dagger$. We begin with the following general result.

\begin{proposition}\label{apppr}
Let $E$ be a translation-invariant $(B)$-space of ultradistributions of class $*-\dagger$. The following assertions hold:
\begin{itemize}
\item[$i)$] if $E$ possesses a Schauder basis then $\DD^*_E$ satisfies the weak sequential approximation property;
\item[$ii)$] $E$ satisfies the weak approximation property if and only if $\DD^*_E$ does it.
\end{itemize}
\end{proposition}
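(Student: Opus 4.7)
The overall plan is to use the parametrix method (Proposition \ref{parametrix}) to factor the identity on $\DD^*_E$ through $E$, so that approximation information can be transferred between the two spaces. Fix $u\in\SSS^*_{\dagger}(\RR^d)$ and an ultradifferential operator $P(D)$ of class $*$ with $P(D)u=\delta$, choosing $u$ in a sufficiently narrow subspace of $\SSS^*_{\dagger}(\RR^d)$ relative to the weight $\omega$ of $E$. Setting $A\varphi:=P(D)\varphi$ and $Bg:=u*g$ one obtains continuous linear maps $A\colon\DD^*_E\to E$ and $B\colon E\to\DD^*_E$ with $BA=\mathrm{Id}_{\DD^*_E}$; continuity of $B$ follows from the estimate $\|Bg\|_{E,m}\leq C_m\|g\|_E$ obtained by combining $\|D^\alpha u\|_{1,\omega}\leq C'M_\alpha/m^{|\alpha|}$ with the Banach module structure of Proposition \ref{corBeurling Algebra}.

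For (i), let $(e_n)$ be a Schauder basis of $E$ with coefficient functionals $(e_n^{*})$ and partial sum projections $P_N=\sum_{n=1}^N e_n^{*}\otimes e_n$. The $P_N$ are equicontinuous on $E$ (the classical consequence of the uniform boundedness principle for Schauder bases) and $P_Ng\to g$ pointwise. The finite-rank operators $T_N:=BP_NA$ lie in $(\DD^*_E)'\otimes\DD^*_E$. For compact convex circled $K\subseteq\DD^*_E$ the image $A(K)$ is compact convex circled in $E$, hence $P_N\to\mathrm{Id}_E$ uniformly on $A(K)$ by the equicontinuity-plus-pointwise-convergence principle; composing with $B$ gives $T_N\to BA=\mathrm{Id}_{\DD^*_E}$ in $\mathcal{L}_c(\DD^*_E,\DD^*_E)$, yielding the weak sequential approximation property. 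The forward direction of (ii) is completely analogous: given compact convex circled $K\subseteq\DD^*_E$ and a continuous seminorm $p$ on $\DD^*_E$, apply the weak approximation property of $E$ to $L:=A(K)$ and the continuous seminorm $q:=p\circ B$ on $E$ to obtain $S\in E'\otimes E$ with $\sup_{g\in L}q(Sg-g)<\varepsilon$; then $T:=BSA\in (\DD^*_E)'\otimes\DD^*_E$ satisfies $\sup_{\varphi\in K}p(T\varphi-\varphi)<\varepsilon$.

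For the converse in (ii), convolution approximation of the identity on $E$ is the key. Fix $\chi\in\SSS^*_{\dagger}(\RR^d)$ with $\int\chi=1$ and put $\chi_n(x)=n^d\chi(nx)$. Since $\omega(y/n)\leq Ce^{A(\tau|y|)}$ uniformly in $n$, the norms $\|\chi_n\|_{1,\omega}$ stay uniformly bounded, so the convolvers $B_n\colon g\mapsto \chi_n*g$ are equicontinuous on $E$, satisfy $B_ng\to g$ in $E$ for every $g$ by Proposition \ref{corBeurling Algebra}, and each $B_n$ is continuous from $E$ into $\DD^*_E$ by the same estimates used for $B$. Given a compact convex circled $K\subseteq E$ and $\varepsilon>0$, the pointwise convergence is uniform on $K$, so there is $n$ with $\sup_{g\in K}\|B_ng-g\|_E<\varepsilon/2$. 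The set $B_n(K)$ is compact convex circled in $\DD^*_E$, so the weak approximation property of $\DD^*_E$ yields $T\in(\DD^*_E)'\otimes\DD^*_E$ with $\sup_{\varphi\in B_n(K)}\|T\varphi-\varphi\|_E<\varepsilon/2$, where $\|\cdot\|_E$ is regarded as a continuous seminorm on $\DD^*_E$. Then $S:=i\circ T\circ B_n\in E'\otimes E$ (with $i\colon\DD^*_E\hookrightarrow E$) satisfies $\sup_{g\in K}\|Sg-g\|_E<\varepsilon$.

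The main technical obstacle is calibrating the parametrix $u$ and the mollifiers $\chi_n$ simultaneously against both the weight $\omega$ and the seminorms defining $\DD^*_E$; in the Roumieu case this requires selecting sequences $(r_p)\in\mathfrak{R}$ (rather than single constants $m$) via the alternative description \eqref{aldef2} of $\DD^{\{M_p\}}_E$ and Lemma \ref{nwseq} to absorb the inevitable loss of constants when passing between weight systems.
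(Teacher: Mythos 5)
Your proposal has a genuine gap at its foundation: the claimed factorization $\mathrm{Id}_{\DD^*_E}=B\circ A$ with $B\colon E\to\DD^*_E$, $Bg=u*g$, does not exist. First, there is no $u\in\SSS^*_{\dagger}(\RR^d)$ with $P(D)u=\delta$ for $P(D)$ of class $*$ — that would force $\delta=P(D)u\in\SSS^*_{\dagger}(\RR^d)$ to be a smooth function. Proposition \ref{parametrix} only produces $u$ in a single Banach space $\SSS^{M_p,t}_{A_p,t}$ (resp. $\SSS^{M_p,(t_p)}_{A_p,(t_p)}$), which strictly contains $\SSS^*_{\dagger}(\RR^d)$; it is not a "narrow subspace" of it. This is not a cosmetic issue: the resulting bound is $\|D^{\alpha}u\|_{1,\omega}\leq C M_{\alpha}/t^{|\alpha|}$ for the one fixed $t$ (resp. $\leq CM_{\alpha}\prod_{j\leq|\alpha|}t_j$ for the one fixed $(t_p)$), so $\|u*g\|_{E,m}<\infty$ only for $m\lesssim t$, and $u*g$ lands in $\DD^{M_p,m}_E$ for bounded $m$ but never in the projective limit $\DD^{(M_p)}_E$; in the Roumieu case $m^{|\alpha|}\prod_{j\leq|\alpha|}t_j\to\infty$ for every $m>0$, so $u*g\notin\DD^{\{M_p\}}_E$ either. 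Hence $B$ is not a continuous map into $\DD^*_E$, the operators $T_N=BP_NA$ and $T=BSA$ have ranges spanned by $u*e_n$, $u*e_j$ which do not lie in $\DD^*_E$, and so they are not elements of $(\DD^*_E)'\otimes\DD^*_E$. This invalidates your argument for $i)$ and for the forward implication of $ii)$. (This is exactly why the paper reserves the parametrix for the dual-side structure theorems, where $u*\tilde f$ only needs to be a weighted continuous function.)

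The repair is the one the paper uses: replace the exact left inverse $u*\cdot$ by the mollifiers $\chi_m*\cdot$ with $\chi_m\in\SSS^*_{\dagger}(\RR^d)$. These do map $E$ continuously into $\DD^*_E$ (since $\|D^{\alpha}\chi_m\|_{1,\omega}\leq C_h M_{\alpha}h^{-|\alpha|}$ for every $h$), at the cost of giving only an approximate identity; one must then prove separately that $\chi_m*\varphi\to\varphi$ in the topology of $\DD^*_E$ for $\varphi\in\DD^*_E$ (the paper's $\varepsilon$-$p_0$ splitting of the seminorm $\|\cdot\|_{E,h_1}$), and in $i)$ run a two-step limit $G_{m,n}\to G_m\to\mathrm{Id}$ in $\mathcal{L}_{\sigma}$ upgraded to $\mathcal{L}_c$ by Banach--Steinhaus. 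Your converse direction of $ii)$ already follows this mollifier route and is essentially correct, matching the paper's argument (which phrases it via the density of $\mathcal{L}(E,\DD^*_E)$ in $\mathcal{L}_c(E,E)$).
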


\begin{proof} To prove $i)$, let $\{e_n\}_{n\in\ZZ_+}$ be a normalized Schauder basis for $E$, i.e., for each $e\in E$, there exists a unique sequence of complex number $\{t_n\}_{n\in\ZZ_+}$ such that $e=\lim_{n\rightarrow \infty} \sum_{j=1}^n t_n e_n$. The linear forms $g_n: e\mapsto t_n$ form an equicontinuous subset of $E'$ and $e=\sum_{j=1}^{\infty}g_n(e) e_n$ converges uniformly on compact subsets of $E$ (cf. \cite[Theorem 9.6, p. 115]{Sch}). Let $\chi_m\in\SSS^*_{\dagger}(\RR^d)$, $m\in\ZZ_+$, be as in $ii)$ of Lemma \ref{appincl}. For $m,n\in\ZZ_+$, define $G_{m,n}\in \DD'^*_{E'_*}\otimes \DD^*_E$ by
\beqs
G_{m,n}=\sum_{j=1}^n g_n\otimes \left(\chi_m*e_n\right).
\eeqs
Also, for each $m\in\ZZ_+$, define $G_m:\DD^*_E\rightarrow \DD^*_E$ by $G_m(\varphi)=\chi_m*\varphi$. If we prove that for each $m\in\ZZ_+$, $G_{m,n}\rightarrow G_m$ and $G_m\rightarrow \mathrm{Id}$ in $\mathcal{L}_{\sigma}\left(\DD^*_E,\DD^*_E\right)$, since $\DD^*_E$ is barreled, the Banach-Steinhaus theorem will imply that these convergence also hold in $\mathcal{L}_c\left(\DD^*_E,\DD^*_E\right)$ and $i)$ will be proved. In fact, every weakly convergent sequence is weakly bounded hence equicontinuous since $\DD^*_E$ is barreled and the Banach-Steinhaus theorem implies uniform convergence on precompact subsets. Thus, fix $m\in\ZZ_+$ and $\varphi\in\DD^*_E$. By $a)$ of Proposition \ref{corBeurling Algebra},
\beqs
\frac{h^{|\alpha|}\left\|D^{\alpha}G_{m,n}(\varphi)-D^{\alpha}G_m(\varphi)\right\|_E}{M_{\alpha}}&=& \left\|\frac{h^{|\alpha|}D^{\alpha}\chi_m}{M_{\alpha}}*\left(\sum_{j=1}^ng_n(\varphi)e_n-\varphi\right)\right\|_E\\
&\leq& C\left\|\sum_{j=1}^ng_n(\varphi)e_n-\varphi\right\|_E,
\eeqs
for each $h>0$ (resp. for some $h>0$), where $C$ depends on $\chi_m$). Thus
\beqs
\|G_{m,n}(\varphi)-G_m(\varphi)\|_{E,h}\leq C\left\|\sum_{j=1}^ng_n(\varphi)e_n-\varphi\right\|_E\rightarrow0,\,\, \mbox{as}\,\, n\rightarrow \infty.
\eeqs
It remains to prove $\chi_m*\varphi\rightarrow \varphi$ in $\DD^*_E$ for each $\varphi\in\DD^*_E$. We consider the Roumieu case; the Beurling case is similar. Fix $\varphi\in\DD^{\{M_p\}}_E$. There exists $h>0$ such that $\chi,\varphi\in \DD^{M_p,h}_E$ and $\tilde{C}=\sigma_h(\chi)<\infty$. Let $0<h_1<h$ be arbitrary but fixed. We will prove that $\|\varphi-\varphi\ast \chi_m\|_{E,h_1}\rightarrow 0$. Let $\varepsilon>0$. Observe that there exists $C_1\geq1$ such that $\|\chi_m\|_{1,\omega}\leq C_1$, $\forall m\in\ZZ_+$. Choose $p_0\in\ZZ_+$ such that $(h_1/h)^p\leq \varepsilon/(2C_2)$ for all $p\geq p_0$, $p\in\NN$, where $C_2=C_1(1+\|\varphi\|_{E,h})\geq 1$.
By b) of Proposition \ref{corBeurling Algebra}, we can choose $m_0\in\ZZ_+$ such that $\ds\frac{h_1^{|\alpha|}}{M_{\alpha}}\left\|D^{\alpha}\varphi-D^{\alpha}\varphi\ast \chi_m\right\|_E\leq \varepsilon$ for all $|\alpha|\leq p_0$ and all $m\geq m_0$, $m\in\NN$. Observe that if $|\alpha|\geq p_0$ we have
\beqs
\frac{h_1^{|\alpha|}}{M_{\alpha}}\left\|D^{\alpha}\varphi-D^{\alpha}\varphi* \chi_m\right\|_E&\leq& \frac{h_1^{|\alpha|}}{M_{\alpha}}\left\|D^{\alpha}\varphi\right\|_E+\frac{h_1^{|\alpha|}}{M_{\alpha}} \left\|D^{\alpha}\varphi\right\|_E\|\chi_m\|_{1,\omega}\\
&\leq& \left(\frac{h_1}{h}\right)^{|\alpha|}\|\varphi\|_{E,h}+C_1\left(\frac{h_1}{h}\right)^{|\alpha|}\|\varphi\|_{E,h}\leq \varepsilon.
\eeqs
Hence, for $m\geq m_0$, $\|\varphi-\varphi\ast \chi_m\|_{E,h_1}\leq \varepsilon$, so $\varphi* \chi_m\rightarrow \varphi$ in $\DD^{M_p,h_1}_E$ and consequently also in $\DD^{\{M_p\}}_E$. Thus, $i)$ holds.\\
\indent To prove $ii)$, assume first that $E$ satisfies the weak approximation property. Let $\chi_m\in\SSS^*_{\dagger}(\RR^d)$, $m\in\ZZ_+$, be as in $ii)$ of Lemma \ref{appincl}. Since for each $\varphi\in\DD^*_E$, $\chi_m*\varphi\rightarrow \varphi$ in $\DD^*_E$, if we set $G_m:\varphi\mapsto \chi_m*\varphi$, $G_m:\DD^*_E\rightarrow \DD^*_E$, we have $G_m\rightarrow \mathrm{Id}$ in $\mathcal{L}_{\sigma}\left(\DD^*_E,\DD^*_E\right)$. As $\DD^*_E$ is barreled, the Banach-Steinhaus theorem implies that the convergence holds in $\mathcal{L}_p\left(\DD^*_E,\DD^*_E\right)$. Let $B$ be a precompact subset of $\DD^*_E$ and $V$ a neighborhood of zero in $\DD^*_E$ and consider the neighborhood of zero $M(B,V)=\{G\in\mathcal{L}\left(\DD^*_E,\DD^*_E\right)|\, G(B)\subseteq V\}$ in $\mathcal{L}_p\left(\DD^*_E,\DD^*_E\right)$. Pick a neighborhood of zero $W$ in $\DD^*_E$ such that $W+W\subseteq V$. There exists $m_0\in\ZZ_+$ such that $G_{m_0}\in \mathrm{Id}+M(B,W)$. By applying Proposition \ref{corBeurling Algebra}, $a)$, one concludes that the mapping $\tilde{G}_{m_0}: E\rightarrow \DD^*_E$, $\tilde{G}_{m_0}(e)=\chi_{m_0}*e$, is well defined and continuous. Obviously, the restriction of $\tilde{G}_{m_0}$ to $\DD^*_E$ is exactly $G_{m_0}$. Pick a neighborhood of zero $U$ in $E$ such that $\tilde{G}_{m_0}(U)\subseteq W$. Since the inclusion $\DD^*_E\rightarrow E$ is continuous, $B$ is precompact in $E$ and since $E$ satisfies the weak approximation property, there exists $S\in E'\otimes E$ such that $S(e)-e\in U$ for all $e\in B$. Let $S=\sum_{j=1}^n e'_j\otimes e_j$. Define $G\in \DD'^*_{E'_*}\otimes \DD^*_E$ by $G=\sum_{j=1}^n e'_j\otimes \tilde{G}_{m_0}(e_j)$. Then, for $\varphi\in B$, we have\\
\\
$G(\varphi)-\varphi$
\beqs
&=&G(\varphi)-\tilde{G}_{m_0}(\varphi)+\tilde{G}_{m_0}(\varphi)-\varphi\\
&=&\tilde{G}_{m_0}\left(\sum_{j=1}^n \langle e'_j,\varphi\rangle e_j-\varphi\right)+G_{m_0}(\varphi)-\varphi\in \tilde{G}_{m_0}(U)+W\subseteq W+W\subseteq V,
\eeqs
which proves that $\DD^*_E$ satisfies the weak approximation property. Conversely, let $\DD^*_E$ satisfies the weak approximation property. Since $\DD^*_E$ is continuously injected into $E$, if we prove that $\mathrm{Id}\in\mathcal{L}_c(E,E)$ is in the closure of the subspace $\mathcal{L}(E,\DD^*_E)$ of $\mathcal{L}_c(E,E)$, \cite[Proposition 2, p. 7]{SchwartzV} will imply that $E$ also satisfies the weak approximation property. Let $\chi_n\in\SSS^*_{\dagger}(\RR^d)$, $n\in\ZZ_+$, be as in $ii)$ of Lemma \ref{appincl}. Define $G_n:E\rightarrow \DD^*_E$, $e\mapsto \chi_n*e$. Proposition \ref{corBeurling Algebra}, $a)$ implies that $G_n$ is well defined and continuous for each $n\in\ZZ_+$ and $c)$ from the same proposition verifies $G_n\rightarrow \mathrm{Id}$ in $\mathcal{L}_{\sigma}(E,E)$. Now, the Banach-Steinhaus theorem yields that the convergence also holds in $\mathcal{L}_c(E,E)$.
\end{proof}

\begin{remark} Since $L^p(\RR^d)$, for $1\leq p<\infty$ and $C_0(\RR^d)$ have Schauder bases (cf. \cite[Corollary 4.1.4, p. 112]{SEM}) so do the spaces $L^p_{\eta}(\RR^d)$, for $1\leq p<\infty$ and $C_{\eta}$ for some positive continuous ultrapolynomially bounded weight of class $\dagger$ since $f\mapsto f/\eta$, $L^p(\RR^d)\rightarrow L^p_{\eta}(\RR^d)$, $1\leq p<\infty$ and $f\mapsto f\eta$, $C_0\rightarrow C_{\eta}$, are isometric isomorphisms between the corresponding spaces. In particular, the above proposition is applicable when $E$ is any of the aforementioned spaces.
\end{remark}

\begin{remark} Enflo, in his seminal paper \cite{enflo}, gave an example of a separable $(B)$-space which does not possess the (weak) approximation property. Later Davie \cite{davie} and Szankowski \cite{szank}, using Enflo's ideas, proved that $l^p$ has a closed subspace which does not have the approximation property for each $p\in[1,2)$ and $p\in(2,\infty)$. It is an interesting problem to find out whether there exists a translation invariant $(B)$-space of ultradistributions which does not possess the (weak) approximation property.
\end{remark}

Since $\DD^*_E$ is complete, if $F$ is sequentially complete, resp. quasi-complete, resp. complete, l.c.s. then the same holds for $\DD^*_E\varepsilon F$ (cf. \cite[Proposition 1.1]{Komatsu3}, \cite[Chapter I, p. 29]{SchwartzV}). By Proposition \ref{apppr}, if $E$ has a Schauder basis and $F$ is sequentially complete, resp. quasi-complete, then $\DD^*_E\varepsilon F$ is canonically isomorphic to the sequential closure, resp. bounding closure, of $\DD^*_E\otimes_{\epsilon} F$ in $\DD^*_E\varepsilon F$ (cf. \cite[Proposition 1.4]{Komatsu3}) and if $E$ satisfies the weak approximation property and $F$ is complete then $\DD^*_E\varepsilon F$ is canonically isomorphic to the closure of $\DD^*_E\otimes_{\epsilon} F$ in $\DD^*_E\varepsilon F$ (i.e., $\DD^*_E\hat{\otimes}_{\epsilon} F$).\\
\indent From now on we will be particularly interested in the case $E=C_{\eta}$. Since $C_{\eta}$ has a Schauder basis, $\dot{\mathcal{B}}^*_{\eta}$ satisfies the weak sequential approximation property. Given $(r_{p})\in \mathfrak{R}$, we write below $R_{\alpha}=\prod_{j=1}^{|\alpha|}r_{j}$.

\begin{lemma}\label{karbb}
$\psi\in\dot{\mathcal{B}}^*_{\eta}$ if and only if $\psi\in\DD^*_{L^{\infty}_\eta}$ and for every $\varepsilon>0$ and $r>0$ (resp. $(r_p)\in\mathfrak{R}$), there exists a compact set $K\subseteq \RR^d$ such that
\beqs
\sup_{\alpha\in\NN^d}\sup_{x\in\RR^d\backslash K} \frac{r^{|\alpha|}\left|D^{\alpha}\psi(x)\right|}{\eta(x)M_{\alpha}}<\varepsilon,\,\, \left(\mbox{resp.}\,\, \sup_{\alpha\in\NN^d}\sup_{x\in\RR^d\backslash K} \frac{\left|D^{\alpha}\psi(x)\right|}{\eta(x)M_{\alpha}R_{\alpha}}<\varepsilon\right).
\eeqs
\end{lemma}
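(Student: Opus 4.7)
The plan is to prove the two implications separately. The key ingredient for necessity is the density of $\SSS^*_\dagger(\RR^d)$ in $\dot{\mathcal{B}}^*_\eta$ with respect to the topology inherited from $\DD^*_{L^\infty_\eta}$ (the result quoted right before Theorem~\ref{theorBB}); sufficiency reduces to a routine observation once smoothness of $\psi$ is used.

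For necessity, I would fix $\psi\in\dot{\mathcal{B}}^*_\eta$, $\varepsilon>0$ and $r>0$ (resp.\ $(r_p)\in\mathfrak{R}$), and choose $\varphi\in\SSS^*_\dagger(\RR^d)$ such that $\|\psi-\varphi\|_{L^\infty_\eta,r}<\varepsilon/2$ (resp.\ $\|\psi-\varphi\|_{L^\infty_\eta,(r_p)}<\varepsilon/2$). This bound handles the difference $\psi-\varphi$ globally, i.e., uniformly in $\alpha$ and $x\in\RR^d$, reducing matters to showing that the corresponding expression for $\varphi$ is less than $\varepsilon/2$ uniformly in $\alpha$ for $x$ outside some compact set. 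The Gelfand--Shilov decay of $\varphi$ provides $|D^\alpha\varphi(x)|\leq\sigma_h(\varphi)M_\alpha h^{-|\alpha|}e^{-A(h|x|)}$ for every $h>0$ in the Beurling case, and for some $h_0>0$ in the Roumieu case. Combined with the lower bound $\eta(x)\geq c\,e^{-A(\tau|x|)}$ (resp.\ $\eta(x)\geq c_\tau e^{-A(\tau|x|)}$ for every $\tau>0$), which follows from ultrapolynomial boundedness of $\eta$ of class $\dagger$, this yields
\beqs
\frac{r^{|\alpha|}|D^\alpha\varphi(x)|}{M_\alpha\eta(x)}\leq C\sigma_h(\varphi)(r/h)^{|\alpha|}e^{A(\tau|x|)-A(h|x|)},
\eeqs
and an analogous estimate in the Roumieu case with prefactor $\prod_{j=1}^{|\alpha|}(h_0 r_j)^{-1}$, which is bounded uniformly in $\alpha$ since $r_j\to\infty$ (only finitely many factors exceed $1$). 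Choosing $h>\max(r,\tau)$ in the Beurling case and $\tau<h_0$ in the Roumieu case, I would invoke the standard property $A(h\rho)-A(\tau\rho)\to\infty$ as $\rho\to\infty$ (which follows from $A(h\rho)\geq A(\tau\rho)+m(\tau\rho)\log(h/\tau)$ together with $m(\rho)\to\infty$). This gives the desired uniform decay outside a sufficiently large ball.

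For sufficiency, I would fix any $\alpha\in\NN^d$ and any admissible $r$ (resp.\ $(r_p)$). Letting $\varepsilon\to 0^+$ in the tail condition shows $\lim_{|x|\to\infty}|D^\alpha\psi(x)|/\eta(x)=0$. Since every element of $\DD^*_{L^\infty_\eta}$ is ultradifferentiable, $D^\alpha\psi$ is continuous and hence $D^\alpha\psi\in C_\eta$ for every $\alpha$. As the defining seminorms of $\DD^*_{C_\eta}$ and $\DD^*_{L^\infty_\eta}$ coincide, this immediately gives $\psi\in\DD^*_{C_\eta}=\dot{\mathcal{B}}^*_\eta$.

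The main obstacle is securing uniformity in $\alpha$ of the tail estimate for $\varphi$ in the Roumieu case: one has less flexibility in picking $h_0$ (since it depends on $\varphi$ and not on $(r_p)$), so one has to balance the prefactor $\prod(h_0 r_j)^{-1}$ against the exponential $e^{A(\tau|x|)-A(h_0|x|)}$. Both concerns are dispatched by exploiting the Roumieu-type bound on $\eta^{-1}$, which allows $\tau$ to be chosen arbitrarily small, together with $r_j\to\infty$.
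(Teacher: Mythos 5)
Your proof is correct, and it diverges from the paper's mainly in the sufficiency direction. The paper's argument is very compressed: the set $F$ of all $\psi$ satisfying the right-hand side is a closed subspace of $\DD^*_{L^{\infty}_{\eta}}$, and for $\psi\in F$ the cut-off products $\varphi_n\psi$ lie in $\SSS^*_{\dagger}(\RR^d)$ and converge to $\psi$ in $\DD^*_{L^{\infty}_{\eta}}$; both inclusions are then read off from the identification (quoted from \cite{dpv-2}) of $\dot{\mathcal{B}}^*_{\eta}$ with the closure of $\SSS^*_{\dagger}(\RR^d)$ in $\DD^*_{L^{\infty}_{\eta}}$. Your necessity argument is essentially the explicit content of the paper's ``$F$ is closed and contains $\SSS^*_{\dagger}$'' step: the $\varepsilon/2$ splitting together with the decay $e^{A(\tau|x|)-A(h|x|)}\to 0$ (via the counting-function representation of $A$) is exactly what makes that step work, and your observation that the Roumieu prefactor $\prod_{j=1}^{|\alpha|}(h_0 r_j)^{-1}$ is bounded uniformly in $\alpha$ because $r_j\to\infty$ correctly resolves the only delicate point there. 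For sufficiency you bypass the multiplier approximation entirely: applying the tail condition for each fixed $\alpha$ gives $D^{\alpha}\psi\in C_{\eta}$, and since the $C_{\eta}$- and $L^{\infty}_{\eta}$-norms coincide, $\psi\in\DD^*_{C_{\eta}}=\dot{\mathcal{B}}^*_{\eta}$ directly from the definition. This is more elementary than the paper's route and uses the nontrivial closure identification only once (as density, in the necessity direction) rather than twice. Both arguments are sound; yours trades the paper's brevity for a self-contained verification of the two nontrivial estimates.
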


\begin{proof} Denote by $F$ the subspace of $\DD^*_{L^{\infty}_\eta}$ consisting of all such $\psi$; $F$ is closed. Let $\varphi_n\in\SSS^*_{\dagger}(\RR^d)$, $n\in\ZZ_+$, be as in $ii)$ of Lemma \ref{appincl}. For fixed $\psi\in F$, similarly as in the proof of Lemma \ref{appincl}, one verifies that $\varphi_n\psi\in\SSS^*_{\dagger}(\RR^d)$ and $\varphi_n\psi\rightarrow \psi$ in $\DD^*_{L^{\infty}_{\eta}}$.
\end{proof}

\begin{lemma}\label{precompactscl}
$B$ is a precompact subset of $\dot{\mathcal{B}}^*_{\eta}$ if and only if $B$ is bounded in $\dot{\mathcal{B}}^*_{\eta}$ and for every $\varepsilon>0$ and $r>0$ (resp. $(r_p)\in\mathfrak{R}$), there exists a compact set $K\subseteq\RR^d$ such that
\beqs
\sup_{\varphi\in B}\sup_{\substack{\alpha\in \NN^d\\ x\in\RR^d\backslash K}}\frac{r^{|\alpha|}\left|D^{\alpha} \varphi(x)\right|}{\eta(x)M_{\alpha}}\leq \varepsilon,\,\,\left(\mbox{resp. }
\sup_{\varphi\in B}\sup_{\substack{\alpha\in \NN^d\\ x\in\RR^d\backslash K}}\frac{\left|D^{\alpha} \varphi(x)\right|} {\eta(x)M_{\alpha}R_{\alpha}}\leq \varepsilon\right).
\eeqs
\end{lemma}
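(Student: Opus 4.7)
The plan is to work with the seminorms that generate the topology of $\dot{\mathcal{B}}^*_{\eta}$: in the Beurling case these are $\rho_h(\varphi):=\sup_{\alpha,x}\frac{h^{|\alpha|}|D^{\alpha}\varphi(x)|}{\eta(x)M_{\alpha}}$ for $h>0$; in the Roumieu case the projective description of $\dot{\mathcal{B}}^{\{M_p\}}_{\eta}$ from Section \ref{tspace} shows that the topology is generated by $\rho_{(r_p)}(\varphi):=\sup_{\alpha,x}\frac{|D^{\alpha}\varphi(x)|}{\eta(x)M_{\alpha}R_{\alpha}}$ for $(r_p)\in\mathfrak{R}$. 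The latter family is directed, since the componentwise minimum of two sequences in $\mathfrak{R}$ stays in $\mathfrak{R}$, so in both cases $B$ is precompact if and only if it is totally bounded with respect to each individual seminorm. This reduces the lemma to producing, for each such seminorm $\rho$ and each $\varepsilon>0$, a finite $\varepsilon$-net in $B$.

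For the direct implication, the boundedness of $B$ is automatic; for the tail condition we take a finite $(\varepsilon/2)$-net $\{\psi_1,\dots,\psi_n\}\subset\dot{\mathcal{B}}^*_{\eta}$ of $B$ in the relevant seminorm, apply Lemma \ref{karbb} to each $\psi_i$ to produce a compact $K_i$ outside of which $\psi_i$ has tail $\leq\varepsilon/2$, and conclude via the triangle inequality with $K:=\bigcup_iK_i$.

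For the converse, fix $\varepsilon>0$ and the seminorm parameter. \emph{Step (a)}: the tail hypothesis with threshold $\varepsilon/4$ yields a compact $K$ outside of which the relevant sup is $\leq\varepsilon/4$ uniformly on $B$. \emph{Step (b)}: the boundedness of $B$ gives control in a strictly stronger seminorm that makes the contribution of high-$\alpha$ terms small. In the Beurling case, take $h'>h$ and use $\rho_{h'}\leq M$ on $B$, giving $\frac{h^{|\alpha|}|D^{\alpha}\varphi(x)|}{\eta(x)M_{\alpha}}\leq M(h/h')^{|\alpha|}\to 0$. In the Roumieu case, invoke the regularity of the $(DF)$-space $\dot{\mathcal{B}}^{\{M_p\}}_{\eta}$ to find $m_0>0$ with $\|\varphi\|_{C_{\eta},m_0}\leq M$ on $B$, giving $\frac{|D^{\alpha}\varphi(x)|}{\eta(x)M_{\alpha}R_{\alpha}}\leq M/(m_0^{|\alpha|}R_{\alpha})\to 0$. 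In either case fix $N_0$ so that this bound is $\leq\varepsilon/4$ for $|\alpha|\geq N_0$. \emph{Step (c)}: for the finitely many $|\alpha|<N_0$, the higher-seminorm bound makes $\{D^{\alpha}\varphi|_K:\varphi\in B,\,|\alpha|<N_0\}$ uniformly bounded, and the bound on $D^{\alpha+e_j}\varphi|_K$ makes it equicontinuous; by Arzel\`a-Ascoli it is precompact in the corresponding weighted finite product of $C(K)$ spaces. Extract a finite $(\varepsilon/4)$-net and choose one representative $\varphi_i\in B$ from each nonempty cell, obtaining $\{\varphi_1,\dots,\varphi_m\}\subset B$. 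Combining $(a)$, $(b)$, $(c)$ yields $\rho(\varphi-\varphi_i)\leq\varepsilon$ for every $\varphi\in B$ and a suitable $\varphi_i$, establishing total boundedness.

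The main technical point is aligning the two descriptions of the Roumieu space: the projective one identifies the family of seminorms against which total boundedness must be verified, while the regularity of the $(DF)$-space provides the single Banach-space bound that controls the decay of the high-$\alpha$ tails uniformly on $B$. Beyond this, the argument reduces to a classical Arzel\`a-Ascoli estimate on the compact set $K$ combined with the three tail budgets from $(a)$--$(c)$.
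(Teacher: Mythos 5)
Your proof is correct and follows essentially the same route as the paper: the forward direction via a finite $\varepsilon/2$-net combined with Lemma \ref{karbb}, and the converse by splitting into a tail-in-$x$ estimate, a tail-in-$\alpha$ estimate from boundedness in a strictly stronger seminorm, and compactness of the remaining finite block of derivatives on $K$. The only cosmetic differences are that the paper obtains the finite net on $K$ from the Montel property of $C^{\infty}(\RR^d)$ rather than quoting Arzel\`a--Ascoli directly, and in the Roumieu case it uses boundedness with respect to $(k_p)=(r_p/2)$ in the projective description instead of the regularity of the $(DF)$-space; both choices are equivalent.
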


\begin{proof} $\Rightarrow$. Let $\varepsilon>0$ and $r>0$ (resp. $(r_p)\in\mathfrak{R}$) and
\beqs
V_r=\left\{\varphi\in\dot{\mathcal{B}}^{(M_p)}_{\eta}\big|\,\|\varphi\|_{L^{\infty}_{\eta},r}\leq\varepsilon/2\right\}\,\, \left(\mbox{resp.}\,\, V_{(r_p)}=\left\{\varphi\in\dot{\mathcal{B}}^{\{M_p\}}_{\eta}\big|\, \|\varphi\|_{L^{\infty}_{\eta},(r_p)}\leq\varepsilon/2\right\}\right).
\eeqs
There exist $\varphi_1,...,\varphi_n\in B$ such that for each $\varphi\in B$ there exists $j\in\{1,...,n\}$ with $\varphi\in \varphi_j+V_r$ (resp. $\varphi\in \varphi_j+V_{(r_p)}$). Let $K\subset\subset\RR^d$ such that $$r^{|\alpha|}\left|D^{\alpha}\varphi_j(x)\right|/(\eta(x)M_{\alpha})\leq \varepsilon/2\  (\mbox{resp.} \left|D^{\alpha}\varphi_j(x)\right|/(\eta(x)M_{\alpha}R_{\alpha})\leq \varepsilon/2),$$ for all $x\in \RR^d\backslash K$, $\alpha\in\NN^d$, $j\in\{1,...,n\}$. For $\varphi\in B$ there exists $j\in\{1,...,n\}$ such that
$\|\varphi-\varphi_j\|_{L^{\infty}_{\eta},r}\leq\varepsilon/2$ (resp. $\|\varphi-\varphi_j\|_{L^{\infty}_{\eta},(r_p)}\leq\varepsilon/2$). The proof follows from
\beqs
\frac{r^{|\alpha|}\left|D^{\alpha}\varphi(x)\right|}{\eta(x)M_{\alpha}}\leq \frac{r^{|\alpha|}\left|D^{\alpha}
\left(\varphi(x)-\varphi_j(x)\right)\right|} {\eta(x)M_{\alpha}}+ \frac{r^{|\alpha|}\left|D^{\alpha}\varphi_j(x)\right|}{\eta(x)M_{\alpha}}
\leq \frac{\varepsilon}{2}+\frac{\varepsilon}{2}=\varepsilon,
\eeqs
$ x\in\RR^d\backslash K, \alpha \in\NN^d,$ in the Beurling case; similarly $\left|D^{\alpha}\varphi(x)\right|/(\eta(x)M_{\alpha}R_{\alpha})\leq \varepsilon$ for all $x\in\RR^d\backslash K$, $\alpha \in\NN^d$, in the Roumieu case.\\
\indent $\Leftarrow$. Let $V_r$ (resp. $V_{(r_p)}$) be the neighborhood of zero defined as above but with $\varepsilon$ instead of $\varepsilon/2$. Set $k=2r$ (resp. $(k_p)=(r_p/2)$). Since $B$ is bounded, there exists $C>0$ such that $\|\varphi\|_{L^{\infty}_{\eta},k}\leq C$ (resp. $\|\varphi\|_{L^{\infty}_{\eta},(k_p)}\leq C$), for all $\varphi\in B$. Hence there exists $p_0\in\ZZ_+$ such that for all $\varphi\in B$, $x\in\RR^d$, $|\alpha|\geq p_0$, $r^{|\alpha|}|D^{\alpha}\varphi(x)|/(\eta(x)M_{\alpha})\leq \varepsilon/2$ (resp. $|D^{\alpha}\varphi(x)|/(\eta(x)M_{\alpha}R_{\alpha})\leq \varepsilon/2$). For $\varepsilon/2$ and $r$ (resp. $(r_p)\in\mathfrak{R}$), pick a compact set $K\subseteq \RR^d$ as in the condition of the lemma. Obviously, $B$ is bounded in $C^{\infty}(\RR^d)$ and since the latter space is Montel it must be precompact in $C^{\infty}(\RR^d)$. Thus, there exists a finite subset $B_0=\{\varphi_1,...,\varphi_n\}$ of $B$ such that for each $\varphi\in B$ there exists $j\in\{1,...,n\}$ such that ($\eta$ is continuous and positive)
\beq
&{}&\sup_{|\alpha|\leq p_0}\sup_{x\in K} \frac{r^{|\alpha|}\left|D^{\alpha}\varphi(x)-D^{\alpha}\varphi_j(x)\right|}{\eta(x)M_{\alpha}}\leq \frac{\varepsilon}{2} \label{rav11}\\
&{}&\left(\mbox{resp.}\,\,\sup_{|\alpha|\leq p_0}\sup_{x\in K} \frac{\left|D^{\alpha}\varphi(x)-D^{\alpha}\varphi_j(x)\right|}{\eta(x)M_{\alpha}R_{\alpha}}\leq \frac{\varepsilon}{2}\right).\label{rav13}
\eeq
For $\varphi\in B$ take $\varphi_j\in B_0$ such that (\ref{rav11}) (resp. (\ref{rav13})) holds. When $x\in K$ and $|\alpha|\geq p_0$
\beqs
\frac{r^{|\alpha|}\left|D^{\alpha}\varphi(x)-D^{\alpha}\varphi_j(x)\right|}{\eta(x)M_{\alpha}}\leq \frac{\varepsilon}{2}+\frac{\varepsilon}{2}=\varepsilon\,\, \left(\mbox{resp.}\,\,\frac{\left|D^{\alpha}\varphi(x)-D^{\alpha}\varphi_j(x)\right|}{\eta(x)M_{\alpha}R_{\alpha}}\leq \varepsilon\right).
\eeqs
If $x\in\RR^d\backslash K$ and $\alpha\in\NN^d$ one similarly obtains $r^{|\alpha|}\left|D^{\alpha}\varphi(x)-D^{\alpha}\varphi_j(x)\right|/(\eta(x)M_{\alpha})\leq \varepsilon$ (resp. $\left|D^{\alpha}\varphi(x)-D^{\alpha}\varphi_j(x)\right|/(\eta(x)M_{\alpha}R_{\alpha})\leq \varepsilon$). Hence $\varphi\in \varphi_j+V_r$ (resp. $\varphi\in \varphi_j+V_{(r_p)}$). The proof is complete.
\end{proof}

Since $\dot{\mathcal{B}}^*_{\eta}\hookrightarrow C^{\infty}(\RR^d)$, we have that $\EE'(\RR^d)$ is continuously injected into $\DD'^*_{L^1_{\eta}}$. We denote by $\DD'^*_{L^1_{\eta},c}$ the dual of $\dot{\mathcal{B}}^*_{\eta}$ equipped with the topology of compact convex circled convergence (which coincides with the topology of precompact convergence since $\dot{\mathcal{B}}^*_{\eta}$ is complete). The next lemma shows that $\dot{\mathcal{B}}^*_{\eta}$ satisfies the condition $a)$ of \cite[Theorem 1.12]{Komatsu3}.

\begin{lemma}\label{seqclomsr}
The sequential closure of the set of measures with compact support in $\DD'^*_{L^1_{\eta},c}$ coincides with $\DD'^*_{L^1_{\eta},c}$.
\end{lemma}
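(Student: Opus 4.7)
The plan is to combine the structural Theorem~\ref{karak} with two coupled truncations: one of the ``potential'' $g$ and one of the ultradifferential operator used to represent $f$. Let $f\in\DD'^*_{L^1_\eta}$. Applying Theorem~\ref{karak}(v) with the translation-invariant Banach space $E=C_\eta$ (for which $E'_\ast=L^1_\eta$ by Example~\ref{ex1}), I obtain an ultradifferential operator $P(D)=\sum_\alpha a_\alpha D^\alpha$ of class $\ast$ and $g\in L^1_\eta$ such that $f=P(D)g$. Using the density of $\mathcal{D}(\RR^d)$ in $L^1_\eta$ (truncate by a compact indicator, then mollify with a classical mollifier), I pick $g_n\in\mathcal{D}(\RR^d)$ with $g_n\to g$ in $L^1_\eta$. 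Setting $P_n(D):=\sum_{|\alpha|\leq n}a_\alpha D^\alpha$, a finite-order differential operator, I define
\beqs
\mu_n:=P_n(D)g_n\in\mathcal{D}(\RR^d).
\eeqs
Each $\mu_n$ is a smooth compactly supported function and hence, via the density $\mu_n(x)\,dx$, is a finite signed Radon measure with compact support. Through the continuous injection $\EE'(\RR^d)\hookrightarrow\DD'^*_{L^1_\eta}$ noted just before the lemma, $\mu_n$ becomes an element of $\DD'^*_{L^1_\eta}$ acting by $\langle\mu_n,\varphi\rangle=\int\mu_n(x)\varphi(x)\,dx$.

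It remains to show that $\mu_n\to f$ in $\DD'^*_{L^1_\eta,c}$; I will actually establish this in the (finer) strong topology, i.e.\ uniformly on bounded subsets of $\dot{\mathcal{B}}^*_\eta$, which suffices. Fix a bounded $B\subseteq\dot{\mathcal{B}}^*_\eta$ and $\varphi\in B$, and split
\beqs
\langle\mu_n-f,\varphi\rangle=\int g_n\bigl(P_n(-D)\varphi-P(-D)\varphi\bigr)dx+\int(g_n-g)\,P(-D)\varphi\,dx.
\eeqs
The second integral is controlled by $\|P(-D)\varphi\|_{L^\infty_\eta}\|g_n-g\|_{L^1_\eta}$; continuity of $P(-D)$ on $\dot{\mathcal{B}}^*_\eta$ makes $\|P(-D)\varphi\|_{L^\infty_\eta}$ uniformly bounded on $B$, while $\|g_n-g\|_{L^1_\eta}\to 0$, so this piece vanishes uniformly on $B$. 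For the first integral I bound
\beqs
\left|\int g_n\bigl(P_n(-D)\varphi-P(-D)\varphi\bigr)dx\right|\leq \|g_n\|_{L^1_\eta}\sum_{|\alpha|>n}|a_\alpha|\|D^\alpha\varphi\|_{L^\infty_\eta},
\eeqs
with $\|g_n\|_{L^1_\eta}$ uniformly bounded. In the Beurling case, $|a_\alpha|\leq C_0 L_0^{|\alpha|}/M_\alpha$ for some fixed $L_0$, while the boundedness of $B$ in the Fr\'echet space $\dot{\mathcal{B}}^{(M_p)}_\eta$ gives $\|D^\alpha\varphi\|_{L^\infty_\eta}\leq C(r)M_\alpha/r^{|\alpha|}$ for every $r>0$; choosing $r=2L_0$ dominates the sum by $C_0C(r)\sum_{|\alpha|>n}2^{-|\alpha|}\to 0$. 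In the Roumieu case, $|a_\alpha|\leq C(L)L^{|\alpha|}/M_\alpha$ for every $L>0$, while regularity of the $(DF)$-space $\dot{\mathcal{B}}^{\{M_p\}}_\eta$ provides $m_B>0$ with $\|D^\alpha\varphi\|_{L^\infty_\eta}\leq C_B M_\alpha/m_B^{|\alpha|}$; choosing $L=m_B/2$ dominates the sum by $C(m_B/2)C_B\sum_{|\alpha|>n}2^{-|\alpha|}\to 0$. In either case $\sup_{\varphi\in B}|\langle\mu_n-f,\varphi\rangle|\to 0$, proving the claim.

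The main technical delicacy is the Beurling/Roumieu parameter matching in the last step. In the Roumieu case it is essential to simultaneously exploit the regularity of the $(DF)$-inductive limit (so that $B$ sits in a single step $\DD^{M_p,m_B}_{L^\infty_\eta}$) and the ``for every $L$'' character of a Roumieu ultradifferential operator, in order to choose $L$ strictly smaller than $m_B$; the Beurling case is easier, as the projective description of $\dot{\mathcal{B}}^{(M_p)}_\eta$ already gives $B$ bounded in every $\DD^{M_p,r}_{L^\infty_\eta}$.
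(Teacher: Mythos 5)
Your proof is correct, but it follows a genuinely different route from the paper's. You invoke the structure theorem (Theorem \ref{karak}, with $E=C_{\eta}$, $E'_{\ast}=L^{1}_{\eta}$) to write $f=P(D)g$ and then perform a double truncation — of the density $g$ (approximated in $L^{1}_{\eta}$ by elements of $\mathcal{D}(\RR^{d})$) and of the ultradifferential operator (cut off at order $n$) — obtaining a single sequence $\mu_{n}=P_{n}(D)g_{n}$ of compactly supported smooth densities converging to $f$ even in the strong topology $b$ of $\DD'^*_{L^{1}_{\eta}}$, which is finer than the $c$-topology, so the lemma follows. The parameter matching you highlight (some $L_{0}$ for a Beurling operator against every $r$ for a bounded set in the Fr\'echet space $\dot{\mathcal{B}}^{(M_p)}_{\eta}$; every $L$ for a Roumieu operator against a single $m_{B}$ coming from the regularity of the $(DF)$-inductive limit) is exactly the right pivot and is available from the properties of $\mathcal{D}^{*}_{E}$ recalled in Section \ref{TIBU}. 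The paper instead avoids the structure theorem's representation: it regularizes $f$ by convolution with a $\delta$-sequence $\chi_{m}$ (so that $\chi_{m}*f\in L^{1}_{\eta}$ by Theorem \ref{karak}$(ii)$), multiplies by compactly supported continuous cutoffs $g_{n}$, and proves the two iterated limits $G_{m,n}\to G_{m}$ and $G_{m}\to f$ directly in the $c$-topology, using the Banach--Steinhaus theorem and the characterization of precompact subsets of $\dot{\mathcal{B}}^{*}_{\eta}$ from Lemma \ref{precompactscl} (the uniform smallness of $|\varphi|/\eta$ off a compact set, which is genuinely a precompactness — not boundedness — phenomenon). What each approach buys: yours yields a single approximating sequence and strong convergence, at the price of the heavier structural input; the paper's stays within the $c$-topology toolkit it develops anyway and only ever needs $f*\chi_m$ to be an $L^1_\eta$-function rather than a full parametrix representation, but places $f$ in the sequential closure via an iterated limit rather than a single sequence.
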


\begin{proof} Let $f\in \DD'^*_{L^1_{\eta},c}$. Let $\chi_n\in\SSS^*_{\dagger}(\RR^d)$, $n\in\ZZ_+$, be as in $ii)$ of Lemma \ref{appincl}. Pick a continuous function $g$ with values in $[0,1]$ such that $\mathrm{supp}\, g\subseteq \{x\in\RR^d|\,|x|\leq 1\}$ and $g(x)=1$ on $\{x\in\RR^d|\, |x|\leq 1/2\}$. For $n\in\ZZ_+$, set $g_n(x)=g(x/n)$ and define $G_{m,n}(x)=g_n(x)(\chi_m*f)(x)\in C_c(\RR^d)\subseteq \left(C(\RR^d)\right)'$. Moreover, for $m\in\ZZ_+$ define $G_m(x)=(\chi_m*f)(x)\in L^1_{\eta}$ (cf. Theorem \ref{karak}). Since $\chi_m*f\rightarrow f$ in $\DD'^*_{L^1_{\eta},\sigma}$ (see the proof of $i)$ of Proposition \ref{apppr}) and $\dot{\mathcal{B}}^*_{\eta}$ is barreled, the Banach-Steinhaus theorem yields $\chi_m*f\rightarrow f$ in $\DD'^*_{L^1_{\eta},c}$. It remains to prove that for each $m\in\ZZ_+$, $G_{m,n}\rightarrow G_m$ as $n\rightarrow \infty$ in $\DD'^*_{L^1_{\eta},c}$. Fix $m\in\ZZ_+$ and a precompact set $B$ in $\dot{\mathcal{B}}^*_{\eta}$. Let $\varepsilon>0$. By Lemma \ref{precompactscl} there exists a compact set $K\subseteq \RR^d$ such that $|\varphi(x)|/\eta(x)\leq \varepsilon/\|\chi_m*f\|_{L^1_{\eta}}$ for all $x\in\RR^d\backslash K$, $\varphi\in B$. Pick $n_0\in\ZZ_+$ such that $K\subseteq\{x\in\RR^d|\, |x|\leq n_0/2\}$. Then for all $n\geq n_0$, $g_n(x)=1$ on $K$. Let $\varphi\in B$. We have
\beqs
|\langle G_m(x)-G_{m,n}(x),\varphi\rangle|&\leq& \int_{\RR^d}(1-g_n(x))|(\chi_m*f)(x)||\varphi(x)|dx\\
&\leq& \int_{\RR^d\backslash K} |(\chi_m*f)(x)||\varphi(x)|dx\leq \varepsilon,
\eeqs
which proves the desired convergence.
\end{proof}

\begin{definition}\label{defvekval}
Let $F$ be a sequentially complete, resp. quasi-complete, resp. complete, l.c.s. We define $\dot{\mathcal{B}}^*_{\eta}(\RR^d;F)$ to be the space of all $F$-valued smooth functions $\boldsymbol{\varphi}$ defined on $\RR^d$ such that:
\begin{itemize}
\item[$i)$] for each $r>0$ (resp. $(r_p)\in\mathfrak{R}$) and each continuous seminorm $q$ on $F$, we have
\beqs
q_r(\boldsymbol{\varphi})=\sup_{x,\alpha} q\left(\frac{r^{|\alpha|}D^{\alpha}\boldsymbol{\varphi}(x)}{\eta(x)M_{\alpha}}\right)<\infty\, \left(\mbox{resp.}\, q_{(r_p)}(\boldsymbol{\varphi})=\sup_{x,\alpha} q\left(\frac{D^{\alpha}\boldsymbol{\varphi}(x)}{\eta(x)M_{\alpha}R_{\alpha}}\right)<\infty\right);
\eeqs
\item[$ii)$] for every $\varepsilon>0$, $q$ a continuous seminorm on $F$ and $r>0$ (resp. $(r_p)\in\mathfrak{R}$), there exists a compact set $K\subseteq \RR^d$ such that
\beqs
\sup_{\alpha\in\NN^d}\sup_{x\in\RR^d\backslash K} q\left(\frac{r^{|\alpha|}D^{\alpha}\boldsymbol{\varphi}(x)}{\eta(x)M_{\alpha}}\right)\leq \varepsilon\, \left(\mbox{resp.}\, \sup_{\alpha\in\NN^d}\sup_{x\in\RR^d\backslash K} q\left(\frac{D^{\alpha}\boldsymbol{\varphi}(x)}{\eta(x)M_{\alpha}R_{\alpha}}\right)\leq \varepsilon\right).
\eeqs
\end{itemize}
Equipped with the seminorms $q_r$ (resp. $q_{(r_p)}$), where $q$ varies through the continuous seminorms of $F$ and $r>0$ (resp. $(r_p)\in\mathfrak{R}$), $\dot{\mathcal{B}}^*_{\eta}(\RR^d;F)$ becomes a (Hausdorff) l.c.s..% its topology is Hausdorff since it is obviously stronger than the topology of $C(\RR^d;F)$.
\end{definition}

We need the following technical lemma.

\begin{lemma}\label{precompactvek}
Let $\boldsymbol{\varphi}\in\dot{\mathcal{B}}^*_{\eta}(\RR^d;F)$. For each $r>0$ (resp. $(r_p)\in\mathfrak{R}$), the set
\beqs
B_r=\left\{\frac{r^{|\alpha|}D^{\alpha}\boldsymbol{\varphi}(x)}{\eta(x)M_{\alpha}}\bigg|\, \alpha\in\NN^d, x\in\RR^d\right\}\,\, \left(\mbox{resp.}\,\, B_{(r_p)}=\left\{\frac{D^{\alpha}\boldsymbol{\varphi}(x)}{\eta(x)M_{\alpha}R_{\alpha}}\bigg|\, \alpha\in\NN^d, x\in\RR^d\right\}\right)
\eeqs
is precompact in $F$.
\end{lemma}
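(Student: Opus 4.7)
The goal is to show that $B_r$ (resp.\ $B_{(r_p)}$) is totally bounded in $F$, i.e., for every continuous seminorm $q$ on $F$ and every $\varepsilon>0$, the set can be covered by finitely many translates of the $q$-ball $\{f\in F:q(f)<\varepsilon\}$. My strategy is to decompose $B_r$ into a \emph{tail}, lying entirely inside the $q$-ball of radius $\varepsilon$ around $0$, and a \emph{core}, which is the image of a compact subset of $\RR^d$ under finitely many continuous $F$-valued maps and hence compact in $F$.

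For the Beurling case, fix $q$ and $\varepsilon>0$. Applying Definition~\ref{defvekval}(i) with $2r$ in place of $r$ yields $C:=q_{2r}(\boldsymbol{\varphi})<\infty$ and
\[
q\!\left(\frac{r^{|\alpha|}D^{\alpha}\boldsymbol{\varphi}(x)}{\eta(x)M_{\alpha}}\right)\leq 2^{-|\alpha|}C,\qquad \alpha\in\NN^d,\ x\in\RR^d.
\]
Choosing $p_0$ with $2^{-p_0}C<\varepsilon$ puts every element of $B_r$ with $|\alpha|\geq p_0$ inside the $q$-ball of radius $\varepsilon$ around $0$. For the finitely many $\alpha$ with $|\alpha|<p_0$, Definition~\ref{defvekval}(ii) supplies a common compact $K\subseteq\RR^d$ outside of which the $q$-smallness still holds. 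Since $\boldsymbol{\varphi}\in C^{\infty}(\RR^d;F)$ and $\eta$ is continuous and strictly positive, each map $x\mapsto r^{|\alpha|}D^{\alpha}\boldsymbol{\varphi}(x)/(\eta(x)M_{\alpha})$ is continuous from $K$ into $F$, and so its image is compact; the finite union over $|\alpha|<p_0$ is therefore a compact subset $C_0\subset F$ that admits a finite $q$-$\varepsilon$-net. Adjoining the singleton $\{0\}$ to this net produces the required finite cover of $B_r$.

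The Roumieu case reduces to the same structure once the rescaling $r\mapsto 2r$ is replaced by a suitable choice of $(r'_p)\in\mathfrak{R}$. I pick $(r'_p)$ with $r'_p\leq r_p$ and such that $R'_p/R_p=\prod_{j=1}^{p}r'_j/r_j\to 0$; concretely $r'_p=\sqrt{r_p}$ works (with a harmless adjustment at small indices to keep the sequence increasing and unbounded), since $\sum_{j=1}^{p}\log\sqrt{r_j}\to\infty$ as $r_j\to\infty$. Then $C':=q_{(r'_p)}(\boldsymbol{\varphi})<\infty$ gives
\[
q\!\left(\frac{D^{\alpha}\boldsymbol{\varphi}(x)}{\eta(x)M_{\alpha}R_{\alpha}}\right)\leq \frac{R'_{\alpha}}{R_{\alpha}}\,C',
\]
which falls below $\varepsilon$ once $|\alpha|\geq p_0$ for a sufficiently large $p_0$. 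From here the argument is verbatim the Beurling one: Definition~\ref{defvekval}(ii) applied to $(r_p)$ handles $|\alpha|<p_0$ with $x\notin K$, and continuity of $\boldsymbol{\varphi}$ on the compact $K$ yields the compact core.

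The only genuinely technical point is the Roumieu construction of $(r'_p)$, and even that is a short calculation; the rest is a routine unwinding of Definition~\ref{defvekval} combined with the elementary fact that continuous images of compact sets are compact. In particular, no completeness hypothesis on $F$ beyond what is used to make $B_r$ meaningful is needed.
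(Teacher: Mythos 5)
Your proof is correct and follows essentially the same route as the paper's: split $B_r$ into a tail (large $|\alpha|$, via a rescaled seminorm, or $x$ outside a compact set, via condition $ii)$ of Definition \ref{defvekval}) that is uniformly $q$-small, plus a core over finitely many $\alpha$ and a compact $K$ which is compact by continuity, then take a finite net. The only cosmetic differences are that the paper adjoins a point of $B_r$ rather than $0$ to the net and, in the Roumieu case, simply uses $(k_p)=(r_p/2)$ (so $K_{\alpha}/R_{\alpha}=2^{-|\alpha|}$) instead of your $r'_p=\sqrt{r_p}$; both choices work.
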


\begin{proof} Let $r>0$ (resp. $(r_p)\in\mathfrak{R}$). Let $q_1,...,q_n$ be continuous seminorms on $F$ and $\varepsilon>0$ and fix a neighborhood of zero $V=\{f\in F|\, q_1(f)\leq \varepsilon,...,q_n(f)\leq \varepsilon\}$ in $F$. Let $k=2r$ (resp. $(k_p)=(r_p/2)$). Since $\boldsymbol{\varphi}\in\dot{\mathcal{B}}^*(\RR^d;F)$, there exists $C>0$ such that $q_j\left(k^{|\alpha|}D^{\alpha}\boldsymbol{\varphi}(x)/(\eta(x)M_{\alpha})\right)\leq C$ (resp. $q_j\left(D^{\alpha}\boldsymbol{\varphi}(x)/(\eta(x)M_{\alpha}K_{\alpha})\right)\leq C$), for all $\alpha\in\NN^d$, $x\in\RR^d$ and $j=1,...,n$. Hence, there exists $p_0\in\ZZ_+$ such that $q_j\left(r^{|\alpha|}D^{\alpha}\boldsymbol{\varphi}(x)/(\eta(x)M_{\alpha})\right)\leq \varepsilon/2$ (resp. $q_j\left(D^{\alpha}\boldsymbol{\varphi}(x)/(\eta(x)M_{\alpha}R_{\alpha})\right)\leq \varepsilon/2$), for all $|\alpha|\geq p_0$, $x\in\RR^d$ and $j=1,...,n$. By condition $ii)$ of Definition \ref{defvekval}, there exists a compact set $K\subseteq \RR^d$ such that $q_j\left(r^{|\alpha|}D^{\alpha}\boldsymbol{\varphi}(x)/(\eta(x)M_{\alpha})\right)\leq \varepsilon/2$ for all $\alpha\in\NN^d$, $x\in\RR^d\backslash K$, $j=1,...,n$, in the Beurling case (resp. $q_j\left(D^{\alpha}\boldsymbol{\varphi}(x)/(\eta(x)M_{\alpha}R_{\alpha})\right)\leq \varepsilon/2$ for all $\alpha\in\NN^d$, $x\in\RR^d\backslash K$, $j=1,...,n$, in the Roumieu case). Since $D^{\alpha}\boldsymbol{\varphi}/\eta$ are continuous mappings from $\RR^d$ to $F$ the set
\beqs
\tilde{B}_r=\left\{\frac{r^{|\alpha|}D^{\alpha}\boldsymbol{\varphi}(x)}{\eta(x)M_{\alpha}}\bigg|\, |\alpha|\leq p_0, x\in K\right\}\, \left(\mbox{resp.}\, \tilde{B}_{(r_p)}=\left\{\frac{D^{\alpha}\boldsymbol{\varphi}(x)}{\eta(x)M_{\alpha}R_{\alpha}}\bigg|\, |\alpha|\leq p_0, x\in K\right\}\right)
\eeqs
is compact in $F$. Thus, there exists a finite set $\tilde{B}_{0,r}\subseteq \tilde{B}_r$ (resp. a finite set $\tilde{B}_{0,(r_p)}\subseteq \tilde{B}_{(r_p)}$) such that $\tilde{B}_r\subseteq \tilde{B}_{0,r}+V$ (resp. $\tilde{B}_{(r_p)}\subseteq \tilde{B}_{0,(r_p)}+V$). Fix $x_0\in\RR^d\backslash K$ and $|\beta|>p_0$ and denote $f_0=r^{|\beta|}D^{\beta}\boldsymbol{\varphi}(x_0)/(\eta(x)M_{\beta})\in B_r$ (resp. $f_0=D^{\beta}\boldsymbol{\varphi}(x_0)/(\eta(x)M_{\beta}R_{\beta})\in B_{(r_p)}$). Let $\tilde{B}_{1,r}=\tilde{B}_{0,r}\cup \{f_0\}$ (resp. $\tilde{B}_{1,(r_p)}=\tilde{B}_{0,(r_p)}\cup \{f_0\}$). We prove that $B_r\subseteq \tilde{B}_{1,r}+V$ (resp. $B_{(r_p)}\subseteq\tilde{B}_{1,(r_p)}+V$), which will complete the proof of the lemma. For $x\in \RR^d\backslash K$ and $\alpha\in\NN^d$, in the Beurling case, by construction
\beqs
q_j\left(r^{|\alpha|}D^{\alpha}\boldsymbol{\varphi}(x)/(\eta(x)M_{\alpha})-f_0\right)\leq q_j\left(r^{|\alpha|}D^{\alpha}\boldsymbol{\varphi}(x)/(\eta(x)M_{\alpha})\right)+q_j(f_0)\leq \varepsilon
\eeqs
for all $j=1,...,n$, hence $r^{|\alpha|}D^{\alpha}\boldsymbol{\varphi}(x)/(\eta(x)M_{\alpha})\in\tilde{B}_{1,r}+V$. Similarly, in the Roumieu case, $D^{\alpha}\boldsymbol{\varphi}(x)/(\eta(x)M_{\alpha}R_{\alpha})\in\tilde{B}_{1,r}+V$. If $x\in K$ and $|\alpha|\geq p_0$ one similarly obtains $r^{|\alpha|}D^{\alpha}\boldsymbol{\varphi}(x)/(\eta(x)M_{\alpha})-f_0\in V$ (resp. $D^{\alpha}\boldsymbol{\varphi}(x)/(\eta(x)M_{\alpha}R_{\alpha})-f_0\in V$). When $x\in K$ and $|\alpha|\leq p_0$, by construction there exists $f\in \tilde{B}_{0,r}$ (resp. $f\in \tilde{B}_{0,(r_p)}$) such that $r^{|\alpha|}D^{\alpha}\boldsymbol{\varphi}(x)/(\eta(x)M_{\alpha})-f\in V$ (resp. $D^{\alpha}\boldsymbol{\varphi}(x)/(\eta(x)M_{\alpha}R_{\alpha})-f\in V$).
\end{proof}

\begin{proposition}
Let $F$ be a sequentially complete l.c.s.. The space $\dot{\mathcal{B}}^*_{\eta}(\RR^d;F)$ is isomorphic to $\dot{\mathcal{B}}^*_{\eta}\varepsilon F$ as l.c.s..
\end{proposition}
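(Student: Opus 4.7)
The strategy is to produce mutually inverse continuous linear maps between $\dot{\mathcal{B}}^*_{\eta}(\RR^d;F)$ and $\dot{\mathcal{B}}^*_{\eta}\varepsilon F\cong \mathcal{L}_{\epsilon}(\DD'^*_{L^1_{\eta},c},F)$. In one direction, I assign to each $\boldsymbol{\varphi}\in \dot{\mathcal{B}}^*_{\eta}(\RR^d;F)$ the bilinear form
\[
\Psi(\boldsymbol{\varphi})(f,f'):=\langle f, f'\circ\boldsymbol{\varphi}\rangle,\qquad f\in\DD'^*_{L^1_{\eta}},\; f'\in F'.
\]
Linearity and continuity of $f'$ give $D^{\alpha}(f'\circ\boldsymbol{\varphi})=f'\circ D^{\alpha}\boldsymbol{\varphi}$; conditions $i)$ and $ii)$ of Definition \ref{defvekval} applied with the seminorm $q(\cdot)=|f'(\cdot)|$, together with Lemma \ref{karbb}, then yield $f'\circ\boldsymbol{\varphi}\in\dot{\mathcal{B}}^*_{\eta}$. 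The hypocontinuity of $\Psi(\boldsymbol{\varphi})$ on products of equicontinuous sets and the continuity of $\Psi$ itself both stem from the observation that $q_{A'}(\cdot):=\sup_{f'\in A'}|f'(\cdot)|$ is a continuous seminorm on $F$ for every equicontinuous $A'\subseteq F'$, so that $\{f'\circ\boldsymbol{\varphi}:f'\in A'\}$ is uniformly bounded in some Banach step $\DD^{M_p,m}_{C_{\eta}}$ (Beurling) resp.\ $\DD^{M_p,(r_p)}_{C_{\eta}}$ (Roumieu) determined by $q_{A'}$.

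Conversely, given $T\in\mathcal{L}_{\epsilon}(\DD'^*_{L^1_{\eta},c},F)$, I set $\Phi(T)(x):=T(\delta_{x})$. The key point is that for every $r>0$ (resp.\ $(r_p)\in\mathfrak{R}$) the family
\[
S_{r}:=\Bigl\{\tfrac{r^{|\alpha|}D^{\alpha}\delta_{x}}{\eta(x)M_{\alpha}}:x\in\RR^d,\alpha\in\NN^d\Bigr\}\;\;\Bigl(\text{resp. }S_{(r_p)}:=\Bigl\{\tfrac{D^{\alpha}\delta_{x}}{\eta(x)M_{\alpha}R_{\alpha}}:x\in\RR^d,\alpha\in\NN^d\Bigr\}\Bigr)
\]
is contained in the polar of the unit ball of $\|\cdot\|_{L^{\infty}_{\eta},r}$ (resp.\ $\|\cdot\|_{L^{\infty}_{\eta},(r_p)}$), hence equicontinuous in $\DD'^*_{L^1_{\eta}}$. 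Since every precompact $K\subseteq\dot{\mathcal{B}}^*_{\eta}$ is bounded in some Banach step and so has uniformly controlled derivatives of all orders on compacts of $\RR^d$, a Taylor expansion shows that $x\mapsto \delta_{x}$ is smooth as a function into $\DD'^*_{L^1_{\eta},c}$. Composing with the continuous map $T$ yields $\Phi(T)\in C^{\infty}(\RR^d;F)$ with $D^{\alpha}\Phi(T)(x)=(-1)^{|\alpha|}T(D^{\alpha}\delta_{x})$. For every continuous seminorm $q$ on $F$, the continuity of $T$ provides a precompact $B_q\subseteq\dot{\mathcal{B}}^*_{\eta}$ with $q(T(f))\leq\sup_{\varphi\in B_q}|\langle f,\varphi\rangle|$; evaluating this at $f\in S_{r}$ (resp.\ $f\in S_{(r_p)}$) gives the growth condition $i)$ of Definition \ref{defvekval}.

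The main obstacle is verifying the decay condition $ii)$ of Definition \ref{defvekval} for $\Phi(T)$. Given $\varepsilon>0$ and the prescribed parameter $r$ (resp.\ $(r_p)$), Lemma \ref{precompactscl} applied to $B_q$ produces a compact $K_{0}\subseteq\RR^d$ such that, uniformly in $\varphi\in B_q$, $\alpha\in\NN^d$ and $x\in\RR^d\setminus K_{0}$,
\[
\frac{r^{|\alpha|}|D^{\alpha}\varphi(x)|}{\eta(x)M_{\alpha}}\leq\varepsilon\quad\Bigl(\text{resp. }\frac{|D^{\alpha}\varphi(x)|}{\eta(x)M_{\alpha}R_{\alpha}}\leq\varepsilon\Bigr).
\]
Substituting the corresponding normalized element of $S_{r}$ (resp.\ $S_{(r_p)}$) into the estimate for $q(T(\cdot))$ above transfers this directly to $q(r^{|\alpha|}D^{\alpha}\Phi(T)(x)/(\eta(x)M_{\alpha}))\leq\varepsilon$ (resp.\ $q(D^{\alpha}\Phi(T)(x)/(\eta(x)M_{\alpha}R_{\alpha}))\leq\varepsilon$) for $x\notin K_{0}$ and every $\alpha$, which is precisely the required decay.

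It remains to verify that $\Phi$ and $\Psi$ are mutually inverse and bicontinuous. The identity $\Phi(\Psi(\boldsymbol{\varphi}))(x)=\boldsymbol{\varphi}(x)$ is immediate from $\Psi(\boldsymbol{\varphi})(\delta_{x},f')=f'(\boldsymbol{\varphi}(x))$. For the opposite composition, $\Psi(\Phi(T))$ and $T$ agree on each point mass by direct inspection and hence on every compactly supported measure by a Riemann-sum approximation in the topology of precompact convergence; since compactly supported measures are sequentially dense in $\DD'^*_{L^1_{\eta},c}$ by Lemma \ref{seqclomsr} and both sides are continuous with values in the sequentially complete space $F$, they coincide on all of $\DD'^*_{L^1_{\eta},c}$. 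Bicontinuity is then a consequence of the continuity of $\Psi$ established in the first paragraph and the estimates produced for $\Phi$ in the preceding two paragraphs.
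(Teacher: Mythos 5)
Your argument runs through the other canonical realization of the $\varepsilon$ product: you identify $\dot{\mathcal{B}}^*_{\eta}\varepsilon F$ with $\mathcal{L}_{\epsilon}\bigl((\dot{\mathcal{B}}^*_{\eta})'_c,F\bigr)=\mathcal{L}_{\epsilon}\bigl(\DD'^*_{L^1_{\eta},c},F\bigr)$ and recover the $F$-valued function by evaluating at Dirac deltas, whereas the paper works with $\mathcal{L}_{\epsilon}\bigl(F'_c,\dot{\mathcal{B}}^*_{\eta}\bigr)$ and invokes Komatsu's characterization \cite[Theorem 1.12]{Komatsu3} of such operators as continuous $F$-valued functions subject to two conditions. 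Your route is more self-contained in that it bypasses that theorem, at the cost of proving by hand the smoothness of $x\mapsto\delta_x$ into $\DD'^*_{L^1_{\eta},c}$ and the identity $\Psi(\Phi(T))=T$; for the latter you correctly rely on the sequential density of compactly supported measures (Lemma \ref{seqclomsr}), which is precisely the lemma the paper uses to verify the hypothesis of Komatsu's theorem. Your treatment of $\Phi$ is sound: the families $S_r$, resp. $S_{(r_p)}$, are indeed equicontinuous, condition $i)$ follows from boundedness of the precompact set $B_q$, and condition $ii)$ transfers from $B_q$ via Lemma \ref{precompactscl} exactly as you say.

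There is, however, a genuine gap in the direction $\boldsymbol{\varphi}\mapsto\Psi(\boldsymbol{\varphi})$. To place $\Psi(\boldsymbol{\varphi})$ in the $\varepsilon$ product you must show that, for every equicontinuous $A'\subseteq F'$, the linear forms $f\mapsto\langle f,f'\circ\boldsymbol{\varphi}\rangle$, $f'\in A'$, are equicontinuous on $\DD'^*_{L^1_{\eta},c}$. Since the neighborhoods of zero of $\DD'^*_{L^1_{\eta},c}$ are the polars of compact convex circled subsets of $\dot{\mathcal{B}}^*_{\eta}$, and $K^{\circ\circ}=K$ for such sets, this amounts to $\{f'\circ\boldsymbol{\varphi}\,|\,f'\in A'\}$ being contained in a compact convex circled subset of $\dot{\mathcal{B}}^*_{\eta}$, i.e. being precompact there. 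Uniform boundedness in a Banach step $\DD^{M_p,m}_{C_{\eta}}$, which is all you establish, does not suffice: bounded subsets of $\dot{\mathcal{B}}^*_{\eta}$ are not precompact in general, since by Lemma \ref{precompactscl} precompactness requires in addition the uniform decay at infinity. The repair is exactly the paper's verification of its condition $b)$: apply condition $ii)$ of Definition \ref{defvekval} with the seminorm $q_{A'}$ and then Lemma \ref{precompactscl} to get precompactness of $\{f'\circ\boldsymbol{\varphi}\,|\,f'\in A'\}$. A related point you gloss over is that, for fixed $f$, the form $f'\mapsto\langle f,f'\circ\boldsymbol{\varphi}\rangle$ must be shown to be represented by an element of $F$ (and not merely of the completion); this is where the sequential completeness of $F$ actually enters, via weak integration against compactly supported measures combined with Lemma \ref{seqclomsr}.
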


\begin{proof} We prove $\dot{\mathcal{B}}^*_{\eta}(\RR^d;F)\cong\mathcal{L}_{\epsilon}\left(F'_c,\dot{\mathcal{B}}^*_{\eta}\right)$. By Lemma \ref{seqclomsr} and \cite[Theorem 1.12]{Komatsu3} the set $\mathcal{L}_{\epsilon}\left(F'_c,\dot{\mathcal{B}}^*_{\eta}\right)$ is identified with the set of all $\mathbf{g}\in C(\RR^d;F)$ such that:
\begin{itemize}
\item[$a)$] for any $f'\in F'$, the function $\langle f', \mathbf{g}(\cdot)\rangle$ is in $\dot{\mathcal{B}}^*_{\eta}(\RR^d)$;
\item[$b)$] for every equicontinuous set $B$ in $F'$, the set $\{\langle f', \mathbf{g}(\cdot)\rangle| f'\in B\}$ is relatively compact in $\dot{\mathcal{B}}^*_{\eta}(\RR^d)$.
\end{itemize}
Let $\boldsymbol{\varphi}\in\dot{\mathcal{B}}^*_{\eta}(\RR^d;F)\subseteq C(\RR^d;F)$ and $f'\in F'$. Then the function $\varphi(x)=\langle f',\boldsymbol{\varphi}(x)\rangle$ is in $C^{\infty}(\RR^d)$ and $D^{\alpha}\varphi(x)=\langle f',D^{\alpha}\boldsymbol{\varphi}(x)\rangle$. Since $f'\in F'$, there exists a continuous seminorm $q$ on $F$ and $C>0$ such that $|\langle f',f\rangle|\leq Cq(f)$ for all $f\in F$. For $r>0$ (resp. $(r_p)\in\mathfrak{R}$) we have $\|\varphi\|_{L^{\infty}_{\eta},r}\leq Cq_r(\boldsymbol{\varphi})$ (resp. $\|\varphi\|_{L^{\infty}_{\eta},(r_p)}\leq Cq_{(r_p)}(\boldsymbol{\varphi})$). Moreover, by condition $ii)$ of Definition \ref{defvekval}, one readily checks that for every $\varepsilon>0$ and every $r>0$ (resp. $(r_p)\in\mathfrak{R}$), there exists a compact set $K\subseteq \RR^d$ such that
\beqs
\sup_{\alpha\in\NN^d}\sup_{x\in\RR^d\backslash K}\frac{r^{|\alpha|}\left|D^{\alpha}\varphi(x)\right|}{\eta(x)M_{\alpha}}\leq \varepsilon\, \left(\mbox{resp.}\, \sup_{\alpha\in\NN^d}\sup_{x\in\RR^d\backslash K} \frac{\left|D^{\alpha}\varphi(x)\right|}{\eta(x)M_{\alpha}R_{\alpha}}\leq \varepsilon\right).
\eeqs
Thus Lemma \ref{karbb} implies that $a)$ holds for $\boldsymbol{\varphi}$. To prove $b)$, let $H$ be an equicontinuous set in $F'$. Of course, we can assume that $H=U^{\circ}$ ($U^{\circ}$ stands for the polar of $U$) for a convex circled closed neighborhood of zero $U=\{f\in F|\, q_1(f)\leq 1,...,q_n(f)\leq1\}$ in $F$. For $f'\in H$, let $\varphi_{f'}(\cdot)=\langle f',\boldsymbol{\varphi}(\cdot)\rangle$. We prove that the set $\Phi=\{\varphi_{f'}|\, f'\in H\}$ is precompact in $\dot{\mathcal{B}}^*_{\eta}$ and hence relatively compact as $\dot{\mathcal{B}}^*_{\eta}$ is complete. For $r>0$ (resp. $(r_p)\in\mathfrak{R}$) the set $B_r=\{r^{|\alpha|}D^{\alpha}\boldsymbol{\varphi}(x)/\left(\eta(x)M_{\alpha}\right)|\, x\in\RR^d,\, \alpha\in\NN^d\}$ (resp. the set $B_{(r_p)}=\{D^{\alpha}\boldsymbol{\varphi}(x)/\left(\eta(x)M_{\alpha}R_{\alpha}\right)|\, x\in\RR^d,\, \alpha\in\NN^d\}$) is bounded in $F$ (in fact it is precompact by Lemma \ref{precompactvek}), hence $\ds\sup_{f'\in H} \|\varphi_{f'}\|_{L^{\infty}_{\eta},r}<\infty$ (resp. $\ds\sup_{f'\in H} \|\varphi_{f'}\|_{L^{\infty}_{\eta},(r_p)}<\infty$). Thus $\Phi$ is bounded in $\dot{\mathcal{B}}^*_{\eta}$. For $r>0$ (resp. $(r_p)\in\mathfrak{R}$) and $\varepsilon>0$ the condition $ii)$ of Definition \ref{defvekval} implies that there exists a compact set $K\subseteq \RR^d$ such that
\beqs
\sup_{\alpha\in\NN^d}\sup_{x\in\RR^d\backslash K} q_j\left(\frac{r^{|\alpha|}D^{\alpha}\boldsymbol{\varphi}(x)}{\eta(x)M_{\alpha}}\right)\leq \varepsilon\, \left(\mbox{resp.}\, \sup_{\alpha\in\NN^d}\sup_{x\in\RR^d\backslash K} q_j\left(\frac{D^{\alpha}\boldsymbol{\varphi}(x)}{\eta(x)M_{\alpha}R_{\alpha}}\right)\leq \varepsilon\right)
\eeqs
for all $j\in\{1,...,n\}$. Since $H=U^{\circ}$, we have
\beqs
\sup_{f'\in H}\sup_{\alpha\in\NN^d}\sup_{x\in\RR^d\backslash K} \frac{r^{|\alpha|}\left|D^{\alpha}\varphi_{f'}(x)\right|}{\eta(x)M_{\alpha}}\leq \varepsilon\, \left(\mbox{resp.}\, \sup_{f'\in H} \sup_{\alpha\in\NN^d}\sup_{x\in\RR^d\backslash K} \frac{\left|D^{\alpha}\varphi_{f'}(x)\right|}{\eta(x)M_{\alpha}R_{\alpha}}\leq \varepsilon\right).
\eeqs
Thus, Lemma \ref{precompactscl} implies $\Phi$ is precompact in $\dot{\mathcal{B}}^*_{\eta}$. Conversely, for $G\in\mathcal{L}_{\epsilon}(F'_c,\dot{\mathcal{B}}^*_{\eta})$ let $\mathbf{g}\in C(\RR^d;F)$ be the function which satisfies $a)$ and $b)$ that generates $G$, i.e., $G(f')(\cdot)=\langle f',\mathbf{g}(\cdot)\rangle$, $f'\in F'_c$. We have to prove that $\mathbf{g}\in\dot{\mathcal{B}}^*_{\eta}(\RR^d;F)$. Denote by $g_{f'}$ the function $x\mapsto \langle f', \mathbf{g}(\cdot)\rangle$, $\RR^d\rightarrow \CC$. Since $g_{f'}\in \dot{\mathcal{B}}^*_{\eta}(\RR^d)\subseteq C^{\infty}(\RR^d)$ for each $f'\in F'$, it follows that $\mathbf{g}\in C^{\infty}(\RR^d;F)$ (cf. \cite[Appendix, Lemma II]{SchwartzT} and the remark after it). Let $q$ be a continuous seminorm on $F$ and $r>0$ (resp. $(r_p)\in\mathfrak{R}$). Let $U=\{f\in F|\, q(f)\leq 1\}$. Then $B=U^{\circ}$ is equicontinuous subset of $F'$. Thus, by $b)$, $\{g_{f'}|\, f'\in B\}$ is precompact in $\dot{\mathcal{B}}^*_{\eta}$ and by Lemma \ref{precompactscl}, using $B^{\circ}=U$ ($U$ is convex, circled and closed), one easily verifies that $\mathbf{g}$ satisfies the conditions of Definition \ref{defvekval}, i.e., $\mathbf{g}\in\dot{\mathcal{B}}^*_{\eta}(\RR^d;F)$. We obtain that the linear mapping $\boldsymbol{\varphi}\mapsto G_{\boldsymbol{\varphi}}$, $\dot{\mathcal{B}}^*_{\eta}(\RR^d;F)\rightarrow \mathcal{L}_{\epsilon}(F'_c,\dot{\mathcal{B}}^*_{\eta})$, where $G_{\boldsymbol{\varphi}}(f')=\langle f',\boldsymbol{\varphi}(\cdot)\rangle$, is a bijection. It remains to prove that it is a topological isomorphism.\\
\indent To prove that it is continuous, let $B$ be equicontinuous subset of $F'$ and $V_{r,\varepsilon}=\{\psi\in\dot{\mathcal{B}}^{(M_p)}_{\eta}|\, \|\psi\|_{L^{\infty}_{\eta},r}\leq \varepsilon\}$ a neighborhood of zero in $\dot{\mathcal{B}}^{(M_p)}_{\eta}$ and $V_{(r_p),\varepsilon}=\{\psi\in\dot{\mathcal{B}}^{\{M_p\}}_{\eta}|\, \|\psi\|_{L^{\infty}_{\eta},(r_p)}\leq \varepsilon\}$ a neighborhood of zero in $\dot{\mathcal{B}}^{\{M_p\}}_{\eta}$. Consider the neighborhoods of zero
\beqs
M(B,V_{r,\varepsilon})&=&\{G\in\mathcal{L}(F'_c,\dot{\mathcal{B}}^{(M_p)}_{\eta})|\, G(B)\subseteq V_{r,\varepsilon}\}\,\, \mbox{in}\,\, \mathcal{L}_{\epsilon}(F'_c,\dot{\mathcal{B}}^{(M_p)}_{\eta})\,\, \mbox{and}\\
M(B,V_{(r_p),\varepsilon})&=&\{G\in\mathcal{L}(F'_c,\dot{\mathcal{B}}^{\{M_p\}}_{\eta})|\, G(B)\subseteq V_{(r_p),\varepsilon}\}\,\, \mbox{in}\,\, \mathcal{L}_{\epsilon}(F'_c,\dot{\mathcal{B}}^{\{M_p\}}_{\eta}).
\eeqs
Of course, without losing generality, we can assume that $B=U^{\circ}$ for a convex circled closed neighborhood of zero $U=\{f\in F|\, q_1(f)\leq 1,...,q_n(f)\leq1\}$ in $F$. Consider the neighborhoods of zero
\beqs
W_{r,\varepsilon}&=&\left\{\boldsymbol{\psi}\in\dot{\mathcal{B}}^{(M_p)}_{\eta}(\RR^d;F)\bigg|\,\sup_{\alpha,x}q_j \left(\frac{r^{|\alpha|}D^{\alpha}\boldsymbol{\psi}(x)}{\eta(x)M_{\alpha}}\right)\leq \varepsilon,\, j=1,...,n\right\}\,\, \mbox{and}\\
W_{(r_p),\varepsilon}&=&\left\{\boldsymbol{\psi}\in\dot{\mathcal{B}}^{\{M_p\}}_{\eta}(\RR^d;F)\bigg|\,\sup_{\alpha,x}q_j \left(\frac{D^{\alpha}\boldsymbol{\psi}(x)}{\eta(x)M_{\alpha}R_{\alpha}}\right)\leq \varepsilon,\, j=1,...,n\right\},
\eeqs
in $\dot{\mathcal{B}}^{(M_p)}_{\eta}(\RR^d;F)$ and $\dot{\mathcal{B}}^{\{M_p\}}_{\eta}(\RR^d;F)$ respectively. Then, for $\boldsymbol{\varphi}\in W_{r,\varepsilon}$ we have $r^{|\alpha|}D^{\alpha}\boldsymbol{\varphi}(x)/(\eta(x)M_{\alpha})\in \varepsilon U$ for all $\alpha\in\NN^d$, $x\in\RR^d$ (resp. for $\boldsymbol{\varphi}\in W_{(r_p),\varepsilon}$ we have $D^{\alpha}\boldsymbol{\varphi}(x)/(\eta(x)M_{\alpha}R_{\alpha})\in \varepsilon U$ for all $\alpha\in\NN^d$, $x\in\RR^d$). Thus $G_{\boldsymbol{\varphi}}(f')\in V_{r,\varepsilon}$ (resp. $G_{\boldsymbol{\varphi}}(f')\in V_{(r_p),\varepsilon}$), for all $f'\in B$ which proves the continuity of $\boldsymbol{\varphi}\mapsto G_{\boldsymbol{\varphi}}$. Conversely, for the neighborhoods of zero $W_{r,\varepsilon}$ in $\dot{\mathcal{B}}^{(M_p)}_{\eta}(\RR^d;F)$ and $W_{(r_p),\varepsilon}$ in $\dot{\mathcal{B}}^{\{M_p\}}_{\eta}(\RR^d;F)$, where $W_{r,\varepsilon}$ and $W_{(r_p),\varepsilon}$ are defined as above, consider the neighborhoods of zero $M(B,V_{r,\varepsilon})$ in $\mathcal{L}_{\epsilon}(F'_c,\dot{\mathcal{B}}^{(M_p)}_{\eta})$ and $M(B,V_{(r_p),\varepsilon})$ in $\mathcal{L}_{\epsilon}(F'_c,\dot{\mathcal{B}}^{\{M_p\}}_{\eta})$, defined as above. For $G_{\boldsymbol{\varphi}}\in M(B,V_{r,\varepsilon})$ (resp. $G_{\boldsymbol{\varphi}}\in M(B,V_{(r_p),\varepsilon})$), we have
\beqs
\left|\left\langle f',\frac{r^{|\alpha|}D^{\alpha}\boldsymbol{\varphi}(x)}{\eta(x)M_{\alpha}}\right\rangle\right|\leq \varepsilon\,\, \left(\mbox{resp.}\,\, \left|\left\langle f',\frac{D^{\alpha}\boldsymbol{\varphi}(x)}{\eta(x)M_{\alpha}R_{\alpha}}\right\rangle\right|\leq \varepsilon\right)
\eeqs
for all $f'\in B=U^{\circ}$, $\alpha\in\NN^d$, $x\in \RR^d$. This implies that $\varepsilon^{-1} r^{|\alpha|}D^{\alpha}\boldsymbol{\varphi}(x)/(\eta(x)M_{\alpha})\in B^{\circ}=U^{\circ\,\circ}=U$ (the last equality holds since $U$ is convex, circled and closed) for all $\alpha\in\NN^d$, $x\in\RR^d$, in the Beurling case and similarly $\varepsilon^{-1} D^{\alpha}\boldsymbol{\varphi}(x)/(\eta(x)M_{\alpha}R_{\alpha})\in U$ for all $\alpha\in\NN^d$, $x\in\RR^d$, in the Roumieu case. Thus $\boldsymbol{\varphi}\in W_{r,\varepsilon}$ (resp. $\boldsymbol{\varphi}\in W_{(r_p),\varepsilon}$), which proves that $\boldsymbol{\varphi}\mapsto G_{\boldsymbol{\varphi}}$, $\dot{\mathcal{B}}^*_{\eta}(\RR^d;F)\rightarrow \mathcal{L}_{\epsilon}(F'_c, \dot{\mathcal{B}}^*_{\eta})$ is topological isomorphism.
\end{proof}

Since $\dot{\mathcal{B}}^*_{\eta}$ is complete and satisfies the weak sequential approximation property, the above proposition together with \cite[Proposition 1.4]{Komatsu3} implies that if $F$ is a sequentially complete, resp. quasi-complete, resp. complete, l.c.s. the space $\dot{\mathcal{B}}^*_{\eta}(\RR^d;F)(\cong \dot{\mathcal{B}}^*_{\eta}\varepsilon F)$ is canonically isomorphic to the sequential completion, resp quasi-completion, resp. completion, of $\dot{\mathcal{B}}^*_{\eta}\otimes_{\epsilon} F$. Thus, by taking $F=\dot{\mathcal{B}}^*_{\eta_1}(\RR^m)$ where $\eta_1$ is continuous positive ultrapolynomially bounded weight of class $\dagger$, we have the canonical isomorphism of l.c.s.
\beqs
\dot{\mathcal{B}}^*_{\eta}(\RR^d;\dot{\mathcal{B}}^*_{\eta_1}(\RR^m))\cong \dot{\mathcal{B}}^*_{\eta}(\RR^d)\varepsilon \dot{\mathcal{B}}^*_{\eta_1}(\RR^m)\cong \dot{\mathcal{B}}^*_{\eta}(\RR^d)\hat{\otimes}_{\epsilon}\dot{\mathcal{B}}^*_{\eta_1}(\RR^m).
\eeqs
The above results allows us to prove the following proposition which will be essential for the proof of the main result from the following section.

\begin{proposition}\label{epsdd}
Let $\eta_j$ be positive continuous ultrapolynomially bounded weights of class $\dagger$ on $\RR^{d_j}$, for $j=1,2$. Then $\eta=\eta_1\otimes \eta_2$ is a positive continuous ultrapolynomially bounded weight of class $\dagger$ and $\dot{\mathcal{B}}^*_{\eta}(\RR^{d_1+d_2})\cong \dot{\mathcal{B}}^*_{\eta_1}(\RR^{d_1})\hat{\otimes}_{\epsilon} \dot{\mathcal{B}}^*_{\eta_2}(\RR^{d_2})$.
\end{proposition}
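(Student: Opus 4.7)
The plan is to proceed in two reductions. First, I would verify that $\eta = \eta_{1}\otimes\eta_{2}$ is indeed an ultrapolynomially bounded weight of class $\dagger$ on $\RR^{d_{1}+d_{2}}$. This is immediate from the product structure combined with $(M.2)$ for $A_{p}$: the estimate $\eta_{j}(x_{j}+h_{j})\leq C_{j}\eta_{j}(x_{j})e^{A(\tau|h_{j}|)}$ yields $\eta(x+h)\leq C\eta(x)e^{A(\tau|h_{1}|)+A(\tau|h_{2}|)}$, and \cite[Proposition 3.6]{Komatsu1} absorbs the sum into $e^{A(H\tau|h|)}$ up to constants. Next, relying on the preceding discussion of $\varepsilon$-products, since $\dot{\mathcal{B}}^{*}_{\eta_{1}}(\RR^{d_{1}})$ is complete and satisfies the weak sequential approximation property, one has the canonical topological isomorphism $\dot{\mathcal{B}}^{*}_{\eta_{1}}(\RR^{d_{1}})\hat{\otimes}_{\epsilon}\dot{\mathcal{B}}^{*}_{\eta_{2}}(\RR^{d_{2}})\cong\dot{\mathcal{B}}^{*}_{\eta_{1}}\bigl(\RR^{d_{1}};\dot{\mathcal{B}}^{*}_{\eta_{2}}(\RR^{d_{2}})\bigr)$. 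Hence the task reduces to exhibiting a canonical topological isomorphism $\dot{\mathcal{B}}^{*}_{\eta}(\RR^{d_{1}+d_{2}})\cong\dot{\mathcal{B}}^{*}_{\eta_{1}}\bigl(\RR^{d_{1}};\dot{\mathcal{B}}^{*}_{\eta_{2}}(\RR^{d_{2}})\bigr)$ via the natural correspondence $\varphi\leftrightarrow\boldsymbol{\varphi}$ with $\boldsymbol{\varphi}(x_{1})(x_{2})=\varphi(x_{1},x_{2})$.

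Starting from $\varphi\in\dot{\mathcal{B}}^{*}_{\eta}(\RR^{d_{1}+d_{2}})$, I would verify that each $D_{x_{1}}^{\alpha_{1}}\boldsymbol{\varphi}(x_{1})$ lies in $\dot{\mathcal{B}}^{*}_{\eta_{2}}(\RR^{d_{2}})$ by applying Lemma \ref{karbb} with the compact set $\pi_{2}(K)$, where $K\subset\RR^{d_{1}+d_{2}}$ is the compact given by the vanishing condition on $\varphi$. Smoothness of $x_{1}\mapsto\boldsymbol{\varphi}(x_{1})$ in $\dot{\mathcal{B}}^{*}_{\eta_{2}}(\RR^{d_{2}})$ then follows from uniform boundedness of derivatives on compacts. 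For the seminorm condition $i)$ of Definition \ref{defvekval}, I would bound $q_{r}(\boldsymbol{\varphi})$ with $q=\|\cdot\|_{L^{\infty}_{\eta_{2}},s}$ using the inequality $M_{\alpha_{1}}M_{\alpha_{2}}\geq c_{0}^{-1}H^{-(|\alpha_{1}|+|\alpha_{2}|)}M_{\alpha_{1}+\alpha_{2}}$ from $(M.2)$ to obtain $q_{r}(\boldsymbol{\varphi})\leq c_{0}\|\varphi\|_{L^{\infty}_{\eta},H\max(r,s)}$; in the Roumieu case the analogue is carried out by combining two sequences $(r_{p}),(s_{p})\in\mathfrak{R}$ into a single $(t_{p})\in\mathfrak{R}$ using Lemma \ref{nwseq}. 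For condition $ii)$, given $\varepsilon$ and data, I take $K_{1}=\pi_{1}(K)$ where $K$ arises from Lemma \ref{karbb} applied to $\varphi$ with a suitably enlarged parameter; for $x_{1}\notin K_{1}$ every $(x_{1},x_{2})\notin K$, and the scalar bound yields the required vector-valued estimate after absorbing $c_{0}$.

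The converse direction is the delicate one. Given $\boldsymbol{\varphi}$, setting $\varphi(x_{1},x_{2})=\boldsymbol{\varphi}(x_{1})(x_{2})$, smoothness and the $\DD^{*}_{L^{\infty}_{\eta}}$-norm bound $\|\varphi\|_{L^{\infty}_{\eta},t}\leq q_{t}(\boldsymbol{\varphi})$ (with $q=\|\cdot\|_{L^{\infty}_{\eta_{2}},t}$) follow from $M_{|\alpha_{1}|+|\alpha_{2}|}\geq M_{|\alpha_{1}|}M_{|\alpha_{2}|}$ (log-convexity, $M_{0}=1$). The main obstacle is to produce, for given $\varepsilon$ and $t$ (resp. $(t_{p})$), a compact $K\subset\RR^{d_{1}+d_{2}}$ controlling $\varphi$ outside. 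I would first apply condition $ii)$ of Definition \ref{defvekval} to $\boldsymbol{\varphi}$ with the seminorm $q=\|\cdot\|_{L^{\infty}_{\eta_{2}},t}$ (resp. $(t_{p})$) and parameter $t$ (resp. $(t_{p})$) to obtain compact $K_{1}\subset\RR^{d_{1}}$; this handles $x_{1}\notin K_{1}$ by unpacking $q$. To deal with $x_{1}\in K_{1}$ but $x_{2}$ large, I would invoke Lemma \ref{precompactvek} applied to $\boldsymbol{\varphi}$ — the set $B_{t}=\{t^{|\alpha_{1}|}D^{\alpha_{1}}\boldsymbol{\varphi}(x_{1})/(\eta_{1}(x_{1})M_{\alpha_{1}})\}$ is precompact in $\dot{\mathcal{B}}^{*}_{\eta_{2}}$ — and then use the precompactness characterization Lemma \ref{precompactscl} to extract a compact $K_{2}\subset\RR^{d_{2}}$ uniform over all elements of $B_{t}$. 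Taking $K=K_{1}\times K_{2}$ then gives the required bound, since $(x_{1},x_{2})\notin K$ forces $x_{1}\notin K_{1}$ or $x_{2}\notin K_{2}$. Bijectivity of the map is obvious, and the matching continuous estimates in both directions — with the Roumieu sequence-juggling again mediated by Lemma \ref{nwseq} — yield the topological isomorphism.
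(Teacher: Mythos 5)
Your proposal is correct and follows the same route as the paper: the reduction via the weak sequential approximation property and completeness of $\dot{\mathcal{B}}^*_{\eta_1}$ to the identity $\dot{\mathcal{B}}^*_{\eta_1}\hat{\otimes}_{\epsilon}\dot{\mathcal{B}}^*_{\eta_2}\cong\dot{\mathcal{B}}^*_{\eta_1}\bigl(\RR^{d_1};\dot{\mathcal{B}}^*_{\eta_2}(\RR^{d_2})\bigr)$ is exactly the paper's first (and only explicit) step. The paper then omits the remaining isomorphism $\dot{\mathcal{B}}^*_{\eta}(\RR^{d_1+d_2})\cong\dot{\mathcal{B}}^*_{\eta_1}\bigl(\RR^{d_1};\dot{\mathcal{B}}^*_{\eta_2}(\RR^{d_2})\bigr)$ by citing an analogous external result, whereas you supply a correct self-contained argument for it using Lemma \ref{karbb}, Lemma \ref{precompactvek}, and Lemma \ref{precompactscl} — in particular your use of the precompactness of the set $B_r$ in $\dot{\mathcal{B}}^*_{\eta_2}$ to extract a uniform compact $K_2$ is precisely the right mechanism for the delicate direction.
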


\begin{proof} That $\eta$ is a continuous positive ultrapolynomially bounded weight of class $\dagger$ on $\RR^{d_1+d_2}$ is obvious. By the above discussion, it is enough to prove $\dot{\mathcal{B}}^*_{\eta}(\RR^{d_1+d_2})\cong \dot{\mathcal{B}}^*_{\eta_1}(\RR^{d_1};\dot{\mathcal{B}}^*_{\eta_2}(\RR^{d_2}))$; but the proof of this fact is analogous to that of \cite[Proposition 4]{PB} and we omit it.
\end{proof}

\section{Existence of convolution of two ultradistributions}
\label{section convolution}

Denote by $\mathfrak{C}_0$ the set of all $g\in C_0(\RR^d)$ such that $g(x)> 0$ for all $x\in\RR^d$. For $g\in\mathfrak{C}_0$ and $r>0$ (resp. $(r_p)\in\mathfrak{R}$ and denote $R_{\alpha}=\prod_{j=1}^{|\alpha|}r_j$), we define $\tilde{\tilde{\DD}}^{M_p}_{L^{\infty}_{\eta},g,r}$ (resp. $\tilde{\tilde{\DD}}^{M_p}_{L^{\infty}_{\eta},g,(r_p)}$) to be the space of all $\varphi\in C^{\infty}(\RR^d)$ such that
\beqs
p_{g,r}(\varphi)=\sup_{\alpha\in\NN^d}\frac{r^{|\alpha|}\left\|gD^{\alpha}\varphi\right\|_{L^{\infty}_{\eta}}} {M_{\alpha}}<\infty\,\, \left(\mbox{resp.}\,\, p_{g,(r_p)}(\varphi)=\sup_{\alpha\in\NN^d}\frac{\left\|gD^{\alpha}\varphi\right\|_{L^{\infty}_{\eta}}} {M_{\alpha}R_{\alpha}}<\infty\right).
\eeqs
One easily obtains that $\tilde{\tilde{\DD}}^{M_p}_{L^{\infty}_{\eta},g,r}$ (resp. $\tilde{\tilde{\DD}}^{M_p}_{L^{\infty}_{\eta},g,(r_p)}$) becomes a $(B)$-space when equipped with the norm $p_{q,r}$ (resp. $p_{g,(r_p)}$). For $g\in\mathfrak{C}_0$ define
\beqs
\tilde{\tilde{\DD}}^{(M_p)}_{L^{\infty}_{\eta},g}=\lim_{\substack{\longleftarrow\\ r\rightarrow \infty}} \tilde{\tilde{\DD}}^{M_p}_{L^{\infty}_{\eta},g,r},\,\,\, \mbox{resp.}\,\,\, \tilde{\tilde{\DD}}^{\{M_p\}}_{L^{\infty}_{\eta},g}= \lim_{\substack{\longleftarrow\\ (r_p)\in\mathfrak{R}}} \tilde{\tilde{\DD}}^{M_p}_{L^{\infty}_{\eta},g,(r_p)}.
\eeqs
For each $g\in\mathfrak{C}_0$, $\tilde{\tilde{\DD}}^*_{L^{\infty}_{\eta},g}$ is a complete l.c.s. We define an order $\preceq$ on $\mathfrak{C}_0$ as follows: $g\preceq g_1$ when $g(x)\leq g_1(x)$ for all $x\in\RR^d$. Since for $g,g_1\in\mathfrak{C}_0$, $g_2(x)=\max\{g(x),g_1(x)\}$, $x\in\RR^d$, is again in $\mathfrak{C}_0$, $(\mathfrak{C}_0,\preceq)$ becomes a directed set. If $g,g_1\in\mathfrak{C}_0$ with $g\preceq g_1$, one has that $\tilde{\tilde{\DD}}^*_{L^{\infty}_{\eta},g_1}$ is continuously injected into $\tilde{\tilde{\DD}}^*_{L^{\infty}_{\eta},g}$ under the canonical inclusion mapping. Hence, we can define as l.c.s.
\beqs
\tilde{\tilde{\DD}}^{(M_p)}_{L^{\infty}_{\eta}}=\lim_{\substack{\longleftarrow\\ g\in \mathfrak{C}_0}} \tilde{\tilde{\DD}}^{(M_p)}_{L^{\infty}_{\eta},g},\,\,\, \mbox{resp.}\,\,\, \tilde{\tilde{\DD}}^{\{M_p\}}_{L^{\infty}_{\eta}}= \lim_{\substack{\longleftarrow\\ g\in\mathfrak{C}_0}} \tilde{\tilde{\DD}}^{\{M_p\}}_{L^{\infty}_{\eta},g}.
\eeqs
Clearly, $\tilde{\tilde{\DD}}^*_{L^{\infty}_{\eta}}$ is complete.

\begin{lemma}\label{bsubddd} As sets $\DD^*_{L^{\infty}_{\eta}}= \tilde{\tilde{\DD}}^*_{L^{\infty}_{\eta}}$
 and the identity mapping $\DD^*_{L^{\infty}_{\eta}}\rightarrow \tilde{\tilde{\DD}}^*_{L^{\infty}_{\eta}}$ is continuous. Moreover, $\DD^*_{L^{\infty}_{\eta}}$ and $\tilde{\tilde{\DD}}^*_{L^{\infty}_{\eta}}$ have the same bounded sets.
\end{lemma}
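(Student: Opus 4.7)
The continuity of the identity mapping $\DD^*_{L^{\infty}_{\eta}}\rightarrow \tilde{\tilde{\DD}}^*_{L^{\infty}_{\eta}}$ is the easy direction: since every $g\in\mathfrak{C}_0$ lies in $C_{0}(\RR^d)\subseteq L^{\infty}(\RR^d)$, one has the pointwise estimate $|gD^{\alpha}\varphi|/\eta\leq \|g\|_{L^{\infty}}\cdot|D^{\alpha}\varphi|/\eta$, so that $p_{g,r}(\varphi)\leq \|g\|_{L^{\infty}}\|\varphi\|_{L^{\infty}_{\eta},r}$ in the Beurling case and $p_{g,(r_p)}(\varphi)\leq \|g\|_{L^{\infty}}\|\varphi\|_{L^{\infty}_{\eta},(r_p)}$ in the Roumieu case. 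This controls every seminorm generating the projective topology of $\tilde{\tilde{\DD}}^*_{L^{\infty}_{\eta}}$ by a seminorm of $\DD^*_{L^{\infty}_{\eta}}$ (recall the projective description of $\DD^{\{M_p\}}_{L^{\infty}_{\eta}}$ via $(r_p)\in\mathfrak{R}$ from Section \ref{tspace}), hence gives both continuity and that $\DD^*_{L^{\infty}_{\eta}}\subseteq \tilde{\tilde{\DD}}^*_{L^{\infty}_{\eta}}$ as sets.

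The reverse set inclusion is the substantive part and is proved by contraposition. Suppose $\varphi\in C^{\infty}(\RR^d)$ does not belong to $\DD^*_{L^{\infty}_{\eta}}$. Then some defining seminorm is infinite: there exist $m>0$ (resp.\ $(r_p)\in\mathfrak{R}$) and a sequence $\alpha_n\in\NN^d$ such that
\[
c_n:=\frac{m^{|\alpha_n|}\|D^{\alpha_n}\varphi/\eta\|_{L^{\infty}}}{M_{\alpha_n}}\longrightarrow\infty,\quad\text{resp.}\quad c_n:=\frac{\|D^{\alpha_n}\varphi/\eta\|_{L^{\infty}}}{M_{\alpha_n}R_{\alpha_n}}\longrightarrow\infty.
\]
Select $x_n\in\RR^d$ with $|D^{\alpha_n}\varphi(x_n)|/\eta(x_n)\geq \tfrac{1}{2}\|D^{\alpha_n}\varphi/\eta\|_{L^{\infty}}$. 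Two cases can occur. If $(x_n)$ admits a bounded subsequence, by passing to it we may assume $x_{n}\to x_{0}\in\RR^d$; then for \emph{any} $g\in\mathfrak{C}_0$ (e.g.\ $g(x)=e^{-|x|}$) we have $g(x_n)\to g(x_0)>0$, and the corresponding terms of $p_{g,m}(\varphi)$ (resp.\ $p_{g,(r_p)}(\varphi)$) inherit the divergence of $c_n$, contradicting $\varphi\in\tilde{\tilde{\DD}}^*_{L^{\infty}_{\eta}}$. If instead $|x_n|\to\infty$, one constructs a suitable $g$, which is where the main technical work lies.

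The construction I have in mind is as follows. Passing to a subsequence, assume the $x_n$ are pairwise disjoint with $|x_{n+1}|>|x_n|+1$, so that the closed balls $\overline{B(x_n,1/2)}$ are mutually disjoint. Fix $\phi\in C_c(\RR^d)$ with $0\leq\phi\leq 1$, $\phi(0)=1$ and $\supp\phi\subseteq B(0,1/2)$, and set
\[
g(x):=e^{-|x|}+\sum_{n=1}^{\infty} c_n^{-1/2}\phi(x-x_n).
\]
The series converges uniformly on $\RR^d$ (disjoint supports and $c_n^{-1/2}\to 0$), so $g\in C(\RR^d)$; moreover $g(x)\to 0$ as $|x|\to\infty$ and $g(x)\geq e^{-|x|}>0$, hence $g\in\mathfrak{C}_0$. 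By construction $g(x_n)\geq c_n^{-1/2}$, so
\[
p_{g,m}(\varphi)\geq \frac{m^{|\alpha_n|}g(x_n)|D^{\alpha_n}\varphi(x_n)|}{\eta(x_n)M_{\alpha_n}}\geq \tfrac{1}{2}c_n^{-1/2}c_n=\tfrac{1}{2}\sqrt{c_n}\longrightarrow\infty,
\]
and analogously for $p_{g,(r_p)}(\varphi)$ in the Roumieu case, contradicting $\varphi\in\tilde{\tilde{\DD}}^*_{L^{\infty}_{\eta}}$.

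The statement about bounded sets follows by the same pattern, now applied with a diagonal sequence. One direction is immediate from continuity of the inclusion. For the converse, if $B\subseteq\tilde{\tilde{\DD}}^*_{L^{\infty}_{\eta}}$ is bounded but not bounded in $\DD^*_{L^{\infty}_{\eta}}$, pick $\varphi_n\in B$, $\alpha_n\in\NN^d$ and $x_n\in\RR^d$ with $c_n\to\infty$ as above (with $\varphi$ replaced by $\varphi_n$), and repeat the dichotomy: in the bounded case any $g\in\mathfrak{C}_0$ yields $\sup_{\varphi\in B}p_{g,m}(\varphi)=\infty$; in the unbounded case the $g$ built as in the preceding paragraph does the same. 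Either conclusion contradicts boundedness of $B$ in $\tilde{\tilde{\DD}}^*_{L^{\infty}_{\eta}}$. The main obstacle throughout is the construction of $g$ in the ``escape to infinity'' case, but the partition-of-unity recipe above (with weights $c_n^{-1/2}$, chosen to diverge when multiplied by $c_n$ yet still vanish at infinity) handles both the Beurling and Roumieu cases uniformly.
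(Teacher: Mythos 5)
Your proof is correct and follows essentially the same route as the paper: contraposition, extraction of points $x_n$ and multi-indices $\alpha_n$ where the derivative quotients diverge, and construction of a $g\in\mathfrak{C}_0$ decaying like $c_n^{-1/2}$ at those points so that $p_{g,r}(\varphi)$ (resp. $p_{g,(r_p)}(\varphi)$) becomes infinite. The only cosmetic differences are that the paper builds $g$ as a radial piecewise-linear function of $|x|$ rather than a sum of disjointly supported bumps, and that it shows directly (using finiteness of the sup over each compact $K_j$) that the bad points can be chosen escaping to infinity, instead of your bounded/unbounded dichotomy.
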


\begin{proof} It is easy to verify that the inclusion mapping $\DD^*_{L^{\infty}_{\eta}}\rightarrow \tilde{\tilde{\DD}}^*_{L^{\infty}_{\eta}}$ is a well-defined and continuous injection. We prove the surjectivity in the Roumieu case as the Beurling case is analogous. Let $\varphi\in \tilde{\tilde{\DD}}^{\{M_p\}}_{L^{\infty}_{\eta}}$ but $\varphi\not\in \DD^{\{M_p\}}_{L^{\infty}_{\eta}}$. So there exists $(r_p)\in\mathfrak{R}$ such that $\|\varphi\|_{L^{\infty}_{\eta},(r_p)}=\infty$. Let $g_1(x)=e^{-|x|}$. Clearly $g_1\in \mathfrak{C}_0$. Hence $C=p_{g_1,(r_p)}(\varphi)<\infty$. For $j\in\ZZ_+$, let $K_j=\{x\in\RR^d|\, |x|\leq j\}$. As $g_1$ is positive, $\ds C_j=\sup_{\alpha}\sup_{x\in K_j}\left|D^{\alpha}\varphi(x)\right|/\left(\eta(x)M_{\alpha}R_{\alpha}\right)<\infty$ for $j\in\ZZ_+$. Since $\|\varphi\|_{L^{\infty}_{\eta},(r_p)}=\infty$, $C_j$ monotonically increases to $\infty$. Hence we can find $\alpha^{(j)}\in\NN^d$ and $x^{(j)}\in\RR^d$, $j\in\ZZ_+$, such that $|x^{(j)}|+1\leq |x^{(j+1)}|$ and $\left|D^{\alpha^{(j)}}\varphi(x^{(j)})\right|\geq j \eta(x^{(j)})M_{\alpha^{(j)}} R_{\alpha^{(j)}}$ for all $j\in\ZZ_+$. Let $\rho_j=|x^{(j)}|$ and set $g_0(\rho_j)=j^{-1/2}$, for $j\in \ZZ_+$. Define $g_0:[0,\infty)\rightarrow (0,\infty)$ linearly on the intervals $(\rho_j,\rho_{j+1})$ and by the constant $1$ on $[0,\rho_1)$. Then $g_0$ is continuous monotonically decreasing and tends to $0$ when $\rho\rightarrow \infty$. Let $g(x)=g_0(|x|)$ for $x\in\RR^d$. It is easy to verify that $g\in\mathfrak{C}_0$. Observe that
\beqs
\frac{g(x^{(j)})\left|D^{\alpha^{(j)}}\varphi(x^{(j)})\right|}{\eta(x^{(j)})M_{\alpha^{(j)}}R_{\alpha^{(j)}}}\geq \sqrt{j}\rightarrow \infty,\,\, \mbox{as}\,\, j\rightarrow\infty,
\eeqs
i.e., $p_{g,(r_p)}(\varphi)=\infty$ which is a contradiction. It remains to prove that $\DD^*_{L^{\infty}_{\eta}}$ and $\tilde{\tilde{\DD}}^*_{L^{\infty}_{\eta}}$ have the same bounded sets. Clearly, each bounded set in the former space is bounded in the latter. We prove the converse in the Roumieu case, the Beurling case being similar. Let $B$ be a bounded subset of $\tilde{\tilde{\DD}}^{\{M_p\}}_{L^{\infty}_{\eta}}$ which is not bounded in $\DD^{\{M_p\}}_{L^{\infty}_{\eta}}$. Thus, there exists $(r_p)\in\mathfrak{R}$ such that $\ds \sup_{\varphi\in B}\|\varphi\|_{L^{\infty}_{\eta},(r_p)}=\infty$. Similarly as in the proof of the surjectivity, we can find $\varphi_j\in B$, $\alpha^{(j)}\in\NN^d$ and $x^{(j)}\in\RR^d$, $j\in\ZZ_+$, such that $|x^{(j)}|+1\leq |x^{(j+1)}|$ and $\left|D^{\alpha^{(j)}}\varphi_j(x^{(j)})\right|\geq j \eta(x^{(j)})M_{\alpha^{(j)}} R_{\alpha^{(j)}}$ for all $j\in\ZZ_+$. By defining $g\in\mathfrak{C}_0$ as above, one obtains
\beqs
\frac{g(x^{(j)})\left|D^{\alpha^{(j)}}\varphi_j(x^{(j)})\right|}{\eta(x^{(j)})M_{\alpha^{(j)}}R_{\alpha^{(j)}}}\geq \sqrt{j}\rightarrow \infty,\,\, \mbox{as}\,\, j\rightarrow\infty,
\eeqs
i.e., $\sup_{\varphi\in B}p_{g,(r_p)}(\varphi)=\infty$ which is a contradiction.
\end{proof}

\begin{lemma}\label{preinDL}
For each $g\in\mathfrak{C}_0$ and $r>0$ (resp $(r_p)\in\mathfrak{R}$), the set
\beqs
\tilde{B}=\left\{\frac{g(a)r^{|\alpha|}D^{\alpha}\delta_a}{\eta(a)M_{\alpha}}\Big|\,a\in\RR^d,\,\alpha\in\NN^d\right\}\,\, \left(\mbox{resp.}\,\, \tilde{B}=\left\{\frac{g(a)D^{\alpha}\delta_a}{\eta(a)M_{\alpha}R_{\alpha}}\Big|\,a\in\RR^d,\,\alpha\in\NN^d\right\}\right)
\eeqs
is precompact in $\DD'^*_{L^1_{\eta}}$.
\end{lemma}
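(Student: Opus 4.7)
The plan is to show that $\tilde B$ is totally bounded in the strong topology of $\DD'^{*}_{L^{1}_{\eta}}$; equivalently, for every bounded set $B\subseteq \dot{\mathcal B}^{*}_{\eta}$ and every $\varepsilon>0$, $\tilde B$ admits a finite $\varepsilon B^{\circ}$-net. The idea is to split $\tilde B$ into a tail contained in $\varepsilon B^\circ$ and a relatively compact piece, treated separately.

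First I would exploit the boundedness of $B$ to obtain a uniform bound on the derivatives of the elements of $B$: in the Beurling case this is immediate from the projective definition, while in the Roumieu case it requires the regularity of $\DD^{\{M_p\}}_{C_{\eta}}$ (recalled in Section~\ref{tspace}) to secure $\sup_{\varphi\in B}\|\varphi\|_{L^\infty_\eta,m}<\infty$ for some fixed $m>0$. Combining this with $|g(a)|\to 0$ as $|a|\to\infty$ (since $g\in\mathfrak{C}_{0}$), one can pick a compact $K\subset\RR^{d}$ and $p_0\in\ZZ_+$ such that whenever $a\notin K$ or $|\alpha|\ge p_0$,
\begin{equation*}
\sup_{\varphi\in B}\left|\left\langle \frac{g(a)r^{|\alpha|}D^{\alpha}\delta_{a}}{\eta(a)M_\alpha},\varphi\right\rangle\right|\le \varepsilon,
\end{equation*}
and analogously in the Roumieu case. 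The decay in $|\alpha|$ comes from a geometric factor $(r/(2r))^{|\alpha|}=2^{-|\alpha|}$ (resp.\ $\prod_{j}(mr_j)^{-1}$, which tends to $0$ since $mr_j\to\infty$), while the decay in $a$ is provided by $|g(a)|$ itself. Thus, the portion of $\tilde B$ corresponding to pairs $(a,\alpha)$ outside $K\times\{\alpha\in\NN^d:|\alpha|<p_0\}$ already lies in $\varepsilon B^\circ$.

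Next I would handle the remaining finite family $\tilde B_{0}=\{g(a)r^{|\alpha|}D^{\alpha}\delta_{a}/(\eta(a)M_\alpha):\,a\in\RR^{d},\,|\alpha|<p_0\}$ (resp.\ the Roumieu analogue) by showing it is relatively compact in $\DD'^{*}_{L^1_\eta}$. For each fixed $\alpha$ with $|\alpha|<p_0$, consider the map $\Phi_{\alpha}:\RR^{d}\to \DD'^{*}_{L^1_\eta}$ given by $\Phi_{\alpha}(a)=g(a)r^{|\alpha|}D^{\alpha}\delta_{a}/(\eta(a)M_\alpha)$. The same kind of uniform estimate, applied now one order higher (i.e.\ up to $|\alpha|+1$), yields equicontinuity of $\{D^{\alpha+\beta}\varphi/\eta:\,\varphi\in B,\,|\beta|\le 1\}$ on compacta; together with the continuity of $g/\eta$ this makes $\Phi_{\alpha}$ strongly continuous on $\RR^d$, and $|g(a)|\to 0$ also yields $\Phi_{\alpha}(a)\to 0$ strongly in $\DD'^*_{L^1_\eta}$ as $|a|\to\infty$. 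Hence $\Phi_\alpha$ extends continuously to the one-point compactification $\RR^d\cup\{\infty\}$ by sending $\infty\mapsto 0$, so $\Phi_\alpha(\RR^d)$ is relatively compact. A finite union of relatively compact sets being relatively compact, $\tilde B_{0}$ is totally bounded.

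The main subtlety lies in the strong continuity of the $\Phi_\alpha$: it hinges on promoting mere boundedness of $B$ in $\dot{\mathcal B}^*_\eta$ to equicontinuous derivative estimates, which in the Roumieu case is exactly where the regularity of $\DD^{\{M_p\}}_{C_\eta}$ enters in an essential way. Once this is secured, picking a finite $\varepsilon B^{\circ}$-net $F$ for the totally bounded set $\tilde B_{0}$ and adjoining $\{0\}$ to $F$ yields a finite $\varepsilon B^{\circ}$-net for the whole of $\tilde B$, completing the proof of precompactness.
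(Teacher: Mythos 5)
Your proof is correct and follows essentially the same route as the paper's: the identical tail/core decomposition, with the tail (large $|\alpha|$ or $a$ far out) absorbed into $\varepsilon B^{\circ}$ via the geometric gain $2^{-|\alpha|}$ (resp.\ $\prod_j (mr_j)^{-1}$) and the decay of $g$, and the core handled by a compactness argument. The only difference is in that last sub-step: the paper observes that the core is bounded in $\EE'(\RR^d)$ and invokes the Montel property of $\EE'(\RR^d)$, whereas you establish strong continuity of $a\mapsto g(a)r^{|\alpha|}D^{\alpha}\delta_a/(\eta(a)M_{\alpha})$ and its vanishing at infinity to realize the core as a continuous image of the one-point compactification of $\RR^d$ — an equally valid (and slightly more self-contained) justification.
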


\begin{proof} Let $B$ be a bounded subset of $\dot{\mathcal{B}}^*_{\eta}$ and consider the neighborhood of zero $B^{\circ}$ in $\DD'^*_{L^1_{\eta}}$. In the Beurling case, for $r'=2r$,
\beqs
\sup_{\varphi\in B}\sup_{a,\alpha}\left|\left\langle \frac{r'^{|\alpha|}D^{\alpha}\delta_a}{\eta(a)M_{\alpha}},\varphi\right\rangle\right|=\sup_{\varphi\in B}\sup_{a,\alpha} \frac{r'^{|\alpha|}\left|D^{\alpha}\varphi(a)\right|}{\eta(a)M_{\alpha}}=\sup_{\varphi\in B} \|\varphi\|_{L^{\infty}_{\eta},r'}=C'<\infty
\eeqs
and in the Roumieu case, for $(r'_p)=(r_p/2)$ denote $R'_{\alpha}=\prod_{j=1}^{|\alpha|}r'_j$, to obtain
\beqs
\sup_{\varphi\in B}\sup_{a,\alpha}\left|\left\langle \frac{D^{\alpha}\delta_a}{\eta(a)M_{\alpha}R'_{\alpha}},\varphi\right\rangle\right|=\sup_{\varphi\in B}\sup_{a,\alpha} \frac{\left|D^{\alpha}\varphi(a)\right|}{\eta(a)M_{\alpha}R'_{\alpha}}=\sup_{\varphi\in B} \|\varphi\|_{L^{\infty}_{\eta},(r'_p)}=C'<\infty.
\eeqs
Set $C=C'+1$. Thus, there exists $p_0\in\ZZ_+$ such that for all $|\alpha|\geq p_0$, $a\in\RR^d$, $\varphi\in B$,
\beqs
\left|\left\langle \frac{r^{|\alpha|}D^{\alpha}\delta_a}{\eta(a)M_{\alpha}},\varphi\right\rangle\right|\leq (2C\|g\|_{L^{\infty}})^{-1},\,\, \mbox{resp.}\,\, \left|\left\langle \frac{D^{\alpha}\delta_a}{\eta(a)M_{\alpha}R_{\alpha}},\varphi\right\rangle\right|\leq (2C\|g\|_{L^{\infty}})^{-1}.
\eeqs
Since $g\in\mathfrak{C}_0$ there exists $c\geq 1$ such that $g(x)\leq 1/(2C)$ for all $|x|\geq c$. Now, observe that the set
\beqs
\tilde{\tilde{B}}=\left\{\frac{g(a)r^{|\alpha|}D^{\alpha}\delta_a}{\eta(a)M_{\alpha}}\Big|\,|a|\leq c,\,|\alpha|\leq p_0\right\},\,\, \mbox{resp.}\,\, \tilde{\tilde{B}}=\left\{\frac{g(a)D^{\alpha}\delta_a}{\eta(a)M_{\alpha}R_{\alpha}}\Big|\,|a|\leq c,\,|\alpha|\leq p_0\right\},
\eeqs
is bounded in $\EE'(\RR^d)$ and since the latter space is Montel it must be precompact in $\EE'(\RR^d)$ and thus also in $\DD'^*_{L^1_{\eta}}$. Hence there exists a finite subset $H=\{f_1,...,f_n\}\subseteq \tilde{\tilde{B}}$ such that $\tilde{\tilde{B}}\subseteq H+B^{\circ}$. Fix $b\in\RR^d$ with $|b|\geq c$ and $\beta\in\NN^d$ with $|\beta|\geq p_0$ and set $f_0=g(b)r^{|\beta|}D^{\beta}\delta_b/(\eta(b)M_{\beta})\in \tilde{B}$ (resp. $f_0=g(b)D^{\beta}\delta_b/(\eta(b)M_{\beta}R_{\beta})\in\tilde{B}$). Define $\tilde{H}=H\cup \{f_0\}$. If $|\alpha|\leq p_0$ and $|a|\leq c$, then, by construction, there exists $f_j\in H$ such that $g(a)r^{|\alpha|}D^{\alpha}\delta_a/(\eta(a)M_{\alpha})-f_j\in B^{\circ}$ (resp. $g(a)D^{\alpha}\delta_a/(\eta(a)M_{\alpha}R_{\alpha})-f_j\in B^{\circ}$). If $|\alpha|\geq p_0$ or $|a|\geq c$ then $g(a)r^{|\alpha|}D^{\alpha}\delta_a/(\eta(a)M_{\alpha})-f_0\in B^{\circ}$ (resp. $g(a)D^{\alpha}\delta_a/(\eta(a)M_{\alpha}R_{\alpha})-f_0\in B^{\circ}$). Hence $\tilde{B}$ is precompact in $\DD'^*_{L^1_{\eta}}$.
\end{proof}

We denote by $\DD^*_{L^{\infty}_{\eta},c}$ the space
$\DD^*_{L^{\infty}_{\eta}}$ equipped with the topology of compact
convex circled convergence from the duality $\langle
\DD'^*_{L^1_{\eta}}, \DD^*_{L^{\infty}_{\eta}}\rangle$ (cf.
Theorem \ref{theorBB}).

\begin{proposition}
The spaces $\DD^*_{L^{\infty}_{\eta},c}$ and $\tilde{\tilde{\DD}}^*_{L^{\infty}_{\eta}}$ are isomorphic as l.c.s.
\end{proposition}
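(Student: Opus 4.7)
The plan is to prove the asserted isomorphism by verifying that the identity map (a set-bijection by Lemma \ref{bsubddd}) is bicontinuous. Two dual tools will do the job: Lemma \ref{preinDL}, which realises each seminorm $p_{g,r}$ of $\tilde{\tilde{\DD}}^*_{L^\infty_\eta}$ as a supremum over a concretely precompact subset of $\DD'^*_{L^1_\eta}$, and Proposition \ref{corprecom}, which describes every precompact (hence every compact convex circled) subset of $\DD'^*_{L^1_\eta}$ as the image under an ultradifferential operator of a precompact set in $L^1_\eta$.

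For the continuity of the identity from $\DD^*_{L^\infty_\eta,c}$ into $\tilde{\tilde{\DD}}^*_{L^\infty_\eta}$, I would write
\beqs
p_{g,r}(\varphi) = \sup_{a,\alpha}\left|\left\langle \frac{g(a)\, r^{|\alpha|} D^\alpha\delta_a}{\eta(a) M_\alpha},\varphi\right\rangle\right|
\eeqs
(and the obvious $p_{g,(r_p)}$ analogue), so that by Lemma \ref{preinDL} the indexing set $\tilde B$ is precompact in $\DD'^*_{L^1_\eta}$. Since $\DD'^*_{L^1_\eta}$ is complete, being an $(F)$-space in the Roumieu case (as the strong dual of the barrelled bornological $(DF)$-space $\dot{\mathcal{B}}^{\{M_p\}}_\eta$) and a complete $(DF)$-space in the Beurling case (by Theorem \ref{theorBB} combined with the Grothendieck theorem that strong duals of $(F)$-spaces are complete), the closed absolutely convex hull $\Gamma$ of $\tilde B$ is compact, convex, and circled. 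Then $p_{g,r}(\varphi)\leq \sup_{f\in\Gamma}|\langle f,\varphi\rangle|$ is majorised by a canonical seminorm of $\DD^*_{L^\infty_\eta,c}$.

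The substantive direction is the converse. Given a compact convex circled $K\subseteq\DD'^*_{L^1_\eta}$, Proposition \ref{corprecom} applied to $E=C_\eta$ (whose associated space is $E'_*=L^1_\eta$) supplies an ultradifferential operator $P(D)=\sum a_\alpha D^\alpha$ of class $*$ and a precompact set $B_2\subseteq L^1_\eta$ so that every $f\in K$ is of the form $P(D)h$ with $h\in B_2$. Precompactness of $B_2$ in $L^1_\eta$ is equivalent to boundedness together with uniform tightness, whence one obtains a sequence $R_n\uparrow\infty$ with $\sup_{h\in B_2}\int_{|x|>R_n}|h|\eta\leq 2^{-n}$. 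From this data I would build $g\in\mathfrak C_0$ by setting $g_0(R_n)=1/(n+1)$, interpolating $g_0$ linearly on each $[R_n,R_{n+1}]$, and putting $g(x)=g_0(|x|)$; the resulting $g$ is positive, continuous, radially decreasing, vanishes at infinity, and satisfies $C_0:=\sup_{h\in B_2}\int|h|\eta/g<\infty$. Then
\beqs
|\langle f,\varphi\rangle|=|\langle h,P(-D)\varphi\rangle|\leq C_0 \left\|\frac{g\,P(-D)\varphi}{\eta}\right\|_\infty\leq C_0 \sum_\alpha |a_\alpha|\left\|\frac{g D^\alpha\varphi}{\eta}\right\|_\infty,
\eeqs
and it remains to control the series. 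In the Beurling case $|a_\alpha|\leq CL^{|\alpha|}/M_\alpha$ for some $L$, so choosing $r>L$ gives $\sum_\alpha (L/r)^{|\alpha|}<\infty$ and $\sup_{f\in K}|\langle f,\varphi\rangle|\leq C'p_{g,r}(\varphi)$. In the Roumieu case, $|a_\alpha|M_\alpha\leq C_{(s_p)}/\prod_{j=1}^{|\alpha|}s_j$ for every $(s_p)\in\mathfrak R$; given any $(r_p)\in\mathfrak R$ I would take the auxiliary sequence $s_p=2^p r_p\in\mathfrak R$, so that $\prod_{j=1}^n r_j/s_j=2^{-n(n+1)/2}$ produces a convergent series and $\sup_{f\in K}|\langle f,\varphi\rangle|\leq C'p_{g,(r_p)}(\varphi)$.

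The main obstacle is the construction of $g\in\mathfrak C_0$ that uniformly bounds $\int|h|\eta/g$ on $B_2$: this is where the tightness consequence of precompactness in $L^1_\eta$ is essential, and where some care is needed to promote the step values $1/n$ to a continuous, strictly positive radial weight vanishing at infinity. Once $g$ is in place, matching the coefficient decay of $P(D)$ to an appropriate scale $r$ (Beurling) or $(r_p)$ (Roumieu) is a routine summation exercise, and the two continuity estimates together yield the desired isomorphism.
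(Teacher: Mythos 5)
Your overall strategy coincides with the paper's: Lemma \ref{bsubddd} for the algebraic identification, Lemma \ref{preinDL} (via writing $p_{g,r}$ as a supremum of pairings against the precompact family of scaled $D^{\alpha}\delta_a$'s) for one direction, and Proposition \ref{corprecom} plus a tightness-based construction of $g\in\mathfrak{C}_0$ for the other. The first direction and the construction of $g$ from the tails $\sup_{h\in B_2}\int_{|x|>R_n}|h|\eta\leq 2^{-n}$ are correct and essentially identical to the paper's argument (your explicit appeal to completeness of $\DD'^*_{L^1_\eta}$ to pass from the precompact set $\tilde B$ to a compact convex circled hull is a point the paper glosses over).

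There is, however, a genuine error in the Roumieu half of the converse direction. You assert that the coefficients of an ultradifferential operator of class $\{M_p\}$ satisfy $|a_\alpha|M_\alpha\leq C_{(s_p)}/\prod_{j=1}^{|\alpha|}s_j$ for \emph{every} $(s_p)\in\mathfrak{R}$. This is false: the correct statement (and the one the paper uses) is that such a bound holds for \emph{some} $(s_p)\in\mathfrak{R}$. For instance, in dimension one, $|a_p|M_p=p^{-p}$ defines coefficients of class $\{M_p\}$ (since $(|a_p|M_p)^{1/p}\to 0$), yet for $s_p=p^2$ one has $|a_p|M_p\prod_{j=1}^{p}s_j=p^{-p}(p!)^2\to\infty$. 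Consequently your choice $s_p=2^pr_p$ for an arbitrarily prescribed $(r_p)$ is not legitimate, and the claimed estimate $\sup_{f\in K}|\langle f,\varphi\rangle|\leq C'p_{g,(r_p)}(\varphi)$ for \emph{every} $(r_p)\in\mathfrak{R}$ is not established. The good news is that you do not need it: since $\tilde{\tilde{\DD}}^{\{M_p\}}_{L^{\infty}_{\eta}}$ is a projective limit over all $g\in\mathfrak{C}_0$ and all $(r_p)\in\mathfrak{R}$, it suffices to dominate $\sup_{f\in K}|\langle f,\cdot\rangle|$ by $C'p_{g,(t_p)}$ for \emph{one} suitable $(t_p)$. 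Take the $(s_p)\in\mathfrak{R}$ furnished by the operator $P(D)$, set $(t_p)=(s_p/2)$ so that $\sum_\alpha\prod_{j=1}^{|\alpha|}(t_j/s_j)=\sum_\alpha 2^{-|\alpha|}<\infty$, and conclude exactly as you do; this is what the paper does. (A minor additional remark: precompactness in $L^1_\eta$ is not \emph{equivalent} to boundedness plus tightness, but you only use the implication precompact $\Rightarrow$ uniformly tight, which is correct.)
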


\begin{proof} Lemma \ref{bsubddd} states that these spaces are algebraically isomorphic. Let $\varepsilon>0$, $g\in\mathfrak{C}_0$, $r>0$, resp. $(r_p)\in\mathfrak{R}$ and consider the neighborhood of zero $V=\{\varphi\in\tilde{\tilde{\DD}}^{(M_p)}_{L^{\infty}_{\eta}}|\, p_{g,r}(\varphi)\leq \varepsilon\}$ (resp. $V=\{\varphi\in\tilde{\tilde{\DD}}^{\{M_p\}}_{L^{\infty}_{\eta}}|\, p_{g,(r_p)}(\varphi)\leq \varepsilon\}$) in $\tilde{\tilde{\DD}}^*_{L^{\infty}_{\eta}}$. For these $g$ and $r$ (resp. $(r_p)$), define $\tilde{B}$ as in the statement of Lemma \ref{preinDL}. The quoted lemma states that $\tilde{B}$ is precompact in $\DD'^*_{L^1_{\eta}}$ and hence so is its convex circled hull $\tilde{\tilde{B}}$. Denoting by $\tilde{\tilde{B}}^{\circ}$ the polar of $\tilde{\tilde{B}}$ with respect to the duality $\langle \DD'^*_{L^1_{\eta}}, \DD^*_{L^{\infty}_{\eta}}\rangle$, one easily verifies that $\varepsilon \tilde{\tilde{B}}^{\circ}\subseteq V$. Thus the topology of $\DD^*_{L^{\infty}_{\eta},c}$ is stronger than the topology of $\tilde{\tilde{\DD}}^*_{L^{\infty}_{\eta}}$. Conversely, let $B^{\circ}$ be the polar of a convex circled precompact subset $B$ of $\DD'^*_{L^1_{\eta}}$.
Proposition \ref{corprecom} implies that there exist a precompact
subset $B_1$ of $L^1_{\eta}$ and an ultradifferential operator
$P(D)$ of class $*$ such that each $f\in B$ can be represented by
$f=P(D)F$ for some $F\in B_1$. Let $P(D)=\sum_{\alpha}c_{\alpha}
D^{\alpha}$. There exist $C,r>0$ (resp. $(r_p)\in\mathfrak{R}$ and
$C>0$), such that $|c_{\alpha}|\leq C r^{|\alpha|}/M_{\alpha}$
(resp. $|c_{\alpha}|\leq C/(M_{\alpha}R_{\alpha})$, where
$R_{\alpha}=\prod_{j=1}^{|\alpha|}r_j$). Since $B_1$ is precompact
in $L^1_{\eta}$, we conclude that for each $n\in\ZZ_+$ there
exists $k_n\geq 1$ such that $\sup_{F\in B_1}\int_{|x|\geq k_n}
|F(x)|\eta(x)dx\leq 2^{-n}$. Without losing generality we can
assume that $k_{n+1}\geq k_n+1$, $\forall n\in\ZZ_+$. Define
$g_0:[0,\infty)\rightarrow (0,\infty)$ by $g_0(k_n)=1/n$ and
linearly on $(k_n,k_{n+1})$ to be continuous. Furthermore, define
$g_0(\rho)=1$ on $[0,k_1)$. Set $g(x)=g_0(|x|)$. Clearly
$g\in\mathfrak{C}_0$. For $n\in\ZZ_+$ denote $K_n=\{x\in\RR^d|\,
|x|\geq k_n\}$ and set $K_0=\RR^d$. Set $t=2r$ (resp.
$(t_p)=(r_p/2)$ and $T_p=\prod_{j=1}^p t_j$). Then, for $\psi\in
\tilde{\tilde{\DD}}^*_{L^{\infty}_{\eta}}$ and $f\in B$, in the
Roumieu case, we have
\beqs
|\langle \psi,f\rangle|&\leq& \int_{\RR^d}|P(-D)\psi(x)||F(x)|dx\\
&\leq& C \sum_{\alpha}2^{-|\alpha|} \int_{\RR^d}\frac{g(x)\left|D^{\alpha}\psi(x)\right|}{\eta(x)M_{\alpha}T_{\alpha}}\cdot \frac{|F(x)|\eta(x)}{g(x)}dx\\
&\leq& C_1 p_{g,(t_p)}(\psi)\sum_{n=0}^{\infty}\int_{K_n\backslash K_{n+1}}\frac{|F(x)|\eta(x)}{g_0(|x|)}dx\\
&\leq& C_1p_{g,(t_p)}(\psi)\left(\|F\|_{L^1_{\eta}}+\sum_{n=1}^{\infty}\frac{n+1}{2^n}\right)\leq C_2p_{g,(t_p)}(\psi)
\eeqs
and similarly, in the Beurling case, $\sup_{f\in B}|\langle \psi,f\rangle|\leq C_2p_{g,t}(\psi)$. Thus, by defining
\beqs
V=\left\{\varphi\in\tilde{\tilde{\DD}}^{(M_p)}_{L^{\infty}_{\eta}}\big|\, p_{g,t}(\varphi)\leq C_2^{-1}\right\}\,\, \left(\mbox{resp.}\,\, V=\left\{\varphi\in\tilde{\tilde{\DD}}^{\{M_p\}}_{L^{\infty}_{\eta}}\big|\, p_{g,(t_p)}(\varphi)\leq C_2^{-1}\right\}\right),
\eeqs
we have $V\subseteq B^{\circ}$.
\end{proof}

Since $\tilde{\tilde{\DD}}^*_{L^{\infty}_{\eta}}$ is complete l.c.s., the above proposition implies that $\DD^*_{L^{\infty}_{\eta},c}$ is complete l.c.s. and its topology is generated by the system of seminorms $p_{g,r}$, $g\in\mathfrak{C}_0$, $r>0$, resp. $p_{g,(r_p)}$, $g\in\mathfrak{C}_0$, $(r_p)\in\mathfrak{R}$. Now, it is easy to verify that $\SSS^*_{\dagger}(\RR^d)\hookrightarrow \DD^*_{L^{\infty}_{\eta},c}$. In fact, one can prove by a similar technique as in the proof of $ii)$ of Lemma \ref{appincl} that given $\psi\in\DD^*_{L^{\infty}_{\eta},c}$, $\varphi_n\psi\in\SSS^*_{\dagger}(\RR^d)$ and $\varphi_n\psi\rightarrow \psi$ in $\DD^*_{L^{\infty}_{\eta},c}$, where $\varphi_n\in\SSS^*_{\dagger}(\RR^d)$, $n\in\ZZ_+$, is the sequence of $ii)$ of Lemma \ref{appincl}. Thus, $\dot{\mathcal{B}}^*_{\eta}\hookrightarrow \DD^*_{L^{\infty}_{\eta},c}$ and denoting by $(\DD^*_{L^{\infty}_{\eta},c})'_b$ the strong dual of $\DD^*_{L^{\infty}_{\eta},c}$, we have the continuous inclusion $(\DD^*_{L^{\infty}_{\eta},c})'_b\rightarrow \DD'^*_{L^1_{\eta}}$. Moreover, we have the following

\begin{proposition}\label{dualofddd}
The spaces $(\DD^*_{L^{\infty}_{\eta},c})'_b$ and $\DD'^*_{L^1_{\eta}}$ are isomorphic as l.c.s.
\end{proposition}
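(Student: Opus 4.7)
The claim is established through a restriction--extension argument that rests on the bidual identification $\DD^{*}_{L^{\infty}_{\eta}}\cong(\dot{\mathcal{B}}^{*}_{\eta})''_{b}$ from Theorem \ref{theorBB}. The inclusion $\dot{\mathcal{B}}^{*}_{\eta}\hookrightarrow\DD^{*}_{L^{\infty}_{\eta},c}$ is continuous with dense range, since $\SSS^{*}_{\dagger}\subseteq\dot{\mathcal{B}}^{*}_{\eta}$ is dense in $\DD^{*}_{L^{\infty}_{\eta},c}$ as noted above; hence the transpose furnishes a continuous injection
\[
\iota:(\DD^{*}_{L^{\infty}_{\eta},c})'_{b}\longrightarrow \DD'^{*}_{L^{1}_{\eta}}, \qquad \tilde{f}\mapsto \tilde{f}|_{\dot{\mathcal{B}}^{*}_{\eta}}.
\]

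To see that $\iota$ is surjective, fix $f\in\DD'^{*}_{L^{1}_{\eta}}$ and view it as a linear form $\tilde{f}$ on $\DD^{*}_{L^{\infty}_{\eta}}$ via the canonical duality $\langle\DD'^{*}_{L^{1}_{\eta}},\DD^{*}_{L^{\infty}_{\eta}}\rangle$ inherited from the bidual identification. Writing $f=P(D)g$ with $g\in L^{1}_{\eta}$ and $P(D)$ of class $*$ through Theorem \ref{karak}, the pairing takes the explicit form $\langle\tilde{f},\psi\rangle=\int_{\RR^{d}}g(x)P(-D)\psi(x)\,dx$. Applying the argument in the second half of the proof of the preceding proposition to the singleton precompact set $\{g\}\subseteq L^{1}_{\eta}$ (so that the envelope $g_{0}\in\mathfrak{C}_{0}$ is built from tail-decay radii $k_{n}$ verifying $\int_{|x|\ge k_{n}}|g(x)|\eta(x)\,dx\le 2^{-n}$) yields $|\langle\tilde{f},\psi\rangle|\le C\,p_{g_{0},t}(\psi)$ in the Beurling case and $|\langle\tilde{f},\psi\rangle|\le C\,p_{g_{0},(t_{p})}(\psi)$ in the Roumieu case, for appropriate $t$ (resp.\ $(t_{p})$) determined by the class of $P(D)$. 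Via the identification $\DD^{*}_{L^{\infty}_{\eta},c}\cong\tilde{\tilde{\DD}}^{*}_{L^{\infty}_{\eta}}$, this proves $\tilde{f}\in(\DD^{*}_{L^{\infty}_{\eta},c})'$, and by construction $\iota(\tilde{f})=f$.

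The main obstacle is the continuity of $\iota^{-1}$, for which we compare the two strong topologies. Since $\DD'^{*}_{L^{1}_{\eta}}$ is barrelled (Theorem \ref{theorBB}), the bounded subsets of its strong dual $\DD^{*}_{L^{\infty}_{\eta}}$ are equicontinuous on $\DD'^{*}_{L^{1}_{\eta}}$, and by Mackey's theorem they coincide with the bounded subsets of the compatible coarser topology $\DD^{*}_{L^{\infty}_{\eta},c}$. Thus, for any bounded $B\subseteq\DD^{*}_{L^{\infty}_{\eta},c}$ there exists a bounded $B'\subseteq\dot{\mathcal{B}}^{*}_{\eta}$ with $B\subseteq B'^{\circ\circ}$, the bipolar being computed with respect to $\langle\DD^{*}_{L^{\infty}_{\eta}},\DD'^{*}_{L^{1}_{\eta}}\rangle$ (a basis of zero neighborhoods of the strong topology on $\DD'^{*}_{L^{1}_{\eta}}$ is provided by polars of bounded subsets of $\dot{\mathcal{B}}^{*}_{\eta}$). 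Any $\tilde{f}\in(\DD^{*}_{L^{\infty}_{\eta},c})'$ is automatically $\sigma(\DD^{*}_{L^{\infty}_{\eta}},\DD'^{*}_{L^{1}_{\eta}})$-continuous, so the bipolar theorem gives
\[
\sup_{\psi\in B}|\langle\tilde{f},\psi\rangle|\le \sup_{\psi\in B'^{\circ\circ}}|\langle\tilde{f},\psi\rangle|= \sup_{\varphi\in B'}|\langle\iota(\tilde{f}),\varphi\rangle|,
\]
which dominates an arbitrary strong seminorm on $(\DD^{*}_{L^{\infty}_{\eta},c})'_{b}$ by a strong seminorm on $\DD'^{*}_{L^{1}_{\eta}}$, completing the argument.
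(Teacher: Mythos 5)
Your proof is correct. The overall strategy is the same as the paper's -- both arguments rest on the fact that the topology of compact convex circled convergence is compatible with the duality $\langle \DD'^*_{L^1_{\eta}},\DD^*_{L^{\infty}_{\eta}}\rangle$ coming from Theorem \ref{theorBB}, on the barrelledness of $\DD'^*_{L^1_{\eta}}$, and on the coincidence of the bounded sets of $\DD^*_{L^{\infty}_{\eta}}$ and $\DD^*_{L^{\infty}_{\eta},c}$ -- but you execute two of the steps differently. For the algebraic identification the paper simply invokes the Mackey--Arens principle ($\sigma\subseteq c\subseteq\tau$ forces $(X'_c)'=X$), whereas you prove surjectivity of the restriction map constructively, writing $f=P(D)g$ with $g\in L^1_{\eta}$ and re-running the tail-decay estimate from the preceding proposition on the singleton $\{g\}$; this is more work but makes the extension explicit (and note the extension is independent of the chosen representation $(P,g)$ because any two such extensions are $c$-continuous and agree on the dense subspace $\dot{\mathcal{B}}^*_{\eta}$). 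For the matching of strong topologies the paper quotes Lemma \ref{bsubddd} to identify the bounded sets, while you derive the needed inclusion $B\subseteq B'^{\circ\circ}$ from barrelledness plus Mackey's theorem; this bypasses Lemma \ref{bsubddd} entirely and is arguably cleaner, at the price of relying on the abstract equicontinuity machinery. Both routes are sound; the paper's is shorter, yours localizes exactly which structural inputs (Theorem \ref{karak}, the seminorms $p_{g,r}$) are being used.
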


\begin{proof} If $X$ is a l.c.s. with $X'$ being its dual, the topology $c$ of compact convex circled convergence on $X'$ is clearly stronger than the weak topology. Moreover, every compact convex circled subset of $X$ is clearly weakly compact, thus $X'_c$ has weaker topology than the Mackey topology on $X'$. Hence $(X'_c)'$ is algebraically isomorphic to $X$. Considering the duality $\langle \DD'^*_{L^1_{\eta}},\DD^*_{L^{\infty}_{\eta},c}\rangle$, we obtain that the dual of $\DD^*_{L^{\infty}_{\eta},c}$ is algebraically isomorphic to $\DD'^*_{L^1_{\eta}}$. The topology of $\DD'^*_{L^1_{\eta}}$ is the topology of uniform convergence on all equicontinuous subsets of $\DD^*_{L^{\infty}_{\eta},c}$. But, $\DD'^*_{L^1_{\eta}}$ is a complete barreled l.c.s. (in fact $\DD'^{\{M_p\}}_{L^1_{\eta}}$ is an $(F)$-space and $\DD'^{(M_p)}_{L^1_{\eta}}$ is barreled by Theorem \ref{theorBB}) hence its topology is in fact the topology of uniform convergence on all strongly bounded subsets of $(\DD'^*_{L^1_{\eta}})'_b=\DD^*_{L^{\infty}_{\eta}}$. But Lemma \ref{bsubddd} implies that the bounded subsets of $\DD^*_{L^{\infty}_{\eta}}$ and $\DD^*_{L^{\infty}_{\eta},c}$ are the same, hence $(\DD^*_{L^{\infty}_{\eta},c})'_b$ and $\DD'^*_{L^1_{\eta}}$ are isomorphic as l.c.s.
\end{proof}

\begin{remark} By using similar technique as in the proof of Lemma \ref{precompactscl} one can prove that every bounded subset of $\DD^*_{L^{\infty}_{\eta},c}$ is precompact and since the latter space is complete, also relatively compact. Hence $\DD^*_{L^{\infty},c}$ furnishes an elegant example of complete semi-Montel space which is not (infra)barreled since it is not reflexive (cf. Theorem \ref{theorBB} and Proposition \ref{dualofddd}).
\end{remark}

We need the following technical lemma whose proof is simple and we therefore omit it.

\begin{lemma}\label{conmulsdd}
The multiplication $(\varphi,\psi)\rightarrow \varphi\psi$ is continuous bilinear mapping in the following cases $\cdot: \DD^*_{L^{\infty}_{\eta}}(\RR^d)\times \SSS^*_{\dagger}(\RR^d)\rightarrow \SSS^*_{\dagger}(\RR^d)$, $\cdot: \DD^*_{L^{\infty}_{\eta},c}(\RR^d)\times \SSS^*_{\dagger}(\RR^d)\rightarrow \SSS^*_{\dagger}(\RR^d)$, $\cdot:\DD^*_{L^{\infty}}(\RR^d)\times\DD^*_{L^{\infty}_{\eta}}(\RR^d)\rightarrow \DD^*_{L^{\infty}_{\eta}}(\RR^d)$ and $\cdot:\DD^*_{L^{\infty},c}(\RR^d)\times\DD^*_{L^{\infty}_{\eta},c}(\RR^d)\rightarrow \DD^*_{L^{\infty}_{\eta},c}(\RR^d)$.
\end{lemma}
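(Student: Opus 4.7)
All four cases follow from a single template based on the Leibniz rule
$$D^\alpha(\varphi\psi)=\sum_{\beta\leq\alpha}\binom{\alpha}{\beta}D^\beta\varphi\cdot D^{\alpha-\beta}\psi,$$
together with two standard ingredients: the inequality $M_\beta M_{\alpha-\beta}\leq M_\alpha$, which follows from $(M.1)$ and $M_0=1$, and the identity $\sum_{\beta\leq\alpha}\binom{\alpha}{\beta}=2^{|\alpha|}$. My plan is to estimate $|D^\beta\varphi(x)|$ and $|D^{\alpha-\beta}\psi(x)|$ using the defining seminorms of the source spaces, substitute into Leibniz, factor $M_\alpha$ and absorb the combinatorial constant $2^{|\alpha|}$ by halving the seminorm parameter (replace $m$ by $m/2$ in the Beurling case, $(r_p)$ by $(r_p/2)$ in the Roumieu case, with the latter preceded by an application of Lemma \ref{nwseq} to ensure $\prod_{j=1}^{p+q}r'_j\leq 2^{p+q}\prod_{j=1}^p r'_j\prod_{j=1}^q r'_j$).

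For case 1 the technical core is reconciling exponential weights: the factor $\eta(x)$ coming from the bound on $|D^\beta\varphi(x)|$ must be dominated by the decay $e^{-A(h'|x|)}$ of $\psi$, with a margin left over to produce the required decay $e^{-A(h|x|)}$ (or $e^{-B_{t_p}(|x|)}$) of the product. Using the defining property $\eta(x)\leq Ce^{A(\tau|x|)}$ of a class-$\dagger$ ultrapolynomially bounded weight together with $e^{A(\rho)+A(\sigma)}\leq Ce^{A(H(\rho+\sigma))}$ (see \cite[Proposition 3.6]{Komatsu1}), one chooses $h'$ (or $(r'_p)$) large enough so that $\eta(x)e^{A(h|x|)-A(h'|x|)}$ is bounded. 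Case 3 is the simplest: no exponential weight needs to be reconciled and one directly obtains $\|\varphi\psi\|_{L^\infty_\eta,m/2}\leq C\|\varphi\|_{L^\infty,m}\|\psi\|_{L^\infty_\eta,m}$.

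Cases 2 and 4 use the description of $\DD^*_{L^\infty_\eta,c}$ via the seminorms $p_{g,r}$ (resp.\ $p_{g,(r_p)}$) of $\tilde{\tilde{\DD}}^*_{L^\infty_\eta}$ established in the preceding proposition. For case 4, given a target seminorm $p_{g_0,r}$, I would choose source multipliers $g_1=g_2=\sqrt{g_0}\in\mathfrak{C}_0$, so that $g_1(x)g_2(x)=g_0(x)$ exactly and the argument reduces to case 3 verbatim after replacing the $L^\infty_\eta$-norms by the $g$-weighted ones. For case 2, the factor $1/g(x)$ appearing in the pointwise bound of $|D^\beta\varphi(x)|$ must be absorbed into the exponential decay of $\psi$; this is achieved by first picking $h'$ (resp.\ $(r'_p)$) so that $\psi$'s decay beats $\eta(x)e^{A(h|x|)}$ with room to spare, and then choosing $g\in\mathfrak{C}_0$ (for instance $g(x)=e^{-A(|x|)}$) whose reciprocal is dominated by this surplus decay.

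The argument is largely bookkeeping; the only mild obstacle is the Roumieu case, where every parameter $(r_p)\in\mathfrak{R}$ must be replaced, possibly several times, by a majorised sequence furnished by Lemma \ref{nwseq} so that $R_\alpha$-type factors split multiplicatively with respect to $\alpha=\beta+(\alpha-\beta)$. Once this preprocessing is done, the four estimates follow the same template and yield the claimed continuities.
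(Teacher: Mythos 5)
The paper itself omits the proof of this lemma (``whose proof is simple and we therefore omit it''), so there is no argument to compare against; your Leibniz-rule template with $M_{\beta}M_{\alpha-\beta}\leq M_{\alpha}$, absorption of $2^{|\alpha|}$ by halving the parameter, and reconciliation of the weights is certainly the intended route, and cases 1, 3 and 4 go through exactly as you describe. (Two small simplifications: in a product, monotonicity of $(r_p)$ already gives $\prod_{j=1}^{|\beta|}r_j\cdot\prod_{j=1}^{|\alpha-\beta|}r_j\leq\prod_{j=1}^{|\alpha|}r_j$, so Lemma \ref{nwseq} is not actually needed here — it is only needed for the reverse splitting that occurs in convolution estimates; and your direct seminorm estimates are the right move for cases 2 and 4, since $\DD^{*}_{L^{\infty}_{\eta},c}$ is not barreled, so one cannot fall back on ``separately continuous $\Rightarrow$ continuous''.)

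The one concrete flaw is your example $g(x)=e^{-A(|x|)}$ in case 2, which fails in the Roumieu case. For joint continuity on $\DD^{\{M_p\}}_{L^{\infty}_{\eta},c}\times\SSS^{\{M_p\}}_{\{A_p\}}(\RR^d)$ you must measure $\psi$ by one of the projective seminorms $\sigma_{(s_p)}$ (the product of inductive limits does not carry the inductive limit topology), and $\sigma_{(s_p)}(\psi)<\infty$ only yields the decay $e^{-B_{s_p}(|x|)}$. Since $A_p\prod_{j\leq p}s_j\geq A_p$ forces $B_{s_p}(\rho)\leq A(\rho)$, the ``surplus decay'' of $\psi$ can never dominate $1/g(x)=e^{A(|x|)}$, no matter which $(s_p)$ you take. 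The fix is to choose $g(x)=e^{-B_{v_p}(|x|)}$ for an arbitrary $(v_p)\in\mathfrak{R}$, and to use that the class-$\dagger$ condition on $\eta$ (``for every $\tau>0$ there is $C$ with $\eta(x)\leq Ce^{A(\tau|x|)}$'') implies $\eta(x)\leq Ce^{B_{u_p}(|x|)}$ for some $(u_p)\in\mathfrak{R}$; then taking $(s_p)=\bigl(\min(t_p,u_p,v_p)/4\bigr)$ and using $B_{w_p}(4\rho)\geq 4B_{w_p}(\rho)-3\log 2$ gives $\eta(x)e^{-B_{s_p}(|x|)}/g(x)\leq Ce^{-B_{t_p}(|x|)}$, which is what the estimate requires. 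In the Beurling case your choice $g=e^{-A(|x|)}$ is fine, because there the available decay $e^{-A(h|x|)}$ improves without bound as $h\to\infty$. With this adjustment the sketch is complete.
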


For $\varphi\in C(\RR^d)$ we define $\varphi^{\Delta}\in C(\RR^{2d})$ by $\varphi^{\Delta}(x,y)=\varphi(x+y)$. To consider the problem on the existence of convolution of two ultradistributions with restrict our attention to the case $\eta(x)=1$.

\begin{definition}\label{defco}
Let $f_1,f_2\in\SSS'^*_{\dagger}(\RR^d)$. We say that the convolution of $f_1$ and $f_2$ exists if for each $\varphi\in\SSS^*_{\dagger}(\RR^d)$, $(f_1\otimes f_2)\varphi^{\Delta}\in\ \DD'^*_{L^1}(\RR^{2d})$ and we define their convolution by
\beqs
\langle f_1*f_2,\varphi\rangle={}_{\DD'^*_{L^1}(\RR^{2d})}\langle (f_1\otimes f_2) \varphi^{\Delta},1_{x,y}\rangle_{\DD^*_{L^{\infty},c}(\RR^{2d})},\,\, \forall \varphi\in\SSS^*_{\dagger}(\RR^d),
\eeqs
where $1_{x,y}$ is the function that is identically equal to $1$; from now on denoted only by $1$.
\end{definition}

A priori it is not clear that if $f_1$ and $f_2$ are as in this definition, the convolution $f_1*f_2$ is a well defined element of $\SSS'^*_{\dagger}(\RR^d)$. To prove this, consider the linear mapping $F:\SSS^*_{\dagger}(\RR^d)\rightarrow \DD'^*_{L^1}(\RR^{2d})$, $F(\varphi)=(f_1\otimes f_2)\varphi^{\Delta}$. If we consider $F$ as a linear mapping from $\SSS^*_{\dagger}(\RR^d)$ to $\SSS'^*_{\dagger}(\RR^{2d})$ it is clearly continuous, hence it has closed graph. But since for each $\varphi\in\SSS^*_{\dagger}(\RR^d)$, $(f_1\otimes f_2)\varphi^{\Delta}\in \DD'^*_{L^1}(\RR^{2d})$ its graph is closed in $\SSS^*_{\dagger}(\RR^d)\times \DD'^*_{L^1}(\RR^{2d})$. Now, $\SSS^*_{\dagger}(\RR^d)$ is ultrabornological (since it is bornological and complete) and $\DD'^*_{L^1}(\RR^{2d})$ is a webbed space of De Wilde since $\DD'^{(M_p)}_{L^1}(\RR^{2d})$ is the strong dual of an $(F)$-space (cf. \cite[Theorem 11, p. 64]{kothe2}) and since $\DD'^{\{M_p\}}_{L^1}(\RR^{2d})$ is an $(F)$-space (cf. \cite[Theorem 4, p. 55]{kothe2}). The closed graph theorem of De Wilde \cite[Theorem 2, p. 57]{kothe2} implies that $F$ is continuous. Now, observe that $f_1*f_2$ is the composition of the two continuous mappings $F:\SSS^*_{\dagger}(\RR^d)\rightarrow \DD'^*_{L^1}(\RR^{2d})$ and $\langle \cdot,1\rangle:\DD'^*_{L^1}(\RR^{2d})\rightarrow \CC$ (cf. Theorem \ref{theorBB}).

\begin{theorem}\label{convolution}
Let $f_1,f_2\in\SSS'^*_{\dagger}(\RR^d)$. The following statements are equivalent
\begin{itemize}
\item[$i)$] the convolution of $f_1$ and $f_2$ exists;
\item[$ii)$] for all $\varphi\in\SSS^*_{\dagger}(\RR^d)$, $(\varphi*\check{f}_1)f_2\in\DD'^*_{L^1}(\RR^d)$;
\item[$iii)$] for all $\varphi\in\SSS^*_{\dagger}(\RR^d)$, $(\varphi*\check{f}_2)f_1\in\DD'^*_{L^1}(\RR^d)$;
\item[$iv)$] for all $\varphi,\psi\in\SSS^*_{\dagger}(\RR^d)$, $(\varphi*\check{f}_1)(\psi*f_2)\in L^1(\RR^d)$.
\end{itemize}
\end{theorem}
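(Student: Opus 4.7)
The plan is to establish the cyclic implications (i) $\Leftrightarrow$ (ii) $\Leftrightarrow$ (iv), with (ii) $\Leftrightarrow$ (iii) following from the symmetry of the hypotheses under the exchange $(f_{1},\varphi)\leftrightarrow(f_{2},\psi)$. For (i) $\Leftrightarrow$ (ii), the starting point is the identity
\[
\langle(\varphi\ast\check{f}_{1})f_{2},\chi\rangle \;=\; \langle(f_{1}\otimes f_{2})\varphi^{\Delta},\,1\otimes\chi\rangle,\qquad \chi\in\SSS^{*}_{\dagger}(\RR^{d}),
\]
which follows from the Fubini property of the tensor product $f_{1}\otimes f_{2}$ together with the representation $(\varphi\ast\check{f}_{1})(y)=\langle f_{1}(x),\varphi(x+y)\rangle$. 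The map $\chi\mapsto 1\otimes\chi$ is continuous $\dot{\mathcal{B}}^{*}(\RR^{d})\to\DD^{*}_{L^{\infty},c}(\RR^{2d})$: for $g\in\mathfrak{C}_{0}$ and any $r>0$ (resp.\ $(r_{p})\in\mathfrak{R}$) one has $p_{g,r}(1\otimes\chi)\leq\|g\|_{L^{\infty}}\|\chi\|_{L^{\infty},r}$ (resp.\ $p_{g,(r_{p})}(1\otimes\chi)\leq\|g\|_{L^{\infty}}\|\chi\|_{L^{\infty},(r_{p})}$), and $\dot{\mathcal{B}}^{*}(\RR^{d})\hookrightarrow\DD^{*}_{L^{\infty}}(\RR^{d})$ continuously. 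By Proposition \ref{dualofddd}, (i) asserts exactly that $(f_{1}\otimes f_{2})\varphi^{\Delta}$ extends to a continuous functional on $\DD^{*}_{L^{\infty},c}(\RR^{2d})$; composing with $\chi\mapsto 1\otimes\chi$ and using the density of $\SSS^{*}_{\dagger}(\RR^{d})$ in $\dot{\mathcal{B}}^{*}(\RR^{d})$ yields (ii). For the converse, Proposition \ref{epsdd} identifies $\dot{\mathcal{B}}^{*}(\RR^{2d})\cong\dot{\mathcal{B}}^{*}(\RR^{d})\hat{\otimes}_{\epsilon}\dot{\mathcal{B}}^{*}(\RR^{d})$, so the restriction of $(f_{1}\otimes f_{2})\varphi^{\Delta}$ to $\dot{\mathcal{B}}^{*}(\RR^{d})\otimes\dot{\mathcal{B}}^{*}(\RR^{d})$ becomes separately continuous by (ii) (and its symmetric counterpart (iii)), which together with the $\epsilon$-tensor structure provides the required continuous extension to $\dot{\mathcal{B}}^{*}(\RR^{2d})$.

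For (ii) $\Leftrightarrow$ (iv), note first that (iv) amounts to well-definedness of the bilinear map $B(\varphi,\psi)=(\varphi\ast\check{f}_{1})(\psi\ast f_{2})$ from $\SSS^{*}_{\dagger}\times\SSS^{*}_{\dagger}$ into $L^{1}(\RR^{d})$; its separate continuity then follows from the De Wilde closed graph theorem, as $\SSS^{*}_{\dagger}$ is ultrabornological and $L^{1}$ is a Banach space. To derive (iv) from (ii), apply the parametrix $\delta=P(D)u$ with $u\in\SSS^{*}_{\dagger}$ (Proposition \ref{parametrix}) to write $\psi\ast f_{2}=P(D)((u\ast\psi)\ast f_{2})$; a Leibniz-type expansion then allows one to rewrite $(\varphi\ast\check{f}_{1})(\psi\ast f_{2})$ as a finite sum of $P(D)$-derivatives of products $(D^{\gamma}(\varphi\ast\check{f}_{1}))\cdot((u\ast\psi)\ast f_{2})$, each of which is in $L^{1}$ by (ii) and Theorem \ref{karak}. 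The converse passage, (iv) $\Rightarrow$ (ii), again proceeds via Theorem \ref{karak}: (ii) is equivalent to $((\varphi\ast\check{f}_{1})f_{2})\ast\theta\in L^{1}$ for every $\theta\in\SSS^{*}_{\dagger}$, and writing $\theta=P(D)(u\ast\theta)$ and expanding the convolution reduces it to a finite sum of terms of the form $((D^{\gamma}\varphi)\ast\check{f}_{1})\cdot((u\ast\theta)\ast f_{2})_{(\beta)}$, each in $L^{1}$ by (iv).

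The main technical obstacle lies in making the Fubini and Leibniz manipulations above rigorous in the quasianalytic setting where compactly supported cut-offs are unavailable. The regularizing sequences from Lemma \ref{appincl}, the structural decomposition in Proposition \ref{parametrix1}, and the characterization in Theorem \ref{karak} together replace the classical partition-of-unity arguments and permit one to justify the identity connecting $(\varphi\ast\check{f}_{1})f_{2}$ with $(f_{1}\otimes f_{2})\varphi^{\Delta}$ (and its variants) by first verifying them on $\SSS^{*}_{\dagger}$ and then extending by continuity. A particularly delicate point is the continuity of $\chi\mapsto 1\otimes\chi$ into $\DD^{*}_{L^{\infty},c}(\RR^{2d})$ in the Roumieu case, where the seminorms are indexed by the directed set $\mathfrak{C}_{0}\times\mathfrak{R}$ and the $\epsilon$-tensor product description of $\dot{\mathcal{B}}^{*}(\RR^{2d})$ from Proposition \ref{epsdd} must be invoked with care to obtain the reverse direction (ii) $\Rightarrow$ (i).
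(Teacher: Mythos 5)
Your implication $i)\Rightarrow ii)$ follows essentially the paper's route (pair $(f_1\otimes f_2)\varphi^{\Delta}$ with $1\otimes\chi$, justify the Fubini identity by regularization with the sequence from Lemma \ref{appincl}, and use the density of $\SSS^*_{\dagger}(\RR^d)$ in $C_0(\RR^d)$ together with Theorem \ref{karak}), but the two steps that carry the real weight of the theorem both have genuine gaps. First, your passage $ii)\Rightarrow i)$ cannot work as stated: the dual of $\dot{\mathcal{B}}^*(\RR^d)\hat{\otimes}_{\epsilon}\dot{\mathcal{B}}^*(\RR^d)$ consists of the \emph{integral} bilinear forms, and since $\dot{\mathcal{B}}^*$ is not nuclear, a separately (or even jointly) continuous bilinear form on $\dot{\mathcal{B}}^*(\RR^d)\times\dot{\mathcal{B}}^*(\RR^d)$ need not extend to a continuous functional on $\dot{\mathcal{B}}^*(\RR^{2d})$. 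Moreover, $ii)$ and $iii)$ only control the pairings $\langle(f_1\otimes f_2)\varphi^{\Delta},1\otimes\chi\rangle$ and $\langle(f_1\otimes f_2)\varphi^{\Delta},\chi\otimes 1\rangle$, not $\langle(f_1\otimes f_2)\varphi^{\Delta},\chi_1\otimes\chi_2\rangle$ for arbitrary $\chi_1,\chi_2\in\dot{\mathcal{B}}^*(\RR^d)$, so the separate continuity you invoke is not established either. This is precisely why the paper proves $iv)\Rightarrow i)$ instead, through a three-variable construction: the relevant functional is first built on $\SSS^*_{\dagger}(\RR^{2d})\hat{\otimes}\dot{\mathcal{B}}^*(\RR^d)$, where the nuclearity of $\SSS^*_{\dagger}(\RR^{2d})$ gives $\pi=\epsilon$, and is then transported to $\dot{\mathcal{B}}^*(\RR^{3d})\cong\dot{\mathcal{B}}^*(\RR^{2d})\hat{\otimes}_{\epsilon}\dot{\mathcal{B}}^*(\RR^d)$ via the multiplier $F_{\theta_1,\theta_2}$ of Proposition \ref{epsdd}, followed by two limit passages with the regularizing sequence and a final parametrix argument.

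Second, your argument for $ii)\Leftrightarrow iv)$ rests on a parametrix $\delta=P(D)u$ with $u\in\SSS^*_{\dagger}(\RR^d)$; no such $u$ exists (otherwise $\delta=P(D)u$ would lie in $\SSS^*_{\dagger}(\RR^d)$), and Proposition \ref{parametrix} only yields $u$ in a single Banach space $\SSS^{M_p,t}_{A_p,t}$ of the defining family, so terms such as $(u*\psi)*f_2$ are not covered by hypothesis $ii)$, which concerns $\varphi\in\SSS^*_{\dagger}(\RR^d)$ only. In addition, $P(D)$ is of infinite order, so the ``Leibniz-type expansion'' of $(\varphi*\check{f}_1)\cdot P(D)g$ is not a finite sum, and the convergence of the resulting double series would itself have to be controlled. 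The paper's proof of $ii)\Rightarrow iv)$ avoids all of this: it extends the bilinear form $G(\varphi,\chi)=\langle(\varphi*\check{f}_1)f_2,\chi\rangle$ to $\SSS^*_{\dagger}(\RR^d)\hat{\otimes}\dot{\mathcal{B}}^*(\RR^d)$, represents the extension through equicontinuous sets $H'\subseteq\SSS'^*_{\dagger}(\RR^d)$ and $K'\subseteq\DD'^*_{L^1}(\RR^d)$, and evaluates it on the kernels $\chi_{\varphi,\psi}(x,y)=\int_{\RR^d}\varphi(x-y+t)\psi(t-y)\chi(t)\,dt$ to identify $(\varphi*\check{f}_1)(\psi*f_2)$ with a bounded measure, hence an $L^1$ function. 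Both of your central steps would need to be replaced by arguments of this kind.
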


\begin{proof} $i) \Rightarrow ii)$. Let $\theta,\psi,\chi\in\SSS^*_{\dagger}(\RR^d)$. Clearly $\psi(x+y)(\check{\chi}*\varphi)(y)\in\SSS^*_{\dagger}(\RR^{2d})$. We have
\beqs
\left\langle \left((\psi*\check{f}_1)f_2\right)*\chi,\theta\right\rangle= \langle f_1\otimes f_2, \psi(x+y)(\check{\chi}*\theta)(y)\rangle.
\eeqs
Let $\varphi_n\in\SSS^*_{\dagger}(\RR^d)$, $n\in\ZZ_+$, be as in $ii)$ of Lemma \ref{appincl}. One easily verifies that $\psi(x+y)\varphi_n(x)(\check{\chi}*\theta)(y)\rightarrow \psi(x+y)(\check{\chi}*\theta)(y)$ in $\SSS^*_{\dagger}(\RR^{2d})$ (cf. the proof of $ii)$ of Lemma \ref{appincl}) and $\varphi_n(x)(\check{\chi}*\theta)(y)\rightarrow 1_x\otimes (\check{\chi}*\theta)(y)$ in $\DD^*_{L^{\infty},c}(\RR^{2d})$. Hence\\
\\
$\langle f_1\otimes f_2, \psi(x+y)(\check{\chi}*\theta)(y)\rangle$
\beqs
&=&\lim_{n\rightarrow \infty} \langle f_1\otimes f_2, \psi(x+y)\varphi_n(x)(\check{\chi}*\theta)(y)\rangle=\lim_{n\rightarrow \infty} \langle (f_1\otimes f_2)\psi^{\Delta}, \varphi_n(x)(\check{\chi}*\theta)(y)\rangle\\
&=&\langle (f_1\otimes f_2)\psi^{\Delta}, 1_x\otimes(\check{\chi}*\theta)(y)\rangle,
\eeqs
where the last equality follows by $i)$ and Proposition \ref{dualofddd}. Thus $\left\langle \left((\psi*\check{f}_1)f_2\right)*\chi,\theta\right\rangle=\langle (f_1\otimes f_2)\psi^{\Delta}, 1_x\otimes(\check{\chi}*\theta)(y)\rangle$. Again, $i)$ and Proposition \ref{dualofddd} imply that there exist $C>0$, $g\in\mathfrak{C}_0$ and $r>0$, resp. $(r_p)\in\mathfrak{R}$, such that
\beqs
\left|\langle (f_1\otimes f_2)\psi^{\Delta}, 1_x\otimes(\check{\chi}*\theta)(y)\rangle\right|&\leq& C p_{g,r}(1_x\otimes(\check{\chi}*\theta)(y))\leq C_1 \|\theta\|_{L^{\infty}},\,\,
\mbox{resp.}\\
\left|\langle (f_1\otimes f_2)\psi^{\Delta}, 1_x\otimes(\check{\chi}*\theta)(y)\rangle\right|&\leq& C p_{g,(r_p)}(1_x\otimes(\check{\chi}*\theta)(y))\leq C_1 \|\theta\|_{L^{\infty}}.
\eeqs
Since $\SSS^*_{\dagger}(\RR^d)$ is dense in $C_0(\RR^d)$, Theorem \ref{karak} implies that $(\psi*\check{f}_1)f_2\in \DD'^*_{L^1}(\RR^d)$. The proof of $i) \Rightarrow iii)$ is analogous.\\
\indent $ii)\Rightarrow iv)$. By similar arguments as in the discussion after Definition \ref{defco}, using the De Wilde closed graph theorem, one can prove that the mapping $\varphi\mapsto (\varphi*\check{f}_1)f_2$, $\SSS^*_{\dagger}(\RR^d)\rightarrow \DD'^*_{L^1}(\RR^d)$, is continuous. Hence, for fixed $\chi\in\dot{\mathcal{B}}^*(\RR^d)$, the mapping $\varphi\mapsto \left\langle (\varphi*\check{f}_1)f_2,\chi\right\rangle$, $\SSS^*_{\dagger}(\RR^d)\rightarrow \CC$, is continuous. On the other hand, for fixed $\varphi\in\SSS^*_{\dagger}(\RR^d)$, the mapping $\chi\mapsto \left\langle(\varphi*\check{f}_1)f_2, \chi\right\rangle$, $\dot{\mathcal{B}}^*(\RR^d)\rightarrow \CC$, is continuous by $ii)$. Thus the bilinear mapping $G:\SSS^*_{\dagger}(\RR^d)\times \dot{\mathcal{B}}^*(\RR^d)\rightarrow \CC$, $G(\varphi,\chi)=\left\langle(\varphi*\check{f}_1)f_2, \chi\right\rangle$, is separately continuous. Since $\SSS^{(M_p)}_{(A_p)}(\RR^d)$ and $\dot{\mathcal{B}}^{(M_p)}(\RR^d)$ are $(F)$-spaces, resp. $\SSS^{\{M_p\}}_{\{A_p\}}(\RR^d)$ and $\dot{\mathcal{B}}^{\{M_p\}}(\RR^d)$ are barreled $(DF)$-spaces, $G$ is continuous. Thus $G$ defines a continuous map $\tilde{G}:\SSS^*_{\dagger}(\RR^d)\hat{\otimes} \dot{\mathcal{B}}^*(\RR^d)\rightarrow \CC$ where the topology on the tensor product is $\pi=\epsilon$ ($\SSS^*_{\dagger}(\RR^d)$ is nuclear).\\
\indent Consider the linear transformation on $\RR^{2d}$, $\Theta(x,y)=(x+y,y)$. It induces topological isomorphism $\tilde{\Theta}:\theta\mapsto \theta\circ\Theta$ on $\SSS^*_{\dagger}(\RR^{2d})$ and on $\dot{\mathcal{B}}^*(\RR^{2d})$. Hence ${}^t\tilde{\Theta}$ is topological isomorphism on $\SSS'^*_{\dagger}(\RR^{2d})$ and on $\DD'^*_{L^1}(\RR^{2d})$.\\
\indent For $\varphi,\chi\in\SSS^*_{\dagger}(\RR^d)$, we have
\beqs
\tilde{G}(\varphi\otimes\chi)=\langle (\varphi*\check{f_1})f_2, \chi\rangle=\langle f_1\otimes f_2, \varphi(x+y)\chi(y)\rangle=\langle {}^t\tilde{\Theta}(f_1\otimes f_2), \varphi\otimes\chi\rangle.
\eeqs
Since $\SSS^*_{\dagger}(\RR^d)\otimes\SSS^*_{\dagger}(\RR^d)$ is dense in $\SSS^*_{\dagger}(\RR^{2d})$, ${}^t\tilde{\Theta}(f_1\otimes f_2)=\tilde{G}\in \left(\SSS^*_{\dagger}(\RR^d)\hat{\otimes} \dot{\mathcal{B}}^*(\RR^d)\right)'$. There exist $C>0$ and equicontinuous subset $H'$ of $\SSS'^*_{\dagger}(\RR^d)$ and equicontinuous subset $K'$ of $\DD'^*_{L^1}(\RR^d)$ such that
\beq\label{kkkrr15}
|\tilde{G}(\theta)|\leq C\sup_{u\in H'}\sup_{v\in K'}|\langle u\otimes v,\theta\rangle|
\eeq
for all $\theta\in\SSS^*_{\dagger}(\RR^d)\otimes \dot{\mathcal{B}}^*(\RR^d)$. It is easy to verify that the set $W=\{\Phi\in\SSS'^*_{\dagger}(\RR^{2d})|\, \Phi=u\otimes v,\, u\in H',\, v\in K'\}$ is equicontinuous subset of $\SSS'^*_{\dagger}(\RR^{2d})$. Since $\SSS^*_{\dagger}(\RR^{2d})$ is continuously injected into $\SSS^*_{\dagger}(\RR^d)\hat{\otimes} \dot{\mathcal{B}}^*(\RR^d)$, (\ref{kkkrr15}) holds for all $\theta\in\SSS^*_{\dagger}(\RR^{2d})$. As $H'$ and $K'$ are equicontinuous subsets of $\SSS'^*_{\dagger}(\RR^d)$ and $\DD'^*_{L^1}(\RR^d)$ respectively, there exist $C_1>0$ and $r>0$ (resp. $(r_p)\in\mathfrak{R}$) such that for all $\varphi\in\SSS^*_{\dagger}(\RR^d)$ and $\chi\in\dot{\mathcal{B}}^*(\RR^d)$
\beqs
&{}&\sup_{u\in H'}|\langle u,\varphi\rangle|\leq C_1 \sigma_r(\varphi)\,\, \mbox{and}\,\, \sup_{v\in K'}|\langle v,\chi\rangle|\leq C_1 \|\chi\|_{L^{\infty},r}\\
&{}&\left(\mbox{resp.}\,\,\sup_{u\in H'}|\langle u,\varphi\rangle|\leq C_1 \sigma_{(r_p)}(\varphi)\,\, \mbox{and}\,\, \sup_{v\in K'}|\langle v,\chi\rangle|\leq C_1 \|\chi\|_{L^{\infty},(r_p)}\right).
\eeqs
We continue the proof in the Roumieu case, since the Beurling case is similar. By Lemma \ref{nwseq}, we can assume that $(r_p)$ is such that $R_{p+q}\leq 2^{p+q} R_pR_q$ for all $p,q\in\ZZ_+$. Let $(r'_p)=(r_p/(2H))$. For $\theta\in \SSS^{\{M_p\}}_{\{A_p\}}(\RR^{2d})$ and $u\in H'$ and $v\in K'$ we have\\
\\
$|\langle u(x)\otimes v(y),\theta(x,y)\rangle|$
\beqs
&=&|\langle u(x),\langle v(y),\theta(x,y)\rangle\rangle|\leq C_1\sup_{\alpha\in\NN^d}\sup_{x\in\RR^d} \frac{\left|\langle v(y),D^{\alpha}_x \theta(x,y)\rangle\right|e^{B_{r_p}(|x|)}}{M_{\alpha}R_{\alpha}}\\
&\leq& C_1^2\sup_{\alpha,\beta\in\NN^d}\sup_{x,y\in\RR^d} \frac{\left|D^{\alpha}_x D^{\beta}_y \theta(x,y)\right|e^{B_{r_p}(|x|)}}{M_{\alpha}M_{\beta}R_{\alpha}R_{\beta}}\\
&\leq& C_2\sup_{\alpha,\beta\in\NN^d}\sup_{x,y\in\RR^d} \frac{\left|D^{\alpha}_x D^{\beta}_y \theta(x,y)\right|e^{B_{r_p}(|x|)}}{M_{\alpha+\beta}R'_{\alpha+\beta}}.
\eeqs
Let $\varphi,\psi\in\SSS^{\{M_p\}}_{\{A_p\}}(\RR^d)$ be fixed. For $\chi\in\SSS^{\{M_p\}}_{\{A_p\}}(\RR^d)$, clearly
\beqs
\chi_{\varphi,\psi}(x,y)=\int_{\RR^d}\varphi(x-y+t)\psi(t-y)\chi(t)dt\in\SSS^{\{M_p\}}_{\{A_p\}}(\RR^{2d}).
\eeqs
Moreover, observe that
\beqs
e^{B_{r_p}(|x|)}\leq 2e^{B_{r_p}(2|x-y+t|)}e^{B_{r_p}(2|y-t|)}\leq 2e^{B_{r'_p}(|x-y+t|)}e^{B_{r'_p}(|t-y|)}
\eeqs
and thus
\beqs
\sup_{\alpha,\beta}\sup_{x,y}\frac{e^{B_{r_p}(|x|)}\left|D^{\alpha}_x D^{\beta}_y\chi_{\varphi,\psi}(x,y)\right|}{M_{\alpha+\beta}R'_{\alpha+\beta}}\leq C_3\|\chi\|_{L^{\infty}}.
\eeqs
Observe the mapping $F_{\varphi,\psi}:\SSS^{\{M_p\}}_{\{A_p\}}(\RR^d)\rightarrow \CC$, $F_{\varphi,\psi}(\chi)=\tilde{G}(\chi_{\varphi,\psi})$. By the above estimates, we have $|F_{\varphi,\psi}(\chi)|\leq C_4 \|\chi\|_{L^{\infty}}$. Since $\SSS^{\{M_p\}}_{\{A_p\}}(\RR^d)$ is dense in $C_0(\RR^d)$ this mapping can be continuously extended to $\tilde{F}_{\varphi,\psi}:C_0(\RR^d)\rightarrow\CC$. Observe that, for $\chi\in\SSS^{\{M_p\}}_{\{A_p\}}(\RR^d)$,
\beqs
\tilde{F}_{\varphi,\psi}(\chi)&=&\langle {}^t\tilde{\Theta}(f_1\otimes f_2), \chi_{\varphi,\psi}\rangle=\langle f_1\otimes f_2\otimes 1_t, \varphi(x+t)\psi(t-y)\chi(t)\rangle\\
&=&\langle (\varphi*\check{f}_1)(\psi*f_2),\chi\rangle.
\eeqs
Thus $(\varphi*\check{f}_1)(\psi*f_2)\in\mathcal{M}^1(\RR^d)$. But $(\varphi*\check{f}_1)(\psi*f_2)$ is a continuous function, hence $(\varphi*\check{f}_1)(\psi*f_2)\in L^1(\RR^d)$. The proof of $iii)\Rightarrow iv)$ is similar.\\
\indent $iv) \Rightarrow i)$. By similar arguments as in the discussion after Definition \ref{defco}, using De Wilde closed graph theorem, one verifies that the bilinear mapping $G:\SSS^*_{\dagger}(\RR^d)\times \SSS^*_{\dagger}(\RR^d)\rightarrow L^1(\RR^d)$, $G(\varphi,\psi)=(\varphi*\check{f}_1)(\psi*f_2)$, is separately continuous, hence continuous since $\SSS^{(M_p)}_{(A_p)}(\RR^d)$ is an $(F)$-space, resp. $\SSS^{\{M_p\}}_{\{A_p\}}(\RR^d)$ is a barreled $(DF)$-space. Keeping in mind $\SSS^*_{\dagger}(\RR^{2d})=\SSS^*_{\dagger}(\RR^d)\hat{\otimes}\SSS^*_{\dagger}(\RR^d)$ where the topology on the tensor product is $\pi=\epsilon$ ($\SSS^*_{\dagger}(\RR^d)$ is nuclear), $G$ extends to a continuous mapping $\tilde{G}:\SSS^*_{\dagger}(\RR^{2d})\rightarrow L^1(\RR^d)$. Denote by $V$ the continuous mapping
\beqs
\SSS^*_{\dagger}(\RR^{2d})\times\dot{\mathcal{B}}^*(\RR^d)\xrightarrow{\mathrm{Id}\times \mathrm{Id}} \SSS^*_{\dagger}(\RR^{2d})\times C_0(\RR^d)\xrightarrow{\tilde{G}\times\mathrm{Id}} L^1(\RR^d)\times C_0(\RR^d)\xrightarrow{\langle \cdot,\cdot\rangle} \CC,
\eeqs
where the last bilinear mapping is the duality between $\mathcal{M}^1(\RR^d)$ and $C_0(\RR^d)$ ($L^1(\RR^d)$ is closed subspace of $\mathcal{M}^1(\RR^d)$). $V$ extends to a continuous mapping $\tilde{V}:\SSS^*_{\dagger}(\RR^{2d})\hat{\otimes}\dot{\mathcal{B}}^*(\RR^d)\rightarrow \CC$, where the topology on the tensor product is $\pi=\epsilon$ ($\SSS^*_{\dagger}(\RR^{2d})$ is nuclear). Fix $\theta_1,\theta_2\in\SSS^*_{\dagger}(\RR^d)$. The mapping $F_{\theta_1,\theta_2}:\dot{\mathcal{B}}^*(\RR^{2d})\rightarrow \SSS^*_{\dagger}(\RR^{2d})$, $F_{\theta_1,\theta_2}(\chi)=(\theta_1\otimes\theta_2)\chi$, is continuous, hence so is the mapping $F_{\theta_1,\theta_2}\otimes \mathrm{Id}:\dot{\mathcal{B}}^*(\RR^{2d})\otimes_{\epsilon}\dot{\mathcal{B}}^*(\RR^d)\rightarrow \SSS^*_{\dagger}(\RR^{2d})\otimes_{\epsilon}\dot{\mathcal{B}}^*(\RR^d)$. By Proposition \ref{epsdd}, $F_{\theta_1,\theta_2}\otimes \mathrm{Id}$ extends to a continuous mapping $F_{\theta_1,\theta_2}\hat{\otimes} \mathrm{Id}:\dot{\mathcal{B}}^*(\RR^{3d})\rightarrow \SSS^*_{\dagger}(\RR^{2d})\hat{\otimes}\dot{\mathcal{B}}^*(\RR^d)$. Denote by $\tilde{U}_{\theta_1,\theta_2}$ the continuous mapping $\tilde{V}\circ (F_{\theta_1,\theta_2}\hat{\otimes} \mathrm{Id}):\dot{\mathcal{B}}^*(\RR^{3d})\rightarrow \CC$. For $\varphi,\psi,\chi\in\SSS^*_{\dagger}(\RR^d)$, we have
\beqs
\tilde{U}_{\theta_1,\theta_2}(\varphi\otimes\psi\otimes\chi)=\langle f_1(x)\otimes f_2(y)\otimes 1_t, \theta_1(x+t)\theta_2(t-y) \varphi(x+t)\psi(t-y) \chi(t)\rangle.
\eeqs
Since $\SSS^*_{\dagger}(\RR^d)\otimes\SSS^*_{\dagger}(\RR^d)\otimes\SSS^*_{\dagger}(\RR^d)$ is dense in $\SSS^*_{\dagger}(\RR^{3d})$, for $w\in\SSS^*_{\dagger}(\RR^{3d})$ there exists a net $\{\tilde{w}_{\lambda}\}_{\lambda\in\Lambda}\subseteq \SSS^*_{\dagger}(\RR^d)\otimes\SSS^*_{\dagger}(\RR^d)\otimes\SSS^*_{\dagger}(\RR^d)$ which converges to $w$ in $\SSS^*_{\dagger}(\RR^{3d})$. Thus, for $w\in\SSS^*_{\dagger}(\RR^{3d})$, we have
\beqs
\tilde{U}_{\theta_1,\theta_2}(w)=\langle f_1(x)\otimes f_2(y)\otimes 1_t, \theta_1(x+t)\theta_2(t-y) w(x+t,t-y,t)\rangle
\eeqs
Let $\varphi_n\in\SSS^*_{\dagger}(\RR^d)$, $n\in\ZZ_+$, be as in $ii)$ of Lemma \ref{appincl} and for $m,n\in\ZZ_+$ and $\chi\in\SSS^*_{\dagger}(\RR^{2d})$ define $w_{n,m}(x,y,t)=\varphi_n(x-y)\chi(x-t,t-y)\varphi_m(x)\in\SSS^*_{\dagger}(\RR^{3d})$. Then
\beqs
\tilde{U}_{\theta_1,\theta_2}(w_{n,m})&=&\langle f_1(x)\otimes f_2(y)\otimes 1_t, \theta_1(x+t)\theta_2(t-y) \varphi_n(x+y)\varphi_m(x+t)\chi(x,y)\rangle\\
&=&\langle f_1(x)\otimes f_2(y), (\theta_1\varphi_m)*\check{\theta_2}(x+y)\varphi_n(x+y)\chi(x,y)\rangle.
\eeqs
One easily verifies that $\theta_1\varphi_m\rightarrow \theta_1$ in $\SSS^*_{\dagger}(\RR^d)$, hence $\left((\theta_1\varphi_m)*\check{\theta_2}\right)^{\Delta}\rightarrow (\theta_1*\check{\theta_2})^{\Delta}$ in $\DD^*_{L^{\infty}}(\RR^{2d})$. Lemma \ref{conmulsdd} implies that for each fixed $n\in\ZZ_+$,
\beqs
&{}&(\theta_1\varphi_m)*\check{\theta_2}(x+y)\varphi_n(x+y)\chi(x,y)\rightarrow \theta_1*\check{\theta_2}(x+y)\varphi_n(x+y)\chi(x,y),\,\, \mbox{in}\,\, \SSS^*_{\dagger}(\RR^{2d}),\\
&{}&w_{n,m}(x,y,t)\rightarrow \varphi_n(x-y)\chi(x-t,t-y)=w_n(x,y,t),\,\, \mbox{in}\,\, \DD^*_{L^{\infty},c}(\RR^{2d}),
\eeqs
as $m\rightarrow \infty$. If we let $m\rightarrow \infty$ in the above equality, by using Proposition \ref{dualofddd}, we have
\beqs
\tilde{U}_{\theta_1,\theta_2}(w_n)=\langle f_1(x)\otimes f_2(y), \theta_1*\check{\theta_2}(x+y)\varphi_n(x+y)\chi(x,y)\rangle,\,\, \forall n\in\ZZ_+.
\eeqs
One easily verifies that $\varphi_n(x+y)\rightarrow 1_{x,y}$ and $\varphi_n(x-y)\rightarrow 1_{x,y}$ in $\DD^*_{L^{\infty},c}(\RR^{2d})$. Set $w(x,y,t)=\chi(x-t,t-y)\in \DD^*_{L^{\infty},c}(\RR^{3d})$. If we let $n\rightarrow\infty$ in this equality, Lemma \ref{conmulsdd} and Proposition \ref{dualofddd} imply
\beqs
\tilde{U}_{\theta_1,\theta_2}(w)=\langle f_1(x)\otimes f_2(y), \theta_1*\check{\theta_2}(x+y)\chi(x,y)\rangle=\langle (f_1\otimes f_2) (\theta_1*\check{\theta_2})^{\Delta}, \chi\rangle.
\eeqs
Since $\tilde{U}_{\theta_1,\theta_2}\in\DD'^*_{L^1}(\RR^{3d})$, by Proposition \ref{dualofddd} there exist $C>0$, $g\in\mathfrak{C}_0$ and $r>0$ (resp. $(r_p)\in\mathfrak{R}$) such that
\beqs
&{}&\left|\tilde{U}_{\theta_1,\theta_2}(w)\right|\leq Cp_{g,r}(w)\leq C_1 \|\chi\|_{L^{\infty}(\RR^{2d}),2r}\\
&{}&\left(\mbox{resp.}\,\,\left|\tilde{U}_{\theta_1,\theta_2}(w)\right|\leq Cp_{g,(r_p)}(w)\leq C_1 \|\chi\|_{L^{\infty}(\RR^{2d}),(r_p/2)}\right).
\eeqs
We obtain that for each $\theta_1,\theta_2\in\SSS^*_{\dagger}(\RR^d)$, $(f_1\otimes f_2) (\theta_1*\theta_2)^{\Delta}\in\DD'^*_{L^1}(\RR^{2d})$. By using De Wilde closed graph theorem once again, one proves that the bilinear mapping $(\theta_1,\theta_2)\mapsto (f_1\otimes f_2) (\theta_1*\theta_2)^{\Delta}$, $\SSS^*_{\dagger}(\RR^d)\times\SSS^*_{\dagger}(\RR^d)\rightarrow \DD'^*_{L^1}(\RR^{2d})$ is continuous. Fix $\chi\in\SSS^*_{\dagger}(\RR^{2d})$. Since $\DD'^*_{L^1}(\RR^{2d})$ is bornological (cf. Theorem \ref{theorBB} for the Beurling case), Theorem \ref{karak} implies that the mapping $f\mapsto f*\chi$, $\DD'^*_{L^1}(\RR^{2d})\rightarrow L^1(\RR^{2d})$ is continuous. Hence the bilinear mapping $Q_{\chi}:\ \SSS^*_{\dagger}(\RR^d)\times\SSS^*_{\dagger}(\RR^d)\rightarrow L^1(\RR^{2d})$, $Q_{\chi}(\theta_1,\theta_2)=\left((f_1\otimes f_2) (\theta_1*\theta_2)^{\Delta}\right)*\chi$, is continuous. We continue the proof in the Roumieu case, as the Beurling case is similar. For $(r_p)\in\mathfrak{R}$ denote by $X_{(r_p)}$ the closure of $\SSS^*_{\dagger}(\RR^d)$ in $\SSS^{M_p,(r_p)}_{A_p,(r_p)}$. There exists $(r_p)\in\mathfrak{R}$ such that $Q_{\chi}$ extends to a continuous bilinear mapping $\tilde{Q}_{\chi}: X_{(r_p)}\times X_{(r_p)}\rightarrow L^1(\RR^{2d})$. Take $(l_p)\in\mathfrak{R}$ with $(l_p)\leq (r_p)$ such that $\SSS^{M_p,(l_p)}_{A_p,(l_p)}\subseteq X_{(r_p)}$ (for the construction of such $(l_p)$ see the proof of Proposition \ref{parametrix1}). For this $(l_p)$, Proposition \ref{parametrix} implies that there exist $u\in\SSS^{M_p,(l_p)}_{A_p,(l_p)}$ and $P(D)$ of class $\{M_p\}$ such that $P(D)u=\delta$. Moreover, by the way we choose $(l_p)$, there exist $\tilde{\theta}_n\in \SSS^*_{\dagger}(\RR^d)$, $n\in\ZZ_+$, such that $\tilde{\theta}_n\rightarrow u$ in $X_{(r_p)}$. Observe that for arbitrary but fixed $\theta\in\SSS^*_{\dagger}(\RR^d)$, we have
\beqs
\left((f_1\otimes f_2) (\theta*P(D)\tilde{\theta}_n)^{\Delta}\right)*\chi=Q_{\chi}(P(D)\theta,\tilde{\theta}_n),\,\, \forall n\in\ZZ_+.
\eeqs
By construction, the right hand side tends to $\tilde{Q}_{\chi}(P(D)\theta,u)$ in $L^1(\RR^{2d})$. For $\varphi\in\SSS^*_{\dagger}(\RR^{2d})$, for the left hand side we have
\beqs
\left\langle\left((f_1\otimes f_2) (\theta*P(D)\tilde{\theta}_n)^{\Delta}\right)*\chi,\varphi\right\rangle=\left\langle f_1\otimes f_2, (P(D)\theta*\tilde{\theta}_n)^{\Delta}\check{\chi}*\varphi\right\rangle.
\eeqs
One easily verifies that for each $\psi\in\SSS^*_{\dagger}(\RR^d)$, $(\psi*\tilde{\theta}_n)^{\Delta}\rightarrow (\psi*u)^{\Delta}$ in $\DD^*_{L^{\infty}}(\RR^{2d})$. Hence, Lemma \ref{conmulsdd} implies $(P(D)\theta*\tilde{\theta}_n)^{\Delta}\check{\chi}*\varphi\rightarrow (P(D)\theta*u)^{\Delta}\check{\chi}*\varphi$ in $\SSS^*_{\dagger}(\RR^{2d})$. Observe that $P(D)\theta*u=\theta*P(D)u=\theta$ in $\SSS'^*_{\dagger}(\RR^d)$. Clearly $P(D)\theta*u$ is $C^{\infty}$ function, hence the equality also holds pointwise. We obtain that
\beqs
\left((f_1\otimes f_2) (\theta*P(D)\tilde{\theta}_n)^{\Delta}\right)*\chi\rightarrow \left((f_1\otimes f_2) \theta^{\Delta}\right)*\chi\,\, \mbox{weakly in}\,\, \SSS'^*_{\dagger}(\RR^{2d}).
\eeqs
But the latter space is Montel, hence the convergence also holds in the strong topology. We obtain $\left((f_1\otimes f_2) \theta^{\Delta}\right)*\chi=\tilde{Q}_{\chi}(P(D)\theta,u)\in L^1(\RR^{2d})$. Since $\chi\in\SSS^*_{\dagger}(\RR^{2d})$ is arbitrary, Theorem \ref{karak} implies that $(f_1\otimes f_2) \theta^{\Delta}\in\DD'^*_{L^1}(\RR^{2d})$.
\end{proof}

\begin{remark} If the convolution of $f_1$ and $f_2$ exists, $ii)$ and $iii)$ of Theorem \ref{convolution} imply that $\langle f_1*f_2,\psi\rangle =\langle (\psi*\check{f}_1)f_2,1\rangle= \langle (\psi*\check{f}_2)f_1,1\rangle$. This can be proved by using the sequence $\{\varphi_n\}_{n\in\ZZ_+}\subseteq \SSS^*_{\dagger}(\RR^d)$ from $ii)$ of Lemma \ref{appincl} via a similar argument as in the proof of $i)\Rightarrow ii)$ of Theorem \ref{convolution}.
\end{remark}

As direct consequence of this remark and Theorem \ref{convolution}, we obtain the following useful corollary.

\begin{corollary}
Let the convolution of $f_1$ and $f_2$ exists. Then the convolution of $f_2$ and $f_1$ also exists and $f_1*f_2=f_2*f_1$. If $P(D)$ is ultradifferential operator of class $*$, then the convolution of $f_1$ and $P(D)f_2$ exists and the convolution of $P(D)f_1$ and $f_2$ also exists and $P(D)(f_1*f_2)=f_1*P(D)f_2=P(D)f_1*f_2$.
\end{corollary}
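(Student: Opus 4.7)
The plan is to use Theorem \ref{convolution} together with the preceding remark as black boxes; all assertions of the corollary reduce to rewriting products/convolutions and applying the characterizations.

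First I would handle commutativity. Assuming $f_1*f_2$ exists, Theorem \ref{convolution}(iii) says $(\varphi*\check{f}_2)f_1\in\DD'^*_{L^1}$ for every $\varphi\in\SSS^*_{\dagger}(\RR^d)$; but this is precisely condition (ii) of Theorem \ref{convolution} with the roles of $f_1$ and $f_2$ exchanged, so $f_2*f_1$ exists. Equality $f_1*f_2=f_2*f_1$ then follows directly from the remark just above the corollary: $\langle f_1*f_2,\psi\rangle=\langle(\psi*\check{f}_2)f_1,1\rangle=\langle f_2*f_1,\psi\rangle$ for every $\psi\in\SSS^*_{\dagger}(\RR^d)$.

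Next I would establish the existence of $f_1*P(D)f_2$ and $P(D)f_1*f_2$ by checking criterion (iv) of Theorem \ref{convolution}. Since $P(D)$ of class $*$ acts continuously on $\SSS^*_{\dagger}(\RR^d)$, for any $\varphi,\psi\in\SSS^*_{\dagger}(\RR^d)$ one has $P(D)\psi\in\SSS^*_{\dagger}(\RR^d)$, and the identities $\psi*P(D)f_2=(P(D)\psi)*f_2$ and $(P(D)f_1)^{\vee}=\check{P}(D)\check{f}_1$ (where $\check{P}(\xi)=P(-\xi)$) give
\[
(\varphi*\check{f}_1)(\psi*P(D)f_2)=(\varphi*\check{f}_1)((P(D)\psi)*f_2)\in L^1,
\]
\[
(\varphi*(P(D)f_1)^{\vee})(\psi*f_2)=((\check{P}(D)\varphi)*\check{f}_1)(\psi*f_2)\in L^1,
\]
both by the assumed criterion (iv) for $f_1*f_2$ applied with the substituted test functions $P(D)\psi$ and $\check{P}(D)\varphi$. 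This yields existence of both convolutions.

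For the identities $P(D)(f_1*f_2)=f_1*P(D)f_2=P(D)f_1*f_2$ I would compute the pairings with an arbitrary $\psi\in\SSS^*_{\dagger}(\RR^d)$ using the remark. Combining the already established commutativity with the transposition formula $\langle P(D)g,\psi\rangle=\langle g,\check{P}(D)\psi\rangle$, one obtains
\[
\langle f_1*P(D)f_2,\psi\rangle=\langle P(D)f_2*f_1,\psi\rangle=\langle((\check{P}(D)\psi)*\check{f}_2)f_1,1\rangle=\langle f_1*f_2,\check{P}(D)\psi\rangle,
\]
and likewise $\langle P(D)f_1*f_2,\psi\rangle=\langle((\check{P}(D)\psi)*\check{f}_1)f_2,1\rangle=\langle f_1*f_2,\check{P}(D)\psi\rangle$. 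Both equal $\langle P(D)(f_1*f_2),\psi\rangle$, which concludes the proof. There is essentially no real obstacle here; the only technical point is the correct bookkeeping of $\check{P}$ in transposing $P(D)$ through convolution and reflection, which is routine.
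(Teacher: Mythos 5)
Your proposal is correct and follows exactly the route the paper intends: the authors state the corollary as a direct consequence of Theorem \ref{convolution} and the preceding remark without writing out the details, and your argument (swapping roles in conditions $ii)$/$iii)$ for commutativity, verifying criterion $iv)$ with $P(D)\psi$ and $\check{P}(D)\varphi$ for existence, and transposing $\check{P}(D)$ through the pairings of the remark for the identities) is precisely that intended computation.
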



\begin{thebibliography}{00}
%\setlength{\itemsep}{0pt}


\bibitem{BO} C. Bargetz, N. Ortner, \textit{Convolution of vector-valued distributions: a survey and comparison}, Dissertationes Math. \textbf{495} (2013), 1--51.

\bibitem{PilipovicK} R.~Carmichael, A.~Kami\'nski, S.~Pilipovi\'c, \textit{Boundary values and convolution in ultradistribution spaces}, World Scientific Publishing Co. Pte. Ltd., Hackensack, NJ, 2007.

\bibitem{faadib} G.~M.~Constantine, T.~H.~Savits, \textit{A multivariate Fa\'{a} di Bruno formula with applications},  Trans. Amer. Math. Soc. \textbf{348} (1996), 503--520.

\bibitem{dm-1} A. Dasgupta, M. Ruzhansky, \textit{Gevrey functions and ultradistributions on compact Lie groups and homogeneous spaces}, Bull. Sci. Math. \textbf{138}(6) (2014), 756-782.

\bibitem{dm-2} A. Dasgupta, M. Ruzhansky, \textit{Eigenfunction expansions of ultradifferentiable functions and ultradistributions}, Trans. Amer. Math. Soc. \textbf{368}(12) (2016), 8481-8498.

\bibitem{davie} A.~M.~Davie, \textit{The approximation problem for Banach spaces}, Bull. London Math. Soc. \textbf{5} (1973), 261--266.

\bibitem{dmw-1} J. Delgado, M. Ruzhansky, B. Wang, \textit{Approximation property and nuclearity on mixed-norm $L^p$, modulation and Wiener amalgam spaces}, J. London Math. Soc. \textbf{94}(2) (2016), 391-408

\bibitem{dmw-2} J. Delgado, M. Ruzhansky, B. Wang, \textit{Grothendieck-Lidskii trace formula for mixed-norm and variable Lebesgue spaces}, J. Spectr. Theory \textbf{6} (2016), 781-791.

\bibitem{delm} J. Delgado, M. Ruzhansky, \textit{The bounded approximation property of variable Lebesgue spaces and nuclearity}, preprint arXiv:1503.07202

\bibitem{DD} P. Dierolf, S. Dierolf, \textit{Topological    properties  of  the dual    pair    $\langle\dot{\mathcal B}(\Omega)' ,\dot{\mathcal B}(\Omega)''\rangle$}, Pacific J. Math. \textbf{108} (1983), 51--82.

\bibitem{dirvo} P. Dierolf, J. Voigt,  \textit{Convolution and $S'$-convolution of distributions}, Collectanea Math. \textbf{29} (1978), 185--196.

\bibitem{DPV} P.~Dimovski, S.~Pilipovi\'{c}, J.~Vindas, \textit{New distribution spaces associated to translation-invariant Banach spaces}, Monatsh. Math., \textbf{177} 4 (2015), 495-515.

\bibitem{DPPV} P.~Dimovski, S.~Pilipovi\'{c}, B.~Prangoski, J.~Vindas, \textit{Convolution of ultradistributions and ultradistribution spaces associated to translation-invariant Banach spaces}, Kyoto J. Math., \textbf{56}(2) (2016), 401-440.

\bibitem{dpv-2} P.~Dimovski, B.~Prangoski, J.~Vindas, \textit{On a class of translation-invariant spaces of quasianalytic ultradistributions}, Novi Sad Journal of Mathematics, \textbf{45} 1 (2015), 143-175.

\bibitem{enflo} P.~Enflo, \textit{A counterexample to the approximation problem in Banach spaces}, Acta Math. \textbf{130} (1973), 309--317.

\bibitem{Grothendieck} A.~Grothendieck, \textit{Produits tensoriels topologiques et espaces nucl\'{e}aires}, American Mathematical Society, Providence, RI, 1955.

\bibitem{hor}  J.~Horv\'{a}th, \textit{Sur la convolution des distributions}, Bull. Sci. Math. \textbf{98} (1974), 183--192.

\bibitem{K}  A. Kami\'{n}ski, \textit{Convolution, product and Fourier transform of distributions,} Studia Math. \textbf{74} (1982), 83--92

\bibitem{PK} A.~Kami\'{n}ski, D.~Kova\v{c}evi\'{c}, S.~Pilipovi\'{c}, \textit{The equivalence of various definitions of the convolution of ultradistributions}, Trudy Mat. Inst. Steklov \textbf{203} (1994), 307--322;  translation in: Proc. Steklov Inst. Math. \textbf{203} (1995), 259--270.

%\bibitem{hilleph} E.~Hille and R.S.~Phillips, \textit{Functional analysis and semi-groups}, American Mathematical Society, Providence, R.I, 1974.

\bibitem{kisynski} J. Kisy\'{n}ski, \textit{On Cohen's proof of the factorization theorem,} Ann. Polon. Math. \textbf{75} (2000), 177--192.

\bibitem{Komatsu1} H.~Komatsu, \textit{Ultradistributions, I: Structure theorems and a characterization}, J. Fac. Sci. Univ. Tokyo, Sect. IA Math., \textbf{20} 1 (1973), 25--105.

\bibitem{Komatsu3} H.~Komatsu, \textit{Ultradistributions, III: Vector valued ultradistributions and the theory of kernels}, J. Fac. Sci. Univ. Tokyo, Sect. IA Math., \textbf{29} 3 (1982), 653--717.

\bibitem{Komatsu4} H.~Komatsu,
\textit{Microlocal  analysis  in  Gevrey  classes  and  in  complex  domains},
in: Microlocal
analysis and applications, pp. 161--236, Springer, Berlin, 1991.

\bibitem{kothe2} G.~K\"{o}the, \textit{Topological vector spaces II}, Springer-Verlag, New York Inc., 1979.

\bibitem{morimoto69} M.~Morimoto, \textit{Sur les ultradistributions cohomologiques,} Ann. Inst. Fourier (Grenoble) \textbf{19} (1969), 129--153.

\bibitem{morimoto} M.~Morimoto, \textit{An introduction to Sato's hyperfunctions}, American Mathematical Society, Providence, RI, 1993.

\bibitem{NR} F.~Nicola, L.~Rodino, \textit{Global pseudo-differential calculus on Euclidean spaces}, Birkh\" auser Verlag, Basel, 2010.

\bibitem{ort} N.~Ortner,  \textit{On convolvability conditions for distributions,} Monatsh. Math. \textbf{160} (2010), 313--335.

%\bibitem{OW} N. Ortner, P. Wagner, \textit{Applications of weighted $\mathcal{D}'_{L^{p}}$-spaces to the convolution of distributions},
%Bull. Polish Acad. Sci. Math., \textbf{37} (1989),  579--595.

\bibitem{Wagner} N.~Ortner, P. ~Wagner, \textit{Distribution-valued analytic functions -- Theory and applications}, Edition swk, Hamburg, 2013.

%\bibitem{pmorimoto} Y.~S.~Park, M.~Morimoto, Fourier ultra-hyperfunctions in the Euclidean $n$-space, J. Fac. Sci. Univ. Tokyo Sect. IA Math. \textbf{20} (1973), 121--127.

\bibitem{pietsch} A.~Pietsch, \textit{Nuclear locally convex spaces}, Springer-Verlag, Berlin-Heidelberg-New York, 1972.


\bibitem{PilipovicC} S.~Pilipovi\'c, \textit{On the convolution in the space of Beurling ultradistributions}, Comment. Math. Univ. St. Paul., \textbf{40} (1991), 15--27.

\bibitem{PB} S.~Pilipovi\'c, B.~Prangoski, \textit{On the convolution of Roumieu ultradistributions through the $\epsilon$ tensor product}, Monatsh. Math. \textbf{173} (2014), 83--105.

\bibitem{BojanL} B.~Prangoski, \textit{Laplace transform in spaces of ultradistributions}, Filomat \textbf{27} (2013), 747--760.

\bibitem{BojanP} B.~Prangoski, \textit{Pseudodifferential operators of infinite order in spaces of tempered ultradistributions}, J. Pseudo-Differ. Oper. Appl. \textbf{4} (2013), 495--549.

\bibitem{Sch} H. H.~Schaefer, \textit{Topological vector spaces}, Springer-Verlag, New York-Heidelberg-Berlin, 1970.

\bibitem{SchwartzT} L.~Schwartz, \textit{Espaces de fonctions diff\'{e}rentiables a valeurs vectorielles}, Journal d'Analyse Math\'{e}matique \textbf{4} (1954), 88--148.

\bibitem{SchwartzS} L.~Schwartz, \textit{S\'{e}minaire Schwartz de la Facult\'{e} des Sciences de Paris, 1953/1954. Produits tensoriels topologiques d'espaces vectoriels topologiques. Espaces vectoriels topologiques nucl\'{e}aires. Applications}, Secr\'{e}tariat math\'{e}matique, 11 rue Pierre Curie, Paris, 1954.

\bibitem{SchwartzV} L.~Schwartz, \textit{Th\'eorie des distributions \'a valeurs vectorielles. I}, Ann. Inst. Fourier \textbf{7} (1957), 1--141.

\bibitem{SEM} Z. Semadeni, \textit{Schauder bases in Banach spaces of continuous functions,} Lecture Notes in Mathematics, 918, Springer-Verlag, Berlin-New York, 1982.

\bibitem{Shiraishi} R.~Shiraishi, \textit{On the definition of convolution for distributions}, J. Sci. Hiroshima Univ. Ser. A \textbf{23} (1959), 19-32

\bibitem{SI} R. Shiraishi, M. Itano, \textit{On the multiplicative product of distributions,} J. Sci. Hiroshima Univ. Ser. A-I Math. \textbf{28} (1964), 223--235.

\bibitem{szank} A.~Szankowski, \textit{Subspaces without the approximation property},  Israel J. Math. \textbf{30} (1978), 123--129.

\bibitem{zharinov} V. V.~Zharinov, \textit{Fourier-ultrahyperfunctions,} (Russian) Izv. Akad. Nauk SSSR Ser. Mat. \textbf{44} (1980), 533--570.

\end{thebibliography}
\end{document}